\documentclass[11pt,a4paper]{amsart}

\title[Conformal dimension via subcomplexes]{Conformal dimension via subcomplexes for 
small cancellation and random groups}
\author{John M. Mackay}
\address{School of Mathematics \\
 University of Bristol \\ Bristol, UK.}
\email{john.mackay@bristol.ac.uk}
\date{\today}
\subjclass[2010]{Primary 20F65; Secondary 20F06, 20F67, 20P05, 57M20}
\keywords{Conformal dimension, random groups, round trees, branching walls.}

\usepackage{amsmath,amsthm,amsfonts,graphicx,amssymb,color}
\usepackage{hyperref}

\numberwithin{equation}{section}
\newtheorem{theorem}[equation]{Theorem}
\newtheorem{proposition}[equation]{Proposition}
\newtheorem{corollary}[equation]{Corollary}
\newtheorem{lemma}[equation]{Lemma}

\newtheorem{definition}[equation]{Definition}
\newtheorem{remark}[equation]{Remark}
\newtheorem{assumption}[equation]{Assumption}

\newtheoremstyle{citing}
  {3pt}
  {3pt}
  {\itshape}
  {}
  {\bfseries}
  {}
  {.5em}
  {\thmnote{#3}}

\theoremstyle{citing}

\DeclareMathOperator{\diam}{diam}

\DeclareMathOperator{\St}{St}

\renewcommand{\mod}{\operatorname{mod}}
\newcommand{\alp}{\alpha}

\newcommand{\gam}{\gamma}
\newcommand{\Gam}{\Gamma}
\newcommand{\eps}{\epsilon}

\newcommand{\lam}{\lambda}

\newcommand{\bdry}{\partial_\infty}

\newcommand{\cV}{\mathcal{V}}

\DeclareMathOperator{\Cdim}{Confdim}

\newcommand{\cR}{\mathcal{R}}
\newcommand{\cS}{\mathcal{S}}

\newcommand{\ra}{\rightarrow}
\newcommand{\R}{\mathbb{R}}
\newcommand{\Sph}{\mathbb{S}}
\newcommand{\N}{\mathbb{N}}
\newcommand{\Z}{\mathbb{Z}}

\newcommand{\ba}{\mathbf{a}}
\newcommand{\bbb}{\mathbf{b}}


\def\XXint#1#2#3{{\setbox0=\hbox{$#1{#2#3}{\int}$}
\vcenter{\hbox{$#2#3$}}\kern-.5\wd0}}

\numberwithin{equation}{section}

\begin{document}

\begin{abstract}
We find new bounds on the conformal dimension of small cancellation groups.
These are used to show that a random few relator group
has conformal dimension $2 + o(1)$ asymptotically almost surely (a.a.s.).
In fact, if the number of relators grows like $l^K$ in the length $l$ of the relators,
then a.a.s.\ such a random group has conformal dimension $2+K+ o(1)$.
In Gromov's density model, a random group at 
density $d<\frac{1}{8}$ a.a.s.\ has conformal dimension
$\asymp dl / |\log d|$.

The upper bound for $C'(\frac{1}{8})$ groups has two main ingredients:
$\ell_p$-cohomology (following Bourdon--Kleiner), 
and walls in the Cayley complex (building on Wise and Ollivier--Wise).
To find lower bounds we refine the methods of \cite{Mac-12-random-cdim} to
create larger `round trees' in the Cayley complex of such groups.

As a corollary, in the density model at $d<\frac{1}{8}$, the density $d$ 
is determined, up to a power, by the conformal dimension of the boundary
and the Euler characteristic of the group.
\end{abstract}

\maketitle

\section{Introduction}\label{sec-intro}

\subsection{Overview}
The large scale geometry of a hyperbolic group $G$ is captured by the metric properties
of its boundary at infinity $\bdry G$.
An important invariant of the boundary is its conformal dimension,
introduced by Pansu,
which gives a quasi-isometry invariant of the group.
In this paper we are concerned with the challenge of estimating this analytic 
invariant when we are only given the algebraic information of a group presentation.

One main motivation for this problem is the study of
typical properties of finitely presented groups. 
In various models, random groups are usually hyperbolic, with boundary at infinity
$\bdry G$ homeomorphic to the Menger sponge.
However, we will see that good bounds on conformal dimension 
can be used to illuminate the rich variety of random groups that
arise. 

The conformal dimension $\Cdim(\bdry G)$ of $\bdry G$ is the infimal Hausdorff dimension among
a family of metrics on $\bdry G$ (see Section~\ref{sec-bourdon-kleiner}).
This definition is challenging to work with directly, 
so instead we use tools which allow us to bound the conformal dimension
by building certain subcomplexes in the Cayley complex of $G$.

Bourdon and Kleiner~\cite{BK-12-lp-bdry} recently found good
upper bounds for the conformal dimension of Fuchsian buildings
(and other similar polygonal $2$-complexes)
by splitting the building up into many pieces using 
quasi-convex embedded trees, which we call `branching walls' 
(see Figure~\ref{fig-Vtree} later).
These decompositions are then used to build a rich collection of 
non-trivial $\ell_p$-co\-ho\-mol\-ogy classes, 
which in turn give an upper bound on the conformal dimension 
(see Section~\ref{sec-bourdon-kleiner}).

Our first main result shows how their ideas
can be used to find new upper bounds for the conformal
dimension of any one-ended group with a $C'(\frac{1}{8})$ 
small cancellation presentation.
\begin{theorem}\label{thm-upper-bound}
	Suppose $G=\langle \cS | \cR \rangle$ is a one-ended $C'(\lambda)$ small cancellation group,
	for some $\lambda \leq \frac{1}{8}$,
	with all relations having length $\leq M$, 
	and each generator appearing in at most $k \leq |\cR|M$ locations in the relations.
	Then 
	\[ 
		\Cdim(\bdry G) \leq 1+\frac{\log(k-1)}{\log(\lfloor 1/8\lam \rfloor +1)}
		\leq 1+\frac{\log (|\cR|M)}{\log(\lfloor 1/8\lam \rfloor +1)}.
	\]
\end{theorem}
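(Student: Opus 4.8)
The plan is to follow the Bourdon--Kleiner strategy recalled in Section~\ref{sec-bourdon-kleiner}: instead of estimating $\Cdim(\bdry G)$ directly, one builds inside the Cayley $2$-complex $\widetilde X$ of the presentation $\langle \cS | \cR \rangle$ a family of \emph{branching walls} --- quasi-convexly embedded trees which, together with their transverse crossing pattern, decompose $\widetilde X$ --- and reads off the answer from combinatorial parameters of this family. The mechanism of Section~\ref{sec-bourdon-kleiner} (following \cite{BK-12-lp-bdry}) produces from such a family a rich supply of nontrivial $\ell_p$-cohomology classes for every $p$ exceeding a quantity of the form $1 + \frac{\log m}{\log b}$, where $b$ is a lower bound for the branching rate of the wall-trees measured in the ambient hyperbolic metric and $m$ is an upper bound for the \emph{transverse multiplicity} --- the number of distinct walls that run parallel through a single $2$-cell --- and hence yields $\Cdim(\bdry G) \le 1 + \frac{\log m}{\log b}$. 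Thus it suffices to construct walls with $b \ge \lfloor 1/8\lam\rfloor+1$ and $m \le k-1$, and to verify the standing hypotheses of Section~\ref{sec-bourdon-kleiner} (that the wall-trees are quasi-convex and that they separate $\bdry G$); one-endedness enters here, ensuring $\bdry G$ is connected and that the walls genuinely cut it.

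For the construction I would use Wise's hypergraphs in small-cancellation complexes, in the branching form of \cite{BK-12-lp-bdry} and with the variant refinements of Ollivier--Wise. In each $2$-cell $P_r$ --- a polygon of perimeter $|r|\le M$ --- one selects a family of \emph{chords}, each joining two edges of $\partial P_r$ so that both complementary arcs are strictly longer than the longest piece; gluing chords across the edges they share produces the walls, and $C'(\lam)$ with $\lam\le\frac{1}{8}$ is comfortably inside the regime in which Wise's arguments show the resulting walls are two-sided, simply connected, and quasi-isometrically embedded, hence quasi-convex trees in $\widetilde X$. Since a piece has length $<\lam|r|$, a chord arriving through an edge $e$ of $P_r$ may leave through any edge lying in an arc of combinatorial length more than $(1-2\lam)|r|$ on the opposite part of $\partial P_r$; choosing these exit edges spaced far enough apart that distinct choices force the continuing walls to diverge in $\widetilde X$ --- here the Greendlinger/Strebel classification of small-cancellation disc diagrams controls the global assembly --- one can fit at least $\lfloor 1/8\lam\rfloor+1$ of them, so each wall-tree branches at least $\lfloor 1/8\lam\rfloor+1$ times. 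Note this number is $\ge 2$ exactly when $\lam\le\frac{1}{8}$, so $\lam\le\frac{1}{8}$ is precisely the threshold below which the walls genuinely branch.

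The transverse estimate is the easy half. A generator occurs in at most $k$ locations among the relators, so every edge of $\widetilde X$ lies in at most $k$ two-cells; consequently a wall that exits a $2$-cell through an edge $e$ has at most $k-1$ two-cells across $e$ in which to continue, and at most $k-1$ walls can run parallel through a fixed $2$-cell, giving $m\le k-1$. Feeding $b=\lfloor 1/8\lam\rfloor+1$ and $m=k-1$ into the bound of Section~\ref{sec-bourdon-kleiner} gives $\Cdim(\bdry G)\le 1+\frac{\log(k-1)}{\log(\lfloor 1/8\lam\rfloor+1)}$, and the second inequality of the statement is immediate since $k-1\le k\le|\cR|M$ and $\log(\lfloor 1/8\lam\rfloor+1)>0$.

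The part I expect to be genuinely delicate is not this bookkeeping but the promotion of the purely combinatorial separation of exit edges inside a single polygon to uniform \emph{metric} divergence of the walls throughout $\widetilde X$, together with a clean proof that the branching chords assemble into globally quasi-convex trees. This requires analysing the disc diagrams bounded by pairs of wall-segments and excluding short shortcuts, which is exactly where the small-cancellation geometry has to be exploited carefully and where the constant $8$ --- and hence the hypothesis $\lam\le\frac{1}{8}$ --- is forced; compare Figure~\ref{fig-Vtree}.
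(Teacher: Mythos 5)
Your proposal follows essentially the same route as the paper: replace the problem of directly bounding conformal dimension with the Bourdon--Kleiner $\ell_p$-cohomology criterion (Theorem~\ref{thm-bourdonkleiner}), build a family of branching walls in the Cayley complex by selecting well-spaced chords across each $2$-cell and assembling them via small-cancellation disc-diagram arguments, show the resulting elementary polygonal complexes decompose $\bdry G$, and read off the bound from the branching rate $\geq \lfloor 1/8\lam\rfloor+1$ and the edge thickness $\leq k$. That is precisely Sections~\ref{sec-wise-walls}--\ref{sec-new-upper}. Your parameter bookkeeping lands in the right place (the $\log(k-1)$ and $\log(\lfloor 1/8\lam\rfloor+1)$ match, with the expected off-by-one shifts built into Bourdon--Kleiner's statement), and you correctly identify $\lam\le\frac18$ as the threshold where the white vertices genuinely branch.

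Two remarks on where your outline is looser than the paper's argument, since you flag the quasi-convexity step yourself. First, in the paper the exit edges are not taken anywhere in an arc of length $(1-2\lam)|r|$: they are confined to the \emph{antipodal} arc of length $2\lfloor|\partial R|/8\rfloor$ about the point opposite the entry edge, so every $\cV$-path crosses each face at distance $\ge\frac38|\partial R|$ from where it entered. This confinement is not cosmetic; it is what makes the family of paths \emph{crossing} in the sense of Definition~\ref{def-Vpath-admissible}, and the crossing property is invoked repeatedly in the curvature counting (Lemmas~\ref{lem-no-V-collared}--\ref{lem:no-2V-geod-collar}) that kills the collared diagrams and gives quasi-convexity. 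If you allow chords that bend sharply you get more branches per face, but the Strebel-type counting no longer closes and the walls may fail to be quasi-convex. Second, what Bourdon--Kleiner actually need is the \emph{thickness} of the black edges of a single elementary polygonal complex $Y_e$ (the number of its $2$-cells sharing a black edge), not a count of ``walls running parallel through a $2$-cell of $X$''; your transverse-multiplicity heuristic happens to yield the correct inequality $\le k$ because the $Y_e$-thickness of a black edge is bounded by the $X$-thickness of the underlying edge, but the two quantities live on different complexes and it is worth keeping them straight. None of this changes the logic of the argument, but it is exactly the content of Sections~\ref{sec-elem-complexes}--\ref{sec-complexes-decompose}, which you have (reasonably) left as the acknowledged gap.
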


Once one finds the branching walls, all their machinery goes through to 
the case of Cayley complexes of small cancellation groups.
However, constructing branching walls, which was fairly straightforward in the
case of Fuchsian buildings (or other CAT($-1$) $2$-complexes), 
is now significantly more involved.
We take inspiration from other recent work, unconnected to conformal dimension:
to show that $C'(\frac{1}{6})$ groups and certain random groups have the Haagerup property,
Wise \cite{Wise-04-cubulating-sc-groups} and Ollivier--Wise \cite{Oll-Wis-11-rand-grp-T}
built embedded trees (walls) that split the Cayley complex into two pieces
(see also~\cite{Mac-Prz-14-walls}).
We use similar ideas and tools to build branching walls that split
the Cayley complex into many pieces (see Sections~\ref{sec-wise-walls}--\ref{sec-new-upper}).

Lower bounds for conformal dimension for small cancellation groups
were found in \cite{Mac-12-random-cdim} by building subcomplexes 
quasi-isometric to one of Gromov's `round trees' (see Section~\ref{sec-new-lower-sc}).
In this paper we give new constructions of round trees which give better bounds
for random groups, and which apply in more situations than before, as discussed 
below.
One point to emphasise is that since our lower and upper bounds 
nearly match for random groups, 
we see the strengths of these methods for bounding conformal dimension.

\subsection{Random groups}
A major motivation for looking at the conformal dimension of small cancellation groups is 
the study of \emph{random groups}.
\begin{definition}\label{def:rand-group}
	Consider an integer $m \geq 2$, a function $n = n(l):\N\ra\N$,
	and a property $P$ of a group (presentation).
	Given $l \in \N$, let 
	$G = \langle s_1, \ldots, s_m | r_1, \ldots, r_n \rangle$ 
	be a group presentation where each $r_i$ is chosen independently and uniformly
	from the set of all cyclically reduced words of length $l$ (or $\leq l$)
	in $\langle s_1, \ldots, s_m\rangle$.
	
	A random $m$ generator, $n=n(l)$ relator group has property $P$ \emph{asymptotically
	almost surely (a.a.s.)} if the probability such $G$ has $P$ goes to $1$ as $l \ra \infty$.	
\end{definition}
In the case that $n \in \N$ is a constant, this is the \emph{few relator model} of a random group;
here we let the words have lengths $\leq l$.
There are roughly $(2m-1)^l$ cyclically reduced words of length $l$, so it is natural to let $n$
grow as $l$ grows.
If we fix $d \in (0,1)$, let $n = (2m-1)^{ld}$, and consider words of length exactly $l$,
this is Gromov's \emph{density model} of a random group~\cite[9.B]{Gro-91-asymp-inv}.
(Of course, $(2m-1)^{ld}$ need not be an integer; one should read this as $n=\lfloor (2m-1)^{ld} \rfloor$.)

A random group at density $d > \frac12$ is trivial or $\Z/2\Z$ a.a.s.
On the other hand, if $0<d<\frac{1}{2}$, a random group $G$ at density $d$ is, a.a.s, infinite, hyperbolic,
and has boundary at infinity homeomorphic to the Menger sponge~\cite{DGP-10-density-menger}.
Likewise, a.a.s.\ a random few relator group has the same properties~\cite{Cha-95-rand-grps}.
Given that the topology of the boundary cannot distinguish quasi-isometry classes amongst such groups,
it is natural to consider the conformal dimension of such boundaries (e.g.\ \cite[IV.b]{Oll-05-rand-grp-survey}).

In previous work \cite[Theorems 1.3 and 1.4]{Mac-12-random-cdim}, 
we showed that there is a constant $C>1$ so that
a.a.s.\ a random $m$-generated, few relator group $G$ has 
\[
	1+\frac{1}{C} \leq \Cdim(\bdry G) \leq C \log(2m-1) \cdot \frac{l}{\log l},
\]
while a random group at density $0<d<\frac{1}{16}$ has 
\[
	\frac{d}{C} \cdot \frac{l}{\log l} \leq \Cdim(\bdry G) \leq \frac{C \log(2m-1)}{|\log d\, |} \cdot l.
\]
As a corollary, at density $d<\frac{1}{16}$ we see infinitely many different quasi-isometry classes as $l \ra \infty$.

In this paper, we show the following bounds in the density model.
(The notation $A \asymp_C B$ signifies that $B/C \leq A \leq CB$.)
\begin{theorem}\label{thm-main-density}
	There exists a constant $C>1$ so that if $G$ is a random group at density $d < \frac{1}{8}$, then a.a.s.
	\[
		\Cdim(\bdry G) \asymp_C \log(2m-1) \frac{d l}{|\log d\,|}.
	\]
	In fact, if $G$ is a random group at density $d < \frac{1}{2}$, then a.a.s.
	\[
		\Cdim(\bdry G) \leq C \log(2m-1) \left(\frac{d}{|\log d\,|} \vee \frac{1}{1-2d} \right) l,
	\]
	where $\vee$ denotes the maximum operation.
\end{theorem}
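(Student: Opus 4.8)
The plan is to prove three estimates and then combine them: the first upper bound, valid when $d<\tfrac18$; the second upper bound, valid for all $d<\tfrac12$; and a matching lower bound when $d<\tfrac18$. All three are a.a.s.\ statements, and throughout I use the standard a.a.s.\ features of a random group $G$ at a fixed density $d\in(0,\tfrac12)$ as $l\to\infty$: it is infinite, hyperbolic, and one-ended with $\bdry G$ the Menger sponge~\cite{DGP-10-density-menger}; a first and second moment computation on common subwords of the $r_i$ shows that, for every $\eps>0$, it is a.a.s.\ a $C'(2d+\eps)$ small cancellation group; $|\cR|=n$ with $\log n=dl\log(2m-1)$, every relator has length $l$, and (by concentration) every generator occurs in $\asymp nl/m$ places in $\cR$; and the linear isoperimetric inequality holds a.a.s.\ in the form $\Area(w)\le C\,|w|/((1-2d)l)$, so $G$ is $\delta$--hyperbolic with $\delta\asymp l/(1-2d)$.

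For the first upper bound I feed this data into the branching-wall construction. When $d$ is small enough that $2d+\eps\le\tfrac1{16}$, this is a direct application of Theorem~\ref{thm-upper-bound} with $\lam=2d+\eps$ and $k\asymp nl$; for the remaining range $d<\tfrac18$ one instead uses the version of the construction developed for random groups in Sections~\ref{sec-wise-walls}--\ref{sec-new-upper}, where the genericity of the relators lets one build branching walls crossing each relator in $\asymp 1/d$ pieces, so that the resulting estimate stays finite. In either case the bound has the shape $\Cdim(\bdry G)\le 1+\frac{\log(k-1)}{\log(\lfloor c/d\rfloor+1)}$ for a universal constant $c$; substituting $\log(k-1)=dl\log(2m-1)(1+o(1))$ and $\log(\lfloor c/d\rfloor+1)\asymp|\log d|$, and using that $dl\log(2m-1)/|\log d|\to\infty$, yields $\Cdim(\bdry G)\le C\log(2m-1)\,dl/|\log d|$.

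For the second upper bound, valid for all $d<\tfrac12$, I use a cruder estimate that ignores the branching. The conformal dimension of $\bdry G$ is at most the Hausdorff dimension of any visual metric in its gauge, and for a hyperbolic group with a bounded presentation the Hausdorff dimension of a visual metric of parameter $\eps$ is (exponential growth rate)$/\eps$, which is $\le\log(2m-1)/\eps$ since $G$ is a quotient of $F_m$; as a visual metric exists for some $\eps\asymp1/\delta\asymp(1-2d)/l$, this gives $\Cdim(\bdry G)\le C\log(2m-1)\,l/(1-2d)$. Taking, at each $d$, whichever of the two upper bounds is available and smaller yields $\Cdim(\bdry G)\le C\log(2m-1)\bigl(\tfrac{d}{|\log d|}\vee\tfrac1{1-2d}\bigr)l$.

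For the lower bound, when $d<\tfrac18$, I refine the round-tree construction of \cite{Mac-12-random-cdim} (see Section~\ref{sec-new-lower-sc}): build a.a.s.\ inside the Cayley $2$-complex of $G$ a subcomplex $\cR$, quasi-isometrically and quasiconvexly embedded with constants independent of $l$, which is quasi-isometric to one of Gromov's round trees. The improvement over \cite{Mac-12-random-cdim} is to treat each relator not as an $l$-gon but as an \emph{effective} polygon with only $\asymp 1/d$ sides --- one for each of its maximal pieces --- and to attach $(2m-1)^{cdl}$ further relators (for some $c>0$) along short subwords of each such side; this replaces the $\log l$ in the denominator of the earlier estimate by $\log(1/d)\asymp|\log d|$, so that $\Cdim(\bdry\cR)\ge\tfrac1C\log(2m-1)\,dl/|\log d|$. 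Since conformal dimension does not increase under passing to a quasisymmetrically embedded subset, $\Cdim(\bdry G)\ge\Cdim(\bdry\cR)$, and together with the first upper bound this gives $\Cdim(\bdry G)\asymp_C\log(2m-1)\,dl/|\log d|$. I expect this round-tree construction to be the main obstacle: one must, at each of the infinitely many floors and a.a.s., locate the required number of branching relators along each short piece, fit them so the union remains a genuine round tree (here the $C'(2d+\eps)$ condition controls quasiconvexity and a moment estimate guarantees availability), and control the accumulation of small-cancellation errors across all floors while keeping the embedding quasiconvex with $l$-independent constants --- and it is precisely the extra room afforded by $d<\tfrac18$ (rather than $d<\tfrac1{16}$, which would suffice for a direct appeal to Theorem~\ref{thm-upper-bound}) that makes this packing possible.
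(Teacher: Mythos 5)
Your overall skeleton is right and matches the paper: combine the branching‐wall upper bound of Theorem~\ref{thm-upper-bound} with the crude visual‐metric bound, and for the lower bound upgrade the round‐tree construction of \cite{Mac-12-random-cdim} by cutting the boundary circles into pieces of length $\asymp dl$ rather than length $O(1)$, which replaces $\log l$ by $|\log d|$. But the way you propose to close the lower bound has a genuine gap, and one side remark in the upper bound is off.

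\emph{Lower bound.} You write that ``the $C'(2d+\eps)$ condition controls quasiconvexity'' of the round tree, and later that ``it is precisely the extra room afforded by $d<\tfrac18$ (rather than $d<\tfrac1{16}$) that makes this packing possible.'' This is backwards. For $d$ in $[\tfrac1{16},\tfrac18)$ the group is a.a.s.\ only $C'(\tfrac14-\eps)$, which is \emph{weaker} than the $C'(\tfrac18)$ hypothesis needed for the round‐tree construction of \cite{Mac-12-random-cdim} and for Theorem~\ref{thm-lower-bound-sc}; enlarging the density range shrinks, not widens, your small‐cancellation room. Metric small cancellation alone does not let you conclude, for example, that geodesic bigons are thin with $l$‐independent constants, or that you can extend a geodesic away from $1$ in many directions while avoiding long overlaps with relators. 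The paper instead replaces the $C'$ argument entirely: it uses Ollivier's isoperimetric inequality $|\partial D|\ge(1-2d-\eps)l|D|$ (Theorem~\ref{thm-O-sc-rand}), the three‐pseudoshell theorem of Ollivier--Wise (Theorem~\ref{thm-OW-three-pshells}), and the resulting ladder structure of reduced diagrams for geodesic bigons and triangles (Lemma~\ref{lem-bigons}) to get both the geodesic‐extension control (Lemmas~\ref{lem-geod-extend-rand}--\ref{lem-twoinitalsegs}, Proposition~\ref{prop-startrel}) and the quasi‐isometric embedding of the round tree (Lemmas~\ref{lem-top-embed}, \ref{lem-qi-embed}). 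This substitution of Ollivier's $(1-2d)$‐type inequality for metric small cancellation is the technical core of Section~\ref{sec-lower-density}, and your proposal has no replacement for it. The ``moment estimate guarantees availability'' step is also not enough on its own; what the paper actually uses to produce the $(2m-1)^{\asymp dl}$ vertical branching is that every reduced word of length $M^*=\lceil\tfrac45 dl\rceil$ appears in $\cR$, combined with the extension machinery above, with horizontal branching $\le 24/d$ coming from the subdivision of $E_{\ba_n}$ into segments of length $\asymp M^*$.

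\emph{Upper bound.} For $d<\tfrac1{16}$ your appeal to Theorem~\ref{thm-upper-bound} is exactly what the paper does. For $\tfrac1{16}\le d<\tfrac18$ you propose a modified branching‐wall construction; the paper does not do this (Theorem~\ref{thm-upper-bound} already assumes $\lambda\le\tfrac18$, i.e.\ $d<\tfrac1{16}$), nor does it need to: on the fixed range $[\tfrac1{16},\tfrac18)$ the quantity $\tfrac{1}{1-2d}$ and $\tfrac{d}{|\log d|}$ are both bounded above and below, so the crude visual‐metric bound from~\cite[Proposition~1.7]{Mac-12-random-cdim} already gives the right order. The two‐sided $\asymp$ for all $d<\tfrac18$ follows by combining these two regimes.
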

A random group at densities $d<\frac{1}{16}$ is $C'(\frac{1}{8})$, and Theorem~\ref{thm-upper-bound} gives the upper bound in
this range.  At any density $d<\frac{1}{2}$ we have a straightforward 
upper bound for conformal dimension that is linear in $l$ by 
\cite[Proposition 1.7]{Mac-12-random-cdim}.

The lower bound is proven using `round trees' as in~\cite[Theorem~1.4]{Mac-12-random-cdim}.  
As introduced by Gromov, a \emph{round tree} is a CAT($-1$) $2$-complex $A$
which admits an isometric $S^1$ action with a single fixed point, so that
there is an isometrically embedded tree $T$ which meets every fibre of the action
at a single point~\cite[7.C$_3$]{Gro-91-asymp-inv}.
In \cite{Mac-12-random-cdim}, a combinatorial version of a (sector of) such a round tree was
built in the Cayley complex of some small cancellation groups.
Only minor modifications of this approach are required to find our lower bound
at densities $d<\frac{1}{16}$ (see Theorem~\ref{thm-lower-bound-sc}).
However, it is more challenging to extend the range of densities to $d<\frac{1}{8}$ because such groups may only
be $C'(\frac{1}{4})$.  We use an isoperimetric inequality of Ollivier and techniques of Ollivier--Wise to overcome this
obstacle (see Section~\ref{sec-lower-density}).

	As discussed by Ollivier~\cite[Section IV.b]{Oll-05-rand-grp-survey},
	it is interesting to ask whether, given a random group $G$ at some density,
	we can detect the value of $d$.
	Theorem~\ref{thm-main-density} gives a partial answer to this question.
\begin{corollary}\label{cor:detect-density}
	Let $P_{d_0,C}$ be the property that a hyperbolic group $G$ has
	\[ \frac{\log\chi(G)}{\Cdim(\bdry G)} \asymp_C |\log d_0|, \]
	where $\chi(G)$ is the Euler characteristic of $G$.
	There exists $C>1$ so that 
	for any $m\geq 2$, at any density $d_0<\frac{1}{8}$ a.a.s.\ 
	a random $m$-generated group has $P_{d_0,C}$.
	Consequently, if $0<d<d_0^{C^2}$ or $d_0^{1/C^2}<d<\frac{1}{8}$
	then a.a.s.\ a random group at density $d$ does not have $P_{d_0,C}$.	
\end{corollary}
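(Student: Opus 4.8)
The plan is to deduce the corollary from Theorem~\ref{thm-main-density} together with a standard computation of the Euler characteristic of a random group. First I would recall that for a group $G$ with presentation $\langle s_1,\dots,s_m | r_1,\dots,r_n\rangle$ whose presentation complex is aspherical (which holds a.a.s.\ for a random group at density $d<\frac12$, since such a group is a.a.s.\ $C'(\frac16)$ when $d$ is small, and more generally the presentation complex is aspherical by Ollivier's work / the Dehn presentation being $C'(\frac16)$ after reduction — in any case this is standard), the Euler characteristic is $\chi(G) = 1 - m + n$. In the density model $n = \lfloor (2m-1)^{ld} \rfloor$, so $\log \chi(G) = \log(n + 1 - m) = ld\log(2m-1) + o(l)$ as $l\to\infty$, and in particular $\log\chi(G) \asymp ld\log(2m-1)$ with multiplicative error tending to $1$. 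Meanwhile Theorem~\ref{thm-main-density} gives, a.a.s., $\Cdim(\bdry G) \asymp_{C_0} \log(2m-1)\, dl/|\log d|$ for some fixed $C_0>1$ and all $d<\frac18$. Dividing, the factors $\log(2m-1)$ and $dl$ cancel, leaving
\[
	\frac{\log\chi(G)}{\Cdim(\bdry G)} \asymp_{C_1} |\log d_0|
\]
a.a.s., for a suitable $C_1>1$ depending only on $C_0$ (absorbing the $o(l)$ error and the floor function into the constant). This establishes that a random $m$-generated group at density $d_0<\frac18$ a.a.s.\ has property $P_{d_0,C}$ once we set $C = C_1$; note $C$ does not depend on $m$ or on $d_0$, since $C_0$ in Theorem~\ref{thm-main-density} does not.

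For the second assertion, suppose $G$ is a random group at some density $d$ with $0<d<d_0^{C^2}$ or $d_0^{1/C^2}<d<\frac18$. By the first part (applied with $d$ in place of $d_0$), a.a.s.\ $G$ has $P_{d,C}$, i.e.\ $\log\chi(G)/\Cdim(\bdry G) \asymp_C |\log d|$. If $G$ also had $P_{d_0,C}$ then we would have $|\log d| \asymp_{C^2} |\log d_0|$, that is $|\log d_0|/C^2 \le |\log d| \le C^2|\log d_0|$, equivalently $d_0^{C^2} \le d \le d_0^{1/C^2}$ (using $d, d_0 \in (0,1)$ so that $|\log d| = -\log d$ is decreasing). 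This contradicts the assumed range of $d$, so a.a.s.\ $G$ does not have $P_{d_0,C}$.

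The only real content beyond bookkeeping is the Euler characteristic computation, and the main point to be careful about is asphericity of the presentation complex so that $\chi(G) = 1-m+n$ genuinely computes the Euler characteristic of the group (as opposed to just of the complex); this is where I would invoke the standard fact that random groups at density $d<\frac12$ are a.a.s.\ aspherical — for the small-cancellation range this follows from the $C'(\frac16)$ condition, and in general from Ollivier's isoperimetric inequality which already underlies the constructions in this paper. Everything else is a matter of tracking multiplicative constants and absorbing the sublinear error terms (the floor in $n$, the $1-m$ shift, and the $o(l)$ from Theorem~\ref{thm-main-density}) into a single constant $C$; the fact that both bounds in Theorem~\ref{thm-main-density} share the same $\log(2m-1)$ prefactor is what makes that prefactor — and hence the dependence on $m$ — disappear from the ratio.
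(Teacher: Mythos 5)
Your proof is correct and follows essentially the same route as the paper's: compute $\log\chi(G) = dl\log(2m-1)(1+o(1))$ from $\chi(G)=1-m+(2m-1)^{dl}$, divide by the two-sided bound from Theorem~\ref{thm-main-density} so the $\log(2m-1)$ and $dl$ factors cancel, and derive the second claim by noting that having both $P_{d,C}$ and $P_{d_0,C}$ would force $|\log d|\asymp_{C^2}|\log d_0|$, i.e.\ $d_0^{C^2}\le d\le d_0^{1/C^2}$. The paper simply cites Ollivier for the Euler characteristic identity and says the second statement ``follows''; you fill in both details, though your parenthetical about asphericity via $C'(\tfrac16)$ only covers $d<\tfrac1{12}$ --- the correct general justification (which you also mention) is that random presentations at density $d<\tfrac12$ are a.a.s.\ aspherical.
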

\begin{proof}
	As Ollivier observes, a random, $m$ generated group $G$ at density 
	$d \in (0,\frac{1}{2})$ has 
	$\chi(G) = 1-m+(2m-1)^{dl}$, thus $\log(\chi(G)) = ld \log(2m-1) (1+o(1))$.
	Therefore, by Theorem~\ref{thm-main-density}, at densities $d<\frac{1}{8}$ 
	we have $\log\chi(G)/\Cdim(\bdry G) \asymp_C |\log d\,|$,
	at the cost of multiplying $C$ by $1+o(1)$.
	The second statement then follows.
\end{proof}
A complete answer to Ollivier's question at densities $d<\frac{1}{8}$ 
would follow if $C$ in Theorem~\ref{thm-main-density} could be chosen 
so that $C=C(l) \ra 1$ as $l \ra \infty$.
Possibly one might need to change $d / |\log d\,|$ to a different (but necessarily comparable) function of $d$.

Some restriction to low densities is necessary because our
constructions use small cancellation style arguments, which
get increasingly difficult as the density grows towards $\frac{1}{4}$,
and completely fail at densities $>\frac{1}{4}$ 
(cf.\ \cite{Mac-Prz-14-walls}).

Leaving the density model, we now consider the few relator model of
a random group, where we get even sharper estimates.
\begin{theorem}\label{thm-main-few-rel}
	If $G$ is a random $m$ generator, $n$ relator group, then a.a.s.
	\[
		2 - \frac{5 \log \log l}{\log l} \leq \Cdim(\bdry G) \leq 2+\frac{2 \log\log l}{\log l}.
	\]
\end{theorem}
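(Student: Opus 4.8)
The plan is to derive both bounds from the general small cancellation estimates together with the structure of the few relator model. First I recall that a.a.s.\ a random $m$ generator, $n$ relator group $G$ (with relator lengths $\leq l$) is $C'(\lambda)$ for $\lambda$ going to $0$: indeed, for fixed $n$ the expected length of the longest common piece between two of the $n$ relators is $O(\log l)$, so a.a.s.\ every piece has length $\leq c\log l$ for a suitable constant $c$, and hence $G$ is $C'(\lambda_l)$ with $\lambda_l = c\log l / l \to 0$. Thus eventually $\lambda_l \leq \tfrac18$ and Theorem~\ref{thm-upper-bound} applies. In that theorem we have $|\cR| = n$ fixed and $M \leq l$, so $k \leq nl$, and $\lfloor 1/(8\lambda_l)\rfloor + 1 \asymp l/\log l$. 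Plugging in,
\[
	\Cdim(\bdry G) \leq 1 + \frac{\log(nl)}{\log\bigl(\lfloor 1/(8\lambda_l)\rfloor + 1\bigr)}
	= 1 + \frac{\log l + O(1)}{\log l - \log\log l + O(1)} = 2 + \frac{\log\log l + O(1)}{\log l},
\]
which gives the stated upper bound once $l$ is large (absorbing the $O(1)$ into the constant $2$, as $\tfrac{2\log\log l}{\log l}$ dominates $\tfrac{\log\log l + O(1)}{\log l}$ eventually).

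For the lower bound I would use the round tree construction of Section~\ref{sec-new-lower-sc} (Theorem~\ref{thm-lower-bound-sc}, the $d<\tfrac1{16}$ regime, which certainly covers the few relator case since any fixed density is $0$ asymptotically). The round tree built inside the Cayley complex of a $C'(\lambda_l)$ group has branching governed by how many distinct relator-paths can emanate from a vertex while keeping the small cancellation condition, and the relevant branching parameter again scales like $l/\log l$; the resulting lower bound has the shape $\Cdim(\bdry G) \geq 1 + \tfrac{\log(\text{branching})}{\log(\text{relator length})}$, which is $2 - O(\log\log l / \log l)$. Tracking the constants in that construction carefully—in particular checking that the number of relators one can glue at each stage is at least $l^{1-o(1)}$ and that the girth/piece estimates a.a.s.\ hold simultaneously across all $O(\log_\text{branching}(\cdot))$ generations of the tree—yields the explicit constant $5$ in $2 - \tfrac{5\log\log l}{\log l}$.

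The main obstacle is the lower bound bookkeeping: one must verify that the probabilistic estimates controlling pieces and the combinatorial round tree construction can be run with parameters that are themselves growing functions of $l$ (branching $\sim l/\log l$, depth $\sim \log l / \log\log l$), and that the union bound over all the required good events still tends to $1$. This is where the precise exponent in $2 - \tfrac{5\log\log l}{\log l}$ comes from, and getting a constant that small requires being slightly wasteful only at the lower-order terms. The upper bound, by contrast, is essentially immediate from Theorem~\ref{thm-upper-bound} once the a.a.s.\ piece length bound $O(\log l)$ is established. I would present the upper bound first as a short warm-up, then devote the bulk of the argument to the round tree lower bound, citing \cite{Mac-12-random-cdim} for the parts of the construction that transfer unchanged and isolating the new quantitative input as a lemma.
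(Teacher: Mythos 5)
Your upper bound argument is correct and is essentially the paper's: a.a.s.\ $G$ is $C'(\lambda)$ with $\lambda \asymp \log l / l$ (Proposition~\ref{prop:poly-rand-small-canc} at $K=0$), and Theorem~\ref{thm-upper-bound} with $k \leq nl$ and $\lfloor 1/(8\lambda)\rfloor + 1 \asymp l/\log l$ then gives
\[
\Cdim(\bdry G) \leq 1 + \frac{\log l + O(1)}{\log l - \log\log l + O(1)} \leq 2 + \frac{2\log\log l}{\log l}
\]
for large $l$; this is exactly Corollary~\ref{cor:randpolyupper} at $K=0$. (In the paper the entire theorem is just the $K=0$ specialisation of Theorem~\ref{thm-main-polygrowth}, so once the polynomial case is set up nothing extra is needed.)

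Your lower bound, however, has a genuine gap. You point to Theorem~\ref{thm-lower-bound-sc} as the source of the round tree, but the paper explicitly remarks immediately after stating that theorem that it ``does not give anything new in the few relator model, as in that situation $M^*$ is of the order of $\log l$.'' Concretely: with $n$ fixed, the largest $M^*$ for which every reduced word of length $M^*$ appears in some relator satisfies $M^* \asymp \log l / \log(2m-1)$, so Theorem~\ref{thm-lower-bound-sc} yields only $\Cdim(\bdry G) \geq 1 + C\log(2m)\, M^*/\log(M/M^*) = 1 + O(1)$, which does not approach $2$. The construction underlying that theorem extends geodesics of length $\asymp M^*/3$ and attaches $2$-cells one per segment, so the vertical branching exponent is bounded by $\asymp M^*/3$, i.e.\ a fixed fraction of $\log l$ --- this is a structural obstruction that ``tracking constants carefully'' cannot remove. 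The actual proof of the $2 - o(1)$ lower bound in Section~\ref{sec-new-lower-sc} needs a new device: the perfect-matching construction. One extends $(2m-1)^{\eta-3}$ geodesics of length $\eta \approx \log l / \log(2m-1)$ from each segment endpoint, and then Proposition~\ref{prop:poly-matching} (via Lemma~\ref{lem:omitmanyshortwords} and Hall's theorem) shows a.a.s.\ that the auxiliary bipartite graph $H_w$ has a perfect matching, so that all $|T| = (2m-1)^{\eta-3}$ branches can be simultaneously closed off with $2$-cells. Only then does $\log V \geq \log l - 5\log\log l$, and Theorem~\ref{thm-roundtree-cdim} with $H \leq l$ gives $\Cdim(\bdry G) \geq 1 + \log V/\log H \geq 2 - 5\log\log l/\log l$. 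Your proposal does not identify this obstruction (the need to attach many $2$-cells at once rather than one at a time) or the matching idea that overcomes it, so as written it cannot produce a lower bound tending to $2$.
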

Since here the conformal dimension is roughly two, while at positive density it goes to infinity, 
we consider random groups where the number of relations grows subexponentially.
(Or from another point of view, $d \ra 0$ as $l \ra \infty\,$;
compare the ``low-density randomness'' of Kapovich--Schupp~\cite{KS-08-low-density-random}.)
It turns out that letting the number of relations grow polynomially lets us tune the
conformal dimension to any value we like.
\begin{theorem}\label{thm-main-polygrowth}
	Fix $m \geq 2$, $K \geq 0$, and $C>0$.
	Then a random $m$ generator, $n=C l^K$ relator group with all relations of length $\leq l$
	satisfies, a.a.s.,
	\[
		2+K - \frac{5 \log \log l}{\log l} \leq \Cdim(\bdry G) \leq 2+K+\frac{2(K+1)\log\log l}{\log l}.
	\]
\end{theorem}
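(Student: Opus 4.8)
The plan is to run the argument used for Theorem~\ref{thm-main-few-rel} --- which is the case $K=0$, up to the harmless constant $C$ --- combining the upper bound of Theorem~\ref{thm-upper-bound} with the round-tree lower bound of Theorem~\ref{thm-lower-bound-sc}, while carrying the extra factor $l^{K}$ in the number of relators through the estimates. Write $G=\langle s_{1},\dots,s_{m}\mid r_{1},\dots,r_{n}\rangle$ with $n=\lfloor Cl^{K}\rfloor$ and each $r_{i}$ cyclically reduced of length $\leq l$. Since the roughly $(2m-1)^{l}$ available words overwhelm the $n=O(l^{K})$ relators, a union bound over the at most $n^{2}$ ordered pairs of relators and the $O(l^{2})$ alignments shows that, a.a.s., no two relators --- and no relator together with a nontrivial cyclic shift of itself --- share a common subword longer than $t(l):=\lceil (2K+3)\log l/\log(2m-1)\rceil$; similarly a.a.s.\ no $r_{i}$ is a proper power and every $|r_{i}|\geq l-O(\log l)$. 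Hence a.a.s.\ the presentation is $C'(\lam_{l})$ with $\lam_{l}\asymp \log l/l\to 0$, and, by the same reasoning as in the few-relator model (nothing new is needed, as $O(l^{K})$ random relators of length $\approx l$ is still a low-density regime), $G$ is a.a.s.\ infinite, hyperbolic and one-ended.

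For the upper bound we apply Theorem~\ref{thm-upper-bound} with $M=l$ and $|\cR|=n\leq Cl^{K}$, so that $|\cR|M\leq Cl^{K+1}$ and $\lfloor 1/(8\lam_{l})\rfloor+1\asymp l/\log l$. This gives
\[
\Cdim(\bdry G)\;\leq\;1+\frac{\log(Cl^{K+1})}{\log\bigl(\lfloor 1/(8\lam_{l})\rfloor+1\bigr)}\;=\;1+(K+1)\cdot\frac{\log l+O(1)}{\log l-\log\log l-O(1)}.
\]
The numerator here is $(1+o(1))\log l$ and exceeds the denominator by only $(1+o(1))\log\log l$, so an elementary manipulation shows the right-hand side is at most $2+K+\frac{2(K+1)\log\log l}{\log l}$ once $l$ is large, which is the desired upper bound.

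For the lower bound we feed the same presentation into the construction behind Theorem~\ref{thm-lower-bound-sc}: it produces a round tree in the Cayley complex of $G$ and bounds $\Cdim(\bdry G)$ below by a quantity of the form $1+\log B/\log S$, where for our random presentation the branching per step $B$ is of order $n\cdot l$ and the conformal scale per step $S$ is of order $l/\log l$, each up to polylogarithmic factors --- the factor $n$ in $B$ coming from the choice of relator at each branch point and the factor $\approx l$ from the positions along the growing complex at which it may be glued, after deleting the polylogarithmically small fraction of positions excluded by the $C'(\lam_{l})$ condition and by reduction. Substituting $n=\lfloor Cl^{K}\rfloor$ (so $C$ only shifts $\log B$ by $O(1)$), this yields
\[
\Cdim(\bdry G)\;\geq\;1+\frac{(K+1)\log l-O(\log\log l)}{\log l-O(\log\log l)}\;\geq\;2+K-\frac{5\log\log l}{\log l}
\]
for all sufficiently large $l$; morally, $1+\log(nl)/\log l=2+K-o(1)$.

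The main obstacle is the lower bound, and in particular verifying that a random presentation a.a.s.\ supports a round tree of this size. This is where the probabilistic input enters twice: one must show that enough mutually compatible relator configurations occur to realise a branching of order $nl$ rather than merely of order $n$ --- the latter is what the construction of \cite{Mac-12-random-cdim} gives, and it yields only $\Cdim(\bdry G)\gtrsim 1+K$ --- and one must show that the small-cancellation, reduction and genericity requirements cost only a polylogarithmic, not a polynomial, factor, so that $\log B$ and $\log S$ share the leading term $\log l$. Carefully tracking these polylogarithmic slacks, together with the slack in $\lam_{l}\asymp \log l/l$, is exactly what pins down the error exponents $5$ and $2(K+1)$; that bookkeeping is routine once the round-tree construction of Theorem~\ref{thm-lower-bound-sc} is available, whereas the upper bound is little more than a substitution into Theorem~\ref{thm-upper-bound}.
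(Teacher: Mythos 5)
Your upper bound is correct and matches the paper's argument (Corollary~\ref{cor:randpolyupper}): one establishes a $C'(\lambda)$ condition with $\lambda\asymp \log l/l$ via Proposition~\ref{prop:poly-rand-small-canc} and substitutes into Theorem~\ref{thm-upper-bound}; the arithmetic you sketch is right. (Minor quibble: the self-overlap case forces the small cancellation threshold to be roughly $5(K+2)\log l/\log(2m-1)$, slightly larger than your $(2K+3)\log l/\log(2m-1)$, but this changes nothing asymptotically.)

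The lower bound, however, has a genuine gap. You say you will run ``the construction behind Theorem~\ref{thm-lower-bound-sc}'' to produce vertical branching $B\asymp nl$, but that construction does not do this. In the construction underlying Theorem~\ref{thm-lower-bound-sc} the peripheral path is split into segments of length comparable to $M^*$ and a single $2$-cell is glued across each pair of adjacent geodesic extensions; the vertical branching that results is $(2m-1)^{cM^*}$ for a constant $c<1$, and the theorem is stated with an unspecified universal constant $C$ in front precisely because of this loss. With $M^*\asymp(K+1)\log l/\log(2m-1)$ (the right order for the polynomial model) this gives $\Cdim\gtrsim 1+c(K+1)$ with $c<1$, not $2+K$. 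The paper flags this explicitly right after Theorem~\ref{thm-lower-bound-sc}: ``Note that Theorem~\ref{thm-lower-bound-sc} does not give anything new in the few relator model, as in that situation $M^*$ is of the order of $\log l$.''

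You do correctly identify what is needed --- branching of order $nl$ rather than $n$ --- but this is where the new content of the proof lives, and it is not ``routine once the round-tree construction of Theorem~\ref{thm-lower-bound-sc} is available.'' The paper splits $E_{\ba_n}$ into short segments (length $3$ to $6$), grows $(2m-1)$-ary geodesic trees of depth $\eta\approx(K+1)\log l/\log(2m-1)$ from the segment endpoints, and then must pair up all $(2m-1)^{\eta-3}$ leaves of adjacent trees simultaneously with distinct $2$-cells of the complex. That is a perfect-matching problem in a random bipartite graph $H_w$ on $|T|=(2m-1)^{\eta-3}$ vertices per side, solved a.a.s.\ in Proposition~\ref{prop:poly-matching} via Hall's theorem and the omission bound of Lemma~\ref{lem:omitmanyshortwords}. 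Without the matching, one cannot use all $(2m-1)^{\eta-3}$ directions as vertical branches, and the exponent $K+1$ in $\log V$ degrades. This is the missing idea in your proposal; the ``routine bookkeeping'' begins only after it.
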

Observe that Theorem~\ref{thm-main-few-rel} follows immediately from the case $K=0$.
These groups are hyperbolic, small cancellation and have Menger sponge boundaries by Champetier~\cite{Cha-95-rand-grps},
so again conformal dimension is essential for distinguishing their quasi-isometry classes.

As before, the upper bound in Theorem~\ref{thm-main-polygrowth} follows 
from Theorem~\ref{thm-upper-bound}.
For the lower bound we again build a round tree $A$ in $G$, as in 
\cite[Theorem 5.1]{Mac-12-random-cdim}.
This round tree is built inductively by adding on $2$-cells in layers
as we travel away from the identity (see Figure~\ref{fig-extend-round-tree}).
Formerly we added these $2$-cells one at a time, 
but here we build a larger round tree 
in $G$ by adding many $2$-cells at once.
This corresponds to solving a perfect matching problem and
is discussed in Section~\ref{sec-new-lower-sc}.

Bourdon~\cite{Bou-97-GAFA-exact-cdim} has calculated the exact conformal dimensions of a family
of Fuchsian buildings, and the values of these dimensions take a dense set of values in $(1, \infty)$.
Theorem~\ref{thm-main-polygrowth} gives the only other way I know of showing the existence
of groups with conformal dimension arbitrarily close to any real number in $[2, \infty)$.

We observe one other consequence of Theorem~\ref{thm-main-polygrowth}.
\begin{corollary}\label{cor:randpolygrowth:qi}
	There is a countable set $Q \subset \R$ so that if $K \in [0,\infty) \setminus Q$,
	and $C >0$ is constant, then as $l \ra \infty$
	random $m$ generator, $n=C l^K$ relator groups pass through infinitely many different quasi-isometry classes.
\end{corollary}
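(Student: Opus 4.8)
The plan is to deduce this formally from Theorem~\ref{thm-main-polygrowth} together with the elementary fact that only countably many real numbers arise as conformal dimensions of boundaries of finitely presented hyperbolic groups, so the values $2+K$ can be hit by such a group for only countably many $K$.

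First I would fix the countable set. Since there are only countably many finite group presentations, there are only countably many finitely presented groups up to isomorphism, so
\[
	Q' := \{\, \Cdim(\bdry H) : H \text{ is a finitely presented hyperbolic group} \,\} \subset \R
\]
is countable; set $Q := \{\, v-2 : v \in Q' \,\} \cap [0,\infty)$, which is again countable. Now fix $K \in [0,\infty)\setminus Q$ and $C>0$, and write $G_l$ for a random $m$-generator, $n = \lfloor C l^K\rfloor$ relator group with relations of length $\leq l$. Suppose, towards a contradiction, that the $G_l$ pass through only finitely many quasi-isometry classes as $l \to \infty$; that is, there is a finite list of groups $H_1,\dots,H_p$ such that a.a.s.\ $G_l$ is quasi-isometric to some $H_i$. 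By Champetier~\cite{Cha-95-rand-grps}, a.a.s.\ $G_l$ is finitely presented and hyperbolic, and both of these properties are quasi-isometry invariants, so (discarding any $H_i$ not of this form) we may assume each $H_i$ is finitely presented and hyperbolic; hence $\Cdim(\bdry H_i) \in Q'$. As conformal dimension of the boundary is a quasi-isometry invariant, it follows that a.a.s.\ $\Cdim(\bdry G_l)$ lies in the fixed finite set $S := \{\Cdim(\bdry H_1),\dots,\Cdim(\bdry H_p)\} \subset Q'$.

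Next I would invoke Theorem~\ref{thm-main-polygrowth}, which gives that a.a.s.
\[
	\bigl| \Cdim(\bdry G_l) - (2+K) \bigr| \leq \eta_l, \qquad \eta_l := \max(5,\,2K+2)\,\frac{\log\log l}{\log l},
\]
and $\eta_l \to 0$ as $l \to \infty$. Intersecting the two almost-sure events, the event ``$\Cdim(\bdry G_l)\in S$ and $|\Cdim(\bdry G_l)-(2+K)|\le\eta_l$'' has probability tending to $1$, hence is non-empty for $l$ large, which forces the deterministic set $S \cap [2+K-\eta_l,\, 2+K+\eta_l]$ to be non-empty for all large $l$. Since $S$ is finite and $\eta_l \to 0$, for $l$ large the interval $[2+K-\eta_l,\,2+K+\eta_l]$ meets $S$ only if $2+K \in S$; non-emptiness thus forces $2+K \in S \subset Q'$, i.e.\ $K \in Q$ --- contrary to our choice of $K$. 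Therefore no such finite list $H_1,\dots,H_p$ exists, so random $m$-generator, $n = C l^K$ relator groups pass through infinitely many quasi-isometry classes, as required. The argument is soft; the only point needing a little care is the reduction to finitely presented hyperbolic representatives $H_i$ (via quasi-isometry invariance of these properties), while all the genuine content is supplied by Theorem~\ref{thm-main-polygrowth}.
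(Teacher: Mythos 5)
Your proof is correct and follows essentially the same route as the paper's: both define $Q$ via the countable set of conformal dimensions realisable by (finitely presented) hyperbolic groups, and both derive the conclusion by combining this with the convergence $\Cdim(\bdry G_l) \to 2+K$ from Theorem~\ref{thm-main-polygrowth}. Your write-up spells out the contradiction argument (a fixed finite set of QI representatives would force $2+K$ to be an achievable conformal dimension) a bit more explicitly than the paper's terse phrasing, but the underlying idea is identical.
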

\begin{proof}
	There are only countably many hyperbolic groups, so let $Q$ be the set of $K \in \R$
	so that $2+K$ is a possible value of the conformal dimension.
	By Theorem~\ref{thm-main-polygrowth}, random $m$ generator, $n=C l^K$ relator groups have
	conformal dimension converging to $2+K$, but this cannot be the conformal dimension of any of the (finitely many)
	presentations considered at each length.  Thus the conformal dimensions keep changing as $l \ra \infty$,
	and so the quasi-isometry class of the groups change also.
\end{proof}
It seems plausible that the same result should hold for all $K > 0$, and perhaps for few relator groups ($K=0$) as well.

\subsection{Outline}
In Section~\ref{sec-bourdon-kleiner} we recall Bourdon and Kleiner's upper bound
for conformal dimension.
Ollivier and Wise's walls are modified to define branching walls in 
Sections \ref{sec-wise-walls} and \ref{sec-elem-complexes}.
We show that each branching wall satisfies the Bourdon--Kleiner condition in
Section \ref{sec-complexes-decompose}.
We then apply their result to small cancellation and random groups in 
Section~\ref{sec-new-upper}.

New lower bounds for the conformal dimension of random groups are given in
Sections~\ref{sec-new-lower-sc} and \ref{sec-lower-density}
for the few/polynomial relator model and density model, respectively.

\subsection{Notation}
We write $A \preceq_C B$ for $A \leq CB$, where $C >0$, and write $A \asymp_C B$
if $A \preceq_C B$ and $A \succeq_C B$.
We omit the $C$ if its precise value is unimportant.

\subsection{Acknowledgements}
I gratefully thank Marc Bourdon for describing to me his work with Bruce Kleiner,
and Piotr Przytycki for many interesting conversations about random groups
and walls.
I also thank the referee(s) for many helpful comments.
The author was partially supported by EPSRC grant ``Geometric and analytic aspects of infinite groups''.

\section{Bourdon and Kleiner's upper bound}\label{sec-bourdon-kleiner}

In this section we describe a result of Bourdon and Kleiner which
gives an upper bound for the conformal dimension of certain Gromov hyperbolic
$2$-complexes.  

A combinatorial path (or loop) in a graph is a map from $[0,1]$ (or $\Sph^1$)
to the graph which follows a finite sequence of edges (and has the same
initial and terminal vertex).
A \emph{combinatorial $2$-complex} $X$ is a $2$-complex which is
built from a graph $X^{(1)}$ (the \emph{$1$-skeleton} of $X$)
by attaching $2$-cells,
where the attaching maps $\Sph^1 \ra X^{(1)}$
are combinatorial loops.
All $2$-complexes we consider will be combinatorial.

Given such a space $X$, we give it a geodesic metric by making all edges isometric
to $[0,1]$, and all $2$-cells isometric to regular Euclidean polygons with the
appropriate number of sides.

The \emph{perimeter} $|\partial R|$ of a $2$-cell (a face) $R$ in $X$ is the number of edges adjacent to the face.
The \emph{thickness} of a $1$-cell (an edge) in $X$ is the number of $2$-cells adjacent to
the edge.  The \emph{degree} of a vertex in $X$ is the number of adjacent edges to the vertex.
We say $X$ has \emph{bounded geometry} if
there is a uniform bound on the perimeter, thickness and degrees of cells in $X$.

There are two natural metrics on the $1$-skeleton $X^{(1)}$: the restriction of the
metric on $X$ and the natural path metric on the graph $X^{(1)}$.  If $X$ has bounded
geometry, these two metrics are comparable; we denote both by $d$.

Throughout this paper, we make the following assumption.
\begin{assumption}\label{assump-main}
	$X$ is a connected, simply connected, combinatorial $2$-complex,
	with a geodesic metric making each $2$-cell a regular Euclidean polygon. 
	$X$ is Gromov hyperbolic and has bounded geometry.
	All closed $2$-cells are embedded, and the intersection of any two $2$-cells
	is a connected (possibly empty) set.
\end{assumption}
For background on Gromov hyperbolicity, see \cite[Chapter III.H]{BH-99-Metric-spaces}.

The key example to bear in mind is when $X$ is a Cayley $2$-complex 
of a Gromov hyperbolic group $G$.
Such spaces $X$ do not always satisfy the last sentence of Assumption~\ref{assump-main},
but if $G$ is given by a $C'(\frac{1}{6})$ presentation, then $X$ satisfies
Assumption~\ref{assump-main} (see Lemma~\ref{lem:small-canc-assumption}).
If $G$ is a random group at density $d<\frac{1}{4}$ then the assumption
follows by \cite[Proposition 1.10, Corollary 1.11]{Oll-Wis-11-rand-grp-T}.

The \emph{boundary at infinity $\bdry X$} of $X$, or \emph{visual boundary} of $X$, 
is a compact metric space associated to $X$,
canonically defined up to ``quasisymmetric'' 
homeomorphism~\cite{Pau-96-qm-qi,BS-00-gro-hyp-embed}.

The \emph{conformal dimension} $\Cdim(\bdry X)$ is the infimal
Hausdorff dimension among all Ahlfors regular metric spaces quasisymmetric to $\bdry X$.
(Another common variation on this definition, a priori with a smaller value,
does not require the metric spaces considered to be Ahlfors regular.
Our lower bounds via Theorem~\ref{thm-roundtree-cdim} hold for this value too.)
As already mentioned, we do not work with the definition of conformal dimension directly.

There is a topological compactification of $X$ as $X \cup \bdry X$.
Given $E \subset X$, we define the \emph{limit set} of $E$ to be
$\bdry E = \mathrm{Closure}_{X \cup \bdry X}(E) \cap \bdry X$.

In the following definition, a subcomplex $Y \subset X$ is a subspace of $X$ which
is itself a combinatorial $2$-complex.  (The cells in $Y$ need not be cells in $X$.)
\begin{definition}[{\cite[Definition 3.3]{BK-12-lp-bdry}}]\label{def-decomposes}
	A subcomplex $Y \subset X$ \emph{decomposes} $X$ if
	\begin{itemize}
		\item $Y$ is connected and simply connected,
		\item $Y$ is quasi-convex, i.e.\ any $x,y \in Y$ can be
		joined by a path in $Y$ of length comparable to $d(x,y)$.
		\item Each pair $H_1, H_2$ of distinct connected components of $\overline{X \setminus Y}$ has 
			$\bdry H_1 \cap \bdry H_2 = \emptyset$,
		\item Every sequence $\{ H_i\}$ of distinct connected components of 
			$\overline{X \setminus Y}$ subconverges
			in $X \cup \bdry X$ to a point in $\bdry X$.
	\end{itemize}
	
	A collection of subcomplexes $\{Y_j\}_{j \in J}$ \emph{fully decomposes} $X$ if
	\begin{itemize}
		\item every $Y_j$ decomposes $X$, and
		\item for every $z_1 \neq z_2$ in $\bdry X$, there exists $Y \in \{Y_j\}_{j \in J}$ so that
		for every connected component $E$ of $\overline{X \setminus Y}$, 
		$\{ z_1, z_2 \} \nsubseteq \bdry E$.
	\end{itemize}
\end{definition}

\begin{definition}[{\cite[Definition 2.2]{BK-12-lp-bdry}}]
	An \emph{elementary polygonal complex} is a connected, simply connected combinatorial 
	$2$-complex so that
	\begin{itemize}
		\item every $2$-cell has an even perimeter at least $6$,
		\item every pair of $2$-cells shares at most a vertex or an edge,
		\item the edges of the $2$-cells are coloured alternately black and white, and
		\item every white edge has thickness $1$, every black edge has thickness $\geq 2$.
	\end{itemize}
\end{definition}
See Figure~\ref{fig-Vtree} later for an illustration.

The following theorem is a combination of various results by Bourdon and 
Kleiner~\cite{BK-11-coxeter,BK-12-lp-bdry},
which also incorporates work of 
Bourdon--Pajot~\cite{BP-03-lp-besov}, 
Keith--Kleiner~\cite{KK-XX-modcutpoints}
and Carrasco-Piaggio~\cite{Car-13-conformal-gauge}.
\begin{theorem}[Bourdon--Kleiner]\label{thm-bourdonkleiner}
	Suppose $X$ admits a cocompact isometric group action and has connected boundary $\bdry X$.
	
	Suppose $X$ is fully decomposed by a family $\{Y_j\}_{j\in J}$ of
	elementary polygonal complexes,
	each of the $2$-cells of which has perimeter in $[2m, C]$, for fixed $m \geq 3$ 
	and $C< \infty$, and whose black edges have thickness in $[2, k]$, for some fixed $k \in \N$.  Then
	\[
		\Cdim(\bdry X) \leq 1 + \frac{\log(k-1)}{\log(m-1)}.
	\]
\end{theorem}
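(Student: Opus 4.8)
The plan is to derive this from the $\ell_p$-cohomology criterion for conformal dimension, assembling the cited results of Bourdon--Pajot~\cite{BP-03-lp-besov}, Keith--Kleiner~\cite{KK-XX-modcutpoints} and Carrasco-Piaggio~\cite{Car-13-conformal-gauge} in the packaged form developed by Bourdon--Kleiner~\cite{BK-11-coxeter,BK-12-lp-bdry}. Since $X$ admits a cocompact isometric action and $\bdry X$ is connected, it suffices to show that for every $p > 1 + \frac{\log(k-1)}{\log(m-1)}$ the first $\ell_p$-cohomology of $X$ separates the points of $\bdry X$; equivalently, that for every pair $z_1 \neq z_2$ in $\bdry X$ there is a function $u$ on $X^{(0)}$ with $du \in \ell^p(X^{(1)})$ whose boundary values at $z_1$ and $z_2$ differ. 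This is the standard route to upper bounds for conformal dimension via combinatorial $p$-modulus, and the hypotheses of the cited theorems are met by Assumption~\ref{assump-main} together with cocompactness.

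First I would construct such a $u$ from the decomposition. Fix $p$ in the above range and $z_1 \neq z_2$; using that $\{Y_j\}$ \emph{fully decomposes} $X$, pick some $Y = Y_j$ in the family so that no connected component $E$ of $\overline{X \setminus Y}$ has $\{z_1,z_2\} \subset \bdry E$. Let $\{H_i\}$ enumerate the components of $\overline{X\setminus Y}$; each indicator $1_{H_i}$ has coboundary supported on the edges of $Y$ adjacent to $H_i$. I would take $u = \sum_i c_i 1_{H_i}$ with coefficients $c_i$ decaying geometrically in the depth of $H_i$ in the natural tree structure of the elementary polygonal complex $Y$, chosen so that the components accumulating towards $z_1$ receive total value near $1$ and those accumulating towards $z_2$ receive total value near $0$. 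The disjointness of limit sets in Definition~\ref{def-decomposes}, together with quasi-convexity of $Y$, guarantees that the $\bdry H_i$ are organised along $\bdry Y$ consistently enough for this $u$ to separate $z_1$ from $z_2$.

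The technical heart is the bound $\|du\|_{\ell^p(X^{(1)})} < \infty$ in the stated range of $p$, and this is where I expect the real work to lie. One uses that $Y$ is an elementary polygonal complex: its white edges have thickness $1$ and its black edges thickness at most $k$, so the dual graph of $Y$ (one vertex per $2$-cell, one edge per shared black edge) is a tree in which each black edge introduces at most $k-1$ further cells, while each $2$-cell, having perimeter in $[2m,C]$, forces expansion by a factor at least $m-1$ in the "circular" white-edge direction. Hence at combinatorial scale $n$ there are $\preceq (k-1)^n(m-1)^n$ edges of $Y$ across which $u$ jumps, and the transition can be spread so that each individual jump has size $\preceq (m-1)^{-n}$; since bounded geometry of $X$ and quasi-convexity of $Y$ confine $du$ to a bounded neighbourhood of $Y$, the total energy is $\preceq \sum_n (k-1)^n (m-1)^n (m-1)^{-np}$, which converges exactly when $p > 1 + \frac{\log(k-1)}{\log(m-1)}$. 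The uniform bounds (perimeters in $[2m,C]$, black-edge thickness in $[2,k]$, bounded geometry, cocompactness) make the implied constants independent of $Y$ and of the chosen pair; the delicate point is the bookkeeping that matches the two geometric progressions — branching $k-1$ and expansion $m-1$ — against the scale at which edges of $Y$ actually sit inside $X$, so that the exponent is exactly $1 + \frac{\log(k-1)}{\log(m-1)}$ and not larger.

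Finally I would conclude: since for every $p > 1 + \frac{\log(k-1)}{\log(m-1)}$ the $\ell_p$-cohomology of $X$ separates every pair of points of $\bdry X$, the Keith--Kleiner / Carrasco-Piaggio characterisation as applied in \cite{BK-12-lp-bdry} yields $\Cdim(\bdry X) \le p$ for all such $p$, and letting $p$ decrease to the critical value gives $\Cdim(\bdry X) \le 1 + \frac{\log(k-1)}{\log(m-1)}$.
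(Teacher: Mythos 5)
Your proposal takes essentially the same route as the paper's proof: both reduce to Bourdon and Kleiner's $\ell_p$-cohomology / $p$-separation criterion for conformal dimension and use the fully-decomposing family of elementary polygonal complexes to produce separating cohomology classes. The paper's actual proof is just a sequence of citations (to \cite{BK-11-coxeter} for approximate self-similarity of $\bdry X$, and to \cite{BK-12-lp-bdry} for $\Cdim(\bdry X) = p_{\mathrm{sep}}(X)$ together with the bound $p_{\mathrm{sep}}(X) \le 1 + \log(k-1)/\log(m-1)$), whereas you reconstruct the substance of those cited corollaries, and your geometric-decay heuristic lands on the correct exponent.
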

\begin{proof}
	We use the terminology of Bourdon and Kleiner; see their papers
	for more details.
	By \cite[Proposition 3.3]{BK-11-coxeter}, $\bdry X$ is approximately self-similar.
	So \cite[Theorem 3.8(1)]{BK-12-lp-bdry} gives that $\Cdim(\bdry X) = p_{sep}(X)$, where
	$p_{sep}(X)$ is the infimal $p \geq 1$ so that functions in $\ell_p H^1_{\mathrm{cont}}(X)$
	can distinguish any two points in $\bdry X$.
	The conclusion then follows from the upper bounds on $p_{sep}(X)$ given by
	\cite[Corollaries 3.6(2), 6.6]{BK-12-lp-bdry}.
\end{proof}

\section{Walls and diagrams}\label{sec-wise-walls}

Our goal is to find embedded elementary polygonal complexes in $X$
that fully decompose $X$ and whose $2$-cells have large perimeters.
The way in which we will accomplish this is to build suitable rooted trees in $X$,
and then take small neighbourhoods of these trees (see Figure~\ref{fig-Vtree}).

Wise~\cite{Wise-04-cubulating-sc-groups} and Ollivier--Wise~\cite{Oll-Wis-11-rand-grp-T}
built walls in the Cayley complexes of both small cancellation groups
and random groups at densities $< \frac{1}{5}$.
Each wall is built by taking an edge in $X$, joining the midpoint of the edge to
the midpoint of each antipodal edge, joining each of these to the midpoints of edges
antipodal to them, and so on in this way (see Definition~\ref{def-Ipathwall}).
Ollivier and Wise show that if certain disc diagrams do not exist, then such walls
will be quasi-convex and embedded in $X$.

A small neighbourhood of such a wall is almost an elementary polygonal complex,
but the $2$-cells only have perimeter four.  
In this and the following sections we adapt Ollivier and Wise's construction to
build branching walls (see Section~\ref{sec-elem-complexes}),
neighbourhoods of which will fully decompose $X$.

Although, as in \cite[Section 3]{Oll-Wis-11-rand-grp-T}, it is possible to deal
with general combinatorial $2$-complexes, we restrict our attention
to the following special case:
Throughout this and following sections, $X$ is the \emph{Cayley complex} of a group 
presentation $G = \langle \cS | \cR \rangle$, where each 
relation $r \in \cR$ is a cyclically reduced word in the generators $\cS$.
Recall that $X$ is the universal cover of the $2$-complex $Y$
which is formed by taking a bouquet
of $|\cS|$ oriented, labelled loops, one for each element of $\cS$, and attaching a disc for each 
$r \in \cR$ along the path labelled by~$r$.

In the case that a relation $r \in \cR$ is a proper power, we modify this construction slightly.
If we write $r = u^i$ for a maximal $i \in \N$, then in $X$ we see many bundles 
of $i$ discs with identical boundary paths; we collapse each of 
these bundles of $i$ discs into a single disc.

\subsection{Disc diagrams}
For general references on disc diagrams and small cancellation theory, we refer the
reader to \cite{Lyndon-Schupp-small-canc,McC-Wis-02-fans-ladders}.
A \emph{combinatorial map} $D \ra X$ of combinatorial $2$-complexes is a continuous map
so that its restriction to any open cell of $D$ is a 
homeomorphism onto an open cell of $X$~\cite[Definition~2.1]{McC-Wis-02-fans-ladders}.
\begin{definition}\label{def-discdiagram}
	A \emph{disc diagram} $D \ra X$ is a contractible, finite, pointed (combinatorial) 
	$2$-complex $D$ with a combinatorial map $D \ra X$ and 
	a specific embedding in the plane so that the base point lies on the boundary
	$\partial D \subset \R^2$.
	
	Note that the map $D \ra X$ gives each edge of $D$ 
	an orientation and a labelling by an element of $\cS$,
	so that for each $2$-cell $R \subset D$, reading the edge labels along the boundary 
	$\partial R$ of $R$ gives (a cyclic conjugate of) a word $r \in \cR$
	or its inverse.
	
	If $w$ is the word in $G$ given by reading $\partial D$ counter-clockwise from
	the base point, we say that $D$ is a van Kampen diagram for $w$.  
	(Equivalently, the \emph{boundary path} $\partial D \ra X$ is labelled by $w$.)
	We write $|\partial D|$ for the length of this word.
	
	A \emph{cancellable pair} in $D \ra X$ is a pair of distinct $2$-cells 
	$R_1, R_2 \subset D$ which meet along at least one edge in $X$, 
	so that the boundary paths of $R_1, R_2$ starting from this edge map to the same
	paths in $X$.
	If $D$ has no cancellable pairs, it is \emph{reduced}.
	
	We write $|D|$ for the number of $2$-cells in $D$.
\end{definition}
This is a slight rewording of the usual definition of a van Kampen diagram to match 
the language of \cite{Oll-Wis-11-rand-grp-T}.

A $2$-cell $R$ in a disc diagram $D \ra X$ is \emph{external} if 
$\partial R \cap \partial D \neq \emptyset$, otherwise it is \emph{internal}.

\subsection{Ollivier and Wise's walls}\label{ssec:ow-I-walls}
We now recall Ollivier and Wise's construction of walls in $X$,
subject to the non-existence of certain disc diagrams.
For the purposes of this subsection, we subdivide edges so that 
the perimeter of every $2$-cell in $X$ is even.
\begin{definition}\label{def-Ipathwall}
	An \emph{I-path} is an immersed path $\lam$ in $X$ which meets $X^{(1)}$ only in 
	the midpoints of edges, and locally crosses each $2$-cell from the middle of an 
	edge straight to the middle of the antipodal edge.
	
	An \emph{I-wall} $\Lambda \ra X$ is the maximal union of I-paths containing a 
	given midpoint of an edge in $X^{(1)}$, identified locally so that 
	$\Lambda$ is a tree with an	immersion $\Lambda \ra X$.
\end{definition}
I-walls are called `hypergraphs' by 
Ollivier--Wise; we call them `I-walls' to compare them with the branching walls
we consider later.
We also call their various types of `collared diagrams' 
I-collared diagrams to distinguish them from our V-collared diagrams.
\begin{definition}[{\cite[Definition 3.2]{Oll-Wis-11-rand-grp-T}}]
	An \emph{I-collared diagram} is a disc diagram $D \ra X$
	with external 2-cells $R_1, \ldots R_n$, $n \geq 2$, where $R_1$ is called a \emph{corner},
	and which contains an I-path $\lam$ passing through $R_2, \ldots, R_n$ in a single
	segment, and self-intersecting in $R_1$.  (See Figure~\ref{fig-Icollared}.)
	It is \emph{reduced} if it is reduced in the usual sense.
\end{definition}
\begin{figure}
	\centering
	\def\svgwidth{0.9\columnwidth}
	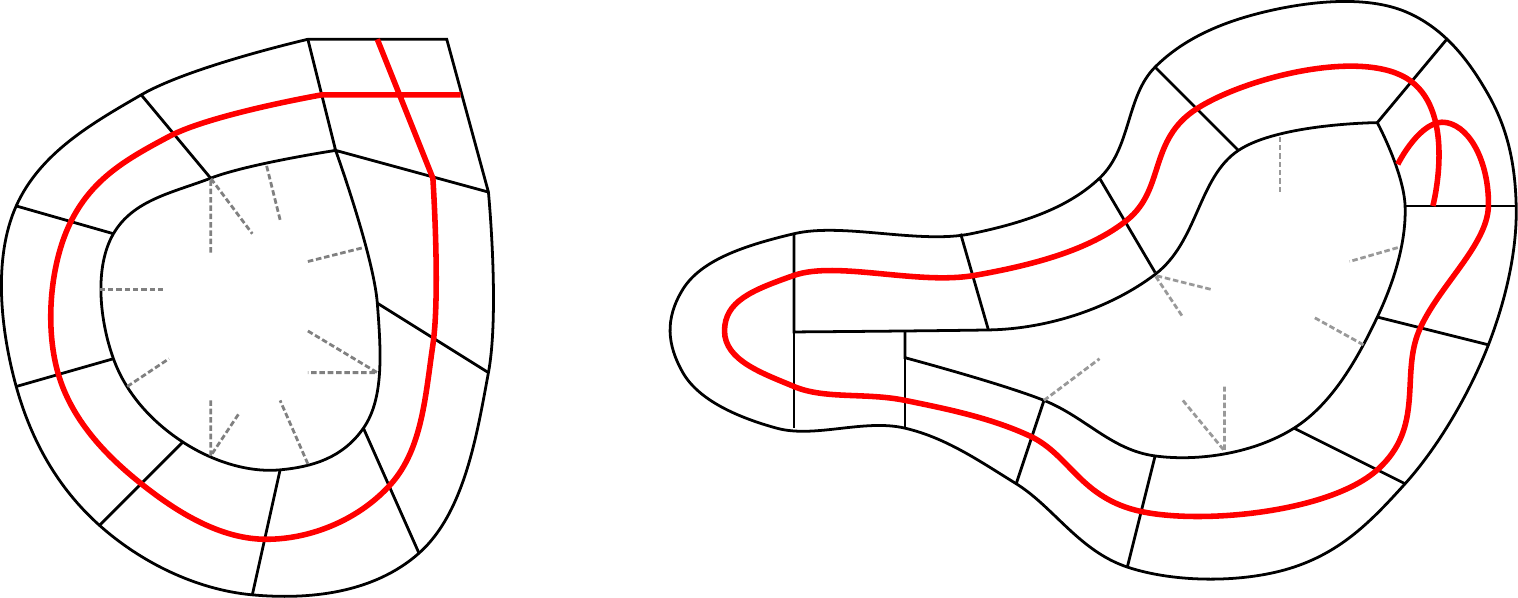
	\caption{I-collared diagrams (a) and (b)}\label{fig-Icollared}
\end{figure}
An I-wall comes with a natural immersion $\Lambda \ra X$;
Ollivier--Wise find conditions which make this immersion an embedding.
\begin{theorem}[{\cite[Theorem 3.5]{Oll-Wis-11-rand-grp-T}}]\label{thm-Iwall-embed}
	If there are no reduced I-collared diagrams for $X$, then every I-path embeds,
	and hence every I-wall embeds.
\end{theorem}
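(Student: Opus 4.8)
The plan is to prove the contrapositive: if some I-path $\lambda \to X$ fails to embed, then $X$ admits a reduced I-collared diagram. Suppose $\lambda$ is an I-path whose immersion into $X$ is not injective. I would first locate a minimal segment of $\lambda$ witnessing the failure of injectivity: since $\lambda$ is immersed, a self-intersection must occur in the interior of some $2$-cell (two distinct portions of $\lambda$ crossing the same cell $R_1$ along the same pair of antipodal edges), or more precisely there are two midpoints $p \ne q$ of edges of $X$ with the same image and a sub-arc of $\lambda$ joining them. Travelling along $\lambda$ from $p$ until it first returns to a previously-visited cell identifies a corner cell $R_1$ and an initial segment of $\lambda$ passing through a sequence of $2$-cells $R_1, R_2, \ldots, R_n$ and returning to meet $R_1$; because $\lambda$ is immersed and these crossings are at most one per cell up to the return, the cells $R_2, \ldots, R_n$ are distinct.

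The next step is to build the disc diagram. The closed $2$-cells $R_1, \ldots, R_n$ traversed by this segment of $\lambda$, glued along the edges in which consecutive cells meet (using Assumption~\ref{assump-main}, so intersections of $2$-cells are connected and closed cells embed), form a \emph{fan}-like planar region; I would take $D$ to be (a planar realisation of) this union of cells, with base point placed on $\partial D$ near the self-intersection in $R_1$. One checks that $D$ is contractible and finite and that the obvious combinatorial map $D \to X$ exists, so $D \to X$ is a disc diagram in the sense of Definition~\ref{def-discdiagram}; by construction it carries the I-path segment through $R_2, \ldots, R_n$ in a single segment that self-intersects in $R_1$, so $D$ is an I-collared diagram. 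Finally, I would arrange that $D$ is \emph{reduced}: if $D$ contains a cancellable pair $R_i, R_j$, one performs the standard surgery (delete the pair, re-glue) which strictly decreases $|D|$ while keeping $D$ a disc diagram carrying essentially the same I-path configuration — here one must check that the surgery does not destroy the I-collared structure, but since a cancellable pair meets along an edge and the I-path crosses each cell transversally between antipodal edges, the I-path passes through $R_i$ and $R_j$ in a controlled way and either the cancellation shortens $\lambda$'s segment correspondingly or the self-intersection persists. Iterating, we reach a reduced I-collared diagram, contradicting the hypothesis. The deduction that every I-wall embeds is then immediate, since an I-wall is a union of I-paths glued along a tree, and an I-wall can fail to embed only if one of its constituent I-paths does, or two of them overlap in a way that again produces a self-intersecting I-path segment and hence a reduced I-collared diagram by the same argument.

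The main obstacle I expect is the reduction/surgery step and the careful bookkeeping in distinguishing the two cases (a) and (b) of Figure~\ref{fig-Icollared}: one must handle separately the case where $\lambda$ re-enters $R_1$ through a \emph{new} edge of $R_1$ versus the degenerate case where the self-intersection happens because two antipodal edges of some cell are identified in $X$, and check in each case that after removing cancellable pairs the diagram is still a genuine disc diagram with the self-intersecting I-path living in a single corner cell. A secondary subtlety is ensuring that when we truncate $\lambda$ at its first return, the cells $R_2, \ldots, R_n$ really are pairwise distinct and distinct from $R_1$ except at the corner — this uses minimality of the chosen segment together with the fact that $\lambda$ is immersed, but edge cases (the I-path meeting a cell it has already left) need to be ruled out by taking the shortest offending segment. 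Everything else — existence of the combinatorial map, contractibility of the glued-up fan, and the final descent from I-walls to I-paths — is routine once these points are pinned down.
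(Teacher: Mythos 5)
There is a genuine gap in the middle of your argument, at the point where you build the disc diagram. You take the union of the cells $R_1, \ldots, R_n$ traversed by the minimal self-intersecting segment and claim this is ``a fan-like planar region'' which you can take to be $D$, and you assert it is contractible. But precisely because the segment returns to $R_1$, the abstract object you get by gluing $R_1,\dots,R_n$ along the edges that $\lambda$ crosses and then identifying the first cell with the last is an \emph{annulus} (or M\"obius band), not a disc — it is the ladder $L$ modulo that identification. It is not contractible, and $D \to X$ constructed this way is not a van Kampen diagram. The missing idea is that this loop of cells only \emph{collars} the eventual diagram: you must choose a simple essential cycle $P$ in the annulus, attach a disc diagram $D$ with boundary path $P$ (whose existence uses simple connectivity of $X$), and work with $F = A \cup_P D$. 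This is why the paper passes through the auxiliary notion of a \emph{quasi-I-collared} diagram and cites three separate lemmas from Ollivier--Wise: one to produce a quasi-collared diagram, one to reduce it, and one to extract a planar (i.e.\ genuinely collared) diagram from it.

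A second, related point you underestimate is the planarity issue that the quasi-collared language exists to handle. Even after you fill in the interior, the planar embedding of the filling disc $D$ need not extend to $A \cup_P D$: the boundary path $P$ can cross $\lambda$. Ruling this out (the analogue of Lemma~\ref{lem:Vpathembed3}, cf.\ Figure~\ref{fig-cancelloop}) is a real step, not bookkeeping, and it interacts with the reduction step — when you cancel a pair $R_i, R_j$ you may have one cell in the collar and one in the filling, or two in the collar, and each case changes $\lambda$, $P$, or both; the argument proceeds by taking a minimal-area quasi-collared diagram rather than by reducing a fixed $D$. Your sentence ``the cancellation shortens $\lambda$'s segment correspondingly or the self-intersection persists'' gestures at this but does not supply the minimality framework that makes it go through. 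The final descent from I-walls to I-paths is fine and matches the paper.
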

Before we sketch the proof of this theorem, we need another definition.

Suppose $\lambda \subset X$ is an I-path which goes through 
$2$-cells $R_1, R_2, \ldots, R_n \subset X$
in turn.  Let $R_i'$ be a copy of $R_i$ for each $i$,
and let $L$ be the combinatorial $2$-complex given by identifying the
boundaries of $R_i'$ and $R_{i+1}'$ along the single edge corresponding to 
$\lambda \cap R_i \cap R_{i+1}$.
We call $L$ the \emph{ladder} 
of $\lambda$; it comes with a natural
combinatorial map $L \ra X$ extending the immersion $\lambda \ra X$.
If we further identify the boundaries of $R_i'$ and $R_{i+1}'$ along the
all the edges corresponding to $R_i \cap R_{i+1}$, we get a $2$-complex
$\check{L}$ called the \emph{carrier} of $\lambda$, which also has a natural
combinatorial map $\check{L} \ra X$.

If an I-path $\lambda \subset X$ does not embed, then its ladder $L \ra X$ will
have (say) the first and last $2$-cells $R_1, R_n$ mapping to the same $2$-cell of $X$.
Assumption~\ref{assump-main} and control on local behaviour like 
Definition~\ref{def-Vpath-admissible} below gives that $n \geq 3$.
Let $A = L / \!\sim$ be the quotient of $L$ by identifying $R_1 \ra X$ and $R_n \ra X$.
Topologically, $A$ is an annulus or a M\"obius strip.
As $A$ is homotopic to $\Sph^1$, we can find 
a simple, non-contractible cycle $P \ra A$, and let
$D \ra X$ be a disc diagram with boundary path $P$.
The combinatorial $2$-complex $F = A \cup_{P = \partial D} D$ 
is a \emph{quasi-I-collared diagram} $F \ra X$.

It may be that the planar embedding of $D$ can be extended to a planar embedding of $F$,
but this will not generally be possible (cf.\ Figure~\ref{fig-cancelloop}).
We say $F \ra X$ is \emph{reduced} if there are no cancellable pairs, as in
Definition~\ref{def-discdiagram}.

Theorem~\ref{thm-Iwall-embed} is proved as follows. 
First, if an I-path doesn't embed, then it bounds a quasi-I-collared 
diagram \cite[Lemma 3.8]{Oll-Wis-11-rand-grp-T}.
One then performs reductions to find a reduced quasi-I-collared diagram
\cite[Lemma 3.9]{Oll-Wis-11-rand-grp-T}.
Finally, from a reduced quasi-I-collared diagram, 
one extracts a reduced I-collared diagram \cite[Lemma 3.10]{Oll-Wis-11-rand-grp-T}, 
contradicting our assumption.

We will use a similar outline frequently in what follows.

\subsection{V-paths}
To build elementary polygonal complexes, we need a generalisation of I-paths
where the paths can bend as they cross $2$-cells in $X$.
\begin{definition}
	A \emph{V-path} of length $l(\alp) \in \N$ is an (oriented) 
	immersed path $\alp: [0, l(\alp)] \ra X$ so that
	for $i = 0, \ldots, l(\alp)-1$,
	$\alp(i)$ is a midpoint of an edge of $X^{(1)}$, 
	$\alp$ restricted to $[i, i+\frac12]$ is a straight segment
	joining $\alp(i)$ to the centre $\alp(i+\frac12)$ of a $2$-cell $R$ adjacent to
	$\alp(i)$, and 
	$\alp$ restricted to $[i+\frac12,i+1]$ is a straight segment
	joining $\alp(i+\frac12)$ to the midpoint $\alp(i+1) \neq \alp(i)$ of 
	an edge of $R$.
\end{definition}
Since $\alp$ is immersed, for any V-path $\alp$ of length at least two we have 
$\alp(i-\frac12) \neq \alp(i+\frac12)$ for $i=1, \ldots, l(\alp)-1$.

We cannot hope to control V-paths in general, so we restrict to
a suitably rich collection of V-paths.
\begin{definition}\label{def-Vpath-admissible}
	Suppose $\cV_0$ is a collection of oriented V-paths in $X$ of length one,
	so that for every $2$-cell $R \subset X$ and edge $e$ in $\partial R$,
	there exists $\alp \in \cV_0$ so that $\alp(0) \in e$ and $\alp(\frac12) \in R$.
	
	Let $\cV$ be the collection of all V-paths $\alp$ so that for each $0 \leq i < l(\alp)$,
	$\alp$ restricted to $[i,i+1]$ lies in $\cV_0$.
	We call each $\alp \in \cV$ a \emph{$\cV$-path}.
	
	We say such a $\cV$ is \emph{crossing} if
	whenever $\alp \in \cV$ has length three and goes through 
	$2$-cells $R_1, R_2, R_3$ in order then $R_1 \cap R_2 \cap R_3 = \emptyset$.
	
	In addition, we say $\cV$ is \emph{reversible} if 
	for all $\alp \in \cV$ we have $-{\alp} \in \cV$, where
	$-{\alp}$ denotes the V-path $\alp$ with reversed orientation.
\end{definition}

For example, the collection of all I-paths considered by Ollivier--Wise 
is a reversible and crossing collection of V-paths.
From now on, we will assume that there is a fixed reversible and 
crossing collection of V-paths $\cV$,
and all V-paths considered will be in $\cV$.

Given a V-path $\alpha$, we 
define the \emph{ladder $L \ra X$} and \emph{carrier $\check{L} \ra X$}
of $\alpha$ exactly as in Subsection~\ref{ssec:ow-I-walls}.
\begin{remark}
	The crossing assumption ensures that if $\alp$ is a 
	 $\cV$-path in $X$,
	then the carrier $\check{L} \ra X$ of $\alp$ will be planar.
\end{remark}

We now define (quasi-)V-collared diagrams, in analogy to the \mbox{(quasi-)} I-collared diagrams
mentioned above (compare \cite[Definition 3.11]{Oll-Wis-11-rand-grp-T}).
These are disc diagrams ``collared'' by attaching
ladders of V-paths 
around the boundary of the diagram, in a not-necessarily-planar way.
\begin{definition}\label{def-quasi-V-collared}
	Let $n \geq 1$ be an integer, and write $\{1,2,\ldots,n\}$ as a disjoint union $I \sqcup J$.
	Suppose that for $i \in I$, $\alp_i$ is an oriented 
	V-path of length at least $2$, with ladder 
	$L_i \ra X$.
	Let $P_i$ be a path immersed in $L_i^{(1)}$, joining a point in
	the boundary of the first $2$-cell in $L_i$ to one in the last $2$-cell of $L_i$.
	For $j \in J$, let $P_j$ be any path immersed in $X^{(1)}$.
	We require that (with subscripts $\mod n$):
	\begin{enumerate}
		\item 
		If $i \in I$ and $i+1 \in I$, then the last $2$-cell of $L_i$ and the
		first $2$-cell of $L_{i+1}$ have the same image in $X$.
		If $n \geq 2$ then $\alp_i(l(\alp_i)-1)$ and $\alp_{i+1}(1)$ 
		are distinct in $X$ (i.e., no doubling back).
		\item 
		The last point of $P_i$ and the first point of $P_{i+1}$ coincide in $X$.
		\item
		If $i \in J$ then $i +1 \in I$ and $i-1 \in I$.
	\end{enumerate}
	The second condition lets us define a cyclic path $P = \bigcup_i P_i$.
	Let $D \ra X$ be a disc diagram with boundary path $P$.
	
	Let $A' = \bigsqcup_{i \in I} L_i$, and let $A \ra X$ be the quotient of $A'$
	where whenever $i,i+1 \in I$ we identify the last $2$-cell of $L_i$ with the first
	$2$-cell of $L_{i+1}$.
	
	A diagram \emph{quasi-V-collared by the $\alp_i$, $i\in I$, and the $P_i$, $i \in J$},
	is the union $E = A \cup_P D \ra X$.
	The diagram is \emph{V-collared by the $\alp_i$, $i\in I$, and the $P_i$, $i \in J$},
	if the planar embedding of $D$ can be extended to a planar embedding of $E$.
	
	The \emph{corners} of $E$ are the initial and final $2$-cells of each $L_i$, $i \in I$.
\end{definition}
If we just say that a diagram is {(quasi-)V-collared}, we assume that $n=1$ and $I=\{1\}$.
If we want to specify that the collaring V-paths lie in a collection $\cV$,
we say a diagram is (quasi-)$\cV$-collared.

V-paths are very similar to I-paths, but with turns as they cross each face,
and similar arguments to those of Theorem~\ref{thm-Iwall-embed} give the following.
\begin{theorem}\label{thm-v-wall-embed}
	If 
\begin{gather}\label{eq:as-no-red-V-collared}
	\parbox{0.9\textwidth}{there is no reduced $\cV$-collared diagram} \tag{A}
\end{gather} 
then every $\cV$-path embeds.  
\end{theorem}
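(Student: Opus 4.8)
The plan is to follow the same three-step outline used by Ollivier--Wise for Theorem~\ref{thm-Iwall-embed}, adapting each step to V-paths. Suppose for contradiction that some $\cV$-path $\alp$ does not embed. First I would show that $\alp$ bounds a quasi-$\cV$-collared diagram, in analogy with \cite[Lemma 3.8]{Oll-Wis-11-rand-grp-T}: since $\alp$ fails to embed, its ladder $L \ra X$ has two distinct $2$-cells (which we may take to be the first and last, $R_1$ and $R_n$, after truncating $\alp$ to a minimal non-embedding sub-path) mapping to the same $2$-cell of $X$. The crossing hypothesis forces $n \geq 3$, exactly as remarked after the definition of the ladder. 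Quotienting $L$ by the identification $R_1 \sim R_n$ produces an annulus or M\"obius strip $A$; choosing a simple essential cycle $P \ra A$ and filling it with a disc diagram $D \ra X$ yields $F = A \cup_{P} D \ra X$, a quasi-$\cV$-collared diagram (with $n=1$, $I = \{1\}$).

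Second, I would reduce $F$ to a reduced quasi-$\cV$-collared diagram, mirroring \cite[Lemma 3.9]{Oll-Wis-11-rand-grp-T}. If $F$ has a cancellable pair of $2$-cells, one removes it by the standard van Kampen diagram surgery; one must check that this surgery can be carried out so as to preserve the quasi-$\cV$-collared structure, i.e.\ that cancellations either occur inside $D$ (standard) or can be pushed into $D$, possibly after shortening the collaring ladder. Since the collaring V-paths lie in the fixed crossing, reversible collection $\cV$ and the local admissibility from Definition~\ref{def-Vpath-admissible} is preserved under passing to sub-paths, the ladder data remains valid throughout; the reversibility of $\cV$ is what lets us re-orient collaring paths when the surgery flips a ladder. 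The process terminates because $|F|$ strictly decreases.

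Third, from a reduced quasi-$\cV$-collared diagram one extracts a reduced $\cV$-collared diagram — the planar case — as in \cite[Lemma 3.10]{Oll-Wis-11-rand-grp-T}. The idea is that a reduced quasi-$\cV$-collared diagram whose planar embedding of $D$ does \emph{not} extend over the ladders must, by a combinatorial analysis of how the non-planar ladder wraps around $D$, contain a strictly smaller reduced quasi-$\cV$-collared diagram; descending on $|F|$ one eventually reaches a genuinely planar, i.e.\ $\cV$-collared, reduced diagram. This contradicts hypothesis~\eqref{eq:as-no-red-V-collared}, completing the proof.

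The main obstacle is the third step. For I-paths the straight-across crossing rule makes the combinatorics of a non-planar wrap quite rigid; for V-paths the bends at each face mean a ladder can interact with $D$ in more configurations, and one must verify that the crossing condition ($R_1 \cap R_2 \cap R_3 = \emptyset$ for length-three sub-paths) still suffices to rule out the degenerate overlaps and to guarantee that the smaller diagram extracted is again quasi-$\cV$-collared rather than some more general object. I expect this to require a careful case analysis of the local picture where the ladder meets $\partial D$, using the crossing and reversibility properties of $\cV$ together with Assumption~\ref{assump-main} (embedded $2$-cells, connected intersections) to eliminate the problematic cases; the other two steps are routine adaptations of the cited lemmas.
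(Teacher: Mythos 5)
Your proposal is correct and follows essentially the same three-step outline the paper uses, matching the three sublemmas (Lemmas~\ref{lem:Vpathembed1}--\ref{lem:Vpathembed3}) almost exactly: pass to a minimal self-intersecting sub-$\cV$-path and quotient its ladder to get a quasi-$\cV$-collared diagram; reduce it (the paper organizes this as ``take minimal area and show it is reduced,'' treating the three cases $D$--$D$, $A$--$D$, $A$--$A$ exactly as you anticipate); and show $P$ does not cross $\alp$ so the diagram is genuinely collared and planar (where, as you flag, the crossing condition and the area-descent trick of extending $\alp$ into $D$ carry the argument), contradicting~\eqref{eq:as-no-red-V-collared}.
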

We keep track of the assumptions we are making for future reference.
\begin{proof}
Suppose $\alp$ in $X$ is a $\cV$-path which does not embed.
We split the proof into three lemmas which use $\alp$ to
find a contradiction to \eqref{eq:as-no-red-V-collared}. 
\begin{lemma}\label{lem:Vpathembed1}
	There is a quasi-$\cV$-collared diagram.
\end{lemma}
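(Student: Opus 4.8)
The plan is to mirror the proof of Theorem~\ref{thm-Iwall-embed} (Ollivier--Wise's Theorem 3.5), adapting each of its three steps to the more flexible setting of $\cV$-paths.  Suppose $\alp$ is a $\cV$-path in $X$ which does not embed.  First I would reduce to the case that $\alp$ is \emph{self-intersecting in a single step}: since $\alp$ fails to embed, some proper initial subpath embeds while one more step causes a collision, so after restricting we may assume the first and last midpoints (or more precisely, two of the edges traversed) of $\alp$ have the same image in $X$.  As in the I-path case, Assumption~\ref{assump-main} (all closed $2$-cells are embedded, intersections of $2$-cells are connected) together with the crossing hypothesis on $\cV$ rules out the possibility that $\alp$ has length $1$ or $2$, so $l(\alp) \geq 3$.

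Next I would form the ladder $L \ra X$ of $\alp$, and glue: letting $A = L/\!\sim$ be the quotient identifying the first $2$-cell $R_1' \ra X$ with the last $2$-cell $R_n' \ra X$ via their common image, $A$ is topologically an annulus or a M\"obius band (the crossing assumption is what makes the carrier, and hence this gluing, planar locally, so that $A$ is a genuine surface with boundary).  Since $A$ is homotopy equivalent to $\Sph^1$, choose a simple non-contractible combinatorial cycle $P$ in $A^{(1)}$ — concretely, run a path in $L^{(1)}$ from a point in the boundary of the first $2$-cell to a point in the boundary of the last $2$-cell, using the identification to close it up.  By van Kampen's lemma (valid since $X$ is simply connected) there is a disc diagram $D \ra X$ with $\partial D$ labelled by $P$.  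Then $E = A \cup_{P=\partial D} D \ra X$ is a quasi-$\cV$-collared diagram in the sense of Definition~\ref{def-quasi-V-collared} with $n=1$, $I = \{1\}$: condition (1) is exactly the statement that the first and last $2$-cells of $L$ have the same image in $X$, and the no-doubling-back clause and condition (2) are automatic from the construction.  This establishes Lemma~\ref{lem:Vpathembed1}, which is all the final statement asks for.

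The subsequent (unstated) lemmas of the proof of Theorem~\ref{thm-v-wall-embed} will then perform reductions to make $E$ reduced (pushing cancellable pairs out, as in \cite[Lemma 3.9]{Oll-Wis-11-rand-grp-T}) and extract a reduced $\cV$-collared diagram from the reduced quasi-$\cV$-collared one (as in \cite[Lemma 3.10]{Oll-Wis-11-rand-grp-T}), contradicting \eqref{eq:as-no-red-V-collared}; but those belong to the later lemmas, not to Lemma~\ref{lem:Vpathembed1}.

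The main obstacle I anticipate is verifying that the gluing $A = L/\!\sim$ actually yields a surface (annulus or M\"obius strip) rather than something more pathological: a priori the ladder could close up in a way where the two identified $2$-cells overlap with intermediate cells, or where the identification forces additional identifications.  This is where the crossing property of $\cV$ does the work — it guarantees that consecutive triples of $2$-cells along a $\cV$-path meet only in the way a planar carrier allows ($R_{i-1}\cap R_i \cap R_{i+1} = \emptyset$), so the carrier $\check L$ embeds in the plane and the ladder-with-ends-glued is an honest surface.  One must also be slightly careful, as in the I-path argument, that when the surface is a M\"obius strip the "simple non-contractible cycle" argument still produces a valid boundary path for a disc diagram; this is handled exactly as in Ollivier--Wise, since a neighbourhood of the core of a M\"obius band is still bounded by a single simple cycle.
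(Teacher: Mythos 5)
Your overall approach (restrict to a minimal self-intersecting subpath, form the ladder, glue first to last $2$-cell, cap off with a van Kampen disc) is exactly the paper's, and most of the discussion in your last paragraph about why the carrier is planar via the crossing assumption is correct. But there is a concrete gap in your reduction step.

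You reduce to the case where ``the first and last midpoints (or more precisely, two of the edges traversed) of $\alp$ have the same image in $X$'', i.e.\ $\alp(0)=\alp(l)$ as edge midpoints. You then immediately ``glue the first $2$-cell $R_1'\to X$ with the last $2$-cell $R_n'\to X$ via their common image''. These two statements are not equivalent: the edge containing $\alp(0)$ lies in the boundary of \emph{two or more} $2$-cells of $X$ (by bounded geometry there could be several), so $\alp(0)=\alp(l)$ does not force $R_1=R_l$ in $X$. If $R_1\neq R_l$ the quotient $A=L/\!\sim$ you describe is undefined, and there is nothing to glue. Definition~\ref{def-quasi-V-collared}(1) with $n=1$, $I=\{1\}$, demands exactly that the first and last $2$-cells of the ladder have the same image in $X$, which is what the paper's reduction achieves: it passes to a sub-$\cV$-path that ``only self-intersects in the first and last $2$-cells it crosses'', a strictly stronger condition than coincidence of end edges. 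Going from ``$\alp$ fails to embed'' to ``the end $2$-cells coincide'' is not a pure restriction in general (if the first collision is at an edge midpoint rather than a face centre, one may need to extend $\alp$ by one $\cV_0$-step through a face already visited before truncating), so this needs to be argued rather than asserted.

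A related slip: you conclude $l(\alp)\geq 3$, but once the reduction is done correctly one gets $l(\alp)\geq 4$. With $R_1=R_{l(\alp)}$ in $X$, immersion rules out $l(\alp)\leq 2$, and the crossing hypothesis rules out $l(\alp)=3$ (if $R_1=R_3$ then $\alp(1)\in R_1\cap R_2=R_1\cap R_2\cap R_3$, contradicting $R_1\cap R_2\cap R_3=\emptyset$). Your weaker bound of $3$ is a symptom of having reduced to the wrong condition on the endpoints.
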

\begin{proof}
	By restricting $\alp$ to a shorter $\cV$-path, we can assume that 
	$\alp$ only self-intersects in the first and last $2$-cells it crosses.
	As every $\cV$-path of length at most three embeds by 
	Definition~\ref{def-Vpath-admissible},
	we know $\alp$ has length at least four.
	
	Let $L \ra X$ be the ladder of $\alp$, and let $A$ be
	the quotient of $L$ under the identification of its first and last $2$-cells;
	recall that $A$ is homeomorphic to an annulus or a M\"obius strip.
	Let $P$ be a path in $L$ joining the boundaries of its first and last $2$-cells,
	so that $P$ forms a closed immersed path in $A$.
	As in Definition~\ref{def-quasi-V-collared}, let 
	$E \ra X$ be a diagram quasi-V-collared by $\alp$,
	where 
	$D \ra X$ is a disc diagram with boundary path $P$, and
	$E = A \cup_P D$.
\end{proof}
Let $F = A \cup_P D$ be a quasi-$\cV$-collared diagram of minimal area,
among all self-intersecting $\cV$-paths $\alp$,
and all $A \cup_P D$ as in Lemma~\ref{lem:Vpathembed1}.
\begin{lemma}\label{lem:Vpathembed2}
	$F$ is a reduced quasi-$\cV$-collared diagram.
\end{lemma}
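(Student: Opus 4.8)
The plan is to argue by contradiction: if $F = A \cup_P D$ contained a cancellable pair of $2$-cells, we would produce a quasi-$\cV$-collared diagram of strictly smaller area, contradicting the minimality in the choice of $F$. This is the standard reduction move familiar from Ollivier--Wise's treatment of I-collared diagrams \cite[Lemma 3.9]{Oll-Wis-11-rand-grp-T}, and the bulk of the work is checking that each possible location of the cancellable pair can be dealt with while remaining inside the class of quasi-$\cV$-collared diagrams.

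First I would fix a cancellable pair $R_1, R_2$ in $F$ and split into cases according to where they sit. \textbf{Case 1: both $R_1, R_2$ lie in $D$.} Then the usual disc-diagram reduction applies: we remove $R_1, R_2$ and re-glue along the common boundary word, obtaining a new disc diagram $D'$ with the same boundary path $P$, and hence a new quasi-$\cV$-collared diagram $A \cup_P D'$ of area $|F| - 2$, for the same V-path $\alp$. \textbf{Case 2: both $R_1, R_2$ lie in the annulus/M\"obius part $A$.} Since $A$ is a quotient of the ladder $L$ of $\alp$, two $2$-cells of $A$ with identical boundary paths starting from a shared edge would force a shortening of $\alp$ itself (or would contradict the ``no doubling back'' condition built into $\cV_0$ via Definition~\ref{def-Vpath-admissible} and the reversibility/crossing hypotheses) — one produces a shorter self-intersecting $\cV$-path $\alp'$, whose quasi-$\cV$-collared diagram has strictly smaller area, again contradicting minimality. \textbf{Case 3: one of $R_1, R_2$ lies in $A$ and the other in $D$.} This is the genuinely delicate case: the two cells meet along an edge of $P = \partial D$, and cancelling them fuses a ladder cell into the disc. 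One pushes the cell of $D$ across into $A$ (equivalently, across the boundary path $P$), which amounts to choosing a different representative path $P'$ for the non-contractible cycle in $A$; this can only decrease the area of the disc part, and the resulting object is still quasi-$\cV$-collared by the (possibly shortened) $\cV$-path. One must check that $P'$ is still an immersed path joining the boundaries of the first and last $2$-cells of $L$, which follows because the move is local and the identifications defining $A$ are preserved.

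The main obstacle I expect is \textbf{Case 3}, and within it the bookkeeping needed to guarantee that after absorbing a disc cell into the ladder one still has a legitimate quasi-$\cV$-collared diagram rather than some degenerate configuration — in particular that the collaring V-path, after the induced shortening, still lies in $\cV$ (using that $\cV$ is defined by a local condition, Definition~\ref{def-Vpath-admissible}) and still has length at least four, and that the new disc diagram is still a genuine disc diagram with the base point on its boundary. If the shortened V-path drops below length four it must in fact embed (again by Definition~\ref{def-Vpath-admissible}, lengths $\le 3$ embed), so the original $\alp$ would have embedded after all — which is the desired contradiction anyway. Once all three cases are exhausted, no cancellable pair can exist, so $F$ is reduced, proving the lemma. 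The final step (Lemma~\ref{lem:Vpathembed3}, to come) will then extract from the reduced quasi-$\cV$-collared diagram $F$ an honest reduced $\cV$-collared diagram, contradicting \eqref{eq:as-no-red-V-collared}.
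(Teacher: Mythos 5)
Your proposal follows the paper's proof quite closely: the same three-way case split (both cells in $D$; both in $A$; one of each), and the same area-reduction argument in each case, with the $A$--$A$ case correctly identified as the one requiring re-extraction of a shorter self-intersecting $\cV$-path, and the mixed case handled by sliding $P$ past the collar cell before identifying. Two points deserve correction, though. First, in the mixed $A$--$D$ case the collaring $\cV$-path $\alp$ is \emph{not} shortened at all --- the paper pushes $P$ to the other side of $R_1$ and identifies $R_1$ with $R_2$, leaving $\alp$ and $A$ untouched and simply deleting $R_2$ from $D$ --- so the ``induced shortening'' bookkeeping you flag is relevant only to the $A$--$A$ case. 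Second, your resolution of the ``what if the new $\cV$-path has length $\le 3$'' worry is not sound: the fact that a length-$\le 3$ $\cV$-path embeds tells you nothing about whether the \emph{original} $\alp$ embeds, so the clause ``the original $\alp$ would have embedded after all'' does not follow. What the paper actually does is note that, because $\alp$ is an immersion and $\cV$ is crossing, the portion of the ladder running from $R_1$ around to $R_2$ (avoiding the corner) must contain at least four $2$-cells; after removing $R_1\cup R_2$ and folding the boundary, one restricts and extends $\alp$ to a genuine self-intersecting $\cV$-path $\alp'$ collaring a diagram of strictly smaller area, which is the desired contradiction. With those two repairs your argument coincides with the paper's.
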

\begin{proof}
	Suppose there is a cancellable pair of $2$-cells $R_1, R_2$ in $F$.
	
	First, if $R_1,R_2 \subset D$, then do the usual van Kampen diagram reduction where we
	remove the two open $2$-cells and the intersection of their closures,
	then identify the corresponding remaining boundaries.
	This gives a quasi-$\cV$-collared diagram of smaller area, contradicting
	the assumption on $F$.
	
	Second, if $R_1 \subset A$ and $R_2 \subset D$, then
	we push $P$ to the other side of $R_1$, and then identify the two faces $R_1, R_2$ 
	in the diagram.  This has the effect of removing $R_2$ from $D$,
	as shown in the two examples in Figure~\ref{fig-cancelAD}.
	As this again reduces the area of $F$, it is impossible.
\begin{figure}
	\centering
	\def\svgwidth{0.9\columnwidth}
	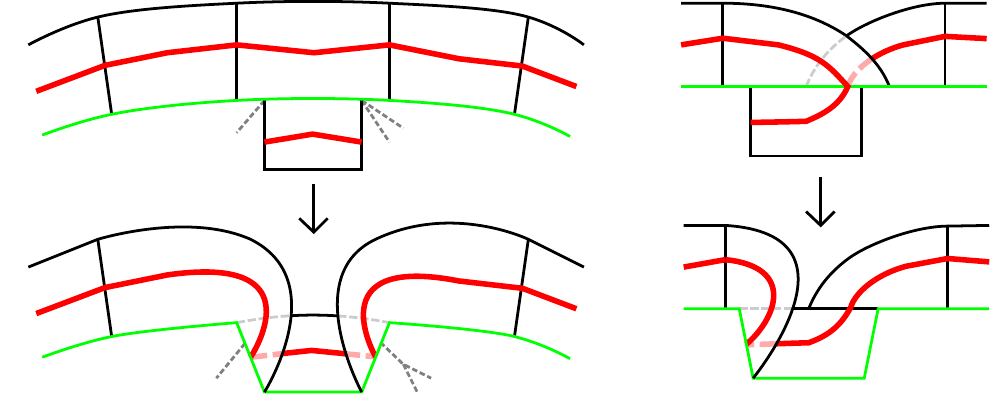
	\caption{Cancelling $2$-cells between $A$ and $D$}\label{fig-cancelAD}
\end{figure}
\begin{figure}
	\centering
	\def\svgwidth{0.85\columnwidth}
	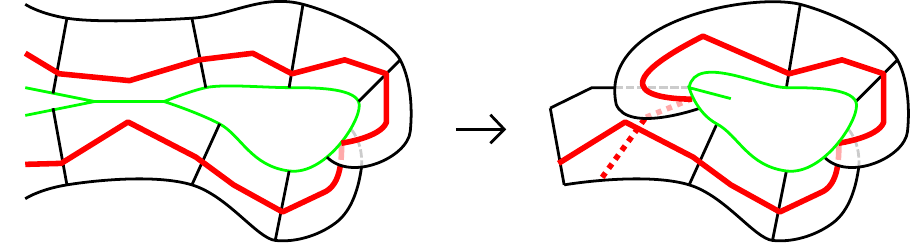
	\caption{Cancelling $2$-cells between $A$ and $A$ (a) and (b)}\label{fig-cancelAA}
\end{figure}

	Finally, suppose we have a cancellable pair $R_1, R_2 \subset A$,
	as in Figure~\ref{fig-cancelAA}(a).
	The $\cV$-path $\alp$ travels from $R_1$ through a sequence of $2$-cells
	$R_3, \ldots, R_4$ to $R_2$, avoiding the corner of $E$.  
	Since $\alp$ is an immersion and crossing, this sequence
	has length at least four.
	As in the usual argument, we remove the 
	interiors and intersection of $R_1$ and $R_2$, and identify the corresponding
	boundary paths edge by edge until the edge containing $R_1 \cap R_3 \cap \alp$
	(or $R_2 \cap R_4 \cap \alp$) is identified with an edge formerly in $R_2$ (or $R_1$);
	see Figure~\ref{fig-cancelAA}(b).
	We restrict $\alp$ to the $\cV$-path $\alp'$ from $R_3$ to $R_2$, and
	extend $\alp'$ from $R_3$ (or $R_4$) into $R_2$ to define
	a new quasi-$\cV$-collared diagram.
	(If the edge $R_1 \cap R_3$ is identified with the edge $R_2 \cap R_4$, we can discard
	$R_2$ too, and $\alp'$ forms a loop.)
	Again, this new diagram would have smaller area, which is impossible.
\end{proof}
\begin{lemma}\label{lem:Vpathembed3}
	$P$ does not cross $\alp$ in $F = A \cup_P D$.
\end{lemma}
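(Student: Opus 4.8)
The plan is to argue by contradiction within the setup already fixed: $F = A \cup_P D \ra X$ is a quasi-$\cV$-collared diagram of minimal area among all self-intersecting $\cV$-paths, and by Lemma~\ref{lem:Vpathembed2} it is reduced. Suppose, for contradiction, that the boundary path $P$ crosses the $\cV$-path $\alp$ somewhere in $F$. Since $P$ is immersed in $L^{(1)}$, it meets edges of $L$; a crossing of $\alp$ by $P$ means that $P$ passes through the midpoint of an edge $e$ that $\alp$ traverses, entering and leaving on opposite sides of $\alp$ locally. The key geometric observation is that near such a crossing, the $2$-cell $R$ of $L$ that $\alp$ enters $e$ into, together with the portion of $P$ inside $R$, cuts $D$: we can use $P$ near the crossing to shortcut the boundary path of $D$.

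**The main reduction step.** Concretely, I would take a crossing point where $P$ meets the midpoint of an edge $e \subset \partial R_k$ with $R_k$ one of the $2$-cells of the ladder $L$, and $\alp$ passes through $R_k$ crossing $e$. Because $P$ both enters and exits $R_k$ (it immerses in $L^{(1)}$ and crosses $\alp$ there), $P \cap \partial R_k$ consists of at least two edge-midpoints on opposite sides of the segment $\alp \cap R_k$. Now reroute: replace the arc of $P$ that runs through the interior region of $D$ adjacent to $R_k$ by an arc running along $\partial R_k$ on the side towards $A$. This produces a new cyclic path $P'$ bounding a disc diagram $D'$ with strictly fewer $2$-cells than $D$ (we have pushed at least the cell adjacent to the rerouted arc out of $D$, into the carrier/ladder side), and $A \cup_{P'} D'$ is again a quasi-$\cV$-collared diagram of the same self-intersecting $\cV$-path $\alp$ (possibly after restricting $\alp$ slightly, as in Lemma~\ref{lem:Vpathembed1}). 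Since $\Area(A)$ is unchanged and $\Area(D') < \Area(D)$, this contradicts the minimality of $F$.

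**Handling the cases.** The care needed is in checking that the rerouting genuinely decreases area and keeps the diagram quasi-$\cV$-collared, and this splits into cases depending on whether the crossing edge $e$ is an edge interior to $D$, on $\partial D \cap \partial A$, or near a corner of $E$. When $e$ lies strictly inside $D$ the rerouting simply slides $P$ across the single $2$-cell of $D$ incident to the crossing; when $e$ is on the shared boundary, one must verify that no $2$-cell of $A$ is incidentally created or destroyed in a way that raises area — here the crossing assumption on $\cV$ (via the planarity of the carrier $\check{L}$, cf.\ the Remark after Definition~\ref{def-Vpath-admissible}) ensures the local picture is the expected planar one. The argument is closely parallel to the standard fact in Ollivier--Wise that in a minimal I-collared diagram the collaring path does not cross the hypergraph, and to the cancellation analysis already carried out in Lemma~\ref{lem:Vpathembed2}; indeed one could phrase a crossing of $P$ and $\alp$ as producing a configuration that either yields a cancellable pair (excluded by Lemma~\ref{lem:Vpathembed2}) or a smaller-area quasi-$\cV$-collared diagram directly.

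**Main obstacle.** The subtle point I expect to be hardest is bookkeeping the non-planarity of $F$: since $F$ need not embed in the plane (the whole reason we work with \emph{quasi}-V-collared diagrams), ``rerouting $P$ to the other side'' must be done on the level of the ladder $L$ and its quotient $A$, not on a planar picture, and one must confirm the rerouted object still satisfies all of Definition~\ref{def-quasi-V-collared} — in particular that $P'$ still joins the boundaries of the first and last $2$-cells of $L$ and still closes up to an immersed loop in $A$, and that $D'$ is still a genuine disc diagram (contractible, not merely connected). Once the local move is pinned down combinatorially this is routine, but it is where the proof must be written with care rather than appealing to a planar figure.
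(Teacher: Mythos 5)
Your approach is genuinely different from the paper's, and it has a real gap. The paper does not reroute $P$; it extends $\alpha$ past the crossing point into $D$ to form a longer $\cV$-path $\alpha'$, and then splits into two cases: if $\alpha'$ self-intersects, a sub-$\cV$-path $\alpha''$ entirely contained in $D$ collars a strictly smaller quasi-$\cV$-collared subdiagram; if $\alpha'$ does not self-intersect, it must exit $D$ and re-enter $A$ at some $2$-cell $R_1$, and the part of $F$ cut off by $\alpha'$ not containing the corner is a smaller quasi-$\cV$-collared diagram with corner $R_1$. In both cases the contradiction is with minimality of $|F|$ over all self-intersecting $\cV$-paths and all choices of $A\cup_P D$ — not just over choices of $P$ and $D$ for a fixed $\alpha$.

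The gap in your proposal is the central claim that ``rerouting $P$ produces a disc diagram $D'$ with strictly fewer $2$-cells than $D$.'' This is asserted but not justified, and I don't see why it should be true. Recall $A$ and $D$ are abstract $2$-complexes glued only along the path $P$; they do not sit in a common ambient space, so there is no canonical sense in which a cell of $D$ ``gets pushed into the carrier side.'' Changing $P$ to $P'$ changes the boundary word, and one must supply a new disc diagram $D'$ for $P'$; nothing guarantees $|D'|<|D|$. The only way ``the cell adjacent to the rerouted arc'' could literally move from $D$ to $A$ is if that cell of $D$ mapped to the same $2$-cell of $X$ as the adjacent cell $R_k$ of $A$ in a compatible way — but then $(R_k,R')$ would be a cancellable pair, which Lemma~\ref{lem:Vpathembed2} has already excluded. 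So the configuration you need to discharge area from $D$ cannot occur in a reduced $F$, and the rerouting argument, as written, has nothing left to push. You also never engage with the dichotomy that drives the paper's proof (whether the extended $\cV$-path $\alpha'$ self-intersects inside $D$ or re-enters $A$), which is where the actual smaller diagram comes from. Your closing remark — that a crossing should yield either a cancellable pair or a smaller quasi-$\cV$-collared diagram — is the right intuition, but the mechanism for producing that smaller diagram is the $\cV$-path extension, not a boundary reroute, and that mechanism needs to be made explicit.
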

\begin{proof}
	If the path $P$ crosses $\alp$ in $A$,
	then we can extend $\alp$ to a $\cV$-path $\alp'$ that
	travels into $D$.
	If $\alp'$ self-intersects, then a sub-$\cV$-path $\alp''$ of $\alp'$ collars
	a reduced subdiagram $F'$ of $D$, see Figure~\ref{fig-cancelloop}(a).
	But this subdiagram has smaller area than $F$, which is impossible.
	\begin{figure}
	\centering
	\def\svgwidth{0.75\columnwidth}
	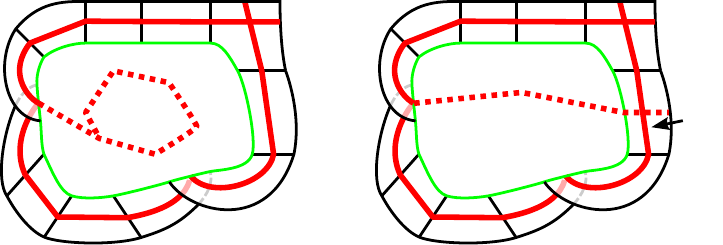
	\caption{Finding smaller quasi-collared diagrams (a) and (b)}\label{fig-cancelloop}
	\end{figure}
	
	If $\alp'$ does not self-intersect, it must intersect $A$ again in a
	$2$-cell $R_1$.  The part of $F$ which is bounded by $\alp''$ and does not
	contain the corner of $F$ gives us a smaller quasi-$\cV$-collared diagram $F'$
	with corner $R_1$, see Figure~\ref{fig-cancelloop}(b), which is impossible.
\end{proof}
Because $P$ does not cross $\alp$, it follows that $A$ is an annulus and 
$F = A \cup_P D$ is collared, contradicting assumption~\eqref{eq:as-no-red-V-collared}.
Therefore Theorem~\ref{thm-v-wall-embed} is proved.
\end{proof}

We now give conditions which ensure that every $\cV$-path $\alp \ra X$ is uniformly
quasi-isometrically embedded in $X$.  
\begin{proposition}\label{prop-V-path-qi}	
	Suppose we have \eqref{eq:as-no-red-V-collared}, and
\begin{gather}
	\parbox{0.9\textwidth}{there is no reduced diagram 
		collared by a $\cV$-path of length $\geq 3$ and a trivial path, and} 
	\tag{B}\label{eq:as-no-red-V-triv-collared} \\
	\label{eq:as-V-path-geod-in-ladder}\tag{C}
	\parbox{0.9\textwidth}{
	if $\alp$ is a $\cV$-path with $l(\alp)\geq 3$ and ladder $L \ra X$,
	and $\gam \subset X^{(1)}$ is a geodesic,
	and if $E \ra X$ is a reduced diagram collared by $\alp$ and $\gam$, 
	then $\gam \subset L$ in $E$.}
\end{gather}
	Then the carrier of every $\cV$-path embeds in $X$, 
	and the $\cV$-paths are uniformly quasi-convex in $X$:
	the endpoints of any $\cV$-path of length $n$ are at least $n/6$ apart in $X^{(1)}$.
	In particular, $\cV$-paths are uniformly quasi-isometrically embedded in $X$.
\end{proposition}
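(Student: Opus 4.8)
The plan is to adapt the proof strategy of Theorem~\ref{thm-v-wall-embed} to these three new conclusions, handling them in increasing order of difficulty. Assumption~\eqref{eq:as-no-red-V-collared} already gives, via that theorem, that every $\cV$-path embeds; the task here is to upgrade ``embeds'' to ``carrier embeds'' and then to a quantitative quasi-convexity estimate.

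\emph{Carrier embedding.} First I would show the carrier $\check L \ra X$ of a $\cV$-path $\alp$ embeds. The carrier differs from the ladder only in that consecutive cells are glued along \emph{all} of $R_i \cap R_{i+1}$, not just the edge met by $\alp$; by the crossing hypothesis this gluing is along a single edge (if $R_i \cap R_{i+1}$ were larger it would, together with the edge crossed by the previous or next segment, contradict crossing or Assumption~\ref{assump-main}), so $\check L$ is planar, as noted in the Remark. If $\check L \ra X$ fails to embed, two of its $2$-cells map to the same cell of $X$, and since the underlying ladder embeds by Theorem~\ref{thm-v-wall-embed} this collision must involve cells that are identified in $\check L$ but not in $L$ — i.e.\ the failure comes from a path in $X^{(1)}$, read along the boundary of the carrier, that is not reduced in $X$, or from two carrier cells $R_i,R_j$ ($|i-j|\geq 2$) sharing more than dictated. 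In either case one extracts a diagram collared by a sub-$\cV$-path of length $\geq 3$ together with a trivial boundary path (the degenerate ``collar'' coming from the carrier identification), contradicting~\eqref{eq:as-no-red-V-triv-collared}. This is the step where I expect to spend the most care: one must check that the carrier's non-embedding really does produce a \emph{reduced} such diagram, possibly after the same kind of reduction moves (cancellable pairs in $D$, between $A$ and $D$, between $A$ and $A$) used in Lemmas~\ref{lem:Vpathembed2}--\ref{lem:Vpathembed3}, and that minimality of area kills all the bad cases exactly as there.

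\emph{Quasi-convexity estimate.} Let $\alp$ be a $\cV$-path of length $n$ with ladder $L\ra X$, and let $\gam\subset X^{(1)}$ be a geodesic joining $\alp(0)$ to $\alp(n)$ (or rather the endpoints of $\gam$ are the nearest vertices). Form a disc diagram $D\ra X$ with boundary path $\gam$ followed by a path along $L$ back from $\alp(n)$ to $\alp(0)$; collaring $D$ by $\alp$ and $\gam$ gives, after the usual area-minimising reductions, a reduced diagram collared by $\alp$ (of length $n\geq 3$; the cases $n\leq 2$ are trivial since then $\alp$ has length $\leq 2$ and the endpoints are within bounded distance) and the geodesic $\gam$. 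By~\eqref{eq:as-V-path-geod-in-ladder}, $\gam$ then lies inside $L$. Since $L$ is a ladder of $n$ polygonal cells each of perimeter $\leq C$, its diameter along the $1$-skeleton is at least $n/C$ in the worst case; more carefully, each cell of $L$ contributes at least, say, one to the length of any path through $L$ from the first cell to the last that is reduced, so $\length_{X^{(1)}}(\gam)\geq n/6$ after accounting for the bounded perimeter (here the constant $6$ should come from a concrete count: a reduced path crossing a single polygon cell from one edge-midpoint region to another uses at least a bounded fraction of an edge per cell, and $6$ is a safe uniform choice given perimeter $\geq 2m \geq 6$). Because $\gam$ is geodesic, $d(\alp(0),\alp(n)) = \length(\gam)\geq n/6$. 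Finally, any point of $\alp$ is within distance $1$ of a vertex of $L$, and $L$ is quasi-isometrically built along $\alp$, so combining the endpoint estimate applied to sub-$\cV$-paths gives that $\alp$ itself is a $(6,c)$-quasigeodesic; hence $\cV$-paths are uniformly quasi-isometrically (equivalently, quasi-convexly) embedded.

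\emph{Main obstacle.} The genuinely delicate part is the carrier-embedding step: turning a failure of $\check L\ra X$ to embed into a contradiction with~\eqref{eq:as-no-red-V-triv-collared} requires the same three-case reduction analysis (cancellable pairs within $D$, across $A$ and $D$, and within $A$) as in Theorem~\ref{thm-v-wall-embed}, now in the presence of the degenerate trivial collaring path, and one must confirm that a minimal-area such configuration is both reduced and genuinely collared (planar) — exactly the analogues of Lemmas~\ref{lem:Vpathembed1}--\ref{lem:Vpathembed3}. Everything after that is a bounded-geometry counting argument.
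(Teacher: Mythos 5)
Your overall outline matches the paper's: carrier embedding via a contradiction with assumption~(B), and quasi-convexity via assumption~(C) applied to a (quasi-)collared diagram. The carrier step is essentially right in spirit (though the gluing in $\check L$ is along all of $R_i\cap R_{i+1}$, which is a connected arc and not necessarily a single edge — the crossing hypothesis only gives $e_i\cap e_{i+1}=\emptyset$, which is what keeps $\check L$ planar). The paper indeed reduces to $R_1\cap R_n\neq\emptyset$ and extracts a diagram collared by a $\cV$-path and a trivial path, using the three-case reduction analysis, exactly as you anticipate.

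The real gap is in the quasi-convexity step. First, you apply assumption~(C) directly to conclude $\gam\subset L$, but the diagram $E=L\cup_P D$ is a priori only \emph{quasi}-collared, and after eliminating cancellable pairs it may still fail to be planar — so (C) does not immediately apply. The paper instead proves that $\gam$ meets every $2$-cell of the ladder by a more intricate argument: if $\gam$ skips from $R_i$ to $R_j$ with $j\geq i+2$, one extracts a subdiagram $F$; if $F$ is collared, (C) applied to the sub-$\cV$-path gives a contradiction (since $\gam\cap R_{i+1}=\emptyset$); if $F$ is only quasi-collared, one extends $\alp$ into $D$ to produce a genuinely collared diagram and again contradicts~(C). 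Second, your derivation of the constant $6$ is wrong: you attribute it to the perimeter bound ($\geq 2m\geq 6$), but the perimeter plays no role. The correct count is that for each interior cell $R_i$ ($i=2,\ldots,n-1$), using the crossing condition and the fact that $\check L$ is embedded, $\gam$ either contains an edge in $R_i\setminus(R_{i-1}\cup R_{i+1})$ or an edge jumping between adjacent cells; since a jumping edge can be charged to at most two indices, $\gam$ contains at least $(n-2)/2$ edges, and then $(n-2)/2\geq n/6$ for $n\geq 3$. Your claim that ``each cell contributes at least one'' edge to $\gam$ would give $n-2$, which is too strong because jump edges are shared, and weakening it to $n/6$ via bounded perimeter is not a valid deduction.
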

Note that in assumption~\eqref{eq:as-V-path-geod-in-ladder}, necessarily the
two endpoints of $\gam$ lie in the boundaries of the initial and final $2$-cells
of $L$, respectively.
\begin{proof}
	Suppose $\alp$ is a $\cV$-path, with ladder $L = R_1 \cup \cdots \cup R_n \ra X$,
	and carrier $\check{L} \ra X$ which has the same $2$-cells identified along additional
	edges.  Note that $L \ra X$ factors through $L \ra \check{L} \ra X$.
	
	First, we show that carriers of $\cV$-paths embed.
	If $n \leq 3$ this follows from Assumption~\ref{assump-main} and the fact that 
	$\cV$-paths are crossing, so we assume that $n > 3$.
	In fact, these assumptions imply that $\check{L}\ra X$ is an immersion,
	and by Theorem~\ref{thm-v-wall-embed} $\alp$ embeds,
	so we just have to show that $R_i \cap R_j = \emptyset$ in $X$ for $|i-j|>1$.
	
	It suffices to show that $R_1 \cap R_n = \emptyset$ in $X$ for $n > 3$.
	If not, there is a diagram which is quasi-collared by a $\cV$-path $\alp$ and a
	trivial path.  In other words, we have a diagram $E \ra X$
	such as in Figure~\ref{fig-Vladder}(a), with $E = L \cup D$ for a disc diagram $D \ra X$.
\begin{figure}
	\centering
	\def\svgwidth{0.85\columnwidth}
	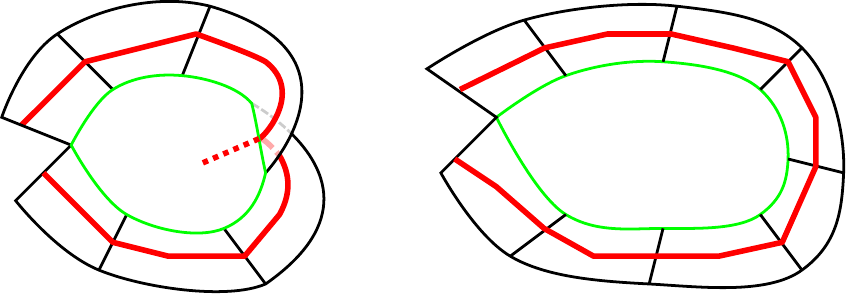
	\caption{(a) and (b)}\label{fig-Vladder}
\end{figure}
	We follow a similar argument to that of Theorem~\ref{thm-v-wall-embed}.
	Assume that $E$ has minimal area among all such diagrams,
	so there is no cancellable pairs within $D$, or between $D$ and $L$.
	(There are no cancellable pairs between $L$ and $L$, for that would give
	a self-intersecting $\cV$-path.)  So $E$ is reduced.
	In fact, $E$ is collared as in Figure~\ref{fig-Vladder}(b),
	for if $P$ crossed $\alp$, 
	we could extend $\alp$ into $D$ to find a smaller diagram collared by
	a $\cV$-path. 
	But such a collared diagram as this contradicts 
	assumptions \eqref{eq:as-no-red-V-collared}, \eqref{eq:as-no-red-V-triv-collared},
	or crossing.

	Second, we show that $\alp$ is quasi-convex in $X$.
	Consider $\alp$ as a locally (and hence globally) Lipschitz 
	map from $[0,n]$ into $X$.
	We show that if
	$\gam$ is a geodesic in $X^{(1)}$ joining the endpoints of $\alp$ in $R_1$ and $R_n$,
	then the length $l(\gam)$ of $\gam$ satisfies $l(\gam) \geq n/6$.
	
	If $n=1$ or $2$ then since $\alp$ has distinct endpoints we have
	$l(\gam) \geq 1 \geq n/6$, so we assume $n \geq 3$.

	Let $E \ra X$ be a diagram quasi-collared by $\alp$ and $\gam$,
	where $E = L \cup_P D$ for a disc diagram $D \ra X$ and a path $P$ which goes from
	$R_1$ to $R_n$ in $L$ and then along $\gam$.
	
	As before, we can reduce any cancellable pairs in $E$ (since $\alp$ is embedded,
	all such reductions will involve one or two $2$-cells in $D$ and will preserve 
	both $L$ and $\gam$).
	Let us again call this diagram $E \ra X$, which could look like 
	Figure~\ref{fig-Vpathqi}.
\begin{figure}
	\centering
	\def\svgwidth{0.85\columnwidth}
	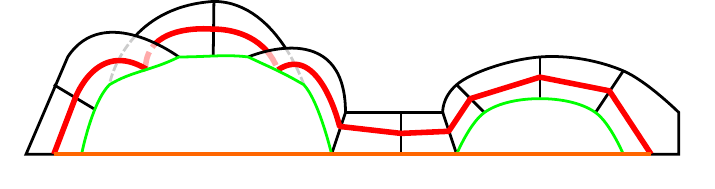
	\caption{Quasi-convexity of ladders}\label{fig-Vpathqi}
\end{figure}
	
	We claim that $\gam$ actually meets every $2$-cell in the ladder.
	By Definition~\ref{def-Vpath-admissible}, $\alp$ is crossing so
	$\gam$ cannot jump over a $2$-cell in the ladder without leaving the ladder.
	If $\gam$ jumps from some $R_i$ to $R_j$ for $j \geq i+2$, then
	there is a subdiagram $F$ of $E$, quasi-collared by $\alp \cap (R_i \cup \cdots \cup R_j)$
	and a subgeodesic of $\gam$ which meets $R_i \cup \cdots \cup R_j$ only at its endpoints.
	(The boundary path $P'$ for $F$ consists of the subgeodesic of $\gam$, followed
	by the relevant portion of the old path $P$ in $R_i \cup \cdots \cup R_j$.)
	
	This diagram $F$ cannot be collared as it would contradict 
	assumption \eqref{eq:as-V-path-geod-in-ladder} because 
	$\gam \cap R_{i+1} = \emptyset$.
	If $F$ is quasi-collared and $P'$ crosses $\alp$,
	we can extend the $\cV$-path $\alp$
	into $D$ to find a reduced diagram collared by a $\cV$-path $\alp'$ of length at 
	least three, and a geodesic which does not meet every $2$-cell of the ladder of $\alp'$,
	again contradicting \eqref{eq:as-V-path-geod-in-ladder}.
	So our claim holds: $\gam$ meets every $2$-cell in $L$, in order.
	
	For each $2$-cell $R_i$, $i=2, \ldots, n-1$, 
	by the crossing assumption of Definition~\ref{def-Vpath-admissible},
	and the fact that $\check{L}$ is embedded, 
	either $\gam$ contains an edge from $R_i \setminus (R_{i-1} \cup R_{i+1})$
	or $\gam$ contains an edge which jumps from $R_{i-1}$ to $R_i$ or from $R_i$ to $R_{i+1}$.
	In any case, $\gam$ has to contain at least $(n-2)/2$ edges.
	Finally, as $n \geq 3$, $(n-2)/2 \geq n/6$.
\end{proof}
\begin{remark}\label{rmk:geod-meets-every-cell-in-ladder}
	Note that the proof shows that if $\gam \subset X^{(1)}$ is a geodesic which joins the
	endpoints of a V-path $\alp$, then $\gam$ meets every $2$-cell in the ladder
	(or equivalently, the carrier) of $\alp$.  
	Thus if the boundary of any two adjacent $2$-cells in $X$ is 
	geodesically convex in $X^{(1)}$, then (the $1$-skeleton of) any carrier 
	is geodesically convex in $X^{(1)}$.
\end{remark}


\section{Defining branching walls}\label{sec-elem-complexes}

In this section we define a collection of elementary polygonal complexes
in $X$. 
We begin by setting up a general framework, and then describe the specific
complexes we use to prove Theorem~\ref{thm-upper-bound}.

\subsection{$\cV'$-branching walls}
Recall that $\cV$ was a fixed crossing and reversible collection of
V-paths.
We now restrict the collection of V-paths further to a crossing 
collection of V-paths $\cV' \subset \cV$, which need not be reversible.

(Later, $\cV$ will consist of all V-paths $\alp$ so that
$\alp(i)$ and $\alp(i+1)$ are roughly antipodal for each $i$,
while $\cV'$ will be a subset of these paths so that once $\alp(i)$ is fixed,
the possible locations of $\alp(i+1)$ are spread out.)

We begin by defining, for each edge $e \subset X^{(1)}$, an
immersed, rooted, bipartite tree $Z_e \ra X$.
The idea is that we follow out all $\cV'$-paths from the midpoint of $e$.

We build $Z_e \ra X$ by induction, and colour all vertices black or white according
to whether they correspond to the midpoint of an edge in $X^{(1)}$, or the middle
of a $2$-cell in $X$.
First, let $f_0:Z_{e,0} \ra X$ be a single root vertex, coloured black,
mapping to the midpoint of $e$, and we say that this midpoint is \emph{exposed}.

For the inductive step, suppose we are given 
an immersed rooted tree $f_{k}:Z_{e, k} \ra X$ 
and a collection of exposed vertices each coloured black.
For each such point $v$, consider each $2$-cell $R \subset X$
which is adjacent to $f_k(v)$, with the exception of the $2$-cell 
corresponding to the parent of $v$ (if it exists).
Consider all $\cV'$-paths of length one which travel from $f_k(v)$ across $R$,
and define $Z_{e,k+1}$ by adding a white vertex adjacent to $v$ corresponding to
the midpoint of $R$, and black vertices adjacent to this white vertex for the
endpoints of these $\cV'$-paths.  These black vertices are precisely the
exposed vertices of $Z_{e,k+1}$.
The immersion $f_k:Z_{e,k} \ra X$ extends to an immersion $f_{k+1}:Z_{e,k+1} \ra X$
which maps these new edges along the corresponding $\cV'$-paths.

We let $Z_{e}\ra X$ be the union of this increasing sequence of trees
and maps. 

Provided 
\begin{equation}\label{eq:as-white-vertex-deg-at-least-three}\tag{D}
	\parbox{0.9\textwidth}{
	each white vertex $v \in Z_e$ has degree $d(v)$ at least three,	}
\end{equation}
we can define
an elementary polygonal complex $Y_e$ associated to the edge $e \subset X^{(1)}$,
with an immersion $Y_e \ra X$, by taking a small neighbourhood of $Z_e \ra X$.
In effect, this replaces each black vertex with a black edge, and each white vertex $v$
with a $2$-cell with perimeter $2d(v)$, alternating between black edges for the adjacent
black vertices and white edges.
See Figure~\ref{fig-Vtree} for an illustration, where $Z_e$ is red,
$Y_e$ is green, and white edges are dashed.
\begin{definition}\label{def-Vcomplex}
	Given an edge $e \subset X^{(1)}$, we
	call the rooted, bipartite, immersed tree $Z_e \ra X$
	the \emph{$\cV'$-branching wall with root $e$}. 
	We call the corresponding $Y_e \ra X$ 
	the \emph{$\cV'$-elementary polygonal complex with root $e$}, 
	or \emph{$\cV'$-complex} for short.
\end{definition}
\begin{figure}
	\centering
	\def\svgwidth{0.65\columnwidth}
	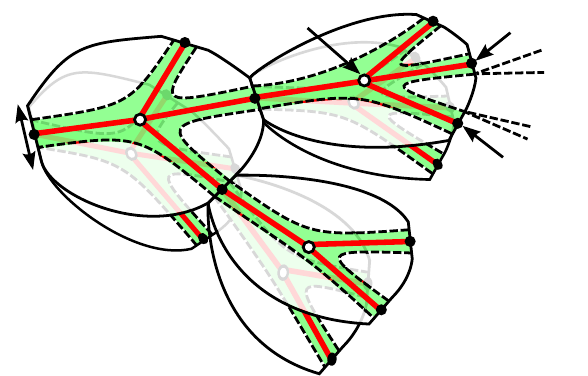
	\caption{Part of an elementary polygonal complex}\label{fig-Vtree}
\end{figure}

\subsection{$\cV$-paths in small cancellation groups}\label{ssec-sc-V-def}

Now suppose $X$ is the Cayley complex of a $C'(\lam)$ group $G$,
where $0 < \lam \leq \frac{1}{8}$ is fixed.
Recall that a (cyclically reduced)
group presentation $\langle S | R \rangle$ is $C'(\lam)$ if
whenever $r, r'$ are distinct cyclic conjugates of relations $R \cup R^{-1}$,
the common initial word of $r$ and $r'$ has length $<\lam \min\{|r|,|r'|\}$.
In this situation, $X$ satisfies Assumption~\ref{assump-main}.
For completeness, we now give a proof of this.
\begin{lemma}\label{lem:small-canc-assumption}
	The Cayley complex $X$ of a $C'(\frac{1}{6})$ finite presentation 
	$\langle S|R \rangle$ satisfies Assumption~\ref{assump-main}.
\end{lemma}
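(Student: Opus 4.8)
The plan is to verify the five conditions of Assumption~\ref{assump-main} for the Cayley complex $X$ of a $C'(\frac16)$ presentation: (i) $X$ is connected and simply connected, (ii) $X$ is Gromov hyperbolic, (iii) $X$ has bounded geometry, (iv) all closed $2$-cells are embedded, and (v) the intersection of any two $2$-cells is connected. Items (i) and (iii) are essentially immediate from the definition: $X$ is the universal cover of the presentation complex (so connected and simply connected by construction), and bounded geometry holds because the presentation is finite — the perimeters are bounded by $\max_{r\in\cR}|r|$, the degree of each vertex is bounded in terms of $|\cS|$ and the number of occurrences of generators in relators, and the thickness of an edge is similarly bounded (using that distinct pieces agree on short subwords by $C'(\frac16)$).

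Items (ii), (iv), (v) are the substantive ones, and here I would quote or reprove the standard consequences of Greendlinger's lemma and the small cancellation hypothesis. For hyperbolicity, I would invoke the linear isoperimetric inequality for $C'(\frac16)$ presentations (e.g.\ from \cite{Lyndon-Schupp-small-canc}): a $C'(\frac16)$ group satisfies a linear isoperimetric inequality, hence is hyperbolic, and so its Cayley complex $X$ with the given geodesic metric is Gromov hyperbolic. For (iv), embeddedness of closed $2$-cells: if the boundary path of a $2$-cell failed to be embedded, the cyclically reduced relator $r$ would have a repeated subword pattern forcing $r$ to be a proper power or forcing a piece of length $\geq \frac12|r|$, both contradicting $C'(\frac16)$ (recall we have already collapsed bundles coming from proper powers). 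For (v), connectedness of the intersection of two $2$-cells $R_1, R_2$: if $R_1 \cap R_2$ were disconnected, say it contained two arcs $\sigma_1, \sigma_2$, then pushing along the boundary of each cell between $\sigma_1$ and $\sigma_2$ produces a reduced disc diagram with two faces in which each face contributes a boundary arc that is a union of at most two pieces; a Greendlinger-type argument (an interior-cell or small-boundary-arc estimate) shows that a reduced diagram with two cells and this structure cannot close up under $C'(\frac16)$, since the total length accounted for by pieces on each $R_i$ would be $< \frac12|r_i| + \frac12|r_i| = |r_i|$, leaving a boundary segment that would have to be cancelled — contradicting reducedness, or else forcing $R_1 = R_2$.

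The main obstacle I expect is item (v), the connectedness of the intersection of two $2$-cells. The other items are either formal (connectedness, simple connectivity, bounded geometry) or well-documented black boxes (hyperbolicity from the linear isoperimetric inequality). But ruling out a disconnected intersection requires a careful diagram argument: one must set up the appropriate two-cell (or few-cell) reduced diagram bounded by the two arcs of $R_1 \cap R_2$ together with boundary arcs of $R_1$ and $R_2$, check that it is reduced (using that $R_1 \neq R_2$ as cells in $X$, together with the proper-power collapse), and then apply the small cancellation inequality to count piece-lengths and derive a contradiction. The bookkeeping — exactly which subpaths are pieces, and the constant $\frac16$ versus $\frac14$ — is where the care is needed, though it is entirely standard small cancellation technology.

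Finally, I would remark that while the statement is phrased for $C'(\frac16)$ (which is all that is needed to invoke Assumption~\ref{assump-main} in the examples), the same argument works verbatim for $C'(\lambda)$ with $\lambda \leq \frac16$, and the bounded geometry constants can be made explicit in terms of $|\cS|$, $|\cR|$, and $M = \max_{r}|r|$, which is what Theorem~\ref{thm-upper-bound} ultimately requires.
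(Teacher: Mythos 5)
Your outline for items (i)--(iii) matches the paper (connectedness, simple connectivity, bounded geometry are formal; hyperbolicity is quoted -- the paper cites the Dehn presentation argument from Ghys--de la Harpe rather than the linear isoperimetric inequality, but this is a cosmetic difference). For item (v) you have the right strategy -- build a disc diagram and run a small cancellation counting argument -- though your sketch has two problems. First, you cannot assume the diagram between the two arcs of $R_1\cap R_2$ has no other $2$-cells; the paper fills the immersed loop $P_1\cup P_2$ with a disc diagram $D$ of \emph{minimal} area and only then glues $R_1$ and $R_2$ to $D$ (minimality is what makes the resulting diagram $E=R_1\cup_{P_1}D\cup_{P_2}R_2$ reduced). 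Second, your ``$<\frac12|r_i|+\frac12|r_i|=|r_i|$'' piece count does not correspond to anything the $C'(\frac16)$ hypothesis controls; the paper instead quotes a curvature (Gauss--Bonnet--type) lemma applied to the reduced diagram $E$, which is both cleaner and actually closes the argument.

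The genuine gap is item (iv). You treat the embeddedness of $\partial R$ as a free-group condition on the word $r$ (``a repeated subword pattern forcing $r$ to be a proper power or a piece of length $\geq\frac12|r|$''). This is not what non-embeddedness means: if the attaching loop of a $2$-cell in the Cayley complex $X$ is not injective, a proper nontrivial subword $w$ of $r$ satisfies $w=_G 1$, and this is a statement about the group $G$, not about $r$ in the free group. Ruling it out requires exactly the same van Kampen diagram argument as (v): take a minimal-area filling $D$ of the loop $P\subsetneq\partial R$, glue $R$ to $D$ to obtain a reduced diagram in which $R$ bumps into itself, and then apply the curvature lemma (the paper cites \cite[Lemma 3.10]{Mac-12-random-cdim}, a consequence of Lemma~\ref{lem:disc-diag-curvature}). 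Without this step your proof of (iv) does not go through, and since (iv) and (v) have identical structure, recognizing that both are diagram arguments is the key missing idea.
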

\begin{proof}
	As $X$ is a Cayley complex of a finitely presented group, it
	is a connected and simply connected $2$-complex with bounded geometry.
	As is well known, $C'(\frac{1}{6})$ groups are hyperbolic; 
	indeed, $\langle S|R \rangle$ is a Dehn 
	presentation~\cite[page 244]{Ghys-dlH-90-hyp-groups}.
	
	We now show that for any $2$-cell $R \ra X$, the boundary $\partial R$
	is embedded.
	Suppose this is not true.  Then there is a $2$-cell $R \ra X$
	and a closed non-trivial interval $P \subsetneq \partial R$ so that the induced
	map $P \subset R \ra X$ gives a closed loop $P \ra X$.
	Let $D \ra X$ be a disc diagram with boundary $P \ra X$,
	and assume $D$ has minimal area for all such choices of $P$ and $D$.
	
	Gluing $D$ to $R$ along $P$ gives a disc diagram $E = R \cup_P D \ra X$.
	In fact, because $D$ has minimal area, $E$ is reduced:
	any cancellable pair in $D$ would already be removed,
	and if $R$ and a $2$-cell $R' \subset D$ are a cancellable pair,
	we can remove $R$ and $R'$ from $E$ and glue up their boundaries
	to find a disc diagram $D'$ of smaller area with boundary 
	$\overline{\partial R \setminus P} \ra X$, contradicting minimality of area.
	But now $E = R \cup_P D \ra X$ is a reduced disc diagram for a
	$C'(\frac{1}{6})$ group where the $2$-cell $R \subset E$ bumps into itself;
	a contradiction~\cite[Lemma 3.10]{Mac-12-random-cdim}.
	
	We now show that the intersection of any two $2$-cells in $X$
	is connected.
	Suppose not.  Then we have $2$-cells $R_1 \ra X, R_2 \ra X$ 
	and non-trivial closed intervals $P_1 \subsetneq \partial R_1, P_2 \subsetneq R_2$
	so that $P_1 \cup P_2$ is an immersed loop in 
	$R_1 \cup_{\partial R_1 \cap \partial R_2} R_2$, where we glue $R_1$ and $R_2$
	along their boundaries' intersection in $X$.
	Consider a disc diagram $D$ for the induced map $P_1 \cup P_2 \ra X$
	of minimal area amongst all such choices of $P_1, P_2, D$.
	Now let $E = R_1 \cup_{P_1} D \cup_{P_2} R_2$ be the disc diagram where
	we glue $R_1$ and $R_2$ to $D$ along $P_1$ and $P_2$, respectively.
	In a similar way as before, $E$ is also a reduced disc diagram.
	In $E$, the two $2$-cells $R_1$ and $R_2$ do not meet along a connected
	interval, which contradicts~\cite[Lemma 3.10]{Mac-12-random-cdim}.
\end{proof}
(The result \cite[Lemma 3.10]{Mac-12-random-cdim} used in the above proof follows
straightforwardly from Lemma~\ref{lem:disc-diag-curvature} below.)

Suppose too that every $1$-cell of $X$ is contained in at least two $2$-cells.
(These assumptions are satisfied if 
every generator of $G$ appears in at least two different places 
in the presentation for $G$.)

We choose $\cV$ to be the set of all V-paths $\alp$ 
so that for all $i = 0, \ldots, l(\alp)$,
the distance between $\alp(i)$ and $\alp(i+1)$ in the boundary
of the $2$-cell $R$ corresponding to $\alp(i+\frac{1}{2})$ is $\geq \frac{3}{8}|R|$.
If a $\cV$-path goes
through $2$-cells $R_1, R_2, R_3$, then $R_1$ and $R_3$ are separated in $\partial R_2$
by at least $(\frac38 - 2\lam)|\partial R_2| > 0$.
Therefore $\cV$ is crossing and reversible.

To apply Theorem~\ref{thm-v-wall-embed} and Proposition~\ref{prop-V-path-qi},
we must check that assumptions~\eqref{eq:as-no-red-V-collared}--\eqref{eq:as-V-path-geod-in-ladder}
are satisfied.
Our main tool is the the following standard fact about disc diagrams.
\begin{lemma}[{\cite[page 241]{Ghys-dlH-90-hyp-groups}}]\label{lem:disc-diag-curvature}
	For any disc diagram $D$ homeomorphic to a disc
	\begin{equation}\label{eq-strebel}
		6 = 2 \sum_v (3-d(v)) + \sum_R (6-2 e(R) -i(R)),
	\end{equation}
	where $d(v)$ is the degree of vertex $v$, and for each $2$-cell $R$,
	$e(R)$ is the number of exterior edges, and $i(R)$ the number of interior edges.
\end{lemma}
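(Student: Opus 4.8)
The plan is to derive \eqref{eq-strebel} directly by expanding Euler's formula for the planar $2$-complex $D$. Since $D$ is homeomorphic to a closed $2$-disc it is contractible, so $\chi(D) = V - E + F = 1$, where $V$, $E$, $F$ denote the numbers of vertices, edges, and $2$-cells of $D$. The identity is obtained by apportioning $6\chi(D) = 6$ between the vertices (producing the $\sum_v(3-d(v))$ term) and the $2$-cells (producing the $\sum_R(6-2e(R)-i(R))$ term).

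First I would record three elementary counting identities. Because $D$ is homeomorphic to a disc it has no spurs, and each edge is incident to at least one closed $2$-cell; call an edge \emph{boundary} or \emph{interior} according to whether it lies on $\partial D$ or not. Each boundary edge is traversed once, and each interior edge twice (counted with multiplicity), by the attaching loops of the surrounding $2$-cells. Writing $E_{\mathrm{ext}}$ and $E_{\mathrm{int}}$ for the numbers of boundary and interior edges, so that $E = E_{\mathrm{ext}} + E_{\mathrm{int}}$, this gives $\sum_R e(R) = E_{\mathrm{ext}}$ and $\sum_R i(R) = 2E_{\mathrm{int}}$. The third identity is the handshake lemma for the $1$-skeleton of $D$: $\sum_v d(v) = 2E$, where a loop at $v$ contributes $2$ to $d(v)$.

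Next I would assemble the formula. Multiplying $\chi(D)=1$ by $6$ and substituting $6V = \sum_v 6 = 2\sum_v(3-d(v)) + 2\sum_v d(v) = 2\sum_v(3-d(v)) + 4E$ gives
\[
6 = 6V - 6E + 6F = 2\sum_v (3 - d(v)) + \bigl(6F - 2E\bigr).
\]
Finally, $2E = 2E_{\mathrm{ext}} + 2E_{\mathrm{int}} = 2\sum_R e(R) + \sum_R i(R)$, so $6F - 2E = \sum_R\bigl(6 - 2e(R) - i(R)\bigr)$, which is precisely \eqref{eq-strebel}.

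The computation is routine; the only delicate point is to fix conventions so that the three counting identities are exact equalities — namely that $d(v)$ counts edge-ends (loops counting twice), that $e(R)$ and $i(R)$ count traversals of edges by the attaching loop of $R$ with multiplicity, and that the hypothesis that $D$ is homeomorphic to a disc is genuinely used to exclude spurs and to ensure every edge borders one or two $2$-cells with the stated multiplicities. No hyperbolicity or small-cancellation assumption enters: \eqref{eq-strebel} holds purely because $\chi(D) = 1$.
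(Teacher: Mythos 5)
The paper does not supply its own proof of this identity; it cites Ghys--de la Harpe. Your derivation --- multiplying $\chi(D)=V-E+F=1$ by $6$, distributing $6V$ to the vertices via $\sum_v d(v)=2E$, and distributing $6F-2E$ to the $2$-cells via $\sum_R e(R)=E_{\mathrm{ext}}$ and $\sum_R i(R)=2E_{\mathrm{int}}$ --- is exactly the standard Euler-characteristic computation that underlies the cited reference, and the algebra checks out; the remark that being homeomorphic to a disc is what rules out spurs and forces each edge to bound one or two $2$-cells with the stated multiplicities is precisely the right point to flag.
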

This lemma will always be applied, and $i(R), e(R)$ calculated, 
after removing all vertices of degree two from $D$.
For such diagrams the sum over all vertices $v$ contributes $\leq 0$ to \eqref{eq-strebel}.
\begin{remark}\label{rmk:internal-at-least-four}
Suppose we have a disc diagram $D \ra X$ so that there are $2$-cells 
$R_1, R_2, R_3$, with a $\cV$-path $\alp$ going through $R_1, R_2, R_3$.
Then our definition of $\cV$-path implies that if $e(R_2)=1$ then 
$i(R_2) > \frac38 |\partial R_2| / \lam |\partial R_2| \geq 3$, so $i(R_2) \geq 4$.
\end{remark}

\begin{lemma}[Assumption \eqref{eq:as-no-red-V-collared}]\label{lem-no-V-collared}
	There is no reduced, $\cV$-collared diagram.
\end{lemma}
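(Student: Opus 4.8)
The plan is to argue by contradiction using the combinatorial Gauss--Bonnet formula, Lemma~\ref{lem:disc-diag-curvature}. Suppose $E = A \cup_P D$ is a reduced $\cV$-collared diagram, with collaring $\cV$-path $\alp$, ladder $L = R_1 \cup \dots \cup R_n \ra X$ (so $A$ is the quotient of $L$ identifying the copies of $R_1$ and $R_n$) and $D \ra X$ a disc diagram with boundary path $P$, as in Definition~\ref{def-quasi-V-collared}. Since $\cV$ is crossing and $\alp$ is immersed, the cases $n \le 3$ cannot occur (for $n = 3$ one would have $R_1 = R_3$, contradicting crossing), so $n \ge 4$. The first, and most delicate, step is to read off the topology of $E$ from the hypothesis that the diagram is $\cV$-collared, not merely quasi-$\cV$-collared: the planar embedding of $D$ extends to a planar embedding of $E$, so $E$ is homeomorphic to a disc and $D$ to an embedded sub-disc with $\partial D = P$. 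Then $A = \overline{E \setminus D}$ has non-trivial fundamental group (the core $\cV$-path $\alp$ runs around it), so it is not a disc; as noted earlier it is an annulus or a M\"obius strip, and since it embeds in the plane it is an annulus. Hence $D$ lies in the interior of $E$, and $\partial E$ is the outer boundary circle of $A$, covered by the ladder cells. In particular the $2$-cells of $E$ are exactly the $n-1$ ladder cells $R_1 = R_n, R_2, \dots, R_{n-1}$, among which $R_1 = R_n$ is the unique corner, together with the $2$-cells of $D$, every one of which is internal to $E$.

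Next I would apply Lemma~\ref{lem:disc-diag-curvature} to $E$ after deleting all vertices of degree two; the vertex term is then $\le 0$, so it is enough to show $\sum_R \bigl(6 - 2e(R) - i(R)\bigr) < 6$, the sum being over the $2$-cells of $E$. I would bound each summand. If $R$ is internal to $E$ then $e(R) = 0$, and since $E$ is reduced and $\langle \cS | \cR \rangle$ is $C'(\lam)$ with $\lam \le \tfrac18$, every edge of $R$ is a piece of length $< \lam|\partial R| \le \tfrac18|\partial R|$, so $i(R) > 1/\lam \ge 8$ and $R$ contributes at most $6 - 9 = -3$; this handles all $2$-cells of $D$. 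If $R_i$ is a ladder cell with $2 \le i \le n-1$, then $\alp$ passes through $R_{i-1}, R_i, R_{i+1}$, and since an edge of the planar diagram $E$ lies in at most two $2$-cells, the edges $R_{i-1} \cap R_i$ and $R_i \cap R_{i+1}$ --- shared with the distinct cells $R_{i-1}$ and $R_{i+1}$ --- are distinct interior edges of $E$, so $i(R_i) \ge 2$; if moreover $e(R_i) = 1$ then Remark~\ref{rmk:internal-at-least-four} upgrades this to $i(R_i) \ge 4$. Running through $e(R_i) = 0$ (treated above), $e(R_i) = 1$ (giving $6 - 2 - 4 = 0$) and $e(R_i) \ge 2$ (giving $6 - 4 - 2 = 0$), every non-corner ladder cell contributes at most $0$. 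Finally, the corner $R_1 = R_n$ has the two distinct interior edges $R_1 \cap R_2$ and $R_{n-1} \cap R_n$ (distinct since $R_2 \ne R_{n-1}$), so $i(R_1) \ge 2$: if $e(R_1) = 0$ it is internal and contributes at most $-3$ as above, and if $e(R_1) \ge 1$ it contributes at most $6 - 2 - 2 = 2$. Summing, $\sum_R \bigl(6 - 2e(R) - i(R)\bigr) \le 2 < 6$, contradicting Lemma~\ref{lem:disc-diag-curvature}; hence no reduced $\cV$-collared diagram exists.

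The step I expect to be the main obstacle is the first one: justifying the topological picture of $E$ from the $\cV$-collared hypothesis, and cleanly excluding the degenerate small-$n$ configurations. Everything afterwards hinges on $E$ being a genuine planar disc whose boundary runs only through ladder cells, so that the cells of $D$ contribute no positive curvature and there is a unique corner; granted this, the curvature count above uses only $C'(\tfrac18)$, the standard fact that interior edges of a reduced diagram are pieces, and Remark~\ref{rmk:internal-at-least-four}. The same template --- reduce, delete degree-two vertices, apply Lemma~\ref{lem:disc-diag-curvature}, and control the corners --- will recur when verifying assumptions~\eqref{eq:as-no-red-V-triv-collared} and~\eqref{eq:as-V-path-geod-in-ladder}.
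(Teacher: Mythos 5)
Your proof is correct and follows the same strategy as the paper: reduce to a planar disc, apply the combinatorial Gauss--Bonnet formula of Lemma~\ref{lem:disc-diag-curvature}, and bound contributions cell-by-cell using $C'(\lam)$ (for interior cells of $D$), Remark~\ref{rmk:internal-at-least-four} (for non-corner ladder cells), and the observation $i(R)\geq 2$ (for the unique corner). You spell out the topological setup (annulus vs.\ M\"obius, excluding $n\leq 3$) and the corner case analysis in more detail than the paper's terse version, but the argument is essentially identical.
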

\begin{proof}
	Let $E \ra X$ be such a diagram, with $A \subset E$ the quotient of the corresponding
	ladder, and $\partial E \subset A$.  
	Considered in the plane, $A$ is a connected loop of $2$-cells joined along $1$-cells,
	so $\partial E$ must be embedded and thus $E$ is homeomorphic to a disc.
	
	In $E$, the corner $R$ has $e(R)=1$ and $i(R) \geq 2$,
	so contributes at most $6 -2 \cdot 1 - 2 = 2$ to \eqref{eq-strebel}.
	All interior $2$-cells contribute $\leq 6-9=-3$ to \eqref{eq-strebel}.
	Any non-corner $2$-cell $R$ in the collar is adjacent to at least $4$ other $2$-cells
	by Remark~\ref{rmk:internal-at-least-four}, so contributes 
	$\leq 6-2\cdot 1 -4 = 0$ to \eqref{eq-strebel}.
	These combine to give a contradiction.
\end{proof}

\begin{lemma}[Assumption \eqref{eq:as-no-red-V-triv-collared}]\label{lem:noladderbumpdiagram}
	There is no reduced diagram collared by a $\cV$-path of length $\geq 3$ and a trivial path.
\end{lemma}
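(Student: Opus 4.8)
The plan is to argue by a curvature count, closely parallel to the proof of Lemma~\ref{lem-no-V-collared}. Suppose for contradiction that $E \ra X$ is a reduced diagram collared by a $\cV$-path $\alp$ of length $n \geq 3$, with ladder $L = R_1 \cup \cdots \cup R_n \ra X$, and a trivial path. By Definition~\ref{def-quasi-V-collared} we have $E = L \cup_P D$ for a disc diagram $D \ra X$, where $P = \partial D$ is an immersed loop running through $L^{(1)}$ from $\partial R_1$ around through $R_2, \dots, R_{n-1}$ to $\partial R_n$, its two ends having the same image $x$ in $X$ (they are joined by the trivial collaring path, so they are identified in $E$); and, the diagram being collared, $D$ is attached to $L$ along $P$ so that the planar embedding of $D$ extends to one of $E$. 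Since $\cV$ is crossing, the carrier of $\alp$, and hence $L$ itself, is planar, and $R_1 \cap R_n = \emptyset$ in $L$; combined with the collaring this shows $\partial E$ is an embedded loop and $E$ is homeomorphic to a disc, with every $2$-cell of $D$ interior to $E$. First I would observe that $D$ is non-empty: were it empty, then $E = L$ would be a strip of $2$-cells in which $R_1$ and $R_n$ are disjoint, so no point of $E$ could lie on both $\partial R_1$ and $\partial R_n$, contradicting the identification forced by the trivial collaring path.

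Next I would remove all degree-$2$ vertices from $E$ and apply Lemma~\ref{lem:disc-diag-curvature}. Since no vertex then has degree below $3$, the vertex sum in \eqref{eq-strebel} is $\leq 0$, and it suffices to bound $\sum_R \bigl(6 - 2e(R) - i(R)\bigr)$ strictly below $6$. For a $2$-cell $R$ with $e(R) = 0$: as the presentation is $C'(\lam)$ with $\lam \leq \tfrac18$ and $E$ is reduced, each of the $i(R)$ boundary arcs that $R$ shares with a neighbour has length $< \lam|\partial R|$, and these arcs cover $\partial R$, so $i(R) > 1/\lam \geq 8$ and $R$ contributes $6 - i(R) \leq -3$; in particular every cell of $D$ contributes at most $-3$. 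For a ladder cell $R_i$ with $2 \leq i \leq n-1$: it is crossed by $\alp$ together with both neighbours $R_{i-1}, R_{i+1}$, so by Remark~\ref{rmk:internal-at-least-four}, $i(R_i) \geq 4$ when $e(R_i) = 1$, giving contribution $6-2-4 = 0$; when $e(R_i) \geq 2$ the bound $i(R_i) \geq 2$ forces contribution $\leq 6-4-2 = 0$; and $e(R_i)=0$ is the case above. So these cells contribute at most $0$. Finally, each corner $R_1, R_n$ meets its ladder neighbour, so $i \geq 1$, and hence contributes at most $6 - 2 - 1 = 3$ when it lies on $\partial E$, and at most $-3$ otherwise. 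Summing: $\sum_R \bigl(6 - 2e(R) - i(R)\bigr) \leq 3 + 3 + 0 + (-3) < 6$, contradicting \eqref{eq-strebel}. Hence no such diagram exists.

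The real obstacle is the opening topological bookkeeping rather than the count: one must check that such a collared diagram is genuinely homeomorphic to a disc, that the filling disc $D$ is non-empty, and that all of its cells are interior to $E$. This is exactly what allows the two corners (each worth at most $3$) to be paid for by a single interior cell of $D$ (worth at most $-3$), so that the total stays strictly below $6$, just as the lone corner of a genuine $\cV$-collared diagram did in Lemma~\ref{lem-no-V-collared}. Once that is set up, the rest is a routine application of the Strebel-type formula \eqref{eq-strebel}, as in the preceding lemmas.
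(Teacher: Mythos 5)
Your argument and the paper's run the same curvature count, but they resolve the borderline case differently, and that difference is exactly where your argument has a gap. The paper's proof runs the count with the two corners at $\leq 3$ each, the other ladder cells at $\leq 0$, and interior cells at $\leq -3$, observes that $3+3 = 6$ already saturates \eqref{eq-strebel}, concludes there are no interior $2$-cells, and then disposes of the remaining equality case by a separate structural argument (the internal edges of the diagram form a tree and the crossing assumption fails at a vertex adjacent to two leaves). You instead try to rule out the equality case up front by showing that the filling disc $D$ must contain at least one interior $2$-cell, so that the sum drops strictly below $6$.

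The gap is in the justification that $D$ contains a $2$-cell. You argue that if $D$ were empty then $E = L$ would be a strip in which $R_1$ and $R_n$ are disjoint, contradicting the identification forced by the trivial collaring path. But a disc diagram $D$ may have $1$-cells without having any $2$-cells, i.e.\ $D$ may be a nontrivial tree; in that case $E$ is not $L$ but a quotient of $L$: gluing a tree $D$ to $L$ along $P$ identifies pairs of edges of $L$ (corresponding to the two boundary traversals of each edge of the tree), and in particular identifies the two endpoints of $P_1$. After this identification a point of $E$ may well lie on both $\partial R_1$ and $\partial R_n$, so the contradiction you point to disappears. This is precisely the degenerate configuration the paper's extra structural step is there to handle.

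Your conclusion is nonetheless correct and the repair is short: $P_1$ is required by Definition~\ref{def-quasi-V-collared} to be an immersed path in $L^{(1)}$ and $P_2$ is trivial, so the cyclic boundary path $P = \partial D$ can backtrack at most once (at its single joint), whereas the boundary path of any tree with at least one edge backtracks at each of its $\geq 2$ leaves; and $D$ being a single vertex would force $P_1$ to be trivial, impossible since $R_1 \cap R_n = \emptyset$ in $L$ for $n \geq 3$. Hence $D$ does contain a $2$-cell and your strict inequality follows. As written, though, the one step on which the whole shortening rests is argued only for the trivial sub-case and therefore does not close the proof.
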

\begin{proof}
	Similarly to Lemma~\ref{lem-no-V-collared}, these diagrams are homeomorphic to a disc.
	
	In such a diagram, the contributions to \eqref{eq-strebel} are
	at most $6-2\cdot 1-1 = 3$ for the two corners, 
	at most $6-2\cdot 1 - 4 = 0$ for the other external $2$-cells,
	and at most $6-9 = -3$ for any internal $2$-cells.
	So there are no internal $2$-cells, and the internal edges of $D$ form a tree.
	At a vertex of this tree adjacent to two leaves of the tree 
	the crossing assumption is contradicted.
\end{proof}

Before considering the interaction of $\cV$-paths and geodesics,
we consider geodesics in small cancellation Cayley complexes.
\begin{lemma}\label{lem:sc-twocells-geod-convex}
	Let $X$ be the Cayley complex of a $C'(\lambda)$ group presentation.
	If $\lambda \leq \frac{1}{6}$ then any $2$-cell $R \subset X$ is geodesically
	convex.
	If $\lambda \leq \frac{1}{8}$ then $R \cup R'$ is geodesically convex
	whenever $R, R'$ are $2$-cells in $X$ with $R \cap R' \neq \emptyset$.	
\end{lemma}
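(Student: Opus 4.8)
The plan is a disc-diagram argument by contradiction, in the spirit of the proof of Lemma~\ref{lem:small-canc-assumption} and using the combinatorial Gauss--Bonnet formula \eqref{eq-strebel}. Write $Y=R$ for the first claim and $Y=R\cup R'$ for the second; in the second claim, if $R\cap R'$ is a single vertex $v$ then $Y$ is a wedge and convexity follows from the first claim (a geodesic between a point of $R$ and a point of $R'$ cannot pass through $v$ twice), so assume $R\cap R'$ is an arc $p$, which is a piece by the small cancellation hypothesis and Assumption~\ref{assump-main}. Suppose $\gamma\subset X^{(1)}$ is a geodesic joining two points of $Y$ with $\gamma\not\subset Y$; replacing $\gamma$ by a subsegment we may assume $\gamma$ meets $Y$ only at its two (distinct) endpoints $x,y\in\partial Y$. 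Each of the two arcs of $\partial Y$ from $x$ to $y$ is a path from $x$ to $y$, so has length $\ge|\gamma|$; let $\delta$ be the longer one, so $|\delta|\ge\tfrac12|\partial Y|$, and let $\delta'$ be the complementary arc. I would take $D\to X$ to be a disc diagram with boundary path $\gamma\cdot\overline\delta$ of least area such that $E:=Y\cup_\delta D$ (glued along $\delta$) is reduced; as in Lemma~\ref{lem:small-canc-assumption} this exists, and since $\gamma$ meets $\partial Y$ only at $x,y$, the loop $\partial E=\gamma\cdot\overline{\delta'}$ is embedded, so $E$ is homeomorphic to a disc and \eqref{eq-strebel} applies. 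Because $\gamma\not\subset Y$, $E$ has at least one $2$-cell in $D$.

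After deleting all degree-two vertices, $2\sum_v(3-d(v))\le 0$, so \eqref{eq-strebel} gives $\sum_S(6-2e(S)-i(S))\ge 6$, and I will derive a contradiction by showing every $2$-cell contributes at most $0$. Reducedness and $C'(\lambda)$ make every interior arc of any cell $S$ of $E$ a piece of length $<\lambda|\partial S|$. If $S$ is an internal cell this forces $i(S)>1/\lambda\ge 6$, so $S$ contributes $\le -1$. If $S$ is a cell of $D$ on $\partial E$, then it meets $\partial E$ only along $\gamma$ (since $\delta$ is interior in $E$); a connected arc of $\partial S$ lying on the geodesic $\gamma$ has length $\le\tfrac12|\partial S|$ (replace it by the complementary arc of $\partial S$), so the interior part of $\partial S$ has length $\ge\tfrac12|\partial S|$ and $i(S)>\tfrac1{2\lambda}\ge 3$, i.e.\ $i(S)\ge 4$; since also $i(S)\ge e(S)$ when $e(S)\ge 2$, in all cases $6-2e(S)-i(S)\le 0$.

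It remains to bound the cells coming from $Y$. For the first claim, $\partial R$ meets $\partial E$ in the single arc $\delta'$, so $e(R)=1$, and $\delta$ is a union of $i(R)$ pieces with $|\delta|\ge\tfrac12|\partial R|$, whence $i(R)\ge 4$ and $R$ contributes $\le 6-2-4=0$: this proves the first claim for $\lambda\le\tfrac16$. For the second claim, $p$ is a single interior edge shared by $R$ and $R'$, each of $\partial R,\partial R'$ meets $\partial E$ in at most one arc, of lengths $b_R,b_{R'}$, and these two arcs are disjoint except possibly at an endpoint of $p$, so $b_R+b_{R'}\le|\delta'|\le\tfrac12|\partial(R\cup R')|\le\tfrac12(|\partial R|+|\partial R'|)$. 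A cell contributing a positive amount must have exactly one exterior arc (the other possibilities again force many interior pieces, hence a negative contribution), and if $R$ does so then its remaining boundary --- namely $\partial R$ minus its exterior arc and minus $p$, which is glued to $D$ --- is a union of at most two pieces, giving $b_R>|\partial R|-|p|-2\lambda|\partial R|>|\partial R|(1-3\lambda)$, and likewise $b_{R'}>|\partial R'|(1-3\lambda)$ if $R'$ is positive. If both $R$ and $R'$ were positive we would get $(|\partial R|+|\partial R'|)(1-3\lambda)<b_R+b_{R'}\le\tfrac12(|\partial R|+|\partial R'|)$, impossible since $\lambda\le\tfrac18$ gives $1-3\lambda\ge\tfrac58$. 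Hence at most one of $R,R'$ is positive, and its contribution is at most $6-2-1=3<6$; combined with the previous paragraph this contradicts $\sum_S(6-2e(S)-i(S))\ge 6$.

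I expect the second claim to be the main obstacle: carefully enumerating the positions of $x,y$ on $\partial(R\cup R')$, tracking which arcs of $\partial R,\partial R'$ are exterior in $E$, shared with $D$, or equal to the seam $p$, and confirming in every configuration that $R$ and $R'$ together contribute strictly less than $6$ to \eqref{eq-strebel}. This is precisely where the stronger hypothesis $\lambda\le\tfrac18$ enters --- fusing two $2$-cells along a piece roughly halves the effective small-cancellation constant, and $\tfrac18$ leaves enough room for the count to close --- whereas $\lambda\le\tfrac16$ already suffices for a single cell. A routine additional point, handled exactly as in Lemma~\ref{lem:small-canc-assumption}, is the reduction ensuring $E$ can be taken reduced without destroying $R$ (or $R'$) as $2$-cells of $E$.
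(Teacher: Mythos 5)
Your proof follows the same overall plan as the paper (a disc diagram closed up into a topological disc, then the combinatorial Gauss--Bonnet formula~\eqref{eq-strebel}), but the configuration of $E$ and the accounting are genuinely different. You glue $Y=R\cup R'$ into the diagram as a single subcomplex, so the seam $p=R\cap R'$ is an interior side of \emph{both} $R$ and $R'$ in $E$, and you use the ``longer arc'' $\delta$ so that $b_R+b_{R'}\le\tfrac12|\partial Y|$. The paper instead picks simple boundary paths $\beta\subset\partial R$, $\beta'\subset\partial R'$ meeting only at one point of $p$, glues $R$ and $R'$ to $D$ separately (so in the paper's $E$ they share only a vertex, and $p$ is not an internal edge of $E$), and lets both corners contribute up to $3$; it then recovers the needed slack from the $D$-cells, using $C'(\tfrac18)$ to get $i\ge5$ and hence contribution $\le-1$ for any $D$-cell with $e=1$, together with the observation that such a cell exists.

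There is a gap in your setup. You take $D$ ``of least area such that $E:=Y\cup_\delta D$ is reduced,'' with $\delta$ fixed to be the longer arc, and assert this ``exists \ldots as in Lemma~\ref{lem:small-canc-assumption}.'' But the reduction move in that lemma --- removing a cancellable pair involving $R$ and a cell of $D$ --- produces a smaller diagram glued along the \emph{other} arc $\delta'$, which lies outside your constrained search space, so the minimality argument does not close. (The paper avoids this by minimizing over both choices of $\beta,\beta'$ and not fixing which arc is internal in advance.) Your first-claim accounting survives the swap trivially, but the inequality $b_R+b_{R'}\le\tfrac12|\partial Y|$ in your second-claim argument is exactly what fails if $\delta$ ends up being the shorter arc, so the gap is material there. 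It is, however, fixable entirely within your framework: since $x,y\notin p$ (else the first claim already gives $\gamma\subset R$ or $\gamma\subset R'$), both $p$ and the nonempty arc $\delta\cap\partial R$ are distinct interior sides of $R$, so $i(R)\ge2$ and $R$ contributes at most $6-2-2=2$ to~\eqref{eq-strebel} regardless of which arc is glued, and likewise for $R'$; combined with your $\le0$ bound on the $D$-cells this already yields a contradiction. This is a nice feature of your decomposition --- retaining $p$ as a shared internal edge is strictly more efficient than the paper's splitting --- but as written the proposal leans on the longer-arc choice without securing it.
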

\begin{proof}
	First 
	suppose there is a non-trivial geodesic $\gam \subset X^{(1)}$
	which meets $R$ exactly in its endpoints.
	Let $D \ra X$ be a reduced diagram with minimal area so that
	$\partial D = \beta \cup \gam$,
	for some simple path $\beta \subset \partial R$.
	Glue $R$ to $D$ along $\beta$ to find a diagram $E = D \cup R$.
	
	If this diagram is not reduced, 
	there is a cancellable pair $R$ and $R_1$, where $R_1$ is a copy of $R$ in $D$.
	As in the usual reduction of small cancellation theory,
	we can remove the interior of $R \cup R_1$ from $E$ and glue together
	the corresponding pairs of edges remaining in $\partial R \cup \partial R_1$ to 
	to find a diagram of smaller area whose boundary consists 
	of $\gam$ and $\partial R \setminus \beta$.
	This contradicts the choice of $D$, so $E$ must be reduced.
	
	As $\gam$ is a geodesic, it must embed in $\partial E$, so $E$ is homeomorphic to a disc.
	$R$ contributes $\leq 3$ to \eqref{eq-strebel}.
	Any $2$-cell $R'' \subset E$ with $e(R'') \geq 2$ contributes $\leq 6-2\cdot 1 -2 =0$
	to \eqref{eq-strebel}.
	By $C'(\frac{1}{6})$,
	any $2$-cell $R'' \neq R$ with $e(R'')=1$ contributes $\leq 6-2 \cdot 1 -4=0$ to 
	\eqref{eq-strebel}, and any internal $2$-cell contributes $\leq 6-7$ to \eqref{eq-strebel}.
	This contradicts the existence of such $\gam$.
	
	Second, suppose $R \neq R'$, $R \cap R' \neq \emptyset$,
	and that there is a geodesic $\gam$ meeting
	$R \cup R'$ exactly in its endpoints $p \in R \setminus R'$ and $q \in R' \setminus R$.
	Now choose simple paths $\beta \subset R, \beta' \subset R'$
	so that $\beta \cup \beta' \cup \gam$ is a closed loop.
	Let $D \ra X$ be a reduced diagram with $\partial D = \beta \cup \beta' \cup \gam$,
	of minimal area for all such choices of $\beta, \beta'$.
	We glue $R$ and $R'$ to $D$ along $\beta$ and $\beta'$, respectively,
	to find a diagram $E = R \cup D \cup R' \ra X$.
	Similar to above, $E$ is reduced, for if not we could make a reduction which
	would contradict the choice of $D$.
	Moreover, since $\gam$ meets $R \cup R'$ only at its endpoints,
	$E$ is homeomorphic to a disc.
	
	As before, $R, R'$ contribute $\leq 3$ to \eqref{eq-strebel}, 
	$2$-cells $R''$ with $e(R'') = 0$ or $\geq 2$ contribute $\leq 0$,
	but $2$-cells $R''$ with $e(R'')=1$ contribute $\leq 6-2\cdot 1 -5 =-1$.
	Because $\gam$ is non-trivial, there are $2$-cells in $D$ with exactly one exterior edge, 
	so we have a contradiction.	
\end{proof}

\begin{lemma}[Strengthened Assumption \eqref{eq:as-V-path-geod-in-ladder}]\label{lem:geodinladder}
	If $D \ra X$ is a reduced disc diagram collared by a $\cV$-path $\alp$ 
	and a geodesic $\gam \subset X^{(1)}$, then
	$\gam$ is contained in the ladder $L$ of $\alp$ in $D$.
\end{lemma}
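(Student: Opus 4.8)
The plan is to realise the collared configuration as an honest disc diagram and run the curvature count \eqref{eq-strebel}. Write $L=R_1\cup\cdots\cup R_n$ for the ladder of $\alp$ ($n=l(\alp)\ge 2$) and form $E=L\cup_{P_1}D\ra X$, where $P_1\subset L^{(1)}$ is the side of $\partial L$ along which $D$ is attached, so $\partial E=\gam\cup\del$ with $\del$ the complementary side of $\partial L$. Since $L$ and $D$ are each homeomorphic to a disc and are glued along the arc $P_1$, the complex $E$ is homeomorphic to a disc, and by hypothesis $E$ is reduced (if one only assumes $D$ reduced, first remove any cancellable pair between a cell of $D$ and a ladder cell by deleting the cell of $D$, as in the proof of Lemma~\ref{lem:Vpathembed2}). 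The goal is to show $D$ contains no $2$-cell; then $D$ is a finite tree, and since $D$ is simply connected $P_1$ and $\gam$ are homotopic rel endpoints in $D$, so — being reduced edge-paths in a tree — $\gam=P_1\subset L$.

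So suppose $D$ contains a $2$-cell, and apply \eqref{eq-strebel} to $E$, with $e(\cdot),i(\cdot)$ computed after deleting all vertices of degree two; then $2\sum_v(3-d(v))\le 0$. The two corners $R_1,R_n$ each carry an exterior edge and are adjacent to a ladder neighbour, so each contributes at most $6-2\cdot 1-1=3$. A non-corner ladder cell $R_i$ is adjacent to both $R_{i-1}$ and $R_{i+1}$, hence $i(R_i)\ge 2$: if $e(R_i)\ge 2$ it contributes $\le 6-4-2=0$; if $e(R_i)=1$ then $i(R_i)\ge 4$ by Remark~\ref{rmk:internal-at-least-four}, giving $\le 6-2-4=0$; if $e(R_i)=0$ then $R_i$ is internal so $i(R_i)\ge 9$ by $C'(\frac{1}{8})$, giving $\le 6-9<0$; in all cases $\le 0$. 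Finally let $R$ be a $2$-cell of $D$ not in $L$. Since $\partial R$ meets $\partial E$ only along $\gam$, every edge of $\partial R$ off $\gam$ lies on a piece of length $<\lam|\partial R|\le\frac{1}{8}|\partial R|$. Because $\gam$ is a geodesic and $R$ is geodesically convex (Lemma~\ref{lem:sc-twocells-geod-convex}, using $\lam\le\frac{1}{6}$), $\gam\cap\partial R$ is connected, so $e(R)\le 1$. If $e(R)=0$ then $R$ is internal and $i(R)\ge 9$, contributing $\le 6-9<0$. If $e(R)=1$ then $\gam$ traverses at most half of $\partial R$ (otherwise the complementary arc would be a strictly shorter subpath with the same endpoints, contradicting that $\gam$ is geodesic), so at least $\frac{1}{2}|\partial R|$ of $\partial R$ is covered by pieces each of length $<\frac{1}{8}|\partial R|$, forcing $i(R)\ge 5$ and contribution $\le 6-2-5=-1$. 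Thus every $2$-cell of $D$ contributes $\le -1$. Summing, the right-hand side of \eqref{eq-strebel} is at most $0+(3+3)+0+(-1)=5$, contradicting that it equals $6$.

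The decisive point is the last one: forcing each extra $2$-cell of $D$ to contribute \emph{strictly} negatively. This is precisely where $\lam\le\frac{1}{8}$ is used instead of $\lam\le\frac{1}{6}$ — under $C'(\frac{1}{6})$ one would only get $i(R)\ge 4$ and a contribution of $0$, leaving the tally at $6\le 6$ with no contradiction (compare the tight count in Lemma~\ref{lem:noladderbumpdiagram}). The remaining ingredients — that a geodesic meets a small cancellation $2$-cell in a connected arc of at most half its perimeter, and that $E$ really is a reduced disc diagram — are routine but need the kind of bookkeeping already used in Lemmas~\ref{lem:Vpathembed2} and \ref{lem:sc-twocells-geod-convex}.
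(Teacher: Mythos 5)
Your argument is correct and runs the same curvature count on \eqref{eq-strebel} as the paper's own proof: the per-cell contribution bounds (corners $\leq 3$, other ladder cells $\leq 0$, internal cells $<0$, and the decisive $\leq -1$ for a $2$-cell off $L$ with an external edge, via Lemma~\ref{lem:sc-twocells-geod-convex} together with $C'(\frac{1}{8})$) are identical, and you reach the same contradiction. One small correction: the inner van Kampen diagram $D$ is a priori only contractible, not homeomorphic to a disc, so the fact that the full collared diagram is a disc should instead be derived, as the paper does, from its boundary being a simple closed curve (the simple geodesic $\gam$ concatenated with the free side of the ladder $L$, which is a simple arc since $L$ is a strip of $2$-cells glued along edges).
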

\begin{proof}
	Since $L$ consists of $2$-cells glued along $1$-cells, and $\gam$ does not
	self-intersect, $D$ is homeomorphic to a disc.
	
	The (at most) two corners of $L$ contribute at most 
	$6-2\cdot 1 - 1 = 3$ each to \eqref{eq-strebel},
	while any other $2$-cell in $L$ contributes at most $6-2\cdot 1 - 4 = 0$ to 
	\eqref{eq-strebel}.  Internal $2$-cells contribute $<0$.
	
	Every $2$-cell $R$ which has an external edge and which is not in $L$
	must have all its external edges in $\gam$.
	By Lemma~\ref{lem:sc-twocells-geod-convex} we have $e(R)=1$ and
	so $i(R) \geq 5$ and $R$ contributes $6-2-5 \leq -1$ to \eqref{eq-strebel}.
	Therefore there are no such $2$-cells $R$ and $\gam \subset L = D$.
\end{proof}

These lemmas combine with Theorem~\ref{thm-v-wall-embed} and 
Proposition~\ref{prop-V-path-qi} to show that $\cV$-paths are embedded 
and quasi-convex in $X$.
Moreover, by Remark~\ref{rmk:geod-meets-every-cell-in-ladder}
the carriers of $\cV$-paths are geodesically convex.

\subsection{$\cV'$-branching walls in small cancellation groups}\label{ssec-sc-Vprime-def}

We now define the family $\cV' \subset \cV$, by first defining V-paths $\cV'_0 \subset \cV$
of length one.
Fix an orientation of every edge in $X^{(1)}$.
Suppose we have an edge $e$, with midpoint $x$,
contained in the boundary of a $2$-cell $R \subset X$.
We follow $\partial R$ from $e$ in the direction of $e$.
Let $y_1$ be the first edge midpoint we meet so that $d(x,y_1) \geq \frac38 |\partial R|$.
Continuing around $\partial R$, let $y_2$ be the edge midpoint with 
distance $d(y_1,y_2) = \lceil \lambda|\partial R| \rceil-1$. 
Continue, with gaps of $\lceil \lambda |\partial R| \rceil -1$,
defining edge midpoints $y_1, \ldots, y_t$,
until $d(y_{t+1}, x)$ would have been $< \frac38 |\partial R|$.
We add to $\cV'_0$ the V-paths $\alp_i:[0,1] \ra X$ which have $\alp_i(0)=x$, 
$\alp_i(\frac12)$ is the midpoint of $R$, and $\alp_i(1) = y_i$, for $i=1,\ldots,t$.
See Figure~\ref{fig-Vcomplex}.
\begin{figure}
	\centering
	\def\svgwidth{0.7\columnwidth}
	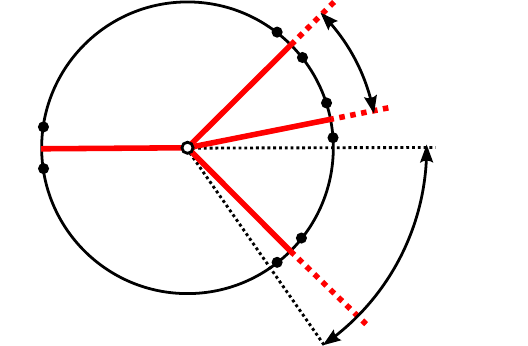
	\caption{Paths in $\cV'_0$}\label{fig-Vcomplex}
\end{figure}

Given $\cV_0'$, we define $\cV' \subset \cV$ to be the set of all V-paths
which restrict to paths in $\cV_0'$ as they cross each $2$-cell.

\begin{lemma}[Assumption \eqref{eq:as-white-vertex-deg-at-least-three}]\label{lem-Vcomplex-valency}
	In the construction of a $\cV'$-complex, we have branching in faces of size
	$t \geq 1+\lfloor 1/8\lambda \rfloor \geq 2$.
\end{lemma}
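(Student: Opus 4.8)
The plan is to reduce the statement to a single inequality between floors and ceilings and then verify that. Fix an edge $e$ with midpoint $x$ in the boundary of a $2$-cell $R$, and put $p = |\partial R|$, which is even after the subdivision. By the construction of $\cV'_0$ the points $y_1, y_2, \dots$ lie on $\partial R$ at arc-distances from $x$ --- measured in the direction of travel around $\partial R$ --- equal to
\begin{equation*}
	\ell_i = \lceil 3p/8 \rceil + (i-1)(\lceil \lambda p\rceil - 1) \qquad (i \geq 1),
\end{equation*}
and $t$ is the largest index for which $d(x, y_t) \geq 3p/8$. Since $\ell_1 = \lceil 3p/8 \rceil \geq 3p/8$ and the $\ell_i$ increase, $d(x, y_i) = \min(\ell_i, p - \ell_i)$, so $d(x, y_i) \geq 3p/8$ exactly when $\ell_i \leq \lfloor 5p/8 \rfloor$; in particular the stopping rule ``$d(y_{t+1}, x) < 3|\partial R|/8$'' fires because $y_{t+1}$ has climbed past $5p/8$, not because it has fallen below $3p/8$. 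It therefore suffices to prove
\begin{equation*}
	\lceil 3p/8 \rceil + \lfloor 1/8\lambda \rfloor (\lceil \lambda p\rceil - 1) \leq \lfloor 5p/8 \rfloor ,
\end{equation*}
since then $\ell_{1 + \lfloor 1/8\lambda\rfloor} \leq \lfloor 5p/8\rfloor$ and hence $t \geq 1 + \lfloor 1/8\lambda\rfloor$; and as $\lambda \leq 1/8$ forces $\lfloor 1/8\lambda\rfloor \geq 1$, this also gives $t \geq 2$.

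To prove the displayed inequality I would first note that $R$ is large: since every $1$-cell of $X$ lies in at least two $2$-cells and, by Lemma~\ref{lem:small-canc-assumption}, $2$-cells are embedded, some single generator is the common initial subword of two distinct cyclic conjugates of relators, hence a piece, so $C'(\lambda)$ forces $1 < \lambda p$; thus $p > 1/\lambda \geq 8$. In particular $\lceil \lambda p\rceil - 1$ is a positive integer and $p \geq 6$. Now $\lceil \lambda p\rceil - 1 < \lambda p$ and $\lfloor 1/8\lambda\rfloor \leq 1/8\lambda$, so
\begin{equation*}
	\lfloor 1/8\lambda\rfloor(\lceil \lambda p\rceil - 1) < \frac{1}{8\lambda}\cdot\lambda p = \frac{p}{8} ,
\end{equation*}
and, being an integer, this is at most $\lfloor p/8\rfloor$. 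As $p$ is even, $\lceil 3p/8\rceil + \lfloor p/8\rfloor \leq p/2 + 3/4$, which (being an integer) is at most $p/2$; and $\lfloor 5p/8\rfloor \geq 5p/8 - 3/4 \geq p/2$ because $p \geq 6$. Chaining these estimates gives the required inequality, completing the proof.

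I expect the only real subtlety to be reading the stopping rule correctly --- seeing that the construction halts because the running arc-distance $\ell_i$ overshoots $\lfloor 5p/8\rfloor$, rather than undershoots $3p/8$ --- together with the check that $p$ is large enough for the floor and ceiling estimates to close up. That last point is exactly where the standing hypothesis that every $1$-cell meets at least two $2$-cells (equivalently, that every generator appears in at least two places) is needed: without it the gap $\lceil \lambda p\rceil - 1$ could be $0$ and all the $y_i$ would collapse to a single child.
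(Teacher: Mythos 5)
Your proof is correct and takes essentially the same route as the paper: both translate the construction into placing midpoints at gaps of $\lceil\lambda p\rceil-1$ in the admissible arc-length interval $[\lceil 3p/8\rceil, \lfloor 5p/8\rfloor]$, and both lower-bound the count using $\lambda p > 1$ (from $C'(\lambda)$ and the assumption that each edge lies in two $2$-cells) together with $\lceil\lambda p\rceil - 1 \leq \lambda p$. The paper records the interval length as $2\lfloor p/8\rfloor$ and divides, whereas you bound the endpoints directly and use integrality to close the gap; this is a minor rearrangement of the same floor-and-ceiling estimates.
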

\begin{proof}
	First note that $\lam |\partial R| >1$ as otherwise the fact that an edge of
	$R$ meets another face would contradict the $C'(\lam)$ assumption.
	In particular, $|\partial R|/8 > 1$, so $2 \lfloor |\partial R|/8 \rfloor \geq |\partial R|/8$.
	Observe that $\lceil \lam |\partial R| \rceil -1 \leq \lam |\partial R|$.
	
	We find $t$ by placing the maximal possible number of objects 
	separated by $\lceil \lam |\partial R| \rceil -1$ 
	into a closed interval of length $2 \lfloor |\partial R|/8 \rfloor$.
	This gives
	\begin{equation*}
		t  = \left\lfloor \frac{2 \lfloor |\partial R|/8 \rfloor}{\lceil \lam |\partial R| \rceil -1} \right\rfloor+1
			\geq \left\lfloor \frac{ |\partial R|/8 }{\lam |\partial R|} \right\rfloor +1 
			= \left\lfloor \frac{1}{8 \lam} \right\rfloor +1 .\ \qedhere
	\end{equation*}
\end{proof}
In our choice of $\cV'$-paths, the $C'(\lambda)$ condition ensures
that when $\cV'$-paths branch, they then go into different $2$-cells.
We formalise this as follows.
\begin{definition}\label{def:Vpath-branching-pair}
	We say that $\cV'$-paths $\alp, \alp'$ of length $\geq 2$ are a \emph{branching pair} if
	$\alp(0)=\alp'(0)$ and $\alp(\frac{1}{2})=\alp'(\frac{1}{2})$, but $\alp(1) \neq \alp'(1)$.
\end{definition}
In our construction, we have:
\begin{equation}\label{eq:as-branch-different-cells}\tag{E}
	\parbox{0.9\textwidth}{
		any branching pair $\alp, \alp'$ satisfies $\alp(\frac{3}{2}) \neq \alp'(\frac{3}{2})$.}
\end{equation}


\section{The complexes decompose $X$}\label{sec-complexes-decompose}

In this section we show that each $\cV'$-complex $Y_e \ra X$ 
decomposes $X$.  Although we work in the situation of a $C'(\lambda)$ group,
we keep track of the assumptions we make on $\cV'$ for possible future applications,
in addition to assumptions~\eqref{eq:as-no-red-V-collared}--\eqref{eq:as-branch-different-cells}
already made.  (See Definition~\ref{def-Vcomplex} for the construction of $Y_e \ra X$.)
\begin{theorem}
	When $X$ is the Cayley complex of a $C'(\lambda)$ group presentation, for some $0 < \lam \leq \frac18$, and $\cV$, $\cV'$ are as defined in 
	Subsections~\ref{ssec-sc-V-def} and \ref{ssec-sc-Vprime-def},
	each $\cV'$-complex $Y_e \ra X$ decomposes $X$ (Definition~\ref{def-decomposes}).
\end{theorem}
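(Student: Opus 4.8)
The plan is to verify the four bullet points of Definition~\ref{def-decomposes} for $Y_e\ra X$, after first promoting the immersion $Y_e\ra X$ to an embedding onto a subcomplex; everything builds on the properties already established for the present $\cV,\cV'$: every $\cV$-path embeds (Theorem~\ref{thm-v-wall-embed}) and is uniformly quasi-convex with geodesically convex carrier (Proposition~\ref{prop-V-path-qi}, Remark~\ref{rmk:geod-meets-every-cell-in-ladder}), together with the curvature identity~\eqref{eq-strebel} (Lemma~\ref{lem:disc-diag-curvature}), Remark~\ref{rmk:internal-at-least-four}, and assumptions~\eqref{eq:as-white-vertex-deg-at-least-three} and~\eqref{eq:as-branch-different-cells}. \emph{Embedding and topology.} To show the tree $Z_e\ra X$ is injective I would argue by minimal counterexample: suppose distinct vertices $u\neq u'$ have the same image in $X$ with $d_{Z_e}(u,u')$ minimal, and let $w$ be their lowest common ancestor in $Z_e$. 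If $w\in\{u,u'\}$ the tree-geodesic is a non-embedded $\cV'$-path, contradicting Theorem~\ref{thm-v-wall-embed}. If $w$ is a black vertex, then concatenating the reversal of the branch $w\leadsto u'$ with the branch $w\leadsto u$ yields a $\cV$-path (reversibility of $\cV$ makes each step admissible, and the concatenation is immersed at the edge-midpoint $w$ precisely because the two branches enter distinct $2$-cells there) which returns to its start and so does not embed --- again contradicting Theorem~\ref{thm-v-wall-embed}. The remaining case, $w$ white, is where~\eqref{eq:as-branch-different-cells} is needed: the two branches form a branching pair at $w$, so by~\eqref{eq:as-branch-different-cells} they leave the cell at $w$ into distinct $2$-cells; gluing their carriers to a reduced disc diagram $D$ filling the loop $f(u\leadsto w\leadsto u')$, performing reductions as in Lemmas~\ref{lem:Vpathembed1}--\ref{lem:Vpathembed3}, and running the curvature count~\eqref{eq-strebel} as in Lemmas~\ref{lem-no-V-collared} and~\ref{lem:noladderbumpdiagram} --- corner cells contributing boundedly (using that the cell at $w$ has two distinct interior edges by~\eqref{eq:as-branch-different-cells}), all other carrier cells $\leq 0$ by Remark~\ref{rmk:internal-at-least-four}, and interior cells of $D$ forcing the sum below $6$ unless $D$ degenerates and the branches coincide --- gives a contradiction. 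Since $Y_e$ is obtained from the now-embedded $Z_e$ by the local thickening of Definition~\ref{def-Vcomplex}, it is an embedded subcomplex which deformation retracts onto $Z_e$ and is therefore connected and simply connected; by Lemma~\ref{lem-Vcomplex-valency} it is an elementary polygonal complex.

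\emph{Quasi-convexity.} Since every point of $Y_e$ lies within bounded distance of a vertex of $Z_e$, it suffices to join vertices $u,u'$ of $Z_e$; with $w$ their common ancestor, $d_{Z_e}(u,u')=d_{Z_e}(u,w)+d_{Z_e}(w,u')$, the thickened tree-geodesic is a path in $Y_e$ of $X$-length $\preceq d_{Z_e}(u,u')$, and for the matching lower bound on $d(f(u),f(u'))$ I would take an $X^{(1)}$-geodesic $\gam$ from $f(u)$ to $f(u')$ and run the count of the quasi-convexity part of Proposition~\ref{prop-V-path-qi} on a reduced diagram collared by the branch $u\leadsto w$, then $\gam$, then the branch $w\leadsto u'$, now with an extra corner at $w$: by~\eqref{eq:as-branch-different-cells} and Lemma~\ref{lem:geodinladder} the geodesic $\gam$ must meet every $2$-cell of both carriers, so $l(\gam)\succeq d_{Z_e}(u,u')$. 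Hence $Z_e$, and therefore $Y_e$, is quasi-convex; in particular $\bdry Y_e=\bdry Z_e$ embeds in $\bdry X$.

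\emph{Complementary components.} By~\eqref{eq:as-white-vertex-deg-at-least-three} (and the standing hypothesis that every edge has thickness $\geq 2$) a vertex of degree $\geq 3$ occurs along each branch of $Z_e$ at least once every two steps, so $Z_e$ branches uniformly and $\bdry Z_e$ is a Cantor subset of $\bdry X$. I would then show, in the spirit of the Ollivier--Wise and Bourdon--Kleiner analyses of wall complements, that each connected component $H$ of $\overline{X\setminus Y_e}$ is a ``wedge'' whose frontier consists of two uniformly quasi-geodesic rays of $Z_e$ issuing from a single branch vertex $p$, each taking at every later branch vertex the extremal turn towards $H$; consequently no branch vertex of $Z_e$ lies in the interior of $H$, and $\bdry H$ is contained in the corresponding complementary gap of $\bdry Z_e$ in $\bdry X$. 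Distinct components are never nested, and two sharing a bounding branch are separated already at that branch's first branch vertex (by uniform branching and quasi-convexity), so the gaps, hence the limit sets $\bdry H$, are pairwise disjoint --- the third condition. Finally, each $p$ has bounded degree and so yields finitely many components, and only finitely many branch vertices have depth $\leq n$; hence any sequence of distinct components has defining vertices $p_i$ of depth tending to $\infty$, so $f(p_i)\ra\infty$ and $\diam(\bdry H_i)\ra 0$ in a visual metric, and $\{H_i\}$ subconverges in $X\cup\bdry X$ to a point of $\bdry X$ --- the fourth condition.

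\emph{Main obstacle.} I expect the two hard points to be the white-vertex case of the embedding argument --- the curvature bookkeeping that uses~\eqref{eq:as-branch-different-cells} to rule out two branches reconverging --- and, more substantially, the geometric description of $\overline{X\setminus Y_e}$: identifying its components as non-nested wedges, ruling out that two of them share a limit point, and making the diameter estimate quantitative enough for the subconvergence, all of which rely on the fine branching structure of $Z_e$ rather than merely on its quasi-convexity.
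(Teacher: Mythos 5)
Your plan matches the paper's proof closely: the theorem is obtained as the conjunction of Lemmas~\ref{lem:Vcplx-decomp-1}--\ref{lem:Vcplx-decomp-4}, and your sketch of each of the four conditions mirrors the paper's argument -- splitting the embedding and quasi-convexity proofs according to whether the two branches diverge at a black vertex (whence a $\cV$-path, handled by Theorem~\ref{thm-v-wall-embed} and Proposition~\ref{prop-V-path-qi}) or at a white vertex (a branching pair, handled by a collared-diagram curvature count, cf.\ assumptions~\eqref{eq:as-no-reduced-almost-2-collared} and \eqref{eq:as-no-2-collar-geod} proved in Lemmas~\ref{lem:nosplit-bi-collar} and \ref{lem:no-2V-geod-collar}), and then using quasi-convexity, thin triangles, and decay of the visual diameter at the branch points $q_i$ for the last two conditions.

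One technical point in your quasi-convexity step is too strong as stated: you claim that ``by~\eqref{eq:as-branch-different-cells} and Lemma~\ref{lem:geodinladder} the geodesic $\gam$ must meet every $2$-cell of both carriers.'' That is not what holds (nor what Lemma~\ref{lem:geodinladder}, which concerns a single $\cV$-path, gives): in a diagram collared by $\alp$, $\gam$ and $-\alp'$, the geodesic can cut across the wedge and need not pass through the cells of either carrier near the branch point. The paper instead first uses Gromov hyperbolicity together with the uniform quasi-convexity of the individual $\cV$-paths (Proposition~\ref{prop-V-path-qi}) to reduce the claim to showing that $\gam$ passes within bounded distance of the branch vertex, and then rules out the ``far'' configuration by assumption~\eqref{eq:as-no-2-collar-geod}, whose curvature count (Lemma~\ref{lem:no-2V-geod-collar}) only needs $\gam$ to miss the penultimate cells of the ladders, not all of them. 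Your direct count would need the same reduction to go through. The remaining differences -- organising the embedding argument around the lowest common ancestor, and phrasing disjointness of the $\bdry H_i$ via complementary gaps of $\bdry Z_e$ rather than the paper's geodesic-ray contradiction (Lemma~\ref{lem:Vcplx-decomp-3}) -- are stylistic rather than substantive.
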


The proof of this theorem is contained in the remainder of this section:
the four conditions for $Y_e$ to decompose $X$ are verified by
Lemmas~\ref{lem:Vcplx-decomp-1}, \ref{lem:Vcplx-decomp-2}, \ref{lem:Vcplx-decomp-3} 
and \ref{lem:Vcplx-decomp-4}.

\subsection{Embedding}

We show that $\cV'$-branching walls, and hence $\cV'$-complexes, embed.
\begin{lemma}\label{lem:Vcplx-decomp-1}
	For any edge $e \subset X$, the map $Y_e \ra X$ is an embedding.
\end{lemma}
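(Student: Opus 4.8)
The plan is to reduce the statement to the injectivity of the branching wall $Z_e \ra X$ and then observe that $Y_e$, being a small regular neighbourhood of $Z_e$, inherits the embedding. By Definition~\ref{def-Vcomplex}, $Y_e$ is obtained from $Z_e$ by replacing each black vertex $b$ with a short black sub-edge of the edge of $X^{(1)}$ carrying $f(b)$, and each white vertex $v=c_R$ with the convex sub-polygon of the $2$-cell $R$ spanned by the midpoints $f(b')$, $b'$ a neighbour of $v$; thus $Y_e$ deformation retracts onto $Z_e$. Granting that $Z_e\ra X$ is injective, distinct white vertices land in distinct $2$-cells of $X$ and distinct black vertices in distinct edges, so it suffices to check that inside a single cell $R$ the spanned (convex, inscribed) sub-polygon meets a black sub-edge only along its own boundary edges. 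This holds because the only points of $\partial R$ lying on that sub-polygon are the midpoints $f(b')$ of its own neighbours, and the $\ge\frac38$-separation built into $\cV'_0$ keeps those midpoints a definite distance from every other edge of $R$; taking the black sub-edges short enough, the pieces of $Y_e$ are then disjoint apart from the gluings prescribed by $Z_e$, and $Y_e\ra X$ embeds. So it remains to prove $Z_e\ra X$ is injective.

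Suppose not, so that $f(v_1)=f(v_2)=:p$ for distinct vertices $v_1\neq v_2$ of $Z_e$; choose such a failure with $d_{Z_e}(\mathrm{root},v_1)+d_{Z_e}(\mathrm{root},v_2)$ minimal. Let $w$ be the last common ancestor of $v_1,v_2$, and let $\alp,\alp'$ be the branches of $Z_e$ from $w$ to $v_1,v_2$; reading off the midpoints they cross, each traces a $\cV'$-path, which is embedded in $X$ by Proposition~\ref{prop-V-path-qi}, and when $w$ is a white vertex we prepend the half-edge joining $w$ to its parent so that $\alp,\alp'$ begin at an edge midpoint. If $w$ is a \emph{black} vertex, then $\alp$ and $\alp'$ leave the edge midpoint $f(w)$ across distinct $2$-cells, so the concatenation $\bar\alp\cdot\alp'$ doubles back nowhere and, using the reversibility of $\cV$ and the fact that reversed $\cV'_0$-steps lie in $\cV_0$, is a genuine $\cV$-path of length $\ge 2$; but it fails to be injective, since both of its endpoints map to $p$, contradicting the fact (established in Section~\ref{ssec-sc-V-def}) that every $\cV$-path embeds.

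Hence $w$ is a \emph{white} vertex $c_R$. Then $\alp,\alp'$ form a branching pair at $c_R$ (Definition~\ref{def:Vpath-branching-pair}), and by assumption~\eqref{eq:as-branch-different-cells} they enter distinct $2$-cells immediately after $c_R$; minimality forces $\alp$ and $\alp'$ to meet in $X$ only at $p$ and at their common starting midpoint, and it forces the last $2$-cells of $\alp$ and $\alp'$ to be distinct (a common last cell would give colliding white vertices closer to the root). Writing $\mu,\mu'$ for the portions of $\alp,\alp'$ after the branch, we close them up through $R$ into an embedded loop $\gam$, fill $\gam$ by a minimal-area disc diagram $D\ra X$, and let $E\ra X$ be the reduced diagram obtained by collaring $D$ with the ladders of $\mu$ and $\bar\mu'$ together with the single cell $R$; as in Theorem~\ref{thm-v-wall-embed}, $E$ may be taken collared rather than quasi-collared, since $\gam$ crosses neither carrier. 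Now apply Lemma~\ref{lem:disc-diag-curvature} to $E$. Each collar cell interior to a $\cV$-path ladder is adjacent to at least four cells by Remark~\ref{rmk:internal-at-least-four} and so contributes $\le 0$; in particular the first cells $R_i,R_j$ of the two ladders are interior to the $\cV$-paths $[x\!\to\!y_i]\cdot\mu$ and $[x\!\to\!y_j]\cdot\mu'$ (where $x$ is the parent of $c_R$, and the initial steps lie in $\cV'_0\subset\cV_0$) whenever $\mu,\mu'$ have length $\ge 2$, hence contribute $\le 0$ as well; the cells of $D$ are interior to $E$ and contribute $\le -3$ each; and the only cells that can contribute positively are $R$ and the two ``end'' cells of $\mu,\bar\mu'$ at $p$, each contributing at most $2$. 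If $D$ is non-empty, the right-hand side of~\eqref{eq-strebel} is at most $3\cdot 2-3=3<6$, a contradiction; if $D$ is empty, the collar closes around a pinch vertex whose negative contribution again drives the total below $6$, except in the lone configuration of three $2$-cells around a degree-three vertex — and there the $\ge\frac38$-separation would force one of those cells to have perimeter at most $\tfrac83<8$, which is impossible. Either way we reach a contradiction, so $Z_e\ra X$ is injective, and the lemma follows.

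The technical crux is the curvature count in the branching-pair case. When the branching occurs at an edge midpoint the two branches simply assemble into one non-embedded $\cV$-path and there is nothing further to do; the work is all in the case $w=c_R$, where one must set up the collared diagram so that the branch cell $R$ and the two cells it branches into are swallowed into the interiors of $\cV$-path ladders (so that Remark~\ref{rmk:internal-at-least-four} applies and they do not count as corners), leaving only three genuine corners summing to at most $6$, and then eliminate the degenerate empty-diagram configuration by hand using the separation built into $\cV'$. Care is also needed with the possibility that $v_1,v_2$ are white (one then runs the same argument with $\alp,\alp'$ ending at a cell centre).
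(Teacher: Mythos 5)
Your reduction to the injectivity of $Z_e\ra X$ and the case where the branch vertex $w$ is black both match the paper exactly: when $w$ is black, $\alp$ and $\alp'$ leave $f(w)$ through distinct $2$-cells, so $\bar\alp\cdot\alp'$ is a $\cV$-path, and Theorem~\ref{thm-v-wall-embed} finishes. The divergence is in the branching-pair case ($w$ a white vertex $c_R$). You treat the situation $f(v_1)=f(v_2)$ an \emph{edge midpoint} as the main case and run a collared-diagram curvature count there; but the paper handles that situation with a one-line concatenation argument (``$\alp$ up to $e$ followed by $-\alp'$ back to $\alp'(0)$ is a self-intersecting $\cV$-path''), and reserves the collared-diagram machinery for the \emph{cell-centre} meeting, which you relegate to a final remark. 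That choice is what creates the gap: by closing $\mu$ and $\bar\mu'$ up through $R$ you get \emph{three} positively-contributing corners ($R$ plus the two end cells at $p$), so your bound from \eqref{eq-strebel} is only $\le 6$, not $<6$, and you must eliminate the boundary case by hand. In the paper's version (Lemma~\ref{lem:nosplit-bi-collar}) the last $2$-cells of $\alp,\alp'$ are the \emph{same} face $R_n$, there are just two corners, the bound is $\le 4 < 6$, and no further analysis is needed.

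The hand-elimination you attempt is where the argument breaks. The sentence ``the $\ge\frac38$-separation would force one of those cells to have perimeter at most $\tfrac83<8$'' does not describe a valid step — there is no such perimeter bound. What is actually true in the three-cell configuration is that the arc of $\partial R_a$ from $y_i$ to $p$ running through the central vertex has length $< 2\lam|\partial R_a|\le\tfrac14|\partial R_a|$ by $C'(\lam)$, contradicting the $\ge\tfrac38|\partial R_a|$ requirement in the definition of $\cV$; you should say that, not the perimeter claim. Moreover ``the collar closes around a pinch vertex whose negative contribution again drives the total below $6$'' is not an argument: when $D=\emptyset$ and the collar has four or more cells, the interior cells can have $e(R)\ge 2$ (so Remark~\ref{rmk:internal-at-least-four} does not apply as stated), the interior adjacency pattern need not reduce to a single high-degree pinch vertex, and you have not shown the vertex sum compensates. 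The cleanest repair is to do what the paper does: in the edge-midpoint subcase, replace the collared diagram by the concatenation $\alp|_{[0,e]}\cdot(-\alp')|_{[e,0]}$, a $\cV$-path with coinciding endpoints, contradicting Theorem~\ref{thm-v-wall-embed}; and treat the cell-centre subcase (your footnote) as the main event, where the two-corner count of Lemma~\ref{lem:nosplit-bi-collar} closes things off cleanly.
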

\begin{proof}
It suffices to show that $Z_e \ra X$ is an embedding.
Suppose we have two $\cV'$-paths $\alp$ and $\alp'$ in $Z_e$.
We want to show that after they diverge in $Z_e$ they never meet again in $X$.
We can assume that $\alp(0)=\alp'(0)$ is the last black vertex that 
$\alp, \alp'$ have in common.

There are two cases, depending on whether $\alp, \alp'$ have a different initial
white vertex or not, i.e.\ whether $\alp(\frac{1}{2}) \neq \alp'(\frac{1}{2})$ or not.
In the first case, $-\alp \cup \alp'$ is an (immersed) $\cV$-path in $X$,
which is an embedding by Theorem~\ref{thm-v-wall-embed}.

In the second case, $\alp$ and $\alp'$ form a branching pair 
(Definition~\ref{def:Vpath-branching-pair}).

Suppose $\alp$ and $\alp'$ meet again after $\alp(\frac{1}{2}) = \alp'(\frac{1}{2})$.
If they first meet in the middle of an edge $e \subset X^{(1)}$,
then the concatenation of $\alp$ up to $e$ and of $-\alp'$ from $e$ back to $\alp'(0)$
gives a self-intersecting $\cV$-path, which contradicts Theorem~\ref{thm-v-wall-embed}.

Suppose that $\alp$ and $\alp'$ meet again in the middle of a $2$-cell;
we assume that this is the last $2$-cell $\alp$ and $\alp'$ cross.

Let $L, L'$ be the ladders of $\alp, \alp'$, and 
as in Definition~\ref{def-quasi-V-collared} let $A \ra X$
be the quotient of $L \cup L'$ on identifying the first and last $2$-cells 
of each $L$; call these quotiented faces $R_1$ and $R_n$.
Because $\alp(\frac{3}{2}) \neq \alp'(\frac{3}{2})$ by assumption~\eqref{eq:as-branch-different-cells}
and since $\alp, \alp'$ are crossing, 
$A$ is homotopic to $\Sph^1$.
Thus we can find a quasi-$\cV$-collared diagram $E \ra X$,
where $E = A \cup_P D$, 
for some disc diagram $D \ra X$, and non-contractible loop $P \subset A$
(Figure~\ref{fig-2Vcollared}).
\begin{figure}
	\centering
	\def\svgwidth{0.6\columnwidth}
	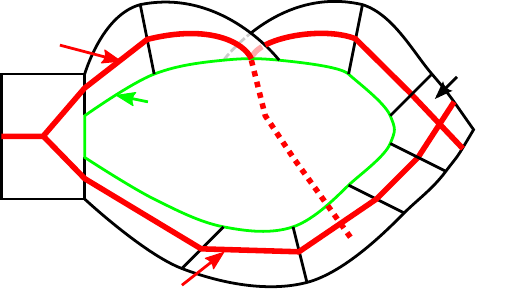
	\caption{A diagram quasi-collared by two $\cV$-paths}\label{fig-2Vcollared}
\end{figure}

We assume that $E$ was chosen to have minimal area.
In a similar way to Theorem~\ref{thm-v-wall-embed},
this means that there are no cancellable pairs 
$R, R' \subset D$ or $R \subset A, R' \subset D$,
as these can be removed leaving $A$ unchanged.
	
There are also no cancellable pairs $R, R' \subset A$:
such a pair would imply that either $\alp$ or $\alp'$ were not embedded, 
or that one could find a diagram of smaller area
quasi-collared by subpaths of $\alp, \alp'$,
both of which are contradictions.

Therefore $E$ is a reduced quasi-$\cV'$-collared diagram. 
As in Lemma~\ref{lem:Vpathembed3}, we can reduce to the case where $P$ does not
meet $\alp \cup \alp'$,
for if the path $P$ crosses, say, $\alp$ then we can extend $\alp$ from this point 
into $D$, and across to $\alp'$ (recall that $\cV$-paths are embedded), 
and extract from that a quasi-$\cV$-collared diagram 
of smaller area 
(see Figure~\ref{fig-2Vcollared}).

We now make an assumption that contradicts the existence of $F$.
\begin{equation}\label{eq:as-no-reduced-almost-2-collared}\tag{F}
	\parbox{0.9\textwidth}{
		Given a branching pair $\alp, \alp'$, there is no reduced 
		diagram collared by $\alp, \alp'$.}
\end{equation}
Given this assumption, we have shown that $\alp, \alp'$ cannot meet again.
So each $Z_e$ is an embedded tree in $X$, and thus $Y_e$ is a connected and simply
connected subcomplex of $X$.
\end{proof}
It remains to check our assumption. 
\begin{lemma}\label{lem:nosplit-bi-collar}
	Assumption~\eqref{eq:as-no-reduced-almost-2-collared} holds.
\end{lemma}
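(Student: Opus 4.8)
The plan is to argue exactly as in Lemmas~\ref{lem-no-V-collared} and~\ref{lem:noladderbumpdiagram}: supposing $E \ra X$ is a reduced diagram collared by a branching pair $\alp,\alp'$, I would derive a contradiction from the curvature formula~\eqref{eq-strebel}.

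First I would record the shape of $E$. Such a diagram has the form $E = A \cup_P D$, where $A$ is obtained from the ladders $L$ of $\alp$ and $L'$ of $\alp'$ by identifying their common first $2$-cell $R_1$ (these agree as $\alp(\frac12)=\alp'(\frac12)$) and their common last $2$-cell $R_n$, and $D \ra X$ is a disc diagram with $E$ planar; call $R_1$ and $R_n$ the \emph{corners} of $E$. Since $A$ is then a loop of $2$-cells glued along $1$-cells and $D$ fills the bounded complementary region, exactly as in Lemma~\ref{lem-no-V-collared} the boundary $\partial E$ is embedded and $E$ is homeomorphic to a disc.

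Next I would run the curvature count. As always we apply~\eqref{eq-strebel} after removing all degree-two vertices, so the vertex sum is $\leq 0$ and it suffices to show $\sum_R(6-2e(R)-i(R)) < 6$. An internal $2$-cell has $i(R)\geq 9$ since $\lam\leq\frac18$, hence contributes $\leq -3$. A non-corner $2$-cell $S$ of a ladder is crossed by a $\cV$-path together with its two ladder-neighbours; if $e(S)=1$ then $i(S)\geq 4$ by Remark~\ref{rmk:internal-at-least-four}, while if $e(S)\geq 2$ the edges through which the $\cV$-path enters and leaves $S$ give $i(S)\geq 2$, so in all cases $S$ contributes $\leq 0$. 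At $R_1$ the paths $\alp,\alp'$ leave through the distinct edges carrying $\alp(1)\neq\alp'(1)$, so $i(R_1)\geq 2$ and $R_1$ contributes at most $6-2-2=2$ (or $\leq -3$ if $e(R_1)=0$, when $R_1$ is internal). At $R_n$ we have $i(R_n)\geq 1$, so $R_n$ contributes at most $6-2-1=3$ (or $\leq -3$ if $e(R_n)=0$). Summing, $\sum_R(6-2e(R)-i(R)) \leq 2+3 = 5 < 6$, contradicting~\eqref{eq-strebel}.

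The step I expect to need the most care is the bookkeeping at $R_n$: unlike $R_1$, where the branching condition forces the two ladders to leave through distinct edges, near $R_n$ the two ladders could in principle re-enter through the same edge, so only $i(R_n)\geq 1$ is guaranteed and the clean bound is $5<6$ rather than $4<6$. I would also check that the cells of $D$ genuinely have no exterior edges (they lie inside the loop $A$), and that degenerate configurations — a ladder of length two, or $R_1 = R_n$ — cause no problem, since in each such case the same count yields a strictly smaller sum.
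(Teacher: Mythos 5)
Your proof is correct and follows the same curvature-count strategy as the paper: verify $E$ is a topological disc, apply \eqref{eq-strebel}, bound interior faces by $-3$, non-corner ladder faces by $0$, and corners by a positive constant, then conclude the total falls below $6$. The paper is slightly sharper at $R_n$: it gets $i(R_n)\geq 2$ (so both corners contribute $\leq 2$, total $\leq 4$), and this is actually guaranteed, not optional — condition (1) of Definition~\ref{def-quasi-V-collared} (``no doubling back'') forces the two ladders to enter $R_n$ through distinct edges, and in the application (Lemma~\ref{lem:Vcplx-decomp-1}) $R_n$ is the first $2$-cell where $\alp,\alp'$ re-meet, so they certainly arrive through different edge-midpoints. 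Your more conservative bound $i(R_n)\geq 1$, giving $\leq 5 < 6$, still suffices, so the extra caution costs nothing; and the degenerate cases you flag ($R_1=R_n$, ladders of length two) are excluded by the definitions of branching pair and collared diagram, so they do not in fact arise.
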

\begin{proof}
	In such a diagram (which is a topological disc), the interior faces contribute
	$\leq -3$ to \eqref{eq-strebel}.
	Exterior faces away from the corners contribute $\leq 6-2\cdot 1 -4 = 0$.
	
	Each of the corners of the diagram has $1$ exterior edge, and $\geq 2$ interior edges,
	so contributes at most $6 -2 \cdot 1 - 2 = 2$ to ~\eqref{eq-strebel}.
	So the total contributed to \eqref{eq-strebel} is $\leq 4$, a contradiction.
\end{proof}

\subsection{Quasi-convexity}

We now show that $\cV'$-branching walls, and hence $\cV'$-complexes, are quasi-convex.
\begin{lemma}\label{lem:Vcplx-decomp-2}
	For any edge $e \subset X$, $Y_e \subset X$ is quasi-convex.
\end{lemma}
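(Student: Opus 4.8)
The plan is to reduce quasi-convexity of $Y_e$ to that of the tree $Z_e$, and then to combine the tree structure of $Z_e$ with the quasi-convexity of $\cV$-paths (Proposition~\ref{prop-V-path-qi}) and a collared-diagram argument. First I would note that, since $X$ has bounded geometry, each white vertex of $Z_e$ has degree at most $1+\max_R|\partial R|$, so the branching number $t$ in the construction of $Y_e$ (Lemma~\ref{lem-Vcomplex-valency}) is bounded; hence $Y_e$ lies in a bounded neighbourhood of $Z_e$ and intrinsic distances in $Y_e$ and $Z_e$ agree up to a bounded factor. It therefore suffices to show $d_{Z_e}(x,y)\preceq_C d_X(x,y)$ for vertices $x,y$ of $Z_e$. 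Let $w$ be the meet of $x,y$ in the tree $Z_e$, and let $\alp$ (resp.\ $\alp'$) be the branch of $Z_e$ from $w$ to $x$ (resp.\ to $y$): this is a $\cV'$-path of some length $n_1$ (resp.\ $n_2$), and $d_{Z_e}(x,y)=2(n_1+n_2)$. So the goal is $n_1+n_2\preceq d_X(x,y)$.

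I would split into two cases. If $\alp(\tfrac12)\neq\alp'(\tfrac12)$ then $(-\alp)\cdot\alp'$ is an immersed $\cV$-path of length $n_1+n_2$ (using that $\cV$ is reversible, that its two half-steps at $w$ lie in $\cV_0$, and that the cells immediately before and after $w$ differ, so there is no backtracking), and Proposition~\ref{prop-V-path-qi} gives $d_X(x,y)\ge (n_1+n_2)/6$ at once. The harder case is $\alp(\tfrac12)=\alp'(\tfrac12)$, so that $\alp,\alp'$ form a branching pair (Definition~\ref{def:Vpath-branching-pair}) through a common cell $R$, and $(-\alp)\cdot\alp'$ doubles back at $w$. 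Here I would take a geodesic $\gam\subset X^{(1)}$ from $x$ to $y$ and consider the null-homotopic loop $\alp\cdot\gam\cdot(\alp')^{-1}$; as in the proofs of Theorem~\ref{thm-v-wall-embed}, Proposition~\ref{prop-V-path-qi} and Lemma~\ref{lem:Vcplx-decomp-1}, I would build a reduced disc diagram $E=(L_\alp\cup_R L_{\alp'})\cup_P D$ collared by the two $\cV$-paths $\alp,(\alp')^{-1}$ and the path $\gam$ in the sense of Definition~\ref{def-quasi-V-collared} (with $I=\{1,3\}$, $J=\{2\}$; the no-doubling-back condition at the junction cell $R$ holds because $\alp(1)\neq\alp'(1)$). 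After cancelling pairs and, by the usual ``extend $\alp$ into $D$'' move, arranging that $\gam$ and the inner paths do not cross $\alp\cup\alp'$, the diagram $E$ is genuinely collared and is a topological disc (the union $L_\alp\cup_R L_{\alp'}$ is a planar tree of cells since $\alp,\alp'$ embed, are crossing, and satisfy assumption~\eqref{eq:as-branch-different-cells}, and $\gam$ is simple).

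The remaining work is to show $\gam$ meets every $2$-cell of $L_\alp$ and of $L_{\alp'}$, in order: if $\gam$ skipped a ladder cell $R_k$, restricting the collaring to the corresponding segment would give a smaller diagram which is either collared, contradicting assumption~\eqref{eq:as-V-path-geod-in-ladder} since $\gam\cap R_k=\emptyset$, or quasi-collared, in which case extending $\alp$ into $D$ yields a diagram of strictly smaller area, contradicting minimality --- precisely the argument already run in the second half of the proof of Proposition~\ref{prop-V-path-qi}. Once $\gam$ meets every ladder cell, counting as there (each interior ladder cell forces $\gam$ to contain either an edge of that cell not shared with its two ladder-neighbours or an edge passing between consecutive ladder cells, each such edge charged to at most two cells) gives $\ell(\gam)\ge (n_1+n_2)/C$ for a uniform $C$; since $\ell(\gam)=d_X(x,y)$ this finishes the second case, and hence the lemma. (One could also avoid the case split by using geodesic convexity of the carriers, Remark~\ref{rmk:geod-meets-every-cell-in-ladder}, together with $\delta$-hyperbolicity of $X$ to bound the Gromov product $(x\mid y)_w$, but the diagram argument reuses bookkeeping already in place.)

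I expect the main obstacle to be the branching-pair case: the tree path from $x$ to $y$ is not a $\cV$-path, so Proposition~\ref{prop-V-path-qi} does not apply to it directly, and one must instead feed the branching pair together with a geodesic into the collared-diagram machinery. The delicate points there are checking the admissibility conditions of Definition~\ref{def-quasi-V-collared} at the junction cell $R$ (which work out because of assumption~\eqref{eq:as-branch-different-cells} and $\alp(1)\neq\alp'(1)$) and verifying, via \eqref{eq-strebel} and Remark~\ref{rmk:internal-at-least-four}, that $\gam$ cannot short-cut past the two ladders.
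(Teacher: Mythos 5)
Your setup is right: reduce to quasi-convexity of $Z_e$, locate the branch point $w$, and split into the case $\alp(\tfrac12)\neq\alp'(\tfrac12)$ (where $-\alp\cup\alp'$ is a $\cV$-path and Proposition~\ref{prop-V-path-qi} finishes) and the branching-pair case. But in the branching-pair case your argument has a genuine gap. You claim that if $\gam$ skips any ladder cell $R_k$, restricting the collar yields a subdiagram contradicting assumption~\eqref{eq:as-V-path-geod-in-ladder}. This works only when the skipped cell lies strictly inside one arm $L_\alp$ or $L_{\alp'}$, where the sub-collar really is a single $\cV$-path and a subgeodesic. It does not cover the case where $\gam$ shortcuts across the junction cell $R=\alp(\tfrac12)$-cell, i.e.\ jumps from some $R_i\subset L_\alp$ directly to some $R_j'\subset L_{\alp'}$ without passing near $R$. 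There the sub-collar is a \emph{branching pair} plus a subgeodesic, and the concatenated path $-\alp\cup\alp'$ through $R$ is not a $\cV$-path: $\alp(1)$ and $\alp'(1)$ are both at distance $\geq\frac38|\partial R|$ from $\alp(0)$ but may be as close as $\lceil\lam|\partial R|\rceil-1$ to each other, so the $\cV$-condition at $R$ fails and neither \eqref{eq:as-V-path-geod-in-ladder} nor the argument in Proposition~\ref{prop-V-path-qi} applies. This shortcut is exactly what the paper's new assumption~\eqref{eq:as-no-2-collar-geod}, verified in Lemma~\ref{lem:no-2V-geod-collar}, is introduced to exclude, and your proposal never invokes it.

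The paper's route in the branching case is also structurally different and avoids the need to show $\gam$ meets every ladder cell (which it does not in fact claim). It first observes that since each of $\alp,\alp'$ is individually quasi-convex (Proposition~\ref{prop-V-path-qi}), it suffices to show that $\gam$ passes within a uniformly bounded distance of the branch point $x$, after which $d(y,y')\geq d(y,x)+d(x,y')-C''\geq n/6+n'/6-C''$. The bounded-passage fact is then proved by contradiction: if $\gam$ stayed far from $R_1$ one could extract a reduced diagram collared by a branching pair of lengths $\geq 5$ and a geodesic meeting the ladders only in their final $2$-cells, violating \eqref{eq:as-no-2-collar-geod}. Your parenthetical remark about using the Gromov product gestures in this direction, but without assumption~\eqref{eq:as-no-2-collar-geod} there is no mechanism to force the geodesic near the branch point, so the gap remains.
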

\begin{proof}
It suffices to show that $Z_e$ is quasi-convex in $X$.
Suppose that $\alp, \alp'$ are $\cV'$-paths
from $x$, the midpoint of the edge $e$, to $y=\alp(n)$ and $y'=\alp'(n')$, respectively.
We can assume that $x$ is the last black vertex which $\alp, \alp'$ have in common.
We want to show that $d(y,y') \succeq n+n'$ in $X^{(1)}$.

As in the proof of Lemma~\ref{lem:Vcplx-decomp-1} above, we split the proof into two cases.
First, if $\alp(\frac{1}{2}) \neq \alp'(\frac{1}{2})$, then $-\alp' \cup \alp$ is a $\cV$-path in $X$,
which is (uniformly) quasi-convex in $X$ by Proposition~\ref{prop-V-path-qi}.

Otherwise, $\alp, \alp'$ are a branching pair.
Let $L = R_1 \cup R_2 \cup \cdots \cup R_n$ 
and $L' = R_1 \cup R_2' \cup \cdots \cup R_{n'}'$ be the ladders of $\alp$ and $\alp'$.
	
As $\cV'$-complexes embed by Lemma~\ref{lem:Vcplx-decomp-1}, we have $y \neq y'$.
Both $\alp$ and $\alp'$ are quasi-convex by Proposition~\ref{prop-V-path-qi},
and $X$ is Gromov hyperbolic, so
it suffices to show that
$\gam$ comes within a constant distance $C'$ of $x$, for then
there exists $C''$ so that
\begin{align*}
	d(y,y') & \geq \max \{ 1, d(y,x)+d(x,y')-C'' \} \\ &
	\geq \max \{1, n/6+n'/6-C'' \}
	\succeq n+n'.
\end{align*}
	
Consider a diagram $E \ra X$ quasi-$\cV'$-collared by $\alp, \alp'$ and $\gam$.
One can perform the reducing procedure as usual:
since $\alp, \alp'$ are embedded (Theorem~\ref{thm-v-wall-embed}) there are
no cancellable pairs $R_i, R_j$ or $R_i', R_j'$,
and since $\alp$ and $\alp'$ only meet in $R_1$ (Lemma~\ref{lem:Vcplx-decomp-1}),
there are no cancellable pairs $R_i, R_j'$.
So $L, L'$ are preserved, and we are left with a reduced quasi-collared diagram
like Figure~\ref{fig-2Vgeodcollared}.
\begin{figure}
	\centering
	\def\svgwidth{0.65\columnwidth}
	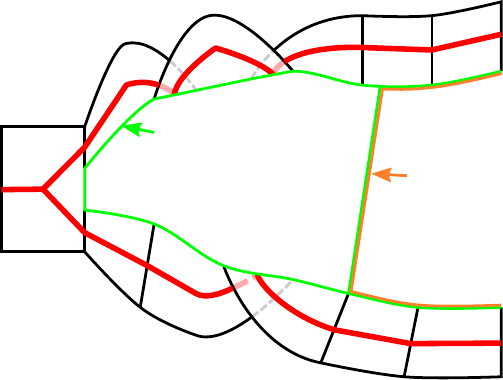
	\caption{}\label{fig-2Vgeodcollared}
\end{figure}

We suppose for a contradiction that $\gam$ does not get within 
four $2$-cells of $R_1$ in $X$.
Consider the subpath of $\gam$ which goes from $L$ to $L'$.
It is enough to consider the case when $\gam$ meets
$A = (L \cup L')/(R_1 \sim R_1')$ only in $R_n \cup R_{n'}'$,
and only at its endpoints.  We have $n, n' \geq 5$.

If in the diagram $E = A \cup_P D \ra X$ the path $P$ crosses $\alp\cup\alp'$,
we can follow (say) $\alp$ into $D$.  
Since $\cV'$-complexes embed by Lemma~\ref{lem:Vcplx-decomp-1},
this $\cV'$-path must leave $D$ by meeting $\gam$.
So we can reduce to the case when $P$ does not cross $\alp \cup \alp'$,
and we still have $\alp, \alp'$ both having length $\geq 5$.

We are left with a reduced, planar diagram which contradicts the following assumption.
\begin{equation}\label{eq:as-no-2-collar-geod}\tag{G}
	\parbox{0.9\textwidth}{
		Given a branching pair $\alp, \alp'$ of lengths $\geq 5$, 
		there is no reduced diagram collared by a $\alp$, a geodesic $\gam$, and $-\alp'$, 
		where $\gam$ meets the ladders of $\alp, \alp'$ only at a single point in each of their final $2$-cells. }
\end{equation}
So the proof is complete.
\end{proof}
It remains to verify our assumption.
\begin{lemma}\label{lem:no-2V-geod-collar}
	Assumption~\eqref{eq:as-no-2-collar-geod} holds.
\end{lemma}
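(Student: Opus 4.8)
The plan is to argue by contradiction via the combinatorial Gauss--Bonnet identity~\eqref{eq-strebel}, in the same spirit as Lemmas~\ref{lem-no-V-collared}, \ref{lem:noladderbumpdiagram} and, most closely, \ref{lem:nosplit-bi-collar}. So suppose $E \ra X$ is such a reduced diagram, collared by a branching pair $\alp,\alp'$ of lengths $n,n'\geq 5$, with ladders $L=R_1\cup\cdots\cup R_n$ and $L'=R_1\cup R_2'\cup\cdots\cup R_{n'}'$ (sharing the first cell $R_1$, since $\alp(\tfrac12)=\alp'(\tfrac12)$), together with a geodesic $\gam\subset X^{(1)}$ that meets $L\cup L'$ only in two points $v_n\in\partial R_n$ and $v_{n'}'\in\partial R_{n'}'$. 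First I would note that the carriers of $\alp,\alp'$ embed (Proposition~\ref{prop-V-path-qi}), so the ladders are simple chains of $2$-cells glued along $1$-cells, while $\gam$ has no self-intersection; hence $\partial E$ is embedded and $E$ is homeomorphic to a disc. After deleting all degree-$2$ vertices one has $\sum_v(3-d(v))\leq 0$, so~\eqref{eq-strebel} forces
\[
	\sum_R \bigl(6-2e(R)-i(R)\bigr)\;\geq\;6,
\]
and the goal is to show the left-hand side is at most $4$.

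Next I would classify the $2$-cells of $E$. There are exactly three corners: $R_1$, the last cell $R_n$ of $L$ (where $\alp$ meets $\gam$), and the last cell $R_{n'}'$ of $L'$; these are distinct cells of $E$ since $n,n'\geq 5$ and $L,L'$ are distinct chains meeting only in $R_1$. Interior cells have $i(R)\geq 9$ by $C'(\tfrac18)$, contributing $\leq -3$. An exterior non-corner cell of $L$ or $L'$ is the middle of three consecutive cells crossed by $\alp$ or $\alp'$, so by Remark~\ref{rmk:internal-at-least-four} it has $e(R)=1$, $i(R)\geq 4$, contributing $\leq 0$. An exterior cell lying in neither ladder has all its exterior edges on $\gam$; by geodesic convexity of unions of two $2$-cells (Lemma~\ref{lem:sc-twocells-geod-convex}, using $\lambda\leq\tfrac18$) its intersection with $\gam$ is a single sub-arc of length $\leq\tfrac12|\partial R|$, so $e(R)=1$, and since the interior sides are pieces of length $<\tfrac18|\partial R|$ we get $i(R)\geq 5$, contributing $\leq -1$ --- exactly as in the proof of Lemma~\ref{lem:geodinladder}. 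Thus every non-corner cell contributes $\leq 0$.

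The crux is bounding the three corners. For $R_1$, exactly as in Lemma~\ref{lem:nosplit-bi-collar}, the edges $R_1\cap R_2$ and $R_1\cap R_2'$ (crossed by $\alp$ and by $\alp'$) are interior edges on the side of $R_1$ facing the rest of $E$, so $e(R_1)=1$, $i(R_1)\geq 2$, and $R_1$ contributes $\leq 2$. For $R_n$: since $\alp$ is a $\cV$-path, $d_{\partial R_n}(\alp(n-1),\alp(n))\geq\tfrac38|\partial R_n|$, and as this is the minimum of the two arcs of $\partial R_n$ between the edge $R_{n-1}\cap R_n$ and the edge carrying $\alp(n)$, \emph{both} arcs have length $\geq\tfrac38|\partial R_n|$. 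One of them, $A$, faces the rest of $E$ and so consists of interior edges; the other is the unique exterior arc, because $\gam$ touches $R_n$ only in the point $v_n$, so $e(R_n)=1$. Since $|\partial R_n|>1/\lambda\geq 8$ (as in the proof of Lemma~\ref{lem-Vcomplex-valency}) and each interior side of $\partial R_n$ is a piece of length $<\tfrac18|\partial R_n|$, the arc $A$ --- still of length more than $\tfrac14|\partial R_n|-1$ after discarding the half-edge near $\alp(n)$ and the side shared with $R_{n-1}$ --- is cut into at least three interior sides, so $i(R_n)\geq 3$ and $R_n$ contributes $\leq 6-2-3=1$. The identical count bounds $R_{n'}'$ by $1$. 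Summing, $\sum_R(6-2e(R)-i(R))\leq 2+1+1=4<6$, the desired contradiction.

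The hard part will be the corner estimate $i(R_n)\geq 3$ (and likewise for $R_{n'}'$): one must check carefully that the inner arc $A$ really lies in the interior of $E$ --- which is where the hypothesis that $\gam$ meets $R_n$ in a \emph{single point} is used, so that nothing of $\partial R_n$ abutting $\gam$ is on $\partial E$ --- and that $A$ still contains three pieces after one subtracts an edge's worth near $\alp(n)$ and the side $R_n$ shares with $R_{n-1}$; here the inequality $|\partial R|>1/\lambda$ is exactly what makes the count work at $\lambda=\tfrac18$. Should this only yield $i(R_n)\geq 2$ in some boundary case, the cell totals come to $\leq 6$ and one finishes as in Lemma~\ref{lem:noladderbumpdiagram}, eliminating the equality case through the crossing assumption. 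Everything else is the routine curvature bookkeeping already carried out in Lemmas~\ref{lem-no-V-collared}--\ref{lem:geodinladder}.
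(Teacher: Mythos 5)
Your overall strategy is the same as the paper's: one argues by contradiction, realizes the reduced collared diagram is a topological disc, and feeds the cell contributions into the combinatorial Gauss--Bonnet identity~\eqref{eq-strebel}, with the non-corner bookkeeping ($\leq -3$ for interior cells, $\leq 0$ for ladder cells via Remark~\ref{rmk:internal-at-least-four}, $\leq -1$ for cells whose exterior edges lie on $\gam$ via Lemma~\ref{lem:sc-twocells-geod-convex}, $\leq 2$ for $R_1$ as in Lemma~\ref{lem:nosplit-bi-collar}) matching the paper exactly. The difference is in the corners $R_n$, $R_{n'}'$, and this is where your argument has a gap.

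You claim $i(R_n)\geq 3$ by identifying the exterior arc of $R_n$ with one of the two arcs between the side $R_{n-1}\cap R_n$ and the edge carrying $\alp(n)$. But this tacitly assumes that the single intersection point $v_n$ of $\gam$ with the ladder lies at (an endpoint of) that edge. Assumption~\eqref{eq:as-no-2-collar-geod} only says $\gam$ meets the ladder at a single point in $R_n$; it does not say where. If $v_n$ lies close to the side $R_{n-1}\cap R_n$ on the inner side, the exterior arc of $R_n$ wraps the long way around (past $\alp(n)$) to $v_n$, the interior arc not shared with $R_{n-1}$ is short, and $i(R_n)=2$ is genuinely possible. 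In that case your cell totals reach $6$ rather than $4$, and your proof does not close. This is exactly why the paper only claims $i(R_n)\geq 2$ (which follows merely from $v_n\notin R_{n-1}$), and then deals with the equality case separately: when $\gam$ is trivial and there are no internal faces, the internal edges form a tree, and the forced exact values $i(R_n)=i(R_{n'}')=2$, $i(R_{n-1})=i(R_{n'-1}')=4$ are incompatible with that tree structure. Your fallback, ``finish as in Lemma~\ref{lem:noladderbumpdiagram}'', invokes a different argument (two leaves meeting at a vertex contradicting the crossing condition); it is not the argument the equality case here actually requires, and you would need to carry out the tree analysis for this specific collaring to make the fallback rigorous. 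So: right approach, but the sharpened corner estimate is not proved, and the announced repair is not the correct one.
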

\begin{proof}
	In such a reduced diagram (which is a topological disc),
	any interior $2$-cell contributes $\leq -3$ to \eqref{eq-strebel}.
	Any $2$-cell along the ladders of $\alp, \alp'$ contribute $\leq 0$ to \eqref{eq-strebel}.
	
	We assume that $\gam$ does not meet the penultimate $2$-cells $R_{n-1}, R_{n'-1}'$ of the ladders
	of $\alp,\alp'$,
	so both $i(R_n)$ and $i(R_{n'}')$ are $\geq 2$, and thus they contribute $\leq 6 - 2 - 2 = 2$
	to \eqref{eq-strebel}.
	
	As in the proof of Lemma~\ref{lem:nosplit-bi-collar}, 
	there is a contribution of $\leq 2$ to \eqref{eq-strebel} from $R_1$.
	
	If $\gam$ has non-zero length, then it bounds a face which must have $\geq 5$ internal edges,
	so this face contributes $\leq 6-2-5 = -1$ to \eqref{eq-strebel}.
	
	These bounds combine to show that $\gam$ must be a trivial path and there are no internal faces.
	Therefore the internal edges of the diagram form a tree.
	However, both $i(R_n)=i(R_{n'}')=2$, and $i(R_{n-1})=i(R_{n'-1}')=4$; this gives a contradiction.
\end{proof}

\subsection{Connected components of the complement}

The connected components of $X \setminus Y_e$ can be described explicitly;
since $Y_e$ is contractible and $X$ is simply connected, we can see the connected
components of $X \setminus Y_e$ by considering $Y_e \setminus Z_e$.

We orient $Y_e$ by choosing an orientation of the edge $e$, and an orientation on the
$2$-cells adjacent to $e$ so that $e$ goes around their boundaries clockwise.
There are two connected components $H_l$ and $H_r$ adjacent to $e$.
The first is bounded by the $\cV'$-wall found by extending the wall
along the left-most option every time, and the second is bounded by 
the wall which extends along the right-most option every time; see Figure~\ref{fig-Vtree}.

The other connected components are found by following a $\cV'$-path out to a face
with white vertex $q$, then branching at $q$ to two adjacent $\cV'$-paths through black
vertices $v_l, v_r$ as in Figure~\ref{fig-Vtree};
the component $H$ is bounded by the $\cV'$-walls found by 
taking all the right-most choices following $v_l$, 
and all the left-most choices following $v_r$.

Since $Z_e$ is quasi-convex, $H$ is also quasi-convex.
Because $X$ is Gromov hyperbolic, every geodesic from $e$ to $H$ 
must pass within a bounded distance of $q$.

\begin{lemma}\label{lem:Vcplx-decomp-3}
Each pair $H_1, H_2$ of distinct connected components of $X \setminus Y_e$ has 
$\bdry H_1 \cap \bdry H_2 = \emptyset$.
\end{lemma}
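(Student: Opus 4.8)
The plan is to combine Gromov hyperbolicity with the explicit picture of the components and the quasi-convexity results already established. Fix the midpoint $x_0$ of $e$ as basepoint, and for a component $H$ let $q_H$ be its associated white vertex of $Z_e$, with the convention $q_H := x_0$ when $H \in \{H_l, H_r\}$. Suppose for contradiction that $H_1 \neq H_2$ and that $\xi \in \bdry H_1 \cap \bdry H_2$, and fix a geodesic ray $\rho$ from $x_0$ to $\xi$. Since each $H_i$ is quasi-convex (Lemma~\ref{lem:Vcplx-decomp-2}) and, as observed just before the lemma, every geodesic from $x_0$ into $H_i$ passes within a bounded distance of $q_{H_i}$, a standard limiting argument (take $p_n \in H_i$ with $p_n \to \xi$, pass to a limit of the geodesics $[x_0,p_n]$, and compare with $\rho$) shows that $\rho$ eventually stays within a constant $K$ of $H_i$, for $i = 1,2$.

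The heart of the matter is to read off from the description of the components that the topological frontier $\partial H_i = \overline{H_i} \setminus H_i$ is essentially a properly embedded line $F_i$ in $Y_e$ obtained by joining the two bounding $\cV'$-walls of $H_i$ through a bounded arc near $q_{H_i}$ (a properly embedded ray issuing from $e$ when $H_i \in \{H_l,H_r\}$); its two ends converge in $X \cup \bdry X$ to the two endpoints of those $\cV'$-walls, and these endpoints are distinct points of $\bdry X$ because distinct branching walls diverge linearly, by the quasi-convexity estimate in the proof of Lemma~\ref{lem:Vcplx-decomp-2}. Since $H_1$ is a connected component of $X \setminus Y_e$, its frontier $F_1 \subseteq Y_e$ is disjoint from the open connected set $H_2$, so any path from $H_1$ to $H_2$ meets $F_1$. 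Hence for all large $t$ the point $\rho(t)$ lies within $K$ of both $H_1$ and $H_2$, and therefore within $2K$ of $F_1$. Choosing $f_t \in F_1$ with $d(\rho(t),f_t) \le 2K$, we get $f_t \to \xi$ in $X \cup \bdry X$; but $f_t$ leaves every bounded set, so it must converge to an end of the line $F_1$, forcing $\xi$ to be one of the two endpoints of the bounding $\cV'$-walls of $H_1$. Applying the same argument with $F_2$ shows that $\xi$ is also an endpoint of a bounding $\cV'$-wall of $H_2$.

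I would finish as follows. We have a $\cV'$-wall $\mu$ bounding $H_1$ and a $\cV'$-wall $\mu'$ bounding $H_2$ with $\mu(\infty) = \xi = \mu'(\infty)$. These walls are distinct: if $\mu = \mu'$, then this common wall would bound both $H_1$ and $H_2$ along its whole length, which contradicts the description of the components (at every branch vertex along a bounding wall the complex $Y_e$ protrudes on both sides, so a fixed wall bounds any single component along only a bounded arc of itself). Two distinct $\cV'$-walls of $Z_e$ agree along an initial segment and then lie in different branches of the tree $Z_e$; since the branching walls are quasi-convex (Lemma~\ref{lem:Vcplx-decomp-2}), this forces $d_X(\mu(n),\mu'(n')) \to \infty$ as $n,n' \to \infty$, so in particular $\mu(\infty) \neq \mu'(\infty)$, a contradiction. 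Therefore $\bdry H_1 \cap \bdry H_2 = \emptyset$.

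The step I expect to be the main obstacle is the structural one in the second paragraph: proving rigorously that each component's frontier is a single properly embedded line assembled from exactly two $\cV'$-walls, and that no two distinct components share an entire bounding wall. This requires a careful bounded-geometry analysis of how the white edges of $Y_e$ meet $X \setminus Y_e$ and of the branching of $Z_e$ around each white vertex; once that picture is in place, the hyperbolic-geometry argument above is routine.
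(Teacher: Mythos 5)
Your opening move coincides with the paper's: use hyperbolicity plus quasi-convexity of each $H_i$ to conclude that a geodesic ray $\rho$ to $\xi$ eventually lies in a bounded neighbourhood of both components. Where you diverge is in how the contradiction is extracted. The paper finishes in one sentence by appealing to the fact that, thanks to the quasi-convex embedding of $Y_e$, the components $H_1$ and $H_2$ separate arbitrarily far from one another away from $e$. You instead explicitly localize $\xi$ as an endpoint of a bounding $\cV'$-wall of each $H_i$ and then argue that distinct rays from $e$ in the quasi-convexly embedded tree $Z_e$ have distinct points at infinity. That is a finer-grained route to the same contradiction and is a reasonable elaboration of the paper's terse argument.

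There is, however, a genuine error in the justification you give for $\mu\neq\mu'$. You assert that at every branch vertex along a bounding wall $Y_e$ protrudes on both sides, so a fixed wall bounds any single component along only a bounded arc of itself. Both claims are false, and in fact the opposite is true. The construction of $\cV'_0$ in Subsection~\ref{ssec-sc-Vprime-def} deliberately clusters the branching directions $y_1,\dots,y_t$ in a short arc on the far side of each $2$-cell; consequently, a hugging wall (which always takes the extremal right-most or left-most choice after $v_l$ or $v_r$) has no protrusion on the $H$-side and bounds $H$ along its \emph{entire} length. The correct reason the two walls cannot share a terminal ray uses precisely this: if $\mu$ and $\mu'$ agreed past some point, then either both take the same extremal choice there, so they bound a common component on the non-protruding side and $H_1 = H_2$; or one takes the left-most choice and the other the right-most, which is impossible once the branching $t\geq 2$ of Lemma~\ref{lem-Vcomplex-valency} is in effect. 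With that correction the rest of your argument closes, but as written the key ``walls are distinct'' step rests on a false structural picture of $Y_e$.
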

\begin{proof}
Suppose $z \in \bdry H_1 \cap \bdry H_2$,
and let $\gam$ be a geodesic ray from $e$ to $z$.
	
By the thin triangles condition, and quasi-convexity of $H_1$,
$\gam$ eventually lies in a $C$-neighbourhood of $H_1$, for some $C>0$.
It also lies in a $C$-neighbourhood of $H_2$.
This contradicts the quasi-convex embedding of $Y_e$,
as far away from $e$ the distance between $H_1$ and $H_2$ should grow
arbitrarily large.  
\end{proof}

\begin{lemma}\label{lem:Vcplx-decomp-4}
	Every sequence $\{ H_i\}$ of distinct connected components of $X \setminus Y_e$ 
	subconverges in $X \cup \bdry X$ to a point in $\bdry X$.
\end{lemma}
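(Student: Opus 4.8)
The plan is to show that the components $H_i$ have diameters of limit sets shrinking to zero in any visual metric, so that they subconverge to a point. First I would recall the explicit description of a component $H$ given just before the statement: $H$ is cut off from $e$ by branching at some white vertex $q \in Z_e$, and every geodesic from $e$ to $H$ (or to $\bdry H$) passes within a bounded distance $C_0$ of $q$. Moreover, by quasi-convexity of $Z_e$ (Lemma~\ref{lem:Vcplx-decomp-2}) and hyperbolicity of $X$, the point $q$ is itself within bounded distance of a geodesic from $e$ to any point of $\bdry H$, so $\bdry H \subset \partial B(q, C_0)$ in the sense that $q$ is a "coarse entry point" for $H$.

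Next I would pass to the sequence $\{H_i\}$. Let $q_i$ be the corresponding branching vertices in $Z_e$. If infinitely many of the $H_i$ share the same $q_i = q$, then they are among the finitely many components branching at the two black children $v_l, v_r$ of $q$ — but actually each white vertex $q$ gives rise to only finitely many such components (one per adjacent pair of the $\geq 3$ and $\leq$ some bounded number of black children), so infinitely many distinct $H_i$ forces $d(e, q_i) \to \infty$. After passing to a subsequence, since $Z_e$ is a locally finite tree quasi-isometrically embedded in $X$ (its carrier is quasi-convex by Remark~\ref{rmk:geod-meets-every-cell-in-ladder}), the vertices $q_i$ subconverge in $Z_e \cup \partial Z_e$ to a boundary point, and hence (via the quasi-convex embedding $Z_e \hookrightarrow X$) the points $f(q_i) \in X$ subconverge to a point $\xi \in \bdry X$. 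This uses that a quasi-convex embedded subtree has a well-defined limit set on which convergence of vertices is inherited from $X$.

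Finally I would argue that $H_i \to \xi$ in $X \cup \bdry X$. Fix a basepoint $o$ near $e$. For $x \in H_i$, any geodesic $[o,x]$ passes within $C_0$ of $f(q_i)$; by thin triangles this forces the Gromov product $(x \mid f(q_i))_o \geq d(o, f(q_i)) - C_1$ for a uniform $C_1$. Since $f(q_i) \to \xi$ and $d(o, f(q_i)) \to \infty$, we get $(x \mid \xi)_o \to \infty$ uniformly over $x \in H_i$ as $i \to \infty$; the same estimate extends to $x \in \bdry H_i$ by a limiting argument in the Gromov product. Hence every sequence $x_i \in H_i \cup \bdry H_i$ converges to $\xi$, which is exactly the assertion that $\{H_i\}$ subconverges in $X \cup \bdry X$ to the point $\xi \in \bdry X$.

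The main obstacle I expect is the bookkeeping in the second paragraph: making precise that "each white vertex spawns only finitely many components" (which needs the bounded geometry of $X$, so that each white vertex of $Z_e$ has bounded degree) and that the branching vertices $q_i$ of an infinite family of distinct components must escape to infinity in $Z_e$, combined with transferring convergence from the tree $Z_e$ to $X$ using the quasi-convexity established in Lemma~\ref{lem:Vcplx-decomp-2}. Once $q_i \to \xi$ is in hand, the hyperbolic-geometry estimate relating $H_i$ to $q_i$ is routine and essentially identical to the argument already used in Lemma~\ref{lem:Vcplx-decomp-3}.
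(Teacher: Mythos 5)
Your proposal is correct and takes essentially the same approach as the paper: pass to a subsequence so the branch points $q_i$ escape to infinity, use the fact that geodesics from $e$ to $H_i$ pass within a bounded distance of $q_i$ to control $\bdry H_i$, and extract a convergent subsequence. The only cosmetic difference is that the paper directly bounds $\diam(\bdry H_i) \preceq e^{-\epsilon d(e,q_i)} \to 0$ and then picks a further subsequence by compactness of $\bdry X$, whereas you first pin down a limit $\xi$ for the $q_i$ (via the quasi-convex embedding of $Z_e$) and then drive the Gromov products to infinity — equivalent bookkeeping reaching the same conclusion.
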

\begin{proof}
	Choose a subsequence which does not contain $H_l$ and $H_r$,
	and let the associated branch points be denoted by $q_i$.
	Necessarily $d(e, q_i) \ra \infty$, so we must have $d(e, H_i) \ra \infty$
	also.
	Since any geodesic from $e$ to $\bdry H_i$ has to go through the ball $B(q_i,C)$,
	we have $\diam(\bdry H_i) \preceq e^{-\eps d(e,q_i)} \ra 0$, where $\eps$ is the
	visual parameter for the visual metric on the boundary.
	Finally, choose a subsequence so that $\bdry H_i$ converges to a point
	in $\bdry X$.
\end{proof}

\subsection{A general statement}
We observe that we have actually shown the following, where we need not be working
with the Cayley complex of a $C'(\frac18)$ small cancellation group.
\begin{theorem}\label{thm-Vcomplex-decompose}
	Suppose $X$ is as in Assumption~\ref{assump-main},
	and that $\cV' \subset \cV$ are crossing families of V-paths, and $\cV$ is reversible.
	If assumptions \eqref{eq:as-no-red-V-collared}--\eqref{eq:as-no-2-collar-geod} are satisfied,
	then each $\cV'$-complex $Y_e \ra X$ decomposes $X$.
\end{theorem}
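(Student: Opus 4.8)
The plan is to observe that Definition~\ref{def-decomposes} asks for exactly four properties of a subcomplex, that each of these has already been verified for $Y_e \ra X$ earlier in this section, and that the proofs of those verifications never used the $C'(\lambda)$ hypothesis directly; so the real work is a careful dependency audit. First I would recall that $Y_e$ is only defined once \eqref{eq:as-white-vertex-deg-at-least-three} holds, so that a small neighbourhood of $Z_e \ra X$ is genuinely an elementary polygonal complex (white vertices of degree $d(v)\geq 3$ give $2$-cells of even perimeter $2d(v)\geq 6$); this is part of the standing hypotheses. The four bullets of Definition~\ref{def-decomposes} are then each supplied by one lemma: Lemma~\ref{lem:Vcplx-decomp-1} shows $Z_e$, hence $Y_e$, embeds as a tree, giving connectedness and simple connectedness; Lemma~\ref{lem:Vcplx-decomp-2} gives quasi-convexity; Lemma~\ref{lem:Vcplx-decomp-3} gives $\bdry H_1 \cap \bdry H_2 = \emptyset$ for distinct components of $\overline{X\setminus Y_e}$; and Lemma~\ref{lem:Vcplx-decomp-4} gives the required subconvergence of any sequence of distinct components. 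So modulo the audit, the theorem is just the conjunction of these four lemmas.

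Second I would trace which ingredients each of those four lemmas actually consumes. Lemma~\ref{lem:Vcplx-decomp-1} invokes Theorem~\ref{thm-v-wall-embed} (which needs only \eqref{eq:as-no-red-V-collared}), uses the crossing property together with \eqref{eq:as-branch-different-cells} to see that the relevant quotient $A$ of two ladders is homotopic to $\Sph^1$, uses reversibility of $\cV$ to form $-\alp\cup\alp'$ as a $\cV$-path, and finally appeals to \eqref{eq:as-no-reduced-almost-2-collared} to rule out the resulting reduced quasi-$\cV$-collared diagram. Lemma~\ref{lem:Vcplx-decomp-2} additionally uses Proposition~\ref{prop-V-path-qi} (which needs \eqref{eq:as-no-red-V-collared}, \eqref{eq:as-no-red-V-triv-collared}, \eqref{eq:as-V-path-geod-in-ladder}), Gromov hyperbolicity of $X$, and \eqref{eq:as-no-2-collar-geod}. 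Lemmas~\ref{lem:Vcplx-decomp-3} and~\ref{lem:Vcplx-decomp-4} use only Gromov hyperbolicity (thin triangles, the visual metric estimate $\diam(\bdry H_i)\preceq e^{-\eps d(e,q_i)}$) together with the quasi-convexity established in Lemma~\ref{lem:Vcplx-decomp-2}. In particular the only places the $C'(\lambda)$ structure ever entered the section were in verifying Assumption~\ref{assump-main} (via Lemma~\ref{lem:small-canc-assumption}) and in establishing \eqref{eq:as-no-red-V-collared}--\eqref{eq:as-no-2-collar-geod} through the curvature formula Lemma~\ref{lem:disc-diag-curvature} and Lemmas~\ref{lem-no-V-collared}--\ref{lem:no-2V-geod-collar}. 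Since the general statement simply promotes those conclusions to hypotheses, the same four lemmas go through verbatim.

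The main obstacle — really the only genuine content — is making the audit honest: one must re-read the proofs of Lemmas~\ref{lem:Vcplx-decomp-1}--\ref{lem:Vcplx-decomp-4} and confirm that every geometric input there is either Assumption~\ref{assump-main}, the crossing/reversibility of $\cV'\subset\cV$, Gromov hyperbolicity, or one of \eqref{eq:as-no-red-V-collared}--\eqref{eq:as-no-2-collar-geod}, with no tacit appeal to the specific shape of $\cV$ from Subsection~\ref{ssec-sc-V-def}, to faces having many sides, or to the geodesic convexity of $R\cup R'$ from Lemma~\ref{lem:sc-twocells-geod-convex} (which was used only inside the proof of \eqref{eq:as-V-path-geod-in-ladder}, now a hypothesis). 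Granting that check, the theorem follows immediately by citing the four lemmas against the four clauses of Definition~\ref{def-decomposes}.
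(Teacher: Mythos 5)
Your proposal is correct and takes essentially the same route as the paper: the paper simply observes that the four clauses of Definition~\ref{def-decomposes} are supplied by Lemmas~\ref{lem:Vcplx-decomp-1}--\ref{lem:Vcplx-decomp-4}, whose proofs consume only Assumption~\ref{assump-main}, crossing/reversibility, Gromov hyperbolicity, and assumptions~\eqref{eq:as-no-red-V-collared}--\eqref{eq:as-no-2-collar-geod}, so promoting those to hypotheses yields the general statement. Your dependency audit is accurate, including the observation that~\eqref{eq:as-white-vertex-deg-at-least-three} is what makes $Y_e$ an elementary polygonal complex in the first place and that the $C'(\lambda)$ machinery was used only to establish~\eqref{eq:as-no-red-V-collared}--\eqref{eq:as-no-2-collar-geod}, not within the four decomposition lemmas themselves.
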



\section{Verifying the upper bound}\label{sec-new-upper}

In this section, we use the machinery developed to prove Theorem~\ref{thm-upper-bound},
and apply this result to random groups.

\subsection{The complexes fully decompose $X$}
It remains to show that we can separate points in $\bdry X$ by $\cV'$-complexes.

\begin{lemma}\label{lem:fullydecompose}
	For every $z_1 \neq z_2$ in $\bdry X$, there exists $Y_e \ra X$ so that
	for every connected component $E$ of $\overline{X \setminus Y_e}$,
	$\{ z_1, z_2 \} \nsubseteq \bdry E$.
\end{lemma}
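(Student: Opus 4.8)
The plan is to fix a bi-infinite geodesic $\gamma$ in $X$ with endpoints $z_1$ and $z_2$ (this exists since $X$, having bounded geometry, is proper, and is geodesic and hyperbolic), and then to locate an edge $e$ on $\gamma$ whose $\cV'$-complex $Y_e$ separates $z_1$ from $z_2$ in the required way. Recall that by Lemma~\ref{lem:Vcplx-decomp-3} the limit sets of distinct connected components of $\overline{X\setminus Y_e}$ are pairwise disjoint, so each of $z_1, z_2$ lies in $\bdry E$ for at most one component $E$. Hence it is enough to produce an $e$ such that $z_2 \notin \bdry E$ whenever $z_1 \in \bdry E$; in particular, any $e$ for which $z_1$ lies in no complementary limit set works. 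So I would aim to show that a suitably placed $e$ forces $z_1$ and $z_2$ into the closures of two different complementary components.

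For the main step, take $e$ to be an edge traversed by $\gamma$, with midpoint $x$. By the construction of $Y_e$ (Definition~\ref{def-Vcomplex}) the root black edge of $Y_e$ crosses $e$ transversally at $x$, so the two half-edges of $e\setminus\{x\}$ lie in the two distinct components $H_l$ and $H_r$ adjacent to $e$; hence the two sub-rays of $\gamma$ leaving $x$ begin in $H_l$ and $H_r$ respectively. Since $Y_e$ is quasi-convex (Lemma~\ref{lem:Vcplx-decomp-2}) and $X$ is hyperbolic, the portion of $\gamma$ lying within any fixed distance of $Y_e$ is coarsely a bounded interval about $x$ — provided neither $z_i$ lies in $\bdry Y_e$, a degenerate case in which $z_i$ is the endpoint of an infinite $\cV'$-path in $Z_e$ and one argues similarly after relocating $e$. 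Outside that interval, each sub-ray of $\gamma$ lies in a single complementary component, so $z_1 \in \bdry E_1$ and $z_2 \in \bdry E_2$ for components $E_1, E_2$ that meet a bounded neighbourhood of $e$; in particular each is $H_l$, $H_r$, or a ``small'' component $H$ whose branch point $q$ has $d(e,q)$ bounded. If $E_1 \neq E_2$ I conclude by Lemma~\ref{lem:Vcplx-decomp-3}.

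The point I expect to be the main obstacle is ruling out $E_1 = E_2 =: E$, i.e.\ that $\gamma$ doubles back near $e$ across the $2$-cells of $Y_e$ meeting $x$, leaving $z_1, z_2 \in \bdry E$. To control this, $e$ should not be chosen arbitrarily: I would pick points $a, b$ far out along $\gamma$ towards $z_1, z_2$, and let $e$ be an edge of $\gamma$ coarsely halfway between $a$ and $b$. If then $z_1, z_2 \in \bdry E$, disjointness forces $\diam(\bdry E) \geq d(z_1,z_2) > 0$ in a fixed visual metric; but the estimate in the proof of Lemma~\ref{lem:Vcplx-decomp-4} gives $\diam(\bdry H) \preceq e^{-\eps\, d(e,q)}$ for a small component $H$ with branch point $q$, so taking $a, b$ sufficiently far out forces $E \in \{H_l, H_r\}$. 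In that case $\gamma$ lies in a bounded neighbourhood of $\overline{H_l}$ (say), and since $\gamma$ leaves every neighbourhood of $Y_e$ it must enter the bulk of $H_l$; relocating $e$ to an edge of $\gamma$ deep inside $H_l$ (using that carriers of $\cV'$-paths are geodesically convex, Remark~\ref{rmk:geod-meets-every-cell-in-ladder}, so that $\gamma$ stays controlled inside $H_l$) returns us to the earlier situation but with any offending component small and its branch point far from the new root, contradicting the diameter lower bound. This, together with the analogous treatment of the $z_i \in \bdry Y_e$ case, would finish the proof.
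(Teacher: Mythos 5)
Your overall set-up (reduce to showing $z_1$, $z_2$ lie in the limit sets of distinct components, via a bi-infinite geodesic $\gamma$) matches the paper, but there is a genuine gap precisely at the point you yourself flag as the main obstacle, and the workaround you sketch does not close it.

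The paper resolves the obstacle by a \emph{local combinatorial} argument tied to a careful choice of $e$. Writing $\gamma$ as a geodesic from $z_2$ to $z_1$, it distinguishes Case~(i), where $\gamma$ never contains as much as $\tfrac14|\partial R|$ of any $2$-cell $R$, from Case~(ii), where $\gamma$ runs along some $\partial R$ for at least $\tfrac14|\partial R|$ and $e$ is taken to be the \emph{middle} edge of that overlap. In both cases one proves directly that each ray of $\gamma$ leaving $x=$ midpoint of $e$ exits the carrier supporting $Y_e$ after crossing at most one $2$-cell, into $H_l$ on one side and $H_r$ on the other: in Case~(i) any attempt by $\gamma$ to track $Y_e$ across a $2$-cell $R_1$ would, by the definition of $\cV'$-paths and $C'(\tfrac18)$, force $\gamma$ to traverse $\geq\tfrac14|\partial R_1|$ of $\partial R_1$, contradicting Case~(i); in Case~(ii) the symmetric placement of $e$ and $C'(\tfrac18)$ rule out both using $R$ and using some $R_1\neq R$ to cross. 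Once the ray has left into $H_l$, geodesic convexity of carriers (Remark~\ref{rmk:geod-meets-every-cell-in-ladder}, via Proposition~\ref{prop-V-path-qi}) prevents it from ever returning, so it converges to a point of $\bdry H_l$; likewise for $H_r$. Your proposal contains no analogue of this step; in particular the assertions that ``the portion of $\gamma$ lying within any fixed distance of $Y_e$ is coarsely a bounded interval about $x$'' and that ``outside that interval each sub-ray lies in a single complementary component'' are not justified, and the second one is exactly what needs proving.

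The fallback you propose --- pick $e$ coarsely halfway between far points $a,b$, use the diameter decay $\diam(\bdry H)\preceq e^{-\eps d(e,q)}$ to force the offending component to be $H_l$ or $H_r$, then ``relocate $e$ deep inside $H_l$'' --- is not a proof. The diameter decay bounds $d(e,q)$, but does not by itself show $E\in\{H_l,H_r\}$, and more seriously the relocation step is pure heuristic: there is no argument that after moving $e$ the same obstruction cannot recur, no termination criterion for the iteration, and no reason the new ``offending component'' must be small with branch point far from the new root. The degenerate case $z_i\in\bdry Y_e$ is also only gestured at; in the paper it never arises because the local argument forces each ray out of the carrier immediately. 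To repair the proposal you would essentially need to import the paper's choice of $e$ and the Case~(i)/(ii) combinatorics, at which point the relocation machinery is unnecessary.
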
	
\begin{proof}
	Let $\gam$ be a geodesic from $z_2$ to $z_1$.
	Consider the overlap of $\gam$ with faces in $X$.
	If $\gam$ does not contain a subpath of length 
	$\frac{1}{4}|\partial R|$ of any $2$-cell, let $e$
	be any edge of $\gam$; this is Case (i).
	If $\gam$ does contain a subpath of length
	$\frac{1}{4}|\partial R|$ of a $2$-cell $R$, let $e$ be the
	edge in the middle of this subpath; this is Case (ii).
	
	In either case, let $H_l, H_r$ be the two connected components of $X \setminus Y_e$
	which $e$ meets.
	It suffices to show that $z_1,z_2$ are in $\bdry H_l, \bdry H_r$ respectively,
	as $\bdry H_l \cap \bdry H_r = \emptyset$.
	By Proposition~\ref{prop-V-path-qi}, it suffices to show that the ray of $\gam$ going
	to $z_1$ leaves the carrier supporting $Y_e$ into $H_l$, for then the ray must
	always remain in $H_l$ (and likewise for the ray to $z_2$).
	
	\emph{Case (i)}: If $\gam$ stays in the carrier of $Y_e$, it must follow $Y_e$ from
	$e$ along the boundary of a $2$-cell $R_1$ into an adjacent face, 
	see Figure~\ref{fig-leaveladder}, ignoring $R$.
	By the definition of a $\cV$-path, and $C'(\frac{1}{8})$, this forces $\gam$ to contain
	at least $\frac14 |\partial R_1|$ of $R_1$, contradicting Case (i).
\begin{figure}
	\centering
	\def\svgwidth{0.7\columnwidth}
	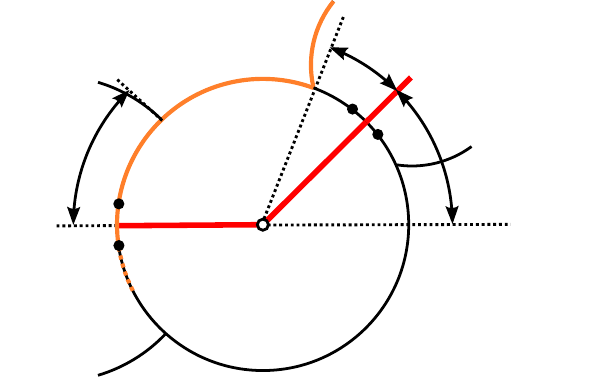
	\caption{$Y_e$ separates $z_1$ and $z_2$.}\label{fig-leaveladder}
\end{figure}
	
	\emph{Case (ii)}: First, if $\gam$ uses $R$ to follow
	$Y_e$ into an adjacent face, by case (i) this subpath of $\gam$ must
	contain $>\frac14 |\partial R|$ of $\partial R$.
	But $e$ was chosen to be the midpoint of $\gam \cap R$, so
	$\gam$ must meet $R$ along a path of length $>\frac12 |\partial R|$,
	which contradicts $\gam$ being geodesic.
	
	Second, $\gam$ cannot use a relation $R_1 \neq R$ to follow $Y_e$.
	Figure~\ref{fig-leaveladder} shows that this forces $\gam$ to contain
	at least $\frac14 |\partial R_1|$ of $\partial R_1$ as it travels from $e$, 
	as well as $\frac18 |\partial R|$ of $\partial R$.
	This contradicts $C'(\frac{1}{8})$.	
\end{proof}

\subsection{Proof of upper bound for small cancellation groups}
\begin{proof}[Proof of Theorem~\ref{thm-upper-bound}]
	Without loss of generality, all generators in $\cS$ appear in at least two relators.
	For if any generator appears in only one relator, we can discard the generator and relator.
	If a generator $s$ appears in no relator, $G$ is the free product of $\langle s \rangle$
	and the group $H$ generated by the other generators.
	This contradicts the assumption that $G$ is one-ended.
	
	By Lemmas~\ref{lem:Vcplx-decomp-1}, \ref{lem:Vcplx-decomp-2}, \ref{lem:Vcplx-decomp-3},
	\ref{lem:Vcplx-decomp-4} and~\ref{lem:fullydecompose}, the $\cV'$-complexes $\{Y_e \ra X\}$ fully decompose
	the Cayley complex $X$.
	We now apply Theorem~\ref{thm-bourdonkleiner},
	where the thickness of the black edges is in the interval $[2,k]$, 
	and the perimeter is at least $2(t+1)$.  Using $t \geq \lfloor 1/8\lam \rfloor +1$ 
	(Lemma~\ref{lem-Vcomplex-valency}),
	we see that
	\[
		\Cdim(\bdry G) \leq 1+ \frac{\log(k-1)}{\log(\lfloor 1/8\lam \rfloor +1)}. \qedhere
	\]
\end{proof}

\subsection{Applications to random groups}

For fixed $m \geq 2$ and $n \geq 1$, a random $m$ generator, $n$ relator group with relators
of length $\leq l$ has, a.a.s.,
a $C'(\lam)$ presentation for $\lam = 11\log(l)/(l \log(2m-1))$ 
\cite[Proposition 2.2]{Mac-12-random-cdim}.
We now show an analogous bound for polynomial density random groups.
\begin{proposition}\label{prop:poly-rand-small-canc}
	Suppose $C > 0$ and $K \in [0, \infty)$ are fixed,
	and $G = \langle \cS | \cR \rangle$
	is a random $m$ generator, 
	$n= Cl^K$ relator group, then a.a.s.\ 
	$G$ has a $C'(\lam)$ presentation, for $\lam = 6(K+2)\log(l)/ (l\log(2m-1))$.
\end{proposition}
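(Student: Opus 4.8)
The plan is to follow the proof of \cite[Proposition~2.2]{Mac-12-random-cdim}, which treats the few relator case, and to track how the argument degrades when the number of relators grows polynomially. A failure of $C'(\lam)$ produces a long \emph{piece}, a common initial subword of two distinct cyclic conjugates $u \neq u'$ of words in $\cR \cup \cR^{-1}$ of length $\geq \lam\min\{|u|,|u'|\}$, and such pieces are exponentially unlikely; the choice $\lam = 6(K+2)\log(l)/(l\log(2m-1))$ is designed so that the $\approx n^2 l^2$ ordered pairs of cyclic conjugates are beaten by the probability $\approx (2m-1)^{-\lam l}$ of a given piece.

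First I would rule out short relators. Put $l_0 = l - \lceil (K+1)\log(l)/\log(2m-1) \rceil$, so $l_0 = (1-o(1))l$. Since there are $(2m-1)^L(1+O((2m-1)^{-L}))$ cyclically reduced words of length $L$, a uniformly random relator of length $\leq l$ has length $< l_0$ with probability $\preceq (2m-1)^{l_0-l} \preceq l^{-(K+1)}$; a union bound over the $n = Cl^K$ relators shows that a.a.s.\ every relator has length in $[l_0, l]$. On this event $\lam\min\{|r|,|r'|\} \geq \lam l_0$ for every pair $r,r'$, so it suffices to prove that a.a.s.\ no two distinct cyclic conjugates of words in $\cR \cup \cR^{-1}$ have a common initial subword of length $\geq k := \lceil \lam l_0\rceil = 6(K+2)\log(l)/\log(2m-1)+O(1)$.

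Second, I would run the union bound over pieces. There are at most $2n$ words in $\cR \cup \cR^{-1}$, each with at most $l$ cyclic conjugates, hence at most $(2nl)^2$ ordered pairs $(u,u')$ to examine, and for each I bound the probability that $u, u'$ agree on their first $k$ letters. If $u, u'$ are conjugates of distinct relators they are determined by independent random words, and since the length-$k$ prefix of a uniformly random cyclically reduced word of length $\geq l_0 \geq 2k$ is, up to a universal constant, uniform on reduced words of length $k$, this probability is $\preceq (2m-1)^{-k}$. If $u, u'$ are distinct conjugates of a single relator $r = r_1 \cdots r_L$, agreement on the first $k$ letters forces $r_a = r_{a+p}$ for $a$ in an interval of length $k$ and some fixed $1 \leq p \leq L-1$; revealing the letters of $r$ in order, at least $\lfloor k/2\rfloor$ of these equalities pin a fresh letter to one of its $\leq 2m-1$ allowed values, so the probability is $\preceq (2m-1)^{-\lfloor k/2\rfloor}$. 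The case where $u'$ is a conjugate of the inverse of $u$'s relator is the same, the periodicity relation being replaced by a reflection $r_a = r_{s-a}^{-1}$. So in all cases the per-pair probability is $\preceq (2m-1)^{-k/2}$.

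Combining, the probability that $\langle \cS \mid \cR \rangle$ fails $C'(\lam)$ is at most the (vanishing) probability of a short relator plus $(2nl)^2 \cdot O\big((2m-1)^{-k/2}\big)$. Since $n = Cl^K$ and $(2m-1)^{-k/2} = (2m-1)^{-3(K+2)\log(l)/\log(2m-1)+O(1)} = O(l^{-3(K+2)})$, the second term is $O\big(l^{2K+2} \cdot l^{-3(K+2)}\big) = O(l^{-(K+4)}) \ra 0$, which finishes the proof. The one genuinely delicate point is the per-pair estimate in the self-overlap and relator-versus-inverse cases, but this is identical to the computation in \cite[Proposition~2.2]{Mac-12-random-cdim}; the only new ingredient is carrying the factor $n^2 = C^2 l^{2K}$ through the union bound, which is exactly what forces the constant $6(K+2)$ in $\lam$.
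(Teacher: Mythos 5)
Your proposal takes essentially the same route as the paper: first rule out short relators a.a.s., then run a union bound over the $\asymp n^2 l^2$ candidate pieces, with the factor $6(K+2)$ in $\lambda$ chosen precisely to dominate the polynomial $n^2 \asymp l^{2K}$ carried through that bound. The one point of divergence is the self-overlap case, a piece shared by two distinct cyclic conjugates of a single relator $r$ or of $r$ and $r^{-1}$. There the paper invokes \cite[Lemma~2.3]{Mac-12-random-cdim}, which extracts two \emph{non-overlapping} occurrences of a subword of length $\geq \lambda l/5$ and then does a one-line bound; you instead give a direct ``reveal the letters in order and count fresh pins'' argument claiming at least $\lfloor k/2 \rfloor$ pins. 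Your exponent ($1/2$ versus the paper's $1/5$) is sharper and would also close the union bound, but the pinning count is only asserted: the reflection case $r_a = r_{s-a}^{-1}$ is the one that is genuinely tight at $k/2$, and it is not quite ``the same'' as the rotation case (the constraint graph is an involution rather than a shift, with a near-fixed-point to account for), while the wrap-around rotation case needs its own bookkeeping. Since \cite[Lemma~2.3]{Mac-12-random-cdim} already does this work, it is cleaner simply to cite it, which is what the paper does; beyond that your argument matches the paper's.
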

\begin{proof}
	Since the number $N_l$ of cyclically reduced words of length $l$ grows exponentially,
	and $n$ grows polynomially, a.a.s.\ every relator has length at least $0.99l$
	(see the discussion before the proof of \cite[Proposition 2.2]{Mac-12-random-cdim}).
	
	First we bound the probability $P_1$ that two different relators
	share a word $u$ of length at least $|u| \geq \lam 0.99l$.
	We have $\binom{n}{2} \leq n^2$ choices for the relators $r_i, r_j \in \cR$,
	and $(2l)^2 = 4l^2$ choices for the starting position of $u$ in $r_i^{\pm 1}, r_j^{\pm 1}$.
	The probability that the subword of $r_j^{\pm 1}$ then matches the corresponding subword
	of $r_i$ is $\preceq (2m-1)^{-0.99 \lam l}$.
	Combined, we have
	\begin{align*}
		P_1 & \preceq n^2 (2l)^2 (2m-1)^{-0.99 \lam l}
		\leq 4C^2 l^{2K+2} l^{-0.99 \cdot (6K+12)} \ra 0.
	\end{align*}
	
	Second, we bound the probability $P_2$ that a word $u$ of length 
	$|u| \geq \lam 0.99l$ appears as a subword of a relator $r \in \cR$ in two different ways.
	By \cite[Lemma 2.3]{Mac-12-random-cdim}, if this occurs then there is word $v$ of 
	length at least $\lam l /5$ which appears in $r^{\pm 1}$ in two different, non-overlapping 
	locations.
	The probability of this occurring is bounded by the product of the 
	number $n$ of choices of $r$, 
	the number $(2l)^2$ of locations for the copies of $v$,
	and the probability that the subword starting at the second location matches the 
	subword at the first, which is $\preceq (2m-1)^{-\lam l/5} $.
	So
	\begin{align*}
		P_2 & \preceq n (2l)^2 (2m-1)^{-\lam l/5}
		= 4C l^{K+2} l^{-6(K+2)/5} \ra 0.
	\end{align*}
	
	Since the probability that $G$ fails $C'(\lam)$ is bounded by $P_1+P_2$,
	we are done.
\end{proof}

We now show the upper bounds of Theorems~\ref{thm-main-polygrowth} and \ref{thm-main-density}.

\begin{corollary}\label{cor:randpolyupper}
	Suppose $C > 0$ and $K \in [0, \infty)$ are fixed,
	and $G = \langle \cS | \cR \rangle$ 
	is a random $m$ generator, $n= Cl^K $ relator group, then a.a.s.\
	\[
		\Cdim(\bdry G) \leq 2+K+\frac{2(K+1) \log\log l}{\log l}\ . 
	\]
\end{corollary}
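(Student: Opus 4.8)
The plan is to feed the small-cancellation estimate of Proposition~\ref{prop:poly-rand-small-canc} into Theorem~\ref{thm-upper-bound} and then simplify the resulting expression.  First I would record that, by Champetier~\cite{Cha-95-rand-grps}, a.a.s.\ the random group $G$ is one-ended (indeed $\bdry G$ is homeomorphic to the Menger sponge), so the hypotheses of Theorem~\ref{thm-upper-bound} can be met.  By Proposition~\ref{prop:poly-rand-small-canc}, a.a.s.\ $G$ admits a $C'(\lambda)$ presentation with $\lambda = 6(K+2)\log l/(l\log(2m-1))$; since $\lambda \to 0$ as $l \to \infty$, for $l$ sufficiently large $\lambda \leq \tfrac18$.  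Moreover $|\cR| = n = Cl^K$ and every relation has length $M \leq l$, so each generator occurs in at most $|\cR| M \leq Cl^{K+1}$ places.  The second inequality of Theorem~\ref{thm-upper-bound} then gives, a.a.s.,
\[
	\Cdim(\bdry G) \leq 1 + \frac{\log\bigl(Cl^{K+1}\bigr)}{\log\bigl(\lfloor 1/8\lambda \rfloor + 1\bigr)}.
\]

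Next I would estimate the denominator.  For $l$ large, $1/(8\lambda) = l\log(2m-1)/\bigl(48(K+2)\log l\bigr) > 1$, so $\lfloor 1/8\lambda\rfloor + 1 \geq 1/(8\lambda)$ and hence $\log\bigl(\lfloor 1/8\lambda\rfloor+1\bigr) \geq \log l - \log\log l - B$ for some constant $B = B(m,K)$.  Combining this with $\log\bigl(Cl^{K+1}\bigr) = (K+1)\log l + \log C$, I obtain, a.a.s.\ for all large $l$,
\[
	\Cdim(\bdry G) \leq 1 + \frac{(K+1)\log l + \log C}{\log l - \log\log l - B}.
\]

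Finally I would check that this right-hand side is at most $2 + K + 2(K+1)\log\log l/\log l$ once $l$ is large enough.  Writing $L = \log l$ and $L' = \log\log l$, then subtracting $1$, dividing by $K+1 > 0$, and clearing the (positive) denominators, the desired inequality becomes one of the form $L L' \geq c_1 L + 2 L'^2 + c_2 L'$ with $c_1 = B + \log C/(K+1)$ and $c_2 = 2B$, equivalently $L' \geq c_1 + \bigl(2 L'^2 + c_2 L'\bigr)/L$; since $L' \to \infty$ while $L'^2/L \to 0$ and $L'/L \to 0$, this holds for all sufficiently large $l$.  I do not expect any genuine obstacle here: the only points needing care are verifying a.a.s.\ that the hypotheses of Theorem~\ref{thm-upper-bound} hold (one-endedness and the $C'(\lambda)$ bound) and carrying out this last elementary estimate cleanly — the substantive work is already contained in Proposition~\ref{prop:poly-rand-small-canc} and Theorem~\ref{thm-upper-bound}.
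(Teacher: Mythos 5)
Your proposal is correct and matches the paper's proof: both feed Proposition~\ref{prop:poly-rand-small-canc} into Theorem~\ref{thm-upper-bound} with $M=l$, $|\cR|=Cl^K$, and then simplify $1+\bigl((K+1)\log l+O(1)\bigr)/\bigl(\log l-\log\log l+O(1)\bigr)$. The only small point the paper is more explicit about is the hypothesis check for Champetier's Theorem 4.18 (one-endedness and Menger boundary), namely that a.a.s.\ every reduced word of length $12$ appears as a subword of some relator and the presentation is $C'(\tfrac{1}{12})$; your appeal to Champetier implicitly needs this verification.
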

\begin{proof}
	Every reduced word in $S$ of length $12$ appears, a.a.s., as a subword of some $r \in \cR$;
	in fact, every such word will appear in the first relator.
	By Proposition~\ref{prop:poly-rand-small-canc} such a presentation 
	is $C'(\frac{1}{12})$, a.a.s., so \cite[Theorem 4.18]{Cha-95-rand-grps}
	shows that $G$ is one-ended and that $\bdry G$ is homeomorphic to the Menger curve.
	
	The conformal dimension bound now follows from 
	Theorem~\ref{thm-upper-bound} applied to $G$ with 
	$M=l$, $|\cR|=n = \lceil C l^K \rceil$ and $\lam=6(K+2)\log(l)/ (l\log(2m-1))$.
	Observe that
	$\log(\lfloor 1/8\lam \rfloor + 1) = \log l - \log\log l +O(1)$, 
	and $\log(|\cR|M) = (K+1)\log l +O(1)$,
	so, a.a.s.,
	\begin{align*}
		\Cdim(\bdry G) & \leq 1+ \frac{(K+1)\log l+O(1)}{\log l -\log\log l+O(1)} \\
		& \leq 1+(K+1)+\frac{(K+1)\log\log l +O(1)}{\log l -\log\log l +O(1)} \\
		& \leq 2+K+\frac{2(K+1) \log\log l}{\log l}.\qedhere
	\end{align*}
\end{proof}

The case of the density model is even simpler.

\begin{corollary}\label{cor:densityupper}
	There exists $C > 0$ so that for any density $d < \frac12$, a.a.s.\ 
	a random $m$-generated group $G$ at density $d$ has
	\[
		\Cdim(\bdry G) \leq C \log(2m-1) \left(\frac{d}{|\log d|} \vee \frac{1}{1-2d} \right) l.
	\]
\end{corollary}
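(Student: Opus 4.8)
The plan is to combine two ingredients already at our disposal: the small-cancellation bound of Theorem~\ref{thm-upper-bound} at low density, and the elementary linear bound of \cite[Proposition 1.7]{Mac-12-random-cdim} at higher density. I would fix a threshold $d_0<\tfrac{1}{16}$, say $d_0=\tfrac{1}{24}$, and split according to whether $d\le d_0$ or $d_0<d<\tfrac12$.

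\emph{Low density, $0<d\le d_0$.} First I would observe that $G$ is a.a.s.\ $C'(3d)$: by the computations used for \cite[Proposition 2.2]{Mac-12-random-cdim} and Proposition~\ref{prop:poly-rand-small-canc}, the probability that two relators, or a single relator in two ways, overlap in a common subword of length $\ge 3dl$ is at most a polynomial in $l$ times $(2m-1)^{(2d-3d)l}=(2m-1)^{-dl}$, which tends to $0$ since $n=\lceil(2m-1)^{dl}\rceil$. As $3d\le\tfrac18$, and as a random group at density $<\tfrac12$ is a.a.s.\ one-ended (its boundary being a Menger curve \cite{DGP-10-density-menger}), Theorem~\ref{thm-upper-bound} applies with $\lambda=3d$, $M=l$ and $|\cR|=n$, giving
\[
	\Cdim(\bdry G)\ \le\ 1+\frac{\log(|\cR|M)}{\log(\lfloor 1/(24d)\rfloor+1)}\ \le\ 1+\frac{dl\log(2m-1)+\log l+O(1)}{\log(\lfloor 1/(24d)\rfloor+1)}.
\]
Since $\log(\lfloor 1/(24d)\rfloor+1)\ge\max\{\log 2,\ |\log d|-\log 24\}$ for $d\le d_0$, one has $\log(\lfloor 1/(24d)\rfloor+1)\succeq|\log d|$ uniformly over $0<d\le d_0$ (the first term controls the ratio where $d$ is bounded away from $0$, the second where $d$ is small). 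As $d$ is fixed and $l\to\infty$, the additive terms $1$ and $\log l+O(1)$ are each $\le\log(2m-1)\,dl$ once $l$ is large, so for such $l$ we get $\Cdim(\bdry G)\preceq\log(2m-1)\,dl/|\log d|$ with an absolute constant; since $\tfrac{d}{|\log d|}\le\tfrac{d}{|\log d|}\vee\tfrac{1}{1-2d}$, this is the claimed bound on this range.

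\emph{Higher density, $d_0<d<\tfrac12$.} Here I would quote \cite[Proposition 1.7]{Mac-12-random-cdim} directly, which gives the a.a.s.\ bound $\Cdim(\bdry G)\preceq\log(2m-1)\,l/(1-2d)$; since $\tfrac{1}{1-2d}\le\tfrac{d}{|\log d|}\vee\tfrac{1}{1-2d}$, this is again the asserted inequality, with a constant depending only on that of Proposition 1.7. Taking $C$ to be the larger of the two constants obtained completes the proof.

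\emph{Where the work is.} There is no serious difficulty, since Theorem~\ref{thm-upper-bound} and \cite[Proposition 1.7]{Mac-12-random-cdim} carry the load; the delicate point is that $C$ must be \emph{uniform in $d$}. In the low-density regime this reduces to the elementary inequality $\log(\lfloor 1/(8\lambda)\rfloor+1)\succeq|\log d|$ for $\lambda=3d\le\tfrac18$, together with the observation that folding the additive $1$ and $\log l$ into $\log(2m-1)\,dl/|\log d|$ is legitimate: for each fixed $d$ we only need the estimate once $l$ is large enough, which is exactly what an a.a.s.\ statement permits.
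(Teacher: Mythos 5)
Your proposal is correct and takes essentially the same two-pronged approach as the paper: apply Theorem~\ref{thm-upper-bound} with a small cancellation parameter $\lambda$ of order $d$ at low density, and fall back on \cite[Proposition 1.7]{Mac-12-random-cdim} at higher density. The only cosmetic differences are your threshold ($d_0=\frac{1}{24}$ with $\lambda=3d$, versus the paper's $d<\frac{1}{16}$ with $\lambda$ just above $2d$) and that you re-derive the $C'(\lambda)$ condition rather than citing Gromov directly.
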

\begin{proof}
	For densities $d < \frac12$ we have the fairly direct upper bound
	\begin{equation}\label{eq:upperdensity-highd}
		\Cdim(\bdry G) \preceq \log(2m-1) \cdot \frac{l}{1-2d},
	\end{equation}
	by \cite[Proposition 1.7]{Mac-12-random-cdim}.  (This follows from
	estimating the Hausdorff dimension of $\bdry G$ for a visual metric.)
	
	When $2d < \lam \leq \frac{1}{8}$, then a.a.s.\ $G$ has a $C'(\lam)$ presentation
	\cite[Section 9.B]{Gro-91-asymp-inv},
	and is one-ended~\cite{DGP-10-density-menger}.
	We have $\log(\lfloor 1/8\lam \rfloor +1) \succeq |\log d\,|$,
	and so for $d < \frac{1}{16}$, Theorem~\ref{thm-upper-bound} gives
	\begin{equation}\label{eq:upperdensity-lowd}
		\Cdim(\bdry G) = 1 + \frac{\log\big( (2m-1)^{dl} l \big)}{\log(\lfloor 1/8\lam \rfloor +1)}
		\preceq \log(2m-1) \cdot \frac{dl}{|\log d\, |}.
	\end{equation}
	
	The corollary follows from \eqref{eq:upperdensity-highd} and \eqref{eq:upperdensity-lowd}.	
\end{proof}


\section{A lower bound for random few relator groups}\label{sec-new-lower-sc}

The conformal dimension of a metric space can be bounded from below by finding
within the space a product of a Cantor set and an interval.
One way to build such a set in the boundary of a hyperbolic space is to find a 
`round tree' inside the space itself.
This was done for certain small cancellation groups in~\cite[Sections 5 and 6]{Mac-12-random-cdim}.

In this section we find sharp lower bounds on the conformal dimension of a random group
when the number of relators is constant, or growing polynomially fast.
We do this by building a bigger round tree in the Cayley complex of
such a group, extending methods from \cite{Mac-12-random-cdim}.

We begin by summarising work from \cite[Section 6]{Mac-12-random-cdim}.
Recall that for complexes $A' \subset A$, the \emph{star} $\St(A')$ of $A'$ (in $A$)
is the union of all closed cells which meet $A'$.
\begin{definition}\label{def:comb-round-tree}
	We say a polygonal 2-complex $A$ is a \emph{combinatorial round tree} 
	with vertical branching $V \in \N$ 
	and horizontal branching at most $H \in \N$ if, setting $T = \{1,2,\ldots,V\}$,
	we can write
	\[
		A = \bigcup_{\ba \in T^\N} A_\ba, 
	\]
	where
	\begin{enumerate}	
	\item $A$ has a base point $1$, contained in the boundary of a unique 
	2-cell $A_\emptyset \subset A$.  
	
	\item Each $A_\ba$ is an infinite planar 2-complex, homeomorphic to
	a half-plane whose boundary is the union of two rays $L_\ba$ and $R_\ba$ with 
	$L_\ba \cap R_\ba = \{1\}$.
	
	\item Set $A_0 = A_\emptyset$, and for $n > 0$, let $A_n = \St(A_{n-1})$.
	Given $\ba = (a_1, a_2, \ldots) \in T^\N$, let $\ba_n = (a_1, \ldots, a_n) \in T^n$.
	If $\ba, \bbb \in T^\N$ satisfy $\ba_n = \bbb_n$ and $\ba_{n+1} \neq \bbb_{n+1}$, then
	\[
		A_n \cap A_\ba \subset A_\ba \cap A_\bbb \subset A_{n+1} \cap A_\ba.
	\]
	We require that each 2-cell $R \subset A_n$ meets at most 
	$VH$ 2-cells in $A_{n+1} \setminus A_n$.
	\end{enumerate}
\end{definition}
The picture to have in mind is that each $A_n$ is a union of $V^n$ different
planar $2$-complexes $\{A_{\ba_n}\}$ indexed by $\ba_n \in T^n$.
Each $A_{\ba_n}$ is homeomorphic to a disc, and its boundary path
consists of $L_{\ba}\cap A_n, R_\ba \cap A_n$ and a connected path $E_{\ba_n}$.
We build $A_{\ba_{n+1}}$ from $A_{\ba_n}$ by attaching $2$-cells along $E_{\ba_n}$
so that each $2$-cell in $A_{\ba_n}$ is adjacent to at most $H$ new $2$-cells.
(See \cite[Figure 4]{Mac-12-random-cdim}.)

\begin{theorem}\label{thm-roundtree-cdim}
	Let $X$ be a hyperbolic polygonal 2-complex.
	Suppose there is a 	combinatorial round tree $A$ with vertical branching $V \geq 2$
	and horizontal branching $H \geq 2$.
	Suppose further that $A^{(1)}$, with the natural length metric giving each edge length one,
	admits a quasi-isometric embedding into $X$.
	Then
	\[
		\Cdim(\bdry X) \geq 1+ \frac{\log V}{\log H}.
	\]
\end{theorem}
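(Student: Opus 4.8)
The plan is to push the problem to the boundary and then bound $\Cdim(\bdry A)$ from below by a combinatorial modulus estimate that plays the vertical branching $V$ against the horizontal branching $H$. First I would note that $A$ is Gromov hyperbolic: it is quasi-isometric to $A^{(1)}$, which quasi-isometrically embeds into the hyperbolic space $X$. A quasi-isometric embedding of proper geodesic hyperbolic spaces sends geodesics to quasi-geodesics, which fellow-travel genuine geodesics, so Gromov products are distorted only additively; hence the embedding induces a quasisymmetric embedding $\bdry A \hookrightarrow \bdry X$ of visual boundaries (cf.\ \cite{Pau-96-qm-qi,BS-00-gro-hyp-embed}). Since conformal dimension is monotone under quasisymmetric embeddings --- a metric in the gauge of $\bdry X$ restricts to one in the gauge of the image of $\bdry A$, and Hausdorff dimension does not increase on subsets --- we get $\Cdim(\bdry X) \geq \Cdim(\bdry A)$; the same holds for the a priori smaller variant of conformal dimension that does not require Ahlfors regularity, where this monotonicity is purely formal. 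So it remains to prove $\Cdim(\bdry A) \geq 1 + \log V/\log H$.

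Next I would read off, from Definition~\ref{def:comb-round-tree}, a sequence of scale-$n$ models of $\bdry A$. Fix a visual metric (parameter $\epsilon$). The shadows of the $2$-cells of $A$ at combinatorial distance $\asymp n$ from the basepoint cover $\bdry A$ by sets of diameter $\asymp e^{-\epsilon n}$, and they split into $V^n$ discrete paths $\{\gamma_{\ba_n}\}_{\ba_n \in T^n}$, where $\gamma_{\ba_n}$ runs along the free frontier $E_{\ba_n}$ of the sheet $A_{\ba_n}$ and approximates, at scale $e^{-\epsilon n}$, an arc of $\bdry A$ (the limit arc $\bdry A_\ba$ of the corresponding half-plane). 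Two features must be extracted. First, by the nesting condition~(3) of Definition~\ref{def:comb-round-tree}, sheets with distinct level-$n$ prefixes share cells only in the portion inherited from their longest common ancestor; combined with the quasi-convex embedding $A^{(1)} \hookrightarrow X$, this makes the paths $\gamma_{\ba_n}$ quantitatively separated in the visual metric, so they are pairwise disjoint outside an $O(n)$-cell core. Second, since each $2$-cell spawns at most $H$ horizontal children, $|E_{\ba_n}| \preceq H^n$, so each $\gamma_{\ba_n}$ has length $\preceq H^n$.

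With these in hand, let $\Gamma_n = \{\gamma_{\ba_n} : \ba_n \in T^n\}$. For any weight $\rho$ on the scale-$n$ shadows that is admissible for $\Gamma_n$ (total $\rho$-weight $\geq 1$ along each $\gamma_{\ba_n}$), H\"older's inequality on a single path of length $\preceq H^n$ gives $\sum_{\text{cells of }\gamma_{\ba_n}} \rho^p \succeq (H^n)^{-(p-1)}$, and summing over the $V^n$ essentially disjoint paths,
\[
	\sum_{\text{scale-}n\text{ shadows}} \rho^p \;\succeq\; V^n \, (H^n)^{-(p-1)} \;=\; \bigl( V \, H^{-(p-1)} \bigr)^{n}.
\]
Hence the combinatorial $p$-modulus of $\Gamma_n$ stays bounded away from $0$ as $n \to \infty$ exactly when $V \geq H^{p-1}$, i.e.\ when $p \leq 1 + \log V/\log H$. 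By the lower bound for conformal dimension in terms of the asymptotics of combinatorial modulus (Pansu; Bourdon--Pajot \cite{BP-03-lp-besov}), as used for round trees in \cite{Mac-12-random-cdim}, this forces $\Cdim(\bdry A) \geq 1 + \log V/\log H$. Combined with the first paragraph, $\Cdim(\bdry X) \geq 1 + \log V/\log H$.

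The hard part is the second paragraph: converting the purely combinatorial Definition~\ref{def:comb-round-tree} into honest metric control on $\bdry A$. Concretely, one must show that the $V^n$ sheet-paths $\gamma_{\ba_n}$ diverge in the visual metric fast enough that their overlaps do not swamp the modulus sum, and that each has length controlled by $H^n$; this is precisely where hyperbolicity of $X$, the quasi-isometric embedding hypothesis, and the nesting condition~(3) (with $V, H \geq 2$) are used. A secondary point is checking that the scale-$n$ shadow decompositions form an admissible approximating sequence to which the combinatorial-modulus lower bound applies, and --- if one insists on the Ahlfors-regular conformal dimension --- that the monotonicity of the first paragraph still goes through; both are handled as in \cite{Mac-12-random-cdim}.
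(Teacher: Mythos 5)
Your argument is essentially the approach the paper takes: the paper's proof is itself only a pointer to \cite[Section 6]{Mac-12-random-cdim}, which lower-bounds $\Cdim(\bdry A)$ by pitting the $V^n$ essentially disjoint frontier curves against their lengths $\preceq H^n$ (a modulus-type computation, there phrased via contraction exponents $\sigma,\tau$ and the symbolic space $W$), and then passes the bound to $\bdry X$ via the quasisymmetric boundary embedding induced by the quasi-isometric embedding $A^{(1)} \hookrightarrow X$. The details you defer --- disjointness of the $\gamma_{\ba_n}$ from condition~(3), the passage from a combinatorial modulus lower bound to a conformal-dimension lower bound, and the AR vs.\ non-AR subtlety in the monotonicity step, which you correctly route through the non-Ahlfors-regular variant --- are exactly what \cite[Section 6]{Mac-12-random-cdim} supplies, so your deferrals match what the paper itself leaves to that reference, and there is no genuine gap.
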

\begin{proof}
	This is proved in \cite[Section 6]{Mac-12-random-cdim}, using slightly different terminology,
	where the final equation of \cite[page 237]{Mac-12-random-cdim} states that
	\begin{equation}\label{eq:lower-bound-reference}
		\Cdim(\bdry A) \geq 1+\frac{\sigma}{\sigma-\tau} = 1+ \frac{\log |T|}{\log M}.
	\end{equation}
	(In \cite{Mac-12-random-cdim}, `$X$' is a space quasi-isometric to $A$.)
	In our case we have $|T| = V$.
	Since each face in $A_n$ meets at most $VH$ faces in $A_{n+1}\setminus A_n$,
	we replace the definition of $W$ in \cite{Mac-12-random-cdim} by $W = \{1,2,\ldots, VH\}^\N$.
	This results in replacing $M$ by $H$ in \eqref{eq:lower-bound-reference}.
\end{proof}

\subsection{Short subwords in the polynomial density model}

We begin by bounding the probability that a random cyclically reduced
word omits a set of prescribed subwords.
\begin{lemma}\label{lem:omitmanyshortwords}
	Fix $j$ different reduced words of length $g(l)<l/4$ in $\langle \cS \rangle$, where
	$|\cS| = m \geq 2$ and $l \geq 4$.
	The probability that a random cyclically reduced word of length $l$ in $\langle \cS \rangle$ omits
	all $j$ words is at most
	\[
		\exp\left( \frac{2}{(2m-1)^{(l/2)-1}} - \frac{l j}{9g(l)(2m-1)^{g(l)}} \right).
	\]
\end{lemma}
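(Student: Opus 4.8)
The plan is to estimate the probability directly, by a blocking argument that turns it into a product of (nearly) independent trials.

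First I would reduce to the case of reduced words. Write $w_1,\dots,w_j$ for the $j$ prescribed reduced words, each of length $g:=g(l)$, over the $2m$-letter alphabet $\{s_1^{\pm1},\dots,s_m^{\pm1}\}$ (if $g=0$ there is nothing to prove). A uniformly random cyclically reduced word of length $l$ is a uniformly random reduced word of length $l$ conditioned on the last letter not being the inverse of the first, so comparing the relevant counts is elementary; carrying this out carefully (splitting off the negligible contribution of the words whose first and last halves ``lock together'') costs only the correction factor $\exp\!\big(2/(2m-1)^{l/2-1}\big)$ appearing in the statement. So it is enough to bound the probability that a uniformly random \emph{reduced} word $x_1x_2\cdots x_l$ — first letter uniform among the $2m$ letters, each subsequent letter uniform among the $2m-1$ letters that do not cancel its predecessor — contains no $w_i$ as a subword.

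Next comes the window estimate, which is the heart of the argument. Choose $s:=\lfloor (l-1)/(g+1)\rfloor$ pairwise disjoint, left-to-right ordered windows $I_1<I_2<\dots<I_s$, each a run of $g+1$ consecutive indices, all contained in $\{2,\dots,l\}$. Fix $t$, write $p=\min I_t$, condition on the letters $x_1,\dots,x_{p-1}$ preceding $I_t$, and call $a$ the letter $x_{p-1}$. I claim that, whatever $a$ is,
\[
 \Pr\big(x_{p+1}\cdots x_{p+g}=w_i \text{ for some } i \mid x_1,\dots,x_{p-1}\big)\ \ge\ \frac{(2m-2)\,j}{(2m-1)^{g+1}}\ \ge\ \frac{2j}{3(2m-1)^{g}}.
\]
Indeed, fix $i$: the letter $x_p$ is uniform among the $2m-1$ letters other than $a^{-1}$, and at least $2m-2$ of these are also different from $(w_i)_1^{-1}$; for each such value of $x_p$ the next $g$ letters then spell out $w_i$ with conditional probability exactly $(2m-1)^{-g}$ (no further cancellation can occur since $w_i$ is reduced). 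Thus $\Pr(x_{p+1}\cdots x_{p+g}=w_i\mid\cdot)\ge (2m-2)(2m-1)^{-(g+1)}$, and summing over $i$ — the events being pairwise disjoint because the $w_i$ are distinct — gives the displayed bound. Attaching the one extra letter $x_p$ at the front of each window is exactly what makes this lower bound uniform, independent both of $a$ and of which letter $w_i$ begins with.

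Finally I would multiply up. If $x$ contains no $w_i$, then in particular no $w_i$ occupies positions $p+1,\dots,p+g$ in any window $I_t$. Since the windows are disjoint and ordered, conditioning on the letters before $I_t$ also fixes the outcomes in $I_1,\dots,I_{t-1}$, so an induction on $s$ using the conditional bound above yields
\[
 \Pr\big(x \text{ contains no } w_i\big)\ \le\ \Big(1-\frac{2j}{3(2m-1)^g}\Big)^{s}\ \le\ \exp\!\Big(-\frac{2js}{3(2m-1)^g}\Big).
\]
Because $g<l/4$ forces $g+1\le 3g$, we have $s\ge \tfrac{l}{3g}-O(1)$, so the exponent is at most $-\tfrac{lj}{9g(l)(2m-1)^{g(l)}}$ up to a lower-order error, and combining this with the cyclically-reduced correction from the first step gives the claimed bound. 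I expect the main obstacle to be precisely the uniform-in-history lower bound in the window step: windows of length exactly $g$ do \emph{not} work, since the letter immediately before such a window could be the inverse of the common first letter of every $w_i$, forcing the conditional catch-probability to zero; widening by one letter repairs this, and one must then check the estimate still sums correctly over the $j$ words without degrading the main term. The remaining work — tracking the cyclically-reduced versus reduced comparison and the floor/end-window corrections tightly enough to land exactly the constant $9$ and the additive term $2/(2m-1)^{l/2-1}$ — is routine but needs a little care.
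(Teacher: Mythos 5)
Your window/blocking argument is essentially the same decomposition the paper uses: split off one initial letter, then look at disjoint blocks of length $g(l)+1$, and in each block bound the probability of not catching a forbidden word by $1-\frac{(2m-2)j}{(2m-1)^{g(l)+1}}\le 1-\frac{2j}{3(2m-1)^{g(l)}}$, where the extra letter prepended to each block is exactly what makes this bound uniform over the history. The per-block count in the paper is literally $(2m-1)^{g+1}-(2m-2)j$, which matches your conditional-probability computation.

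The gap is in your opening reduction to plain reduced words. You claim the passage from cyclically reduced to reduced words ``costs only the correction factor $\exp\bigl(2/(2m-1)^{l/2-1}\bigr)$'' and is elementary, but the obvious comparison $\frac{|C\cap B|}{|C|}\le\frac{|R|}{|C|}\cdot\frac{|R\cap B|}{|R|}$ (with $R$ the reduced words, $C\subset R$ the cyclically reduced ones, $B$ the avoiders) costs a factor $\frac{|R|}{|C|}\approx\frac{2m}{2m-1}$, a constant bounded away from $1$, not the exponentially-close-to-$1$ factor in the statement. When $g(l)$ is near $l/4$ the main exponential term $\exp\bigl(-lj/(9g(2m-1)^{g})\bigr)$ is itself close to $1$, and a multiplicative $\frac{2m}{2m-1}$ would push the bound above $1$ and destroy it; so this is not a negligible constant to hand-wave. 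One can recover the stated correction, but only by making the avoidance event depend essentially only on the first half of the word and then exploiting mixing of the non-backtracking walk over the remaining $\gtrsim l/2$ letters: this is precisely what the paper does by placing all the blocks in positions $\{2,\dots,1+A(g+1)\}$ with $A=\lfloor l/(2g+2)\rfloor$ and leaving a tail of length between $l/2-1$ and $3l/4$. The tail's contribution already encodes the cyclic-reduction constraint with an error $O\bigl((2m-1)^{-(l/2-1)}\bigr)$, and the factor $\frac{2m}{2m-1}$ cancels identically against the ratio of reduced to cyclically reduced totals. You actually have room to do this, since you are taking about twice as many windows as needed for the constant $9$; but as written, running the windows all the way to index $l$ and then invoking an ``elementary'' comparison does not give the claimed correction, and you should instead stop the windows halfway and let the uncounted tail do the cyclic-reduction bookkeeping.
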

\begin{proof}
	This largely follows \cite[Lemma 2.5]{Mac-12-random-cdim}.
	We split a reduced word $r$ into an initial letter, then words $u_1, \ldots, u_A$ of
	length $g(l)+1$, then a tail of length between $(l/2)-1$ and $3l/4$.
	We can take $A = \lfloor l/(2g(l)+2) \rfloor$.
	Each $u_i$ consists of an initial letter, then a word of length $g(l)$,
	which is forbidden to be any of the $j$ specified words.
	
	In our case, for each $s \in \cS^{\pm}$, let $p_s$ be the number of forbidden words
	which begin with $s$, so $\sum_{\cS^{\pm}} p_s = j$.
	If the initial letter of $u_i$ is $s^{-1}$, we have $(2m-1)^{g(l)}-j+p_s$
	choices for the remainder of $u_i$.
	One choice $s_*$ for the initial letter of $u_i$ is forbidden by the previous letter,
	so the total number of choices for the word $u_i$ is 
	\begin{align*}
		& \sum_{s \in S^{\pm}\setminus \{s_*\}}
			\big( (2m-1)^{g(l)}-j+p_s \big)
		\\ & = (2m-1)^{g(l)+1}-(2m-1)j+ \bigg( \sum_{s \in S^{\pm}}p_s \bigg) - p_{s_*}
		\\ & \leq (2m-1)^{g(l)+1}-(2m-2) \cdot j.
	\end{align*}
	In the proof of \cite[Lemma 2.5]{Mac-12-random-cdim} for $j=1$, the number
	of choices for $u_i$ was bounded by $(2m-1)^{g(l)+1}-(2m-2) \cdot 1$.
	On replacing $1$ by $j$, the remainder of that proof gives our lemma.
\end{proof}

As a consequence, we show that, a.a.s., every subword of a certain length appears as
a subword in a random group presentation.
\begin{proposition}\label{prop:everyword-poly}
	Suppose $C>0$ and $K \in [0,\infty)$, and $G = \langle \cS | \cR \rangle$ is a
	random $m$ generator, $n= Cl^K $ relator group, then 
	a.a.s.\ every word in $\cS$ of length 
	\[
		t =  \big( (K+1)\log l - 3 \log\log l \big) / \log(2m-1)
	\]
	appears as a subword of some relator.	
\end{proposition}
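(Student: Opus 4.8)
The plan is to fix a reduced word $w$, bound the probability that \emph{every} relator of $\cR$ omits $w$ using Lemma~\ref{lem:omitmanyshortwords}, and then union bound over all words $w$ of the relevant length. Write $g = \lceil t \rceil$; it suffices to show that a.a.s.\ every reduced word of length $g$ occurs as a subword of some relator, since every shorter word occurs as a subword of one of these. As in the proof of Proposition~\ref{prop:poly-rand-small-canc}, I would first pass to the a.a.s.\ event that every relator has length at least $0.99 l$: there are $\asymp (2m-1)^{l'}$ cyclically reduced words of length $l'$, so a given relator is shorter than $0.99l$ with probability $\preceq (2m-1)^{-0.01 l}$, while $n = Cl^K$ grows only polynomially. (For $K > 0$ many relators are genuinely needed, since a single word of length $l$ has fewer than $l < (2m-1)^g$ subwords of length $g$.)

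Fixing such a $w$, I would apply Lemma~\ref{lem:omitmanyshortwords} with $j = 1$, the single forbidden word $w$, and forbidden-word length $g < l'/4$: conditionally on a relator $r \in \cR$ having length $l' \in [0.99 l, l]$, the probability that $r$ omits $w$ is at most
\[
	\exp\left( \frac{2}{(2m-1)^{(l'/2)-1}} - \frac{l'}{9 g (2m-1)^{g}} \right) \;\le\; \exp\left( 2(2m-1)^{1-0.99 l/2} - \frac{0.99\, l}{9 g (2m-1)^{g}} \right).
\]
Since the relators are independent, the probability that all $n$ of them omit $w$ is at most the $n$-th power of this quantity; as $n \cdot 2(2m-1)^{1-0.99 l/2} \to 0$, it is at most $\exp\left( o(1) - 0.99\, nl /(9 g (2m-1)^{g}) \right)$.

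The key computation is to estimate $(2m-1)^g$. The definition of $t$ gives $(2m-1)^t = \exp\left( (K+1)\log l - 3\log\log l \right) = l^{K+1}/(\log l)^3$, so using $g \le t+1$ and $g \preceq \log l$,
\[
	\frac{nl}{9 g (2m-1)^{g}} \;\ge\; \frac{C\, l^{K+1} (\log l)^3}{9 g (2m-1)\, l^{K+1}} \;=\; \frac{C (\log l)^3}{9 (2m-1)\, g} \;\succeq\; (\log l)^2 .
\]
Hence the probability that every relator omits our fixed $w$ is at most $\exp\left( o(1) - c (\log l)^2 \right)$ for a constant $c = c(C, K, m) > 0$. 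Since there are at most $2m(2m-1)^{g-1} \le (2m-1)^{g+1} \preceq l^{K+1}/(\log l)^3$ reduced words of length $g$, a union bound gives that the probability some length-$g$ word is omitted by all of $\cR$ is at most
\[
	\frac{(2m-1)\, l^{K+1}}{(\log l)^3}\, \exp\left( o(1) - c (\log l)^2 \right),
\]
which tends to $0$ because $(\log l)^2 \gg \log l$. This proves the proposition.

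The only real obstacle is making this last estimate close: the number of candidate words is $e^{\Theta(\log l)}$, while each individual word fails to appear anywhere with probability only $e^{-\Theta((\log l)^2)}$. This is exactly why $t$ carries the $-3\log\log l$ term: it inserts the factor $(\log l)^3$ into $nl/(g(2m-1)^g)$, and after cancelling the $g \asymp \log l$ in the denominator one is left with a genuine $(\log l)^2$ in the exponent, which beats the $(K+1)\log l$ contributed jointly by the polynomially many relators and the polynomially many words. The remaining points — the reduction to long relators, the passage between lengths $t$ and $g = \lceil t \rceil$, and the special case $K = 0$ of few relators — require no new ideas.
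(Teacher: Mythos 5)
Your proof is correct and follows essentially the same route as the paper: apply Lemma~\ref{lem:omitmanyshortwords} with $j=1$ to each relator, raise to the $n$th power, union bound over the $\asymp (2m-1)^t$ words, and observe that the $-3\log\log l$ in $t$ leaves a genuine $(\log l)^2$ in the exponent after cancelling the factor $g \asymp \log l$. The only (harmless) extra care you take is the explicit rounding $g=\lceil t\rceil$ and using $0.99l$ rather than $l$ in the exponent; the paper elides these.
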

\begin{proof}
	Let $l_1, \ldots, l_n$ be the lengths of the relators; 
	we can assume that $0.99l \leq l_i \leq l$ for $i=1,\ldots,n$.
	
	By Lemma~\ref{lem:omitmanyshortwords}, 
	the probability that a word of length 
	$l_i \in [0.99l,l]$
	omits a given word of length $t$ is at most
	\[
		P_1 = \exp \left( \frac{2}{(2m-1)^{l/4}} - \frac{l}{9t(2m-1)^t} \right),
	\]
	so the probability that all $n$ relators miss a given word of length $t$ is at most
	$P_1^n$.
	Therefore the probability that one of the $2m(2m-1)^{t-1}$ words of length $t$
	is missed by all $n \asymp l^K$ relators is bounded by
	\begin{align*}
		2m(2m-1)^{t-1} P_1^n 
		& \preceq \exp \left( t\log(2m-1)+\frac{2n}{(2m-1)^{l/4}} - \frac{ln}{9t(2m-1)^t} \right)\\
		& \preceq \exp \left( (K+1)\log l 
			- \frac{l Cl^K \log(2m-1)}{9(K+1)(\log l) 
			l^{K+1}/\log^{3} l}\right),
	\end{align*}
	which goes to zero as $l\ra\infty$.
\end{proof}

\subsection{Perfect matchings and lower bounds}

Consider the following problem: given $a \in G$ 
and a reduced word $u$,
when do we have $d(1,au) = d(1,a)+d(a,au)$?
In $C'(\frac16)$ groups, when $|u|$ is less than $\frac{1}{6}$ of the length of
the shortest relator,
this is ensured by ruling out two initial letters for $u$.
\begin{definition}\label{def:nonextendingneighbours}
	A point $a$ in the Cayley graph $\Gam = X^{(1)}$ of a finitely generated group $G$
	has \emph{$i$ non-extending neighbours} if there are distinct $b_1, \ldots, b_i \in G$
	so that $d(b_j,a)=1$ and $d(1,b_j) \leq d(1,a)$, for $j=1,\ldots,i$.
\end{definition}
Observe that, as $d$ is a word metric, every point $a \neq 1$
has at least one non-extending neighbour.
\begin{lemma}[{cf.\ \cite[Lemma 5.3]{Mac-12-random-cdim}}]\label{lem:geod-extend-sc}
	Suppose $G=\langle \cS | \cR \rangle$ is a $C'(\frac{1}{6})$ presentation
	with all relators of length at least $M'$, and $\eta\in \Z_{\geq 0}$ satisfies
	$\eta+1<M'/6$.
	Given $a \in \Gam$, let $E_{a, \eta}$ be the collection of points
	$b \in \Gam$ so that $d(1,b) = d(1,a)+d(a,b)$, $d(a,b) \leq \eta$,
	and $b$ has at least two non-extending neighbours.
	
	Then for every point $a \in G$, 
	either $E_{a, \eta} = \emptyset$, or $E_{a, \eta} = \{b\}$ where $b$ has
	exactly two non-extending neighbours.
\end{lemma}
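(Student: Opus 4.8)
The plan is to argue via van Kampen diagrams and Greendlinger's Lemma, along the lines of \cite[Lemma~5.3]{Mac-12-random-cdim}: having two non-extending neighbours forces a relator occurrence through $b$, and since $\eta+1<M'/6$ we see only a bounded portion of any relator within distance $\eta$ of $a$, so by $C'(\tfrac16)$ that occurrence --- and hence $b$ itself --- is forced. Concretely, suppose $E_{a,\eta}\ne\emptyset$ and fix $b\in E_{a,\eta}$ with distinct non-extending neighbours $c_1,c_2$; writing $b=c_ig_i$ with $g_i\in\cS^{\pm}$, $g_1\ne g_2$, and choosing geodesic words $w_i$ for $c_i$, each $w_ig_i$ is a reduced word for $b$ of length $d(1,c_i)+1\le d(1,b)+1$. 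Also fix a geodesic word $vu$ from $1$ through $a$ to $b$, with $v$ geodesic for $a$ and $|u|=d(a,b)\le\eta$. Then $W=w_1g_1g_2^{-1}w_2^{-1}$ is a reduced word, trivial in $G$; after removing any common geodesic prefix of $w_1,w_2$ (a cyclic reduction of $W$) it bounds a reduced disc van Kampen diagram $D\to X$ with at least one $2$-cell (if it had none, $w_1g_1$ and $w_2g_2$ would coincide, forcing $g_1=g_2$).

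By Lemma~\ref{lem:disc-diag-curvature} together with $C'(\tfrac16)$, the diagram $D$ has a $2$-cell $R$ whose outer arc $\sigma\subset\partial D$ has $|\sigma|>\tfrac12|\partial R|\ge\tfrac12 M'$. A geodesic word cannot contain a subword of length $>\tfrac12|\partial R|$ that also lies on $\partial R$, since Dehn's algorithm would shorten it; so $\sigma$ lies neither inside $w_1$ nor inside $w_2^{-1}$, hence it contains one of the two central letters of $W$. Therefore the image $\Delta\subset X$ of $R$ is a $2$-cell with $b\in\partial\Delta$, and a length-$>\tfrac12|\partial\Delta|$ subword of $\partial\Delta$ is read off the (nearly geodesic) word $W$ as it turns at $b$; in particular a suffix of $w_1$ of length close to $\tfrac12|\partial\Delta|$ lies on $\partial\Delta$. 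Since $\eta+1<M'/6$ this arc cannot wrap around $\Delta$, and two distinct $2$-cells of $X$ carrying such a long common subword would contradict $C'(\tfrac16)$; so $\Delta$ is uniquely determined.

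It remains to deduce the two conclusions. If $b$ had a third non-extending neighbour $c_3$, then the same construction applied to the pairs $(c_1,c_3)$ and $(c_2,c_3)$ again produces a long relator arc through $b$, necessarily along $\Delta$; but the two edges of $\partial\Delta$ at $b$, together with the last edge of the geodesic $vu$, already account for all the candidate non-extending neighbours, and a short computation of $d(1,\cdot)$ along $\partial\Delta$ (using that $\partial\Delta$ minus its long geodesic arc is short) shows exactly one of the two $\partial\Delta$-edges at $b$ is non-extending --- so $b$ has exactly two. Finally, if $b'\in E_{a,\eta}$ with $b'\ne b$, then $d(b,b')\le 2\eta$ and $b'$ lies on a geodesic through $a$ as well; running the construction at $b'$ gives a relator occurrence $\Delta'$ through $b'$ whose long arc runs along a geodesic ending within distance $\eta+1$ of $a$, so $\Delta$ and $\Delta'$ share a subword of length $>\tfrac16 M'$ and must coincide --- yet the long arc of $\Delta$ is a single geodesic segment and cannot turn at two distinct interior vertices, forcing $b=b'$.

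The technical core, as in \cite[Lemma~5.3]{Mac-12-random-cdim}, is the bookkeeping in the last two steps: confirming the diagrams are reduced after the right cyclic reduction, pinning down precisely how much of a relator a geodesic word may contain, and extracting from the Greendlinger $2$-cell a \emph{canonical} relator occurrence depending only on $a$ and $\eta$ rather than on the auxiliary geodesics $w_1,w_2$. I expect this pinning-down step --- where the hypothesis $\eta+1<M'/6$ enters through $C'(\tfrac16)$ --- to be the main obstacle; once it is done, both the count ``exactly two'' and the uniqueness of $b$ fall out of the planar geometry of $\Delta$ and its intersection with the geodesic through $a$.
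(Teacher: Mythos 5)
Your strategy — van Kampen diagrams plus small cancellation to force a relator occurrence near $b$ — is the right kind of idea, and you correctly identify where the heart of the argument lies. But the diagram you choose is the wrong one, and as a result the ``pinning-down'' step you flag as the main obstacle does not actually go through; it is not a matter of bookkeeping.

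You build the disc diagram from the bigon $W=w_1g_1g_2^{-1}w_2^{-1}$, i.e.\ from $1$ to $b$ via $c_1$ and back via $c_2$, where $w_1,w_2$ are \emph{arbitrary} geodesics to the non-extending neighbours $c_1,c_2$. These geodesics have no reason to pass through $a$, so the resulting Greendlinger $2$-cell $\Delta$ has a long arc along $w_1$ or $w_2$, not along the fixed geodesic $[1,a]$. Consequently, for two candidate points $b,b'\in E_{a,\eta}$ you get $2$-cells $\Delta,\Delta'$ pinned to different, unrelated geodesics, and your final claim that ``$\Delta$ and $\Delta'$ share a subword of length $>\tfrac16 M'$'' has no justification — you would need their long arcs to overlap substantially, which your construction does not provide. (There is also a smaller issue: after cyclically reducing $W$ there are two corners in the cyclic word — one at $b$ and one at the vertex $z$ where the common prefix of $w_1,w_2$ ends — and Greendlinger may return a cell straddling $z$ rather than $b$; you would need the ladder/bigon structure, as in Lemma~\ref{lem-bigons}, to guarantee a long-arc cell at the $b$-end.)

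The fix is to change the diagram so that $[1,a]$ is part of its boundary. For each $b_i\in E_{a,\eta}$ choose a non-extending neighbour $c_i\notin[a,b_i]$, and take a reduced diagram for the geodesic triangle with sides $[1,b_i]=[1,a]\cup[a,b_i]$, $[b_i,c_i]$, and $[1,c_i]$. This is legitimate precisely because $d(1,b_i)=d(1,a)+d(a,b_i)$. The diagram is a ladder, with an end $2$-cell $R_i$ containing both $b_i$ and $c_i$. Since $[1,c_i]$ contributes at most $\tfrac12|\partial R_i|$ to $\partial R_i$, the interior edge at most $\tfrac16|\partial R_i|$ (it is a piece), and $[b_i,c_i]\cup[a,b_i]$ at most $\eta+1$, the arc $\partial R_i\cap[1,a]$ has length at least $\tfrac13|\partial R_i|-(\eta+1)>\tfrac16|\partial R_i|$ using $\eta+1<M'/6$. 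These arcs for $i=1,2$ both start at $a$ and run along the \emph{same} geodesic $[1,a]$, so $C'(\tfrac16)$ forces $R_1=R_2$, and then reading the distance function around $\partial R_1=\partial R_2$ gives $b_1=b_2$ and $c_1=c_2$. Both conclusions (uniqueness of $b$ and exactly two non-extending neighbours) fall out at once, because the argument applies equally when $b_1=b_2$ with $c_1\neq c_2$.
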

\begin{proof}
	The proof is identical to that of \cite[Lemma 5.3]{Mac-12-random-cdim}.
	Suppose that $E_{a,\eta}$ contains $b_1, b_2$, with a fixed geodesic $[a,b_i]$ for $i=1,2$,
	and that for $i=1,2$ we have $c_i \in G$ with $c_i \notin [a,b_i]$, $d(b_i, c_i)=1$,
	and $d(1,c_i) \leq d(1,b_i)$.
	
	Consider the geodesic triangle with sides $[1,b_i],[b_i,c_i],[1,c_i]$, where
	$[1,b_i]=[1,a]\cup [a,b_i]$.  
	A reduced diagram for this triangle is a ladder \cite[Lemma 3.12]{Mac-12-random-cdim} and
	contains a $2$-cell $R_i$ which has $b_i,c_i \in \partial R_i$.
	Since $[1,c_i]$ is a geodesic, $\partial R_i$ meets $[1,a]$ along a segment of
	length
	\[
		\geq \tfrac12 |\partial R_i| - \tfrac16 |\partial R_i|-d(a,b_i)-d(b_i,c_i) 
		\geq \tfrac13 |\partial R_i|- \eta-1 \geq \tfrac16 |R_i|.
	\]
	So the $C'(\frac{1}{6})$ condition implies that $R_1=R_2$ in $X$.
	As we travel along $\partial R_i$ from $a$ to $b_i$, the distance to $1$ increases until
	we reach $c_i$.  Since $R_1=R_2$, we then have $b_1=b_2$ and $c_1=c_2$ as required.
\end{proof}

When building $A_{n+1}$, we split $E=E_{\ba_n}$ into segments of length 
between $3$ and $6$.
At each segment endpoint, we find a path of length three extending away from $1$
avoiding the direction ruled out by \cite[Lemma 5.4]{Mac-12-random-cdim},
and the direction ruled out (if any) by Lemma~\ref{lem:geod-extend-sc}.
Therefore, from this endpoint we can further extend along any of $(2m-1)^{\eta-3}$ possible
paths and we will be building geodesics in $X$, see Figure~\ref{fig-extend-round-tree}.
These geodesics have distinct endpoints by small cancellation.
\begin{figure}
	\centering
	\def\svgwidth{0.55\columnwidth}
	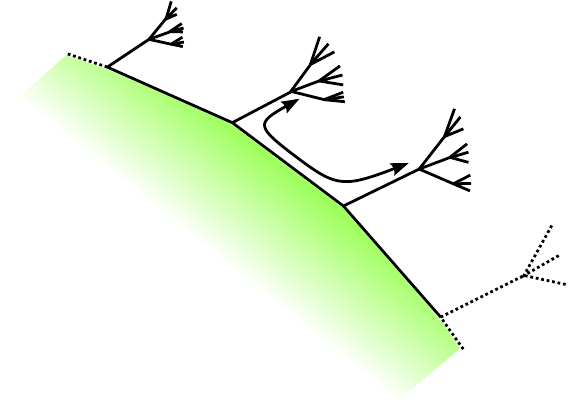
	\caption{Extending round trees}\label{fig-extend-round-tree}
\end{figure}

The aim is to match up the endpoints of these trees with $2$-cells, so as to build
an embedded round tree with branch set indexed by $T$ of size $(2m-1)^{\eta-3}$.
For each reduced word $w \in \langle \cS \rangle$ of length $\in [9,12]$,
consider the labelled tree $\Gamma_w$ found by attaching $(2m-1)$-valency rooted trees to the left
and to the right of a line of length $|w|$ labelled by $w$.  
Orient and label the edges in the trees
so that every vertex outside the interior of $w$ has degree $2|\cS|$ 
and has one ingoing and one outgoing edge for each generator.
Let $H_w$ denote the $(|T|,|T|)$ bipartite graph with vertices corresponding to the
endpoints in $\Gamma_w$, and add an edge joining $v_1, v_2$ whenever the corresponding labelled 
simple path in $\Gamma_w$ of length $2\eta-6+|w|$ can be found in some relator in $\cR$.
What we want is to find a perfect matching in $H_w$.
\begin{proposition}\label{prop:poly-matching}
	Suppose $C>0$ and $K \in [0,\infty)$, and $G = \langle \cS | \cR \rangle$ is a
	random $m$ generator, $n= Cl^K$ relator group, then 
	a.a.s.\ 
	for 
	\[
		\eta =  \big( (K+1)\log l - 4 \log\log l \big) / \log(2m-1),
	\]
	for every word $w$ with $9 \leq |w| \leq 12$, $H_w$ contains a perfect matching.
\end{proposition}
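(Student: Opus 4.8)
The plan is to produce the matching via Hall's theorem, controlling the relevant failure probability by a union bound that rests on Lemma~\ref{lem:omitmanyshortwords}. Since there are at most $C':=\sum_{j=9}^{12}2m(2m-1)^{j-1}$ words $w$ with $9\leq|w|\leq 12$, it suffices to fix one such $w$, show that a.a.s.\ $H_w$ has a perfect matching, and then union bound over the $\leq C'$ choices of $w$. Write $N_0=|T|=(2m-1)^{\eta-3}$ for the common size of the two vertex classes $L_w,R_w$ of $H_w$ (the depth-$(\eta-3)$ leaves of the left and right trees of $\Gamma_w$). Each potential edge $\{v_1,v_2\}$ with $v_1\in L_w$, $v_2\in R_w$ corresponds to a \emph{distinct} reduced word $P_{v_1,v_2}$ of length $L:=2\eta-6+|w|$ (distinct because a path in a tree is determined by its two endpoints), and $\{v_1,v_2\}$ is an edge of $H_w$ exactly when $P_{v_1,v_2}$ (in either direction) occurs as a subword of some relator. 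As in Propositions~\ref{prop:poly-rand-small-canc} and \ref{prop:everyword-poly} we may condition on the a.a.s.\ event that every relator has length in $[0.99l,l]$; then $L<0.99l/4$ for $l$ large, so Lemma~\ref{lem:omitmanyshortwords} applies with $g(l)=L$.

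By Hall's theorem $H_w$ has no perfect matching if and only if there are nonempty $S\subseteq L_w$, $T'\subseteq R_w$ with $|S|+|T'|=N_0+1$ and \emph{no} edge of $H_w$ between them, i.e.\ none of the $|S|\,|T'|$ reduced words $\{P_{v_1,v_2}:v_1\in S,\,v_2\in T'\}$ (nor any of their inverses) occurs in any relator. Applying Lemma~\ref{lem:omitmanyshortwords} to each relator with $j\geq|S|\,|T'|$ forbidden words, and multiplying over the $n$ independent relators, the probability of this event is at most $\exp\bigl(o(1)-\rho\,|S|\,|T'|\bigr)$, where
\[
	\rho\;=\;\frac{0.99\,n l}{9\,L\,(2m-1)^{L}}.
\]
Substituting $\eta=\bigl((K+1)\log l-4\log\log l\bigr)/\log(2m-1)$ gives $(2m-1)^{\eta}=l^{K+1}/(\log l)^{4}$, hence $(2m-1)^{L}\asymp_m l^{2(K+1)}/(\log l)^{8}$, and with $nl\asymp l^{K+1}$ and $L\asymp\log l$ one gets
\[
	\rho\,N_0\;\asymp_m\;\frac{n l}{L\,(2m-1)^{\eta}}\;\asymp_m\;(\log l)^{3}\;\longrightarrow\;\infty,
	\qquad \log N_0=(\eta-3)\log(2m-1)\asymp(K+1)\log l .
\]
Thus $\rho N_0\gg\log N_0$, and this gap is exactly what makes the union bound converge.

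Finally I would run the union bound over blocking pairs, grouped by $m_0:=\min(|S|,|T'|)\in\{1,\dots,\lceil(N_0+1)/2\rceil\}$. The number of pairs with a given $m_0$ is at most $2\binom{N_0}{m_0}\binom{N_0}{m_0-1}\leq 2(eN_0)^{2m_0}$, and since $|S|+|T'|=N_0+1$ forces the elementary bound $|S|\,|T'|\geq \tfrac12 N_0 m_0$, each such pair blocks with probability at most $\exp\bigl(o(1)-\tfrac12\rho N_0 m_0\bigr)$. Hence
\[
	\Pr[\,H_w\text{ has no perfect matching}\,]\;\leq\;2e^{o(1)}\sum_{m_0\geq 1}\exp\!\Bigl(m_0\bigl(2\log(eN_0)-\tfrac12\rho N_0\bigr)\Bigr),
\]
and because $\rho N_0\asymp_m(\log l)^{3}$ dominates $2\log(eN_0)\asymp_m\log l$, for $l$ large the bracket is $\leq-\tfrac14\rho N_0$, so the sum is $O\bigl(e^{-\frac14\rho N_0}\bigr)\to 0$. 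A union bound over the $\leq C'$ choices of $w$ then completes the proof. The only genuine obstacle is the one just flagged: the expected degree of a vertex of $H_w$ is merely polylogarithmic in $l$ (of order $(\log l)^{3}$), so a crude count of potential blocking sets is hopeless; the fix is that a blocking pair with $\min(|S|,|T'|)=m_0$ must exclude at least $\tfrac12 N_0 m_0$ words while there are only $(eN_0)^{O(m_0)}$ such pairs, so the term-by-term bound $e^{-\Omega(m_0(\log l)^{3})}$ beats the count $e^{O(m_0\log l)}$.
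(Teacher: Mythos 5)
Your proposal is correct and follows essentially the same route as the paper: both arguments invoke Hall's theorem and bound the probability of a blocking pair via Lemma~\ref{lem:omitmanyshortwords}, exploiting that a blocking pair of total size $N_0+1$ forbids $\gtrsim N_0 m_0$ distinct words, while there are only $\exp(O(m_0 \log N_0))$ such pairs to union over, and $\rho N_0 \asymp (\log l)^3 \gg \log N_0 \asymp \log l$. The only cosmetic differences are that the paper parametrises by $(W,N(W))$ with $|W|$ minimal (so $|W|=|N(W)|+1$ and $|W|\leq\lceil|T|/2\rceil$) rather than by $(S,T')$ with $|S|+|T'|=N_0+1$, and the paper rules out the $s=1$ case by a separate appeal to Proposition~\ref{prop:everyword-poly}, whereas your bound handles $m_0=1$ uniformly.
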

\begin{proof}
	It suffices to check a single word $w$.
	The following is inspired by a result on random bipartite 
	graphs~\cite[Theorem~4.1]{JLR-00-Random-graphs}.
	
	Observe that by Proposition~\ref{prop:everyword-poly},
	every word of length $\leq \eta+9$ appears as a subword somewhere in $\cR$,
	so there is no isolated vertex in $H_w$.
	
	Suppose that $H_w$ does not contain a perfect matching.
	Then by Hall's marriage theorem, there exists $W$ contained either in the left or right vertices
	so that $|W| > |N(W)|$, where $N(W)$ denotes the neighbours of $W$.
	Choose $W$ of minimal cardinality; such a $W$ must satisfy
	$|W|=|N(W)|+1$ (else discard a vertex of $W$) 
	and $|W| \leq \lceil |T|/2 \rceil$ (else replace $W$ by $N(W)^c$, where $N(W)^c$ denotes
	the complement of $N(W)$ in its half of the vertices).
	
	Given a choice of $W$ and $N(W)$ with $|W|=s \geq 2$ and $|N(W)|=s-1$,
	none of the $s(|T|-s+1)$ words of length $l'=2\eta-6+|w|$ from $W$ to $N(W)^c$ can appear as subwords in $\cR$.
	The probability that all $C l^K $ relators omit these words is bounded,
	using Lemma~\ref{lem:omitmanyshortwords}, by
	\begin{align*}
		P_1 
		& = \exp \left( \frac{2}{(2m-1)^{l/4}} - \frac{ls(|T|-s+1)}{9l'(2m-1)^{l'}} 
			\right)^{C l^K } \\
		& \leq 2 \exp \left( - \frac{Cl^{K+1}s(|T|-s+1)}{9l'(2m-1)^{l'}} \right) \\
		& \leq 2 \exp \left( - \frac{Cl^{K+1}s|T|}{18l'(2m-1)^{l'}} \right)
	\end{align*}
	for large $l$, where we use that $s \leq \frac12 |T|+1$.
	
	There are $\binom{|T|}{s} \leq |T|^s$ choices for $W$ and 
	$\binom{|T|}{s-1} \leq |T|^{s-1}$ choices for $N(W)^c$,
	so the probability that we have no perfect matching is:
	\begin{align*}
	& \leq \sum_{s=2}^{\lceil |T|/2 \rceil}
		|T|^s \cdot |T|^{s-1}
		\cdot 2 \exp \left( - \frac{Cl^{K+1}s|T|}{18l'(2m-1)^{l'}} \right)
	\\ & \leq 2\sum_{s=2}^{\lceil |T|/2 \rceil}
		\exp \left( (2s-1)\log |T|
		- \frac{Cl^{K+1}s(2m-1)^{\eta-3}}{18(2\eta+6)(2m-1)^{2\eta+6}} \right)
	\\ & \leq 2\sum_{s=2}^{\lceil |T|/2 \rceil}
		\exp \left( 2s(K+1)\log l-
		 \frac{C'l^{K+1}s}{(\log l) l^{K+1}/\log^4l} \right),
	\end{align*}
	for some $C'=C'(m,K,C)$.
	For large $l$ this is a geometric series with ratio $\leq \frac{1}{2}$,
	and arbitrarily small initial term, hence this bound goes to zero as $l \ra \infty$.
\end{proof}

Continuing by induction, we build a polygonal complex $A = \bigcup_n A_n$
with an immersion of $A$ into the Cayley complex of $G$.
By \cite[Section 5.2.2, 5.2.3]{Mac-10-confdim}, this complex is quasi-isometrically
embedded.
It has vertical branching $V$, where
\[
	\log V = (\eta-3)\log(2m-1) \geq (K+1)\log l - 5 \log\log l,
\]
and horizontal branching $H \leq l$.
Thus by Theorem~\ref{thm-roundtree-cdim}, for large $l$ we conclude that 
\[
	\Cdim(\bdry G) \geq 1 + \frac{\log V}{\log H} \geq 2+K - \frac{5 \log \log l}{\log l}.
\]
This proves our desired lower bound, and completes the proof of Theorem~\ref{thm-main-polygrowth}.\qed


\section{Lower bounds at higher density}\label{sec-lower-density}

In this section, we show the existence of suitable round trees in the Cayley complexes
of random groups at densities $d<\frac{1}{8}$, which give asymptotically sharp bounds on the conformal 
dimension of these groups.
The main problem is that at densities $d<\frac{1}{8}$ we can only expect 
the very weak $C'(\frac{1}{4})$ condition.  
To overcome this difficulty
we use Ollivier's isoperimetric inequality,
and many modifications to the argument of \cite{Mac-12-random-cdim}.

The improved lower bound (of order $dl/|\log d|$ rather than order $dl/\log l$)
results from attaching $2$-cells in the round tree along paths longer than
six.  This idea can also be used to improve \cite[Theorem 5.1]{Mac-12-random-cdim},
which we record for completeness.
\begin{theorem}\label{thm-lower-bound-sc}
	Suppose $G = \langle \cS | \cR \rangle$ is a $C'(\frac{1}{8} -\delta)$ presentation, 
	with $|\cS| = m \geq 2$ and $|\cR| \geq 1$, where $\delta \in (0, \frac{1}{8})$
	and $|r| \in [3/\delta, M]$ for all $r \in \cR$.
	Suppose further that for some $M^* \geq 12$, 
	every reduced word $u \in \langle \cS \rangle$ of length $M^*$
	appears at least once in some cyclic conjugate of some relator $r^{\pm 1}, r \in \cR$.
	Then for some universal constant $C>0$, we have
	\[
		\Cdim(\bdry G) \geq 1 + C\log(2m) \cdot \frac{M^*}{\log(M/M^*)}.
	\]
	(If we have a $C'(\frac{1}{11})$ presentation, the lower bound on
	the lengths of relators holds automatically.)
\end{theorem}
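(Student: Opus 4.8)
The plan is to construct a combinatorial round tree $A$, in the sense of Definition~\ref{def:comb-round-tree}, with vertical branching $V$ and horizontal branching $H$ satisfying $\log V \asymp M^*\log(2m)$ and $\log H \le \log(M/M^*)+O(1)$, whose $1$-skeleton (with the natural length metric) embeds quasi-isometrically in the Cayley complex $X$ of $G$; Theorem~\ref{thm-roundtree-cdim} then gives $\Cdim(\bdry G)\ge 1+\log V/\log H \ge 1 + C\log(2m)\cdot M^*/\log(M/M^*)$ for a universal $C>0$. The construction is that of \cite[Section~5, Theorem~5.1]{Mac-12-random-cdim}, modified by attaching the $2$-cells of the round tree along paths of length comparable to $M^*$ rather than along paths of bounded length; as indicated in the text, this is precisely what turns the old denominator $\log M$ into $\log(M/M^*)$. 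Throughout, the $C'(\frac18-\delta)$ hypothesis, which implies $C'(\frac16)$, lets the small-cancellation machinery of Section~\ref{sec-wise-walls} and of \cite{Mac-12-random-cdim} apply.

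First I would fix an integer $\eta$ comparable to $M^*$, small enough that $2\eta+O(1)\le M^*$ (so that every reduced word of length at most $2\eta+O(1)$ occurs in a cyclic conjugate of some $r^{\pm1}$ by hypothesis) and that $\eta< M'/6$ for the relator lengths $M'$ actually used (possible since $\delta<\frac18$ and $|r|\in[3/\delta,M]$), so that Lemma~\ref{lem:geod-extend-sc} applies. Then, following \cite[Section~5]{Mac-12-random-cdim}, I would take $A_0$ to be a single $2$-cell and build $A_{n+1}=\St(A_n)$ from $A_n=\bigcup_{\ba_n}A_{\ba_n}$ sector by sector: given a sub-disc $A_{\ba_n}$ with exposed boundary path $E_{\ba_n}$, cut $E_{\ba_n}$ into segments of length $\asymp M^*$; near the growing tip attach the branching trees of depth $\eta$, which branch by a factor $2m-1$ at every step after discarding the at most two non-extending directions of \cite[Lemma~5.4]{Mac-12-random-cdim} and Lemma~\ref{lem:geod-extend-sc}, so that they carry geodesics of $X$ with pairwise distinct far endpoints; then glue in one $2$-cell along each body segment, and one $2$-cell across each matched pair of leaves of consecutive tip-trees. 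Here the bipartite matching graphs $H_w$ of \cite{Mac-12-random-cdim} are complete graphs, since every path to be matched has length at most $2\eta+O(1)\le M^*$ and so occurs in a relator, so the perfect matchings exist with nothing to prove. Consequently the vertical branch set $T$ has size $(2m-1)^{\eta-O(1)}$, hence $\log V\asymp M^*\log(2m)$; and a $2$-cell of perimeter $|r|\le M$ glued in this way contributes to the next exposed path a sub-path of $\partial R$ of length $|r|-O(M^*)$, which at the following stage is again cut into segments of length $\asymp M^*$, so each $2$-cell of $A_n$ meets $O(M/M^*)$ $2$-cells of $A_{n+1}\setminus A_n$ inside any one sector and at most $|T|\cdot O(M/M^*)$ in total: $\log H\le \log(M/M^*)+O(1)$.

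It then remains to check that this $A$ is a combinatorial round tree and that $A^{(1)}$ embeds quasi-isometrically. Conditions (1)--(2) of Definition~\ref{def:comb-round-tree} are built into the construction; condition (3) — the nesting $A_n\cap A_\ba\subset A_\ba\cap A_\bbb\subset A_{n+1}\cap A_\ba$ and the $VH$ bound — holds once one shows that the glued-in $2$-cells and the branching geodesics meet in $X$ only where forced, and the quasi-isometric embedding follows from \cite[Section~5.2.2, 5.2.3]{Mac-10-confdim} once one checks its hypotheses (the two boundary rays $L_\ba\cup R_\ba$ of each sector are geodesics, branches have distinct and linearly separating far endpoints, and there is no unexpected backtracking). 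As in Lemmas~\ref{lem:sc-twocells-geod-convex}, \ref{lem:geodinladder} and \ref{lem:no-2V-geod-collar}, all of this reduces to curvature counts in reduced disc diagrams via Lemma~\ref{lem:disc-diag-curvature}: the point is that $C'(\frac18-\delta)$ together with $|r|\ge 3/\delta$ give enough slack that a $2$-cell running alongside a geodesic, a sibling branch, or the outer boundary of a sector for a length comparable to $M^*$ still cannot be shortcut. Assembling the estimates and applying Theorem~\ref{thm-roundtree-cdim} then gives the stated bound. The parenthetical is an elementary count: in a $C'(\frac1{11})$ presentation, comparing the number of short subwords carried by the relators against the number a $C'(\frac1{11})$ condition forbids from repeating forces every relator to have length at least $3/\delta=88$.

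I expect the main obstacle to be this last verification in the regime of long attaching paths. With bounded attaching paths a single $2$-cell, or a single ladder, controls every relevant configuration; here a geodesic, a sibling branch, or the exposed boundary of a sector may run next to one glued-in $2$-cell for a distance comparable to $M^*$, and one must show via Lemma~\ref{lem:disc-diag-curvature} that no such configuration produces a shortcut, an unexpected intersection in $X$, or a failure of the nesting demanded by Definition~\ref{def:comb-round-tree}(3). The hypotheses $\delta<\frac18$ and $|r|\ge 3/\delta$ are exactly the room that makes these diagram estimates close, so the argument, while longer than in \cite{Mac-12-random-cdim}, is of the same character.
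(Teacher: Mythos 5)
Your proposal reproduces, in somewhat more detail, the paper's own (two-sentence) proof: the paper modifies the round-tree construction of \cite[Theorem~5.1]{Mac-12-random-cdim} by lengthening the subdivisions of the peripheral path $E$ and the geodesic extensions from $O(1)$ and $\approx M^*/2$ to each being $\asymp M^*/3$, so that a face acquires only $O(M/M^*)$ neighbours in the next layer, and then applies Theorem~\ref{thm-roundtree-cdim}; this is exactly your plan. One small correction of attribution: the bipartite matching graphs $H_w$ are from Section~\ref{sec-new-lower-sc} of the present paper, not from \cite{Mac-12-random-cdim}, and, as you note, the hypothesis that every word of length $M^*$ appears makes them complete, so the matching step is vacuous here --- which is precisely why neither \cite{Mac-12-random-cdim} nor the present paper invokes it for this theorem.
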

In the induction step \cite[Section 5.2.2]{Mac-12-random-cdim}, 
we split the peripheral path $E$ into segments of lengths between $3$ and $6$, and
extended geodesics of length $\lfloor M^*/2 -3 \rfloor$ from the endpoints
of these segments.  This resulted in a $\log(M)$ in the denominator of the lower bound
because each face had $\leq M$ new faces attached to it.

To prove Theorem~\ref{thm-lower-bound-sc} for large $M^*$,
split $E$ into segments of lengths between
$M^*/6$ and $M^*/3$, and extend geodesics a length $\lfloor M^*/3-3 \rfloor$.
Since each face has $\leq 6M/M^*$ new faces attached to it,
the denominator of the lower bound is replaced by $M/M^*$.

This result gives a sharp bound on conformal dimension for
random groups at densities $d<\frac{1}{16}$, but at higher
densities we need our new tools.
Note that Theorem~\ref{thm-lower-bound-sc} does not give anything new in the few relator
model, as in that situation $M^*$ is of the order of $\log l$.

\subsection{Diagrams in the density model}
We use the following isoperimetric inequality of Ollivier to rule out the existence
of certain diagrams and show that geodesic bigons have a specific form
(cf.\ \cite[Section 3]{Mac-12-random-cdim}).

\begin{theorem}[{\cite[Theorem 1.6]{Oll-07-sc-rand-group}}]\label{thm-O-sc-rand}
	At density $d$, for any $\eps >0$ the following property occurs a.a.s.:
	all reduced van Kampen diagrams $D$ satisfy
	\begin{equation*}
		|\partial D| \geq (1-2d -\eps) l|D|.
	\end{equation*}
\end{theorem}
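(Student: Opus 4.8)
The plan is to prove the stated inequality by the standard union bound over van Kampen diagrams (this is \cite[Theorem~1.6]{Oll-07-sc-rand-group}; here is the shape of the argument). First I would rephrase it in terms of interior edges. A.a.s.\ no relator of a random group is a proper power, so in a reduced disc diagram $D \ra X$ every $1$-cell either lies on $\partial D$, contributing to $|\partial D|$, or is shared by exactly two distinct $2$-cells, contributing to neither (spurs only increase $|\partial D|$). Summing perimeters over the faces therefore gives $l\,|D| = |\partial D| + 2\,I(D)$, where $I(D)$ is the number of interior $1$-cells, so the desired bound $|\partial D| \geq (1-2d-\eps)\,l\,|D|$ is equivalent to $I(D) \leq (d + \tfrac{\eps}{2})\,l\,|D|$. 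It thus suffices to show that a.a.s.\ \emph{every} reduced $D$ satisfies this upper bound on its interior $1$-cells.

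Next I would set up the union bound. Fix $k \in \N$ and a combinatorial type $\Delta$: an abstract planar $2$-complex, homeomorphic to a disc, with $k$ faces each of perimeter $l$, and with $I = I(\Delta)$ interior $1$-cells. A \emph{decoration} of $\Delta$ assigns to each face an index in $\{1,\ldots,n\}$ together with a cyclic offset and an orientation, prescribing which cyclic conjugate of a relator in $\cR \cup \cR^{-1}$ reads along that face; there are at most $(2ln)^k$ decorations. For a fixed $\Delta$ and decoration, I would bound the probability that the random presentation realises it by revealing the relators one face at a time, in an order for which every partial union $f_1 \cup \cdots \cup f_i$ is connected: when $f_{i+1}$ is attached it must agree with the already-determined letters along the $t_{i+1}$ edges it shares with $f_1 \cup \cdots \cup f_i$, and since its relator is a uniformly random cyclically reduced word of length $l$ this has probability $\preceq (2m-1)^{-t_{i+1}}$. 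Each interior edge of $\Delta$ is counted in exactly one $t_i$, so $\sum_i t_i = I$ and the realisation probability is $\preceq (2m-1)^{-I}$. (Faces repeating an earlier relator index, the cyclically-reduced normalisation, and the junctions between the several arcs shared at a given step each need care, but contribute only sub-exponential corrections, and the extremal case is that of genuinely overlapping distinct relators.)

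Combining these with $n = (2m-1)^{dl}$, the expected number of reduced diagrams with $k$ faces whose interior-edge count exceeds $(d+\tfrac\eps2)lk$ is
\[
	\preceq \sum_{\substack{\Delta,\ |\Delta| = k \\ I(\Delta) > (d + \eps/2)lk}} (2ln)^k\,(2m-1)^{-I(\Delta)}
	\preceq \#\bigl\{\,\text{types }\Delta \text{ with } |\Delta| = k\,\bigr\} \cdot (2l)^k\,(2m-1)^{-(\eps/2)lk},
\]
after which one sums over $k$ and lets $l \ra \infty$.

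The hard part is this last summation: the gain $(2m-1)^{-(\eps/2)lk}$ must dominate the number of combinatorial types together with every correction factor, \emph{uniformly in $k$}, for arbitrary $\eps > 0$ and every $m \geq 2$. For $k$ below a suitable exponential of $l$ the crude count $\#\{\text{types}\} \leq (Cl)^{Ck}$ already suffices because $l \ra \infty$; handling all $k$ at once requires either a more efficient encoding of reduced planar diagrams or a localisation argument that replaces a putative counterexample by a minimal one of controlled complexity. (It is here that one really uses that the relators are random, not merely $C'(\lam)$.) Granting this uniform control, the series over $k$ converges to $0$, so a.a.s.\ no reduced van Kampen diagram violates the isoperimetric inequality.
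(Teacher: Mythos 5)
The paper does not prove this statement; it is taken verbatim as \cite[Theorem~1.6]{Oll-07-sc-rand-group}, so there is no in-paper argument to compare against. Your sketch correctly captures the shape of the \emph{local} part of Ollivier's proof: rewriting the inequality in terms of interior edges, taking a union bound over decorated combinatorial types, and bounding the realisation probability of a fixed decorated type by roughly $(2m-1)^{-I}$ by revealing relators face by face. This does yield the result a.a.s.\ for all reduced diagrams with at most $K$ faces, for any fixed $K$ (and even for $K$ growing subexponentially in $l$), since the number of decorated types with $k$ faces is crudely $(Cl)^{Ck}$ and is swamped by the gain $(2m-1)^{-(\eps/2)lk}$ once $l$ is large.

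However, the step you flag with ``Granting this uniform control'' is precisely the content of the theorem, and you do not supply it. The statement is about \emph{all} reduced van Kampen diagrams, with no size bound, and the union bound genuinely fails when $k$ is comparable to or larger than $(2m-1)^{cl}$: the number of types is then not dominated by the probability gain, and a more efficient encoding cannot rescue this on its own. What is actually needed is a local-to-global isoperimetry principle --- Gromov's Cartan--Hadamard-type theorem, in the form made precise and used by Ollivier in \cite{Oll-07-sc-rand-group} and his earlier sharp phase transition paper: if every reduced diagram with at most $K(\eps)$ faces satisfies a linear isoperimetric inequality with a sufficiently good constant, then every reduced diagram satisfies a linear isoperimetric inequality with a slightly worse constant. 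That reduction is nontrivial (it is geometric, not probabilistic) and is where the real work happens. A second, smaller gap: your claim that the realisation probability is $\preceq (2m-1)^{-I}$ ``with only sub-exponential corrections'' is not obviously correct when faces repeat the same relator index or when the shared boundary arcs at a given step are not a single segment; Ollivier handles this carefully via the notion of fulfilling an abstract diagram, counting the number of genuinely \emph{free} letters when a relator is revealed, and this bookkeeping is where the constant $1-2d$ (rather than something weaker) actually comes from. So the proposal has the right outline for the bounded case but leaves both the repeated-relator accounting and, more importantly, the local-to-global passage as unproved black boxes.
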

This theorem is used extensively by Ollivier and Wise to control the geometry of van Kampen diagrams
in random groups at low densities.  We collect some of these properties now.
\begin{proposition}[Ollivier--Wise]
	\label{prop-OW-12cells}
	In the Cayley complex of a random group at density $d<\frac14$, a.a.s.\ we have that
	the boundary path of every 2-cell embeds, and
	the boundary paths of any two distinct 2-cells meet in a connected (or empty) set.
\end{proposition}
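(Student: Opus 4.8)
The plan is to mimic the proof of Lemma~\ref{lem:small-canc-assumption}, with Ollivier's isoperimetric inequality (Theorem~\ref{thm-O-sc-rand}) playing the role that the $C'(\tfrac16)$ condition plays there; both assertions are precisely \cite[Proposition 1.10, Corollary 1.11]{Oll-Wis-11-rand-grp-T}. First I would fix $\eps>0$ with $1-2d-\eps>\tfrac12$ (possible since $d<\tfrac14$) and pass to the a.a.s.\ event on which the conclusion of Theorem~\ref{thm-O-sc-rand} holds for this $\eps$ and no relator is a proper power, so that every $2$-cell of the Cayley complex $X$ has a boundary circle with exactly $l$ edges. Everything below is then deterministic.

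For the first assertion, suppose the boundary path of some $2$-cell $R\ra X$ is not embedded. Then two distinct points of the boundary circle of $R$ have the same image in $X$, splitting that circle into two arcs $P,Q$ with $|P|+|Q|=l$; relabel so that $|P|\leq \tfrac{l}{2}$. The arc $P$ maps to a loop in $X$, so its label $p$ — a reduced word of length $\geq 1$, being a subword of the cyclically reduced relator — represents the trivial element of $G$. Let $D$ be a reduced van Kampen diagram for $p$. Since $p$ does not freely reduce to the empty word, $|D|\geq 1$, and Theorem~\ref{thm-O-sc-rand} gives
\[
	\tfrac{l}{2}\ \geq\ |P|\ =\ |\partial D|\ \geq\ (1-2d-\eps)\,l\,|D|\ \geq\ (1-2d-\eps)\,l\ >\ \tfrac{l}{2},
\]
a contradiction. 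Hence every $2$-cell boundary path embeds.

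For the second assertion, given distinct $2$-cells $R_1,R_2$ whose (now embedded) boundary paths meet in a disconnected set, I would follow Lemma~\ref{lem:small-canc-assumption}: choose proper arcs $P_i\subsetneq\partial R_i$ with common endpoints, meeting the other cell only at those endpoints, so that $P_1\cup P_2$ is an immersed essential loop; let $D$ be a minimal reduced van Kampen diagram for it, and assemble $R_1$, $R_2$, $D$ into a reduced diagram $E$ in which $R_1$ and $R_2$ still meet in a disconnected set. Then $|E|\geq |D|+2\geq 3$, while $|\partial E|=(l-|P_1|)+(l-|P_2|)<2l$, which already contradicts Theorem~\ref{thm-O-sc-rand} when $3(1-2d-\eps)>2$, i.e.\ for $d$ somewhat below $\tfrac14$; to cover all $d<\tfrac14$ one first applies Theorem~\ref{thm-O-sc-rand} to a diagram made of two cells glued along a common subword, bounding the length of any subword shared by two relators and hence controlling $|P_1|+|P_2|$, and then re-runs the count — this is the content of \cite[Proposition 1.10, Corollary 1.11]{Oll-Wis-11-rand-grp-T}. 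The step I expect to need the most care is the topological bookkeeping in building $E$ from $R_1$, $R_2$ and $D$ so that it is genuinely a reduced disc diagram with the stated boundary length; once that is in place, the only probabilistic ingredient is the single invocation of Ollivier's inequality.
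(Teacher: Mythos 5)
The paper's ``proof'' of this proposition is a one-line citation to \cite[Proposition 1.10, Corollary 1.11]{Oll-Wis-11-rand-grp-T}, so your attempt to actually reconstruct the argument from Theorem~\ref{thm-O-sc-rand} is a genuinely different route, and for the most part a good one.

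Your argument for the first assertion is correct, and in fact cleaner than the template you are imitating: unlike Lemma~\ref{lem:small-canc-assumption}, which must glue $R$ to $D$ and analyse the curvature of the combined diagram (because $C'(\tfrac16)$ does not hand you a linear isoperimetric inequality directly), here you can apply Ollivier's inequality to $D$ alone. The word $p$ labelling the shorter arc $P$ is a non-empty subword of a cyclically reduced relator, hence reduced, so any van Kampen diagram for $p$ has $|D|\geq 1$; together with $|\partial D|=|P|\leq l/2$ and $(1-2d-\eps)>\tfrac12$ this is a contradiction. Fine.

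For the second assertion there is a real gap. The count you write out, $|E|\geq 3$ and $|\partial E|<2l$, contradicts Theorem~\ref{thm-O-sc-rand} only when $3(1-2d-\eps)>2$, i.e.\ $d<\tfrac16-\tfrac{\eps}{2}$ — not ``somewhat below $\tfrac14$'' as you claim — so the range $[\tfrac16,\tfrac14)$ is not covered. Your proposed fix (bound the longest piece shared by two relators and thereby ``control $|P_1|+|P_2|$'') does not do what you want: a bound on $|C_1|,|C_2|$ says nothing about the lengths of the complementary arcs $P_1,P_2$, which can be close to $l$. The correct mechanism, still entirely within Theorem~\ref{thm-O-sc-rand}, is to apply the isoperimetric inequality to the inner diagram $D$ as well as to $E$. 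Writing $\alpha=1-2d-\eps$ and using $|\partial E|=2l-|\partial D|$, $|E|=|D|+2$, one gets
\[
	\alpha l\,(|D|+2)\ \leq\ |\partial E|\ =\ 2l-|\partial D|\ \leq\ 2l-\alpha l\,|D|,
\]
hence $\alpha(|D|+1)\leq 1$; since $\alpha>\tfrac12$ for $d<\tfrac14$ and suitable $\eps$, this forces $|D|=0$, contradicting $|D|\geq 1$ (which you correctly justified). You do ultimately fall back on citing \cite{Oll-Wis-11-rand-grp-T} for the full range, which matches the paper, but as written your sketch would not compile into a proof at densities $d\in[\tfrac16,\tfrac14)$. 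You are right, incidentally, that the delicate point common to both arguments is verifying that the assembled diagram $E$ is reduced; that requires taking $D$ of minimal area over all admissible choices of $P_1$ and $P_2$, exactly as in Lemma~\ref{lem:small-canc-assumption}, not merely of minimal area for a fixed pair of arcs.
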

\begin{proof}
	Follows from Proposition 1.10 and Corollary 1.11 of~\cite{Oll-Wis-11-rand-grp-T}.
\end{proof}
\begin{lemma}\label{lem:rand-geodesics-meet-cell}
	If $G$ is a random group at density $d<\frac14$, then a.a.s., if
	$\gam$ is a geodesic in the Cayley graph $\Gam=X^{(1)}$ of $G$, for every
	$2$-cell $R \subset X$, $R \cap \gam$ is connected.
\end{lemma}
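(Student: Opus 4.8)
The plan is to argue by contradiction, invoking Ollivier's isoperimetric inequality (Theorem~\ref{thm-O-sc-rand}) twice: once to bound the size of an auxiliary van Kampen diagram, and then again, after gluing a single $2$-cell onto it, to reach the contradiction. I would work on the a.a.s.\ event that Theorem~\ref{thm-O-sc-rand} holds for a fixed $\eps>0$ with $1-2d-\eps>\tfrac12$ (possible since $d<\tfrac14$), that Proposition~\ref{prop-OW-12cells} holds, and that no relator is a proper power, so that every $2$-cell of $X$ has perimeter exactly $l$.

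So suppose $\gam\subset X^{(1)}$ is a geodesic and $R$ is a $2$-cell with $R\cap\gam$ disconnected. Since $\gam$ lies in the $1$-skeleton, $R\cap\gam=\partial R\cap\gam$, so each component is a subpath of $\gam$ contained in $\partial R$; taking two consecutive components and their nearest endpoints yields distinct vertices $p,q\in\partial R$ and a subgeodesic $\gam'\subset\gam$ from $p$ to $q$ whose interior misses $R$ — in particular no edge of $\gam'$ lies on $\partial R$. Let $\beta$ be the shorter of the two arcs of $\partial R$ joining $p$ to $q$ and $\beta'$ the complementary arc, so $|\beta|\le\tfrac l2$, $|\beta|+|\beta'|=l$, and $|\gam'|=d(p,q)\le|\beta|$. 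As $\gam'$ and $\beta$ are edge-disjoint, $w=\gam'\cdot\overline\beta$ is a nonempty reduced word equal to $1$ in $G$, so it bounds a reduced van Kampen diagram $D\ra X$ with $|\partial D|=|\gam'|+|\beta|\le 2|\beta|\le l$. Theorem~\ref{thm-O-sc-rand} then forces $(1-2d-\eps)\,l\,|D|\le l$, hence $|D|<2$; and $|D|=0$ is impossible since $w$ is a nontrivial reduced word, so $D$ is a single $2$-cell $R'$ with $\partial R'=w$ of perimeter $l$. Thus $|\gam'|+|\beta|=l$, which with $|\beta|+|\beta'|=l$ gives $|\beta'|=|\gam'|$.

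The crux is then the second application of the inequality. Glue $R$ to $R'$ along the common arc $\beta$ to form a disc diagram $E=R\cup_\beta R'$ with $|E|=2$ and boundary path $\gam'\cdot\beta'$, so that $|\partial E|=|\gam'|+|\beta'|=2|\gam'|\le 2|\beta|\le l$. The step I expect to need most care is checking that $E$ is a reduced van Kampen diagram: it is topologically a disc (two polygons meeting along a proper boundary arc), and its only possible cancellable pair is $\{R,R'\}$, but past $\beta$ the boundary of $R$ runs along $\beta'\subset\partial R$ while that of $R'$ runs along $\gam'$, whose edges avoid $\partial R$, so $\partial R$ and $\partial R'$ diverge immediately and $\{R,R'\}$ is not cancellable. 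Applying Theorem~\ref{thm-O-sc-rand} to $E$ now yields $l\ge|\partial E|\ge 2(1-2d-\eps)\,l>l$, a contradiction. Hence a.a.s.\ $R\cap\gam$ is connected for every geodesic $\gam$ and every $2$-cell $R$.
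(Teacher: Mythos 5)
Your proof is correct and follows essentially the same route as the paper's: form the loop $\gam'\cup\beta$ (with $\beta$ the shorter arc of $\partial R$), use Ollivier's isoperimetric inequality to see it bounds a single $2$-cell $R'$, then apply the inequality again to the reduced two-cell diagram $R\cup_\beta R'$ for a contradiction. The only cosmetic differences are that you derive the single-cell step directly from Theorem~\ref{thm-O-sc-rand} where the paper cites \cite[Proposition~1.10]{Oll-Wis-11-rand-grp-T} for the same estimate, and that the paper phrases the final step as ``hence $R = R'$ and $\gam'\subset\partial R$'' rather than as an explicit boundary-length count for $E$.
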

\begin{proof}
	If not, we can find a loop in $\Gam$ consisting of a connected path $\beta \subset \partial R$
	of length $\leq l/2$, and a subgeodesic $\gam' \subset \gam$ of length $|\gam'| \leq l/2$
	that meets $R$ only at its endpoints.
	So $\beta \cup \gam'$ has length at most $l$, which is only
	possible if $\beta \cup \gam'$ bounds a single $2$-cell $R'$
	by \cite[Proposition 1.10]{Oll-Wis-11-rand-grp-T}.
	Since $|R \cap R'| = |\beta| = l/2 \geq 2dl$, by Theorem~\ref{thm-O-sc-rand}
	we have $R=R'$ and $\gam' \subset \partial R$, a contradiction.
\end{proof}

In a reduced diagram $D$, a 2-cell $R$ is called a \emph{pseudoshell} if 
$|\partial R \cap \partial D| > \frac{1}{2}|\partial R|$.
\begin{theorem}[{\cite[Theorem 5.1]{Oll-Wis-11-rand-grp-T}}]\label{thm-OW-three-pshells}
	For a random group at density $d<\frac{1}{6}$, a.a.s.\ every reduced diagram with at least
	three $2$-cells has at least three pseudoshells.
\end{theorem}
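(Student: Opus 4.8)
The plan is to deduce this from Ollivier's isoperimetric inequality (Theorem~\ref{thm-O-sc-rand}), supplemented by the local structure of $2$-cells (Proposition~\ref{prop-OW-12cells}). First I would fix $\eps>0$ with $2d+\eps<\tfrac13$, which is possible exactly because $d<\tfrac16$, and assume that the a.a.s.\ conclusions of Theorem~\ref{thm-O-sc-rand} (for this $\eps$) and of Proposition~\ref{prop-OW-12cells} both hold; the rest is then deterministic. By the usual reductions (deleting spurs, and passing to blocks at cut vertices, where a pseudoshell of a block of $D$ is a pseudoshell of $D$) one reduces to the case that the reduced diagram $D\ra X$ is homeomorphic to a disc, with $N=|D|\geq 3$ cells, each of perimeter $l$. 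Then every boundary edge of $D$ lies on exactly one cell, so $|\partial D|=\sum_R|\partial R\cap\partial D|$, and, writing $i(D)$ for the number of interior edges, $Nl=|\partial D|+2\,i(D)$. Theorem~\ref{thm-O-sc-rand} applied to $D$ thus gives $i(D)\leq\tfrac12(2d+\eps)Nl$, and applied to the disc sub-diagram spanned by any $k$ of the cells it bounds the total length of the arcs they pairwise share by $\tfrac12(2d+\eps)kl$ (using Proposition~\ref{prop-OW-12cells} to know such intersections are single arcs).

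The first step is a counting estimate. A cell $R$ that is not a pseudoshell has $|\partial R\cap\partial D|\leq\tfrac12 l$, hence at least $\tfrac12 l$ of its edges interior to $D$; since each interior edge meets exactly two cells, $2\,i(D)=\sum_R(\#\text{ edges of }\partial R\text{ interior to }D)\geq q\cdot\tfrac12 l$, where $q$ is the number of non-pseudoshells. With $i(D)\leq\tfrac12(2d+\eps)Nl$ this gives $q\leq2(2d+\eps)N<\tfrac23 N$, so the number $p=N-q$ of pseudoshells satisfies $p>\tfrac13 N$; in particular $p\geq 3$ whenever $N\geq 6$.

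For the small cases $N\in\{3,4,5\}$ (where $q=N-p\geq N-2\geq 1$ under the assumption $p\leq 2$) I would argue as follows. Let $D'\subseteq D$ be the subcomplex of the $q$ non-pseudoshell cells and their closures, and let $\Sigma$ be the total length of arcs shared between two of these cells. If some component of $D'$ is not simply connected, a bounded complementary region of $D'$ in the plane consists of cells of $D\setminus D'$, i.e.\ of pseudoshells; such a cell then lies in the interior of $D$, so all $l$ of its edges are interior, forcing $l\leq i(D)\leq\tfrac12(2d+\eps)Nl$ and hence $2d+\eps\geq\tfrac2N\geq\tfrac25$, a contradiction. Otherwise $D'$ is a disjoint union of reduced disc diagrams totalling $q$ cells, so $\Sigma\leq\tfrac12(2d+\eps)ql$; since $\sum_R(\#\text{ edges of }\partial R\text{ interior to }D)$, summed over the non-pseudoshells, equals $U+\Sigma$ with $U\leq i(D)$, we obtain
\[ \tfrac12\,ql \;\leq\; i(D)+\Sigma \;\leq\; \tfrac12(2d+\eps)(N+q)\,l , \]
so $2d+\eps\geq q/(N+q)$ when $q\geq 2$, while for $q=1$ one has $\Sigma=0$ and instead $\tfrac12 l\leq i(D)\leq\tfrac12(2d+\eps)Nl$, i.e.\ $2d+\eps\geq\tfrac1N$. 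Checking the finitely many admissible pairs ($N\in\{3,4,5\}$, $q\geq N-2$), the resulting lower bound on $2d+\eps$ is always at least $\tfrac13$, contradicting $2d+\eps<\tfrac13$. This would exhaust all cases.

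The routine part is the bookkeeping with Theorem~\ref{thm-O-sc-rand}. The hard part will be precisely the small diagrams $N\in\{3,4,5\}$: the averaged isoperimetric inequality on $D$ alone is not quite sharp there, and one is forced both to apply it to sub-diagrams (to limit how tightly the non-pseudoshell cells can cluster near $\partial D$) and to separately eliminate the non-planar configurations in which a pseudoshell gets engulfed by the remaining cells — this case analysis is where I expect the main difficulty to lie.
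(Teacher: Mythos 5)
The paper does not prove Theorem~\ref{thm-OW-three-pshells}; it quotes it verbatim from Ollivier--Wise and uses it as a black box, so there is no in-paper argument to compare against. Your proposed derivation from Ollivier's isoperimetric inequality (Theorem~\ref{thm-O-sc-rand}) is, however, essentially correct and is the natural reconstruction given the tools available. The global count ($2i(D)\geq ql/2$ from the non-pseudoshells together with $i(D)\leq\tfrac12(2d+\eps)Nl$) gives $q<\tfrac23 N$, disposing of $N\geq 6$; for $N\in\{3,4,5\}$ the constraints $N-2\leq q<\tfrac23 N$ force $q=N-2$, and your refined estimate $\tfrac12 ql\leq i(D)+\Sigma\leq\tfrac12(2d+\eps)(N+q)l$ (resp.\ $\tfrac12 l\leq i(D)$ when $q=1$, where $\Sigma=0$) yields $2d+\eps\geq\tfrac13$ in each case, the desired contradiction. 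Two places are under-argued and should be fleshed out in a full write-up. First, ``passing to blocks'' is not quite a reduction to a single disc: you need to note that if every block has $\leq 2$ faces then all $N\geq 3$ faces are already pseudoshells of $D$ (single-face blocks trivially; two-face blocks because the isoperimetric inequality bounds the shared arc by $(2d+\eps)l<l/2$), while if some block has $\geq 3$ faces it is the $2$-connected disc your main argument treats. Second, in the non-simply-connected-$D'$ case the key topological fact is that any cell enclosed in a hole of $D'$ is separated from $\partial D$; having observed this, the most direct conclusion is that such a cell fails $|\partial R\cap\partial D|>l/2$ and so cannot be a pseudoshell, contradicting $D\setminus D'$ consisting of pseudoshells --- your detour through $l\leq i(D)$ also works but needs the same input. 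Neither issue is a gap in the idea, only in the exposition.
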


We use this theorem to show that any reduced diagram for a geodesic bigon at
densities $d<\frac{1}{6}$ has the following special form, which is a variation
on the ladders considered in Sections~\ref{sec-wise-walls}--\ref{sec-new-upper}.
\begin{definition}
	A connected disk diagram $D$ is a \emph{ladder 
	(from $\beta\subset \partial D$ to $\beta' \subset \partial D$)},
	if $D$ is a union of a sequence $R_1, R_2, \ldots, R_k$,
	for some $k \in \N$,
	where each $R_i$ is a closed $1$-cell or a closed $2$-cell,
	and $R_i \cap R_j = \emptyset$ for $|i-j|>1$.
	Moreover, $\beta$ and $\beta'$ are closed paths in
	$R_1 \setminus R_2$ and $R_k \setminus R_{k-1}$, respectively.
\end{definition}
See Figure~\ref{fig-bigon} for a ladder where $\beta$ and $\beta'$ are points.
\begin{figure}
	\begin{center}
	\includegraphics[width=0.9\textwidth]{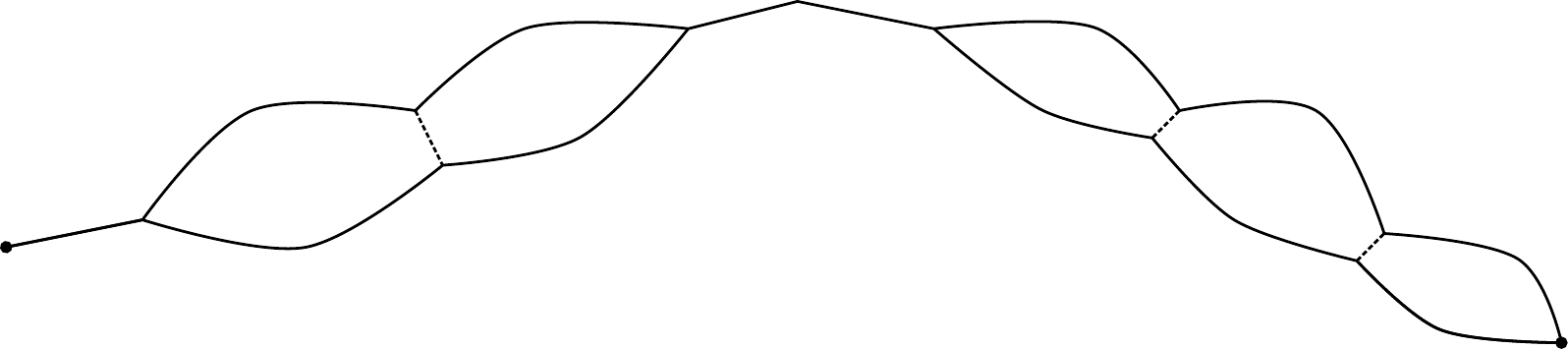}
	\end{center}
	\caption{A ladder}\label{fig-bigon}
\end{figure}

\begin{lemma}[{cf. \cite[Lemmas 3.11, 3.12]{Mac-12-random-cdim}}]\label{lem-bigons}
	Let $G$ be a random group at density $d<\frac{1}{6}$.  Then a.a.s.\ we have the following
	properties, for any reduced diagram $D \ra X$.
	
	(i) Suppose $\partial D$ consists of, 
	in order, a geodesic $\gam_1$,	a path $\beta \subset R \subset D$,
	a geodesic $\gam_2$, and a path $\beta' \subset R' \subset D$.
	Suppose further that $\partial R$ and $\partial R'$ each contain edges from both $\gam_1$ and $\gam_2$.
	Then $D$ is a ladder $R=R_1, R_2, \ldots, R_r=R'$ from $\beta$ to $\beta'$.
	
	(ii) Suppose $\partial D$ consists of a geodesic $\gam_1$,
	a path $\beta \subset \partial D$, and a geodesic $\gam_2$,
	with $\gam_1, \gam_2$ sharing the endpoint $p$.
	Suppose further that $\beta = D \cap R$ for some $2$-cell $R$ in $X$,
	i.e.\ that $D \cup R \ra X$ is also a reduced diagram.
	Then $D$ is a ladder $R_1, R_2, \ldots, R_k$ from $p$ to $\beta$.
	
	(iii) Suppose $\partial D$ is a geodesic
	bigon $\gam_1 \cup \gam_2$ with endpoints $p$ and $q$.
	Then $D$ is a ladder from $p$ to $q$. 
\end{lemma}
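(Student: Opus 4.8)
The plan is to prove all three parts by a single induction on the number $|D|$ of $2$-cells, using as the main engine the three-pseudoshells theorem (Theorem~\ref{thm-OW-three-pshells}) together with the connectedness statements of Proposition~\ref{prop-OW-12cells} and Lemma~\ref{lem:rand-geodesics-meet-cell}; Ollivier's isoperimetric inequality (Theorem~\ref{thm-O-sc-rand}) enters only through those inputs. I would first establish (i), which is the most flexible statement, and then deduce (ii) and (iii) by the same cutting argument, with (iii) obtained by splitting a bigon diagram into two diagrams of the type in (ii).

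The key local observation, which I would isolate at the outset, is: \emph{in each of the three configurations, a pseudoshell $S$ which is not one of the designated ``end'' $2$-cells (the cells $R,R'$ in (i), the cell carrying $\beta$ and the cell carrying the geodesic edges at $p$ in (ii), or the two cells at $p$ and $q$ in (iii)) must be a cut cell of $D$}. This is a length count. Edges of $\beta$, $\beta'$ (or of $\beta$ in (ii)) lie on $\partial D$ and hence have thickness one in $D$, so they belong only to the designated end cells; thus for a non-end pseudoshell $S$ one has $\partial S\cap\partial D\subset\gamma_1\cup\gamma_2$. By Lemma~\ref{lem:rand-geodesics-meet-cell} each $\partial S\cap\gamma_i$ is a single arc, and it has length at most $\tfrac12|\partial S|$, since a longer geodesic subpath of $\partial S$ would be short-cut by the complementary arc. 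Hence the pseudoshell inequality $|\partial S\cap\partial D|>\tfrac12|\partial S|$ forces $S$ to meet both $\gamma_1$ and $\gamma_2$ in nonempty arcs, and these two arcs cannot share an endpoint: a common point of $\gamma_1$ and $\gamma_2$ is one of the finitely many prescribed points, and the cell carrying the geodesic edges adjacent to such a point is a designated end cell. Therefore $\partial S\setminus\partial D$ consists of two disjoint arcs, so removing $S$ disconnects $D$.

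With this in hand the induction is routine. The base cases $|D|\le 2$ are checked directly: $|D|=0$ makes $D$ a tree whose boundary is a loop, hence a single arc (and in (iii) this forces $\gamma_1=\gamma_2$); $|D|=1$ forces $D$ to be a single $2$-cell because geodesics and boundary subpaths of cell boundaries do not backtrack, giving a ladder of length one; and if $|D|=2$ the two cells meet in a connected arc by Proposition~\ref{prop-OW-12cells}, producing a ladder of length two. For $|D|\ge 3$, Theorem~\ref{thm-OW-three-pshells} gives at least three pseudoshells, at most two of which are designated end cells, so the observation yields a non-end pseudoshell $S$ that is a cut cell. Writing $\partial S$ cyclically as (arc of $\gamma_1$)$\,\cup\,c_1\cup\,$(arc of $\gamma_2$)$\,\cup\,c_2$ with $c_1$, $c_2$ on the two sides of the cut, I would cut $D$ along $S$ into $D_1\ni c_1$ and $D_2\ni c_2$; each $D_i$ is a reduced subdiagram of $D$ (so $D_i\cup S$ is reduced), and each again falls into one of the three configurations with $c_i=D_i\cap S$ serving as the new short side, so the inductive hypothesis applies and each $D_i$ is a ladder with the appropriate ends. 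Concatenating, $D=D_1\cup S\cup D_2$ is the required ladder. For (ii) one additionally invokes the hypothesis that $D\cup R$ is reduced when cutting near the $\beta$-end; for (iii) one applies part (ii) to each of $D_1$ and $D_2$.

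The main obstacle is not the core argument but the bookkeeping in degenerate cases: a trivial geodesic or trivial $\beta$, the two designated end cells coinciding, the two geodesics sharing an initial or terminal segment (which should first be absorbed into the ladder, reducing to the case where $\gamma_1\cap\gamma_2$ is exactly the prescribed point set), a non-embedded boundary path, and — most delicately — checking that after cutting along $S$ the resulting pieces really satisfy the precise hypotheses of (i)--(iii), in particular that the relevant $2$-cells still carry edges from both geodesics and that reducedness and the embedding properties of Proposition~\ref{prop-OW-12cells} pass to subdiagrams. These are precisely the points treated in \cite[Lemmas 3.11, 3.12]{Mac-12-random-cdim} in the $C'(\tfrac{1}{6})$ setting, and the present density-model analogue goes through because the isoperimetric inequality of Theorem~\ref{thm-O-sc-rand} supplies the same curvature control at density $d<\tfrac{1}{6}$.
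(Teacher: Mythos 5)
Your proof is correct and follows essentially the same route as the paper's: a simultaneous induction on $|D|$ over cases (i)--(iii), using Theorem~\ref{thm-OW-three-pshells} to find a pseudoshell that is not a designated end cell and cutting $D$ along it. Your isolated cut-cell observation (that such a pseudoshell meets each $\gamma_i$ in a single arc of length at most $\tfrac12|\partial S|$, via Lemma~\ref{lem:rand-geodesics-meet-cell}, and hence separates $D$) is spelled out more carefully than in the paper, which asserts the separation with a reference to Figure~\ref{fig-bigon2} but first performs explicitly the reduction of splitting $D$ at any interior vertex shared by $\gamma_1$ and $\gamma_2$; make sure that reduction precedes your cut-cell claim so that $D$ is genuinely a topological disc with $\gamma_1\cap\gamma_2$ equal to the prescribed endpoints.
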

At densities $d>\frac{1}{6}$, geodesic bigons need not be ladders; we see
situations such as Figure~\ref{fig-ladderfail}.
\begin{figure}
		\centering
		\def\svgwidth{0.6\columnwidth}
		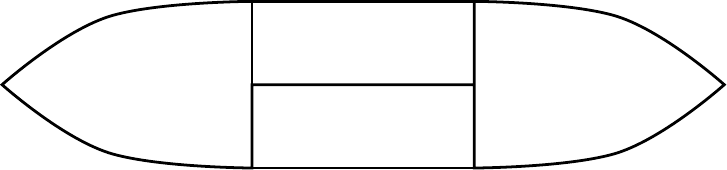
		\caption{Example of geodesic bigon at density $d>\frac{1}{6}$}\label{fig-ladderfail}
	\end{figure}
\begin{proof}
	In case (ii), if $\beta$ is not trivial, then we may assume that $D$ contains $R$.
	This is because if it does not, we can glue $R$ to $D$ along $\beta$ 
	to give a new reduced diagram $D'$.
	The boundary of $D'$ consists of $\gam_1,\overline{\partial R\setminus \beta}, \gam_2$,
	and if $D'$ is a ladder from the shared endpoint of $\gam_1, \gam_2$ to $\overline{\partial R\setminus \beta}$,
	then $D$ is a ladder from the shared endpoint of $\gam_1, \gam_2$ to $\beta$.
	
	We prove the lemma (in all three cases simultaneously)
	by induction on the number of cells in $D$, where
	$\partial D$ has the required form, and if $\beta$ (respectively $\beta'$) is non-trivial, then
	$D$ contains $R$ (respectively $R'$).
	
	If $\gam_1$ and $\gam_2$ meet at a vertex $x \in D$ other than their endpoints, 
	then we can split the diagram $D$ at $x$ into two diagrams $D_1$ and $D_2$ of fewer
	cells than $D$.
	By induction, both $D_1$ and $D_2$ are ladders, 
	and so $D$ is also a ladder.
	
	If $D$ has at most two $2$-cells, we are done by Proposition~\ref{prop-OW-12cells}.
	
	If $D$ has at least three $2$-cells, then by Theorem~\ref{thm-OW-three-pshells},
	$D$ has at least three pseudoshells.
	Now, as boundaries of $2$-cells embed (Proposition~\ref{prop-OW-12cells}), and the geodesics
	are disjoint except at their endpoints, we have that $D$ is a topological disk.
	Therefore, apart from $R$ and $R'$, there must be at least one other pseudoshell $R''$
	in $D$, and $R''$ must meet both $\gam_1$ and $\gam_2$ (see Figure~\ref{fig-bigon2}).
	By induction, the subdiagrams of $D$ bounded by $R$ and $R''$, and by $R''$ and $R'$,
	are both ladders, so $D$ is a ladder as well.
\end{proof}
\begin{figure}
	\centering
	\def\svgwidth{0.9\columnwidth}
	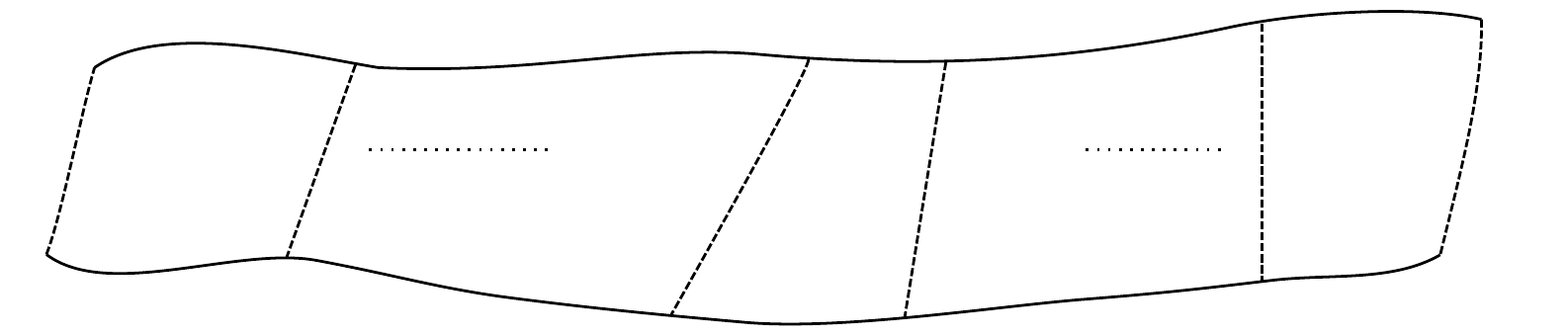
	\caption{}\label{fig-bigon2}
\end{figure}

\subsection{Extending geodesics}

We can extend geodesics in random groups at density $d<\frac{1}{6}$ in multiple ways,
and ensure that the extensions do not start with a long word from a relator
(cf.\ \cite[Section 5.1]{Mac-12-random-cdim}).

First we show that near any point in the Cayley graph of a
random group at density $d<\frac{1}{6}$ 
there is at most one point with at least two non-extending neighbours
(Definition~\ref{def:nonextendingneighbours}).
\begin{lemma}[cf.\ Lemma~\ref{lem:geod-extend-sc}]\label{lem-geod-extend-rand}
	Suppose $G$ is a random group at density $d<\frac{1}{6}$ with Cayley graph $\Gam$,
	and $\eta \in \Z_{\geq 0}$ is given.
	Given $a \in G$, let $E_{a, \eta}$ be the collection of points
	$b \in G$ so that $d(1,b) = d(1,a)+d(a,b)$, $d(a,b) \leq \eta$,
	and $b$ has at least two non-extending neighbours.
	
	Then a.a.s.\ for every point $a \in G$, 
	either $E_{a, \eta} = \emptyset$, or $E_{a, \eta} = \{b\}$ where $b$ has
	exactly two non-extending neighbours.
	
	In fact, if we fix $\eps < \frac{1}{6}-d$, 
	the same conclusion holds a.a.s.\
	for all $a \in G$ and all $\eta < \min\{ \frac{1}{6}l-1, \frac{3}{2}\eps l-1\}$.
\end{lemma}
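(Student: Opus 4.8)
The plan is to follow the template of the small-cancellation argument for Lemma~\ref{lem:geod-extend-sc}, replacing the $C'(\tfrac16)$ piece bound and the structural facts about $C'$-diagrams by their density-model analogues: Ollivier's isoperimetric inequality (Theorem~\ref{thm-O-sc-rand}), the embedding and intersection properties of $2$-cells (Proposition~\ref{prop-OW-12cells}), and the ladder description of geodesic bigons and related reduced diagrams at density $<\tfrac16$ (Lemma~\ref{lem-bigons}).

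First I would reduce to a single contradictory configuration. Suppose the conclusion fails for some $a$: then either $E_{a,\eta}$ contains two distinct points, or it contains one point with at least three non-extending neighbours. In either case extract two \emph{distinct} pairs $(b_1,c_1)$, $(b_2,c_2)$, where $b_i\in E_{a,\eta}$, $c_i$ is a non-extending neighbour of $b_i$, and $c_i$ does not lie on the fixed geodesic $\gam_i=[1,a]\cup[a,b_i]$ (possible since at most one non-extending neighbour of $b_i$, namely its predecessor on $\gam_i$, can lie on $\gam_i$). The goal becomes to prove $b_1=b_2$ and $c_1=c_2$, which contradicts the hypothesis in both cases.

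Mirroring the $C'(\tfrac16)$ proof, for each $i$ I would take a reduced van Kampen diagram $D_i$ for the geodesic triangle with sides $\gam_i$, the edge $e_i=[b_i,c_i]$, and a geodesic $[1,c_i]$; by Lemma~\ref{lem-bigons} this is a ladder, whose extreme $2$-cell $R_i$ has $b_i,c_i\in\partial R_i$ (the case where $e_i$ borders no $2$-cell of $D_i$ forces $c_i\in\gam_i$, contrary to choice, and the cases $d(1,c_i)=d(1,b_i)$ versus $d(1,b_i)-1$ must be separated). Since $\gam_i$ and $[1,c_i]$ are geodesics meeting at $b_i$ and the arc along which $R_i$ meets the rest of $D_i$ is a piece, hence of length $\le(2d+\eps')l$ by Theorem~\ref{thm-O-sc-rand} applied to a two-cell diagram, the arc $\partial R_i\cap\gam_i$ has length $\ge(\tfrac12-2d-\eps')l-O(1)$, so $\partial R_i\cap[1,a]$ is a sub-path of $[1,a]$ ending at $a$ of length $\ge(\tfrac12-2d-\eps')l-\eta-O(1)$. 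If $\eta$ is small enough, $\partial R_1\cap[1,a]$ and $\partial R_2\cap[1,a]$ then share a sub-path $\pi\subset[1,a]$ of length $>(2d+\eps'')l$; both $R_1,R_2$ (as $2$-cells of $X$) contain $\pi$, and if $R_1\neq R_2$ then $R_1\cup_\pi R_2$ is a reduced two-cell diagram (connected intersection by Proposition~\ref{prop-OW-12cells}, and distinct $2$-cells sharing an edge with a common boundary word cannot occur in a Cayley complex) of perimeter $<(2-4d-2\eps'')l$, contradicting Theorem~\ref{thm-O-sc-rand}; so $R:=R_1=R_2$. As $d(1,\cdot)$ increases strictly along $\partial R$ from $\pi$ through $a$ towards $b_i$ and then fails to increase at $c_i$ — a description intrinsic to $(R,\pi)$ — we get $b_1=b_2$ and $c_1=c_2$.

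The main obstacle is that this estimate is only comfortably positive for $d<\tfrac18$; as $d\to\tfrac16$ the constant $2d+\eps'$ approaches $\tfrac13$ and the bound degenerates. To cover the full range $d<\tfrac16$ I would instead, in the two-distinct-points case, work with the single loop $[1,b_1]\cdot e_1\cdot[c_1,c_2]\cdot e_2^{-1}\cdot[1,b_2]^{-1}$ with $[c_1,c_2]$ a geodesic: its cyclically reduced form has length $\le 4\eta+O(1)$ (the common prefix $[1,a]$ of $[1,b_1]$ and $[1,b_2]$ cancels, and $d(c_1,c_2)\le 2\eta+2$), so once $\eta<\tfrac16 l-1$ (with the extra threshold $\eta<\tfrac32\eps l-1$, $\eps<\tfrac16-d$, produced by carrying the parameters through the diagram-structure and geodesic-extension inputs) Theorem~\ref{thm-O-sc-rand} forces any reduced diagram for it to contain no $2$-cells, i.e.\ the loop is freely trivial. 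Using $c_i\notin\gam_i$ to see that $[a,b_i]\cdot e_i$ is reduced, one unwinds this in the free group to pin down the tree-like relative position of $a,b_1,c_1,c_2,b_2$, and a short case analysis on the cancellation at the junction with $[c_1,c_2]$ yields $b_1\in\gam_2$, or $c_1\in\gam_2$, or $b_1=c_2$, or $c_1=c_2$ (and symmetrically); together with choosing $b_i$ with $d(a,b_i)$ minimal and re-running the triangle argument in the now tree-like region, this forces $b_1=b_2$, $c_1=c_2$. I expect the fiddly parts to be: closing this free-reduction case analysis cleanly; handling the edge cases $b_i=a$ and $d(1,c_i)=d(1,b_i)$; and noting that the $a.a.s.$ statement is uniform in $a$, which is automatic since $G$ acts cocompactly (or since the relevant events are translation invariant).
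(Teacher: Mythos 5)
The paper's proof takes a genuinely different route from both of your approaches, and I believe your second approach (intended to cover the full range $d<\frac16$) has a gap that I cannot close.

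Your first approach mirrors the $C'(\frac16)$ proof: show $R_1=R_2$, then read off $b_1=b_2$, $c_1=c_2$ from the shared cell. You correctly identify that the estimate forcing $R_1=R_2$ degenerates as $d\to\frac18$. The paper does \emph{not} try to show $R_1=R_2$. Instead it observes that $c_1\neq c_2$ forces $R_1\neq R_2$ (because $\partial R$ is an embedded cycle, so $b$ has only two neighbours on $\partial R$, one of which is $b'$), glues $D_1$ and $D_2$ along $\gam_1\cap\gam_2$, reduces, trims to the cells near $a$, and then reads off a contradiction to Theorem~\ref{thm-O-sc-rand} directly from a careful count: a diagram $D'$ with $3+k$ faces has $|\partial D'|\leq 2(\frac12 l+1+\eta)+l+\frac12 kl$, which undershoots the isoperimetric lower bound once $l>(2+2\eta)/(3\eps)$, plus a separate check for $|D'|=2$. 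This works for all $d<\frac16$ because it never needs two boundary arcs of $R_i$ and $R_j$ to overlap in a long piece.

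The gap in your second approach: free triviality of the short loop is not enough to force $b_1=b_2$, $c_1=c_2$. Concretely, take $b_1=b_2=b$ and fix the same $[a,b]$; the loop reduces to $e_1\cdot[c_1,c_2]\cdot e_2^{-1}$. Free triviality here says only that the geodesic $[c_1,c_2]$ has the form $c_1\to b\to c_2$, i.e.\ $d(c_1,c_2)=2$ with every geodesic through $b$. This is exactly the configuration you are trying to rule out (namely $b$ having the three non-extending neighbours $b'$, $c_1$, $c_2$), and it is entirely consistent with being tree-like; the constraint $d(1,c_i)\leq d(1,b)$ is about the ambient Cayley graph, not about the small tree you extracted. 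Similarly in the $b_1\neq b_2$ case, free reduction forces $[c_1,c_2]$ to coincide with the concatenation $c_1\to b_1\to a'\to b_2\to c_2$, which again does not contradict anything by itself. The alternatives you list at the end of the case analysis (``$b_1\in\gam_2$, or $c_1\in\gam_2$, or $b_1=c_2$, or $c_1=c_2$'') are not what the free reduction yields, and even if they were, I do not see how ``re-running the triangle argument'' closes the loop without reintroducing the $R_1=R_2$ step that only works for $d<\frac18$. To make the full-range case go through you need to bring the $2$-cells $R_1\neq R_2$ back into the picture and contradict the isoperimetric inequality with a genuine diagram, as the paper does; a purely one-skeleton (free-group) argument discards the information carried by those two faces.

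One small additional remark: your parameter thresholds $\eta<\frac16 l-1$ and $\eta<\frac32\eps l-1$ do match those that emerge from the paper's count, so you carried the constants correctly; the issue is entirely in the structure of the final contradiction.
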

\begin{proof}
	Suppose we have a group presentation with $l > 6\eta+6$
	and a point $a \neq 1$ in the corresponding Cayley graph which contradicts our desired conclusion.
	That is, either we have two distinct points $b_1, b_2 \in E_{a, \eta}$, or 
	$E_{a, \eta} = \{b\}$ and $b$ has three non-extending neighbours.
	In this last case, set $b_1=b_2=b$.  
	The strategy of proof is to use these points to build a reduced diagram which contradicts
	Theorem~\ref{thm-O-sc-rand}.
	
	Let $\gam_0 = [a, 1]$ be a geodesic.  
	For $i=1,2$, let $\gam_i$ be a geodesic from $b_i$ to $1$ extending $\gam_0$, 
	and denote by $b_i' \in \gam_i$ the point for which $d(b_i,b_i')=1$.
	(Observe that if $b_1=b_2$, then $b_1'=b_2'$.)
	
	By our assumption, there are two points $c_1 \neq c_2$, so that $c_i \neq b_i'$,
	$d(c_i,b_i)=1$, and $d(1,c_i) \leq d(1,b_i)$, for $i=1,2$.
	(In the case that $b_1 \neq b_2$, we must have $c_1 \neq c_2$, else we have a relation
	in $G$ of length $\leq 2\eta+2 < l$.  In the other case, $b$ has at least three 
	non-extending neighbours.)
	
	For each $i=1,2$, let $\beta_i$ be a geodesic from $c_i$ to $1$, which necessarily does
	not pass through $b_i'$.
	Let $D_i$ be a reduced diagram for the geodesic triangle $\gam_i, [b_i,c_i], \beta_i$,
	for $i=1,2$.
	By the choice of paths, for each $i=1,2$, 
	the edge $[b_i,c_i]$ is disjoint from $\gam_i, \beta_i$ in $X$, so $[b_i,c_i]$
	must lie in the boundary of some $2$-cell $R_i \subset D_i$.
	So the diagram $D_i \ra X$ is a ladder by Lemma~\ref{lem-bigons}(i) 
	applied with $\partial D_i$ consisting of, in order, $\gam_i, [b_i,c_i], \beta_i$ 
	and a trivial path.
	
	As follows from Theorem~\ref{thm-O-sc-rand}, or Lemma~\ref{lem-multiplerelators} below,
	any two 2-cells in a reduced diagram meet along a segment 
	of length $\leq l/3$.
	
	In the case of the diagrams $D_i$, this implies that $a \in \partial R_i$.
	This is because $\beta_i$ is a geodesic, so
	$|\partial R_i \cap \gam_i| \geq \frac12 l - \frac13 l -1 \geq \eta \geq d(b_i,a)$.
	
	Now consider the diagram $D = D_1 \cup_{\gam_1 \cap \gam_2} D_2 \ra X$
	formed by gluing the diagrams
	$D_1$ and $D_2$ along $\gam_1 \cap \gam_2 \supseteq \gam_0$.
	Let $p$ be the first point of the path $\gam_0$ to meet $\gam_1$ or $\gam_2$ after $a$,
	and discard from $D$ any cells which are not in the boundary of a 2-cell that meets
	$\gam_0[a,p]$.
	
	Note that $c_1 \neq c_2$ implies that $R_1$ and $R_2$ are different 2-cells in $X$,
	and so in $D$ they do not reduce.
	Any reductions in $D$ that do occur must happen along $\gamma_0$.
	Perform the reduction which occurs closest to $a$ to find a reduced diagram $D'$ as in
	Figure~\ref{fig-geodextend}(a), up to swapping the indices $1$ and $2$.
	\begin{figure}
		\centering
		\def\svgwidth{0.95\columnwidth}
		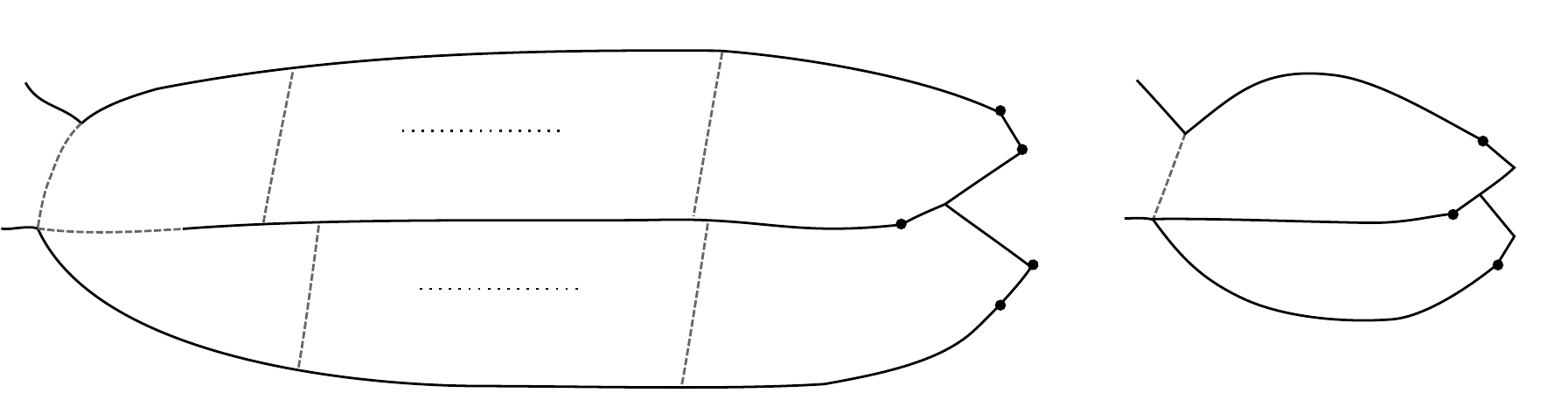
		\caption{Extending geodesics (a) and (b)}\label{fig-geodextend}
	\end{figure}
	We choose $\eps>0$ so that $2d+2\eps < \frac13$.  Then by Theorem~\ref{thm-O-sc-rand},
	we have, a.a.s., that every reduced diagram $D$ satisfies $|\partial D| > (\frac23+\eps)l|D|$.
	
	Suppose $|D'| = 3+k$, for some $k \geq 0$.  Then Theorem~\ref{thm-O-sc-rand} gives
	$|\partial D'| > (\frac23+\eps)l(3+k) = 2l+\frac23 kl+\eps l(3+k)$.
	
	On the other hand, $|\partial D'| \leq 2(\frac12 l+1+\eta)+l+\frac12 lk = 2+2\eta+2l+\frac12 kl$, 
	as $R_1,R_2$ each contribute at most $\frac12 l+1+\eta$ to $|\partial D'|$, 
	$R_3$ contributes at most $l$, and all other faces contribute at most $l/2$.
	So
	\[
		2l+\tfrac12 kl+3\eps l \leq 2l+\tfrac23 kl+\eps l(3+k) < 2+2\eta+2l+\tfrac12 kl,
	\]
	which is a contradiction for $l > (2+2\eta)/(3\eps)$.
	
	If $|D'|=2$, we are in the situation of Figure~\ref{fig-geodextend}(b).
	As $\gam_2$ is a geodesic, $|\partial R_1 \cap \partial R_2| \geq \frac12 l-1-\eta$,
	but then $\frac23 l\cdot 2 < |\partial D'| \leq 2(\frac12 l+1+\eta)$,
	a contradiction for $l \geq 6+6\eta$.
	
	Thus a.a.s.\ no such point $a$ exists in $\Gamma$.
\end{proof}
In a similar, but slightly simpler fashion, we see the following.
\begin{lemma}\label{lem-twoinitalsegs}
	In a random group at density $d<\frac{1}{6}$, a.a.s.\ for any $a \in G$, there are
	at most two initial segments of length $l/6$ for a geodesic $[a,1]$.
\end{lemma}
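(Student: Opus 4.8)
The plan is to argue by contradiction and, exactly as in the proof of Lemma~\ref{lem-geod-extend-rand}, to convert a configuration of three geodesics into a reduced van Kampen diagram whose area and perimeter violate Ollivier's isoperimetric inequality (Theorem~\ref{thm-O-sc-rand}). Fix $\eps>0$ with $2d+2\eps<\frac13$, so that a.a.s.\ every reduced diagram $D$ satisfies $|\partial D|>(\frac23+\eps)\,l\,|D|$, and so that the conclusions of Proposition~\ref{prop-OW-12cells}, Lemma~\ref{lem:rand-geodesics-meet-cell} and Lemma~\ref{lem-bigons} all hold; since each of these holds a.a.s.\ for all reduced diagrams (respectively all $2$-cells, geodesics, bigons) at once, it suffices to rule out a single bad configuration. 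Suppose then that for some $a\in G$ there are geodesics $\gamma_1,\gamma_2,\gamma_3$ from $a$ to $1$ whose initial segments of length $\frac l6$ are pairwise distinct. Let $w$ be the endpoint of the longest common initial subpath of the three; since those initial $\frac l6$-segments differ, $d(a,w)<\frac l6$, and after replacing $a$ by $w$ and passing to truncations (and folding together any remaining shared initial edges) we may assume that $\gamma_1,\gamma_2,\gamma_3$ leave $a$ in three distinct directions and that any two of them diverge from one another within distance $\frac l6$ of $a$.

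Next I would assemble the diagram. By Lemma~\ref{lem-bigons}(iii) a reduced diagram $D_{ij}$ for the geodesic bigon $\gamma_i\cup\gamma_j$ is a ladder. Fixing the cyclic order of the three outgoing directions at $a$ so that $\gamma_2$ lies between $\gamma_1$ and $\gamma_3$, glue $D_{12}$ and $D_{23}$ along their common boundary geodesic $\gamma_2$ to obtain a planar diagram $D$ with $\partial D=\gamma_1\cup\gamma_3$ and $\gamma_2$ an interior arc from $a$ to $1$. This $D$ cannot be reduced: if it were, Lemma~\ref{lem-bigons}(iii) would force $D$ to be a ladder from $a$ to $1$, but then $\gamma_2$ would have to begin along an interior edge of this ladder incident to the endpoint $a$, and a ladder has no interior edge incident to an endpoint (while $\gamma_2$ cannot begin along $\gamma_1$ or $\gamma_3$, having a distinct first edge). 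Hence $D$ admits cancellable pairs, all lying along $\gamma_2$, and they cannot all cancel, for otherwise reducing $D$ completely would leave the empty diagram with boundary $\gamma_1\cup\gamma_3$, forcing $\gamma_1=\gamma_3$. Reducing $D$ and trimming away the cells far from $a$ as in Lemma~\ref{lem-geod-extend-rand} then gives a non-empty reduced diagram $D'$ whose boundary is built from subpaths of $\gamma_1,\gamma_2,\gamma_3$ (each meeting any given $2$-cell in a connected subpath of length $\leq\frac l2$ by Lemma~\ref{lem:rand-geodesics-meet-cell}), and which still contains the first $2$-cells of the ladders $D_{12}$ and $D_{23}$ near $a$ --- these survive the reduction, since cancelling them would identify an initial subpath of $\gamma_1$ with one of $\gamma_3$ and so make their first edges agree.

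Finally I would run the perimeter count. Exactly as in the estimate in the proof of Lemma~\ref{lem-geod-extend-rand} --- using that consecutive $2$-cells of a reduced diagram overlap in a segment of length $\leq\frac l3$, that only a bounded number of cells of $D'$ meet two distinct boundary geodesics, and that the remaining cells contribute $\leq\frac l2$ each to $\partial D'$ --- one gets an upper bound of the form $|\partial D'|\leq\frac l2|D'|+O(l)$ with a controlled $O(l)$ term, whereas Theorem~\ref{thm-O-sc-rand} gives $|\partial D'|>(\frac23+\eps)\,l\,|D'|$; since $\frac23>\frac12$ this is impossible once $|D'|$ is not too small and $l$ is large, the few degenerate cases $|D'|\in\{1,2\}$ being handled directly from the fact that $\partial D'$ is then a short geodesic polygon (contained in the ball of radius $\frac l6+\frac l2$ about $a$) while Ollivier's inequality would force its perimeter to be of order at least $l$. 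This contradiction proves the lemma. The main obstacle, as in Lemma~\ref{lem-geod-extend-rand}, is pinning down the combinatorial shape of $D'$ after the reductions and trimming precisely enough for this bookkeeping to close; here the situation is slightly simpler because all three sides leaving $a$ are geodesics, so each pair bounds a ladder and the local picture near $a$ is cleaner than for the geodesic triangles used there.
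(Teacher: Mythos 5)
Your proposal takes a genuinely different route from the paper's, and it has a gap that I don't think can be repaired in the form you describe.

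The paper does not derive a contradiction with Ollivier's isoperimetric inequality. It glues the two ladders $D_1, D_2$ along $\gamma_0$, reduces and trims near $a$, and then invokes the three-pseudoshell theorem (Theorem~\ref{thm-OW-three-pshells}): if $|D|\geq 3$ then $D$ has at least three pseudoshells, but only two cells in this trimmed diagram can possibly have more than half their boundary exposed, so $|D|=2$. Crucially, the two-cell diagram \emph{does exist} and is perfectly consistent with Theorem~\ref{thm-O-sc-rand}: two $2$-cells sharing a segment of length $s$ give $|\partial D|=2l-2s$, and the inequality $|\partial D|>(1-2d-\eps)\cdot 2l$ is exactly the statement $s<(\tfrac13+\eps)l$, which is the $C'$ condition already guaranteed a.a.s.\ --- not a contradiction. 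The paper then uses that $C'(\tfrac13)$ bound, together with $\gamma_1$ being a geodesic, to read off a \emph{distance} bound (the second branch point is $\geq l/6$ away), which is the actual content of the lemma. Your plan, by contrast, is to ``run the perimeter count'' and obtain a contradiction, but you acknowledge this only works ``once $|D'|$ is not too small,'' and you propose to handle $|D'|\in\{1,2\}$ by claiming ``Ollivier's inequality would force its perimeter to be of order at least $l$.'' That is not a contradiction: a two-cell reduced diagram has perimeter $\approx \tfrac43 l$, of order $l$, and satisfies the inequality. So the small cases are not ``degenerate''; they are where the conclusion actually has to be extracted, and no perimeter count alone does it. Moreover the constant in your bound $|\partial D'|\leq \tfrac{l}{2}|D'|+O(l)$ is not pinned down, so even granting the structure you describe, the number of small cases you would have to treat by hand could be considerably larger than two.

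A secondary issue is the first reduction. Taking $w$ to be the endpoint of the longest common initial subpath of all three geodesics and then ``folding together any remaining shared initial edges'' does not let you assume all three leave $a$ in distinct directions: it is possible that $\gamma_1,\gamma_2$ share a nontrivial common prefix beyond $w$ while $\gamma_3$ branches off at $w$, and after truncating at $w$ you have only two outgoing directions, with the third branch occurring later. The paper handles this correctly by simply ordering the geodesics so that $\gamma_2$ branches from $\gamma_0$ no earlier than $\gamma_1$, and letting the branch points $b,c$ be at whatever distances from $a$ they happen to be; the argument then shows $d(b,c)\geq l/6$, from which the lemma follows without any normalization of the initial configuration.

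In short: replace the intended isoperimetric contradiction with the pseudoshell argument to cap $|D|$ at $2$, and then argue geometrically in the two-cell diagram (using $C'(\tfrac13)$ and the geodesic condition) to get the $l/6$ separation of branch points. The perimeter-counting outline you have sketched, as stated, cannot close.
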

\begin{proof}
	Suppose otherwise, that $\gam_0,\gam_1,\gam_2$ are three geodesics from $a$ to $1$,
	and have different initial segments of length $l/6$.
	We may assume that $\gam_2$ branches away from $\gam_0$ no earlier than $\gam_1$ does.
	
	As in the previous lemma, $\gam_0$ and $\gam_i$ form a reduced diagram $D_i$ which is a ladder.
	We glue these diagrams along $\gam_0$, perform the reduction closest to $a$,
	and discard the rest of the diagram after this point, or after the point at which 
	one of $\gam_1$ or $\gam_2$ has been reunited with $\gam_0$.  Call this diagram $D$ 
	(see Figure~\ref{fig-initial-segments}(a)).
	\begin{figure}
		\centering
		\def\svgwidth{0.95\columnwidth}
		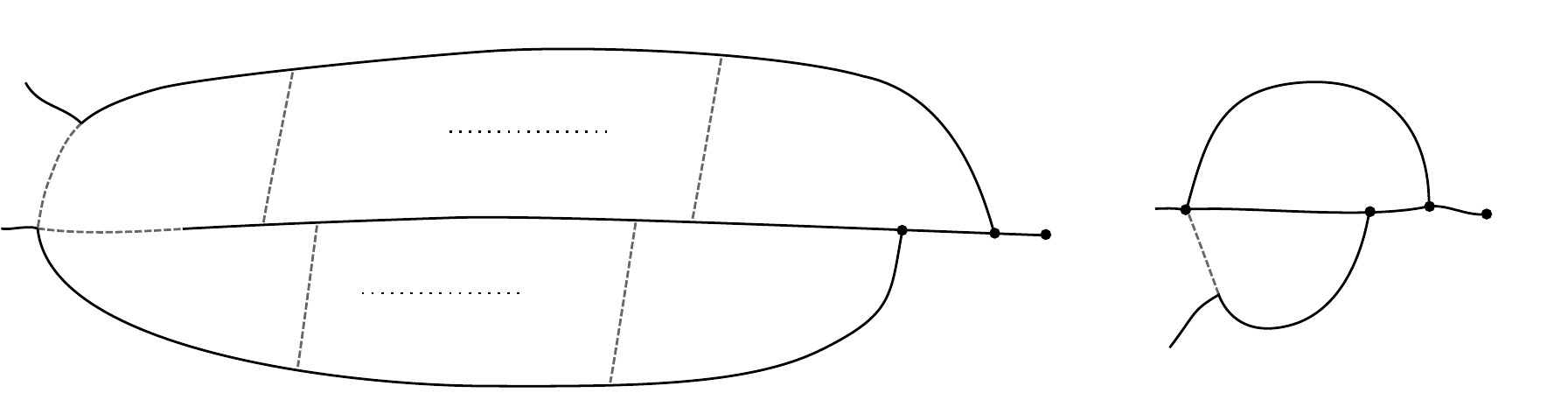
		\caption{Initial segments (a) and (b)}\label{fig-initial-segments}
	\end{figure}	
	
	Now, by Theorem~\ref{thm-OW-three-pshells}, if $|D| \geq 3$, it has at least three pseudoshells.
	But the only possible pseudoshells are $R_1$ and $R_3$, so we must have $|D| = 2$,
	which is illustrated by Figure~\ref{fig-initial-segments}.
	
	Theorem~\ref{thm-O-sc-rand} gives us $C'(\frac{1}{3})$,
	so $[c,p]\subset \gam_0$ has length less than $l/3$.	
	As $\gam_1$ is a geodesic, the path from $b$ to $c$ in $\gam_0$
	has length $\geq l/2-l/3 = l/6$.
	This gives the desired result: a second branching of geodesics from $a$ to $1$ can
	occur only after distance $l/6$.
\end{proof}

\begin{proposition}[{cf.\ \cite[Lemma 5.4]{Mac-12-random-cdim}}]\label{prop-startrel}
	Given $d<\lambda<\frac{1}{6}$, there exists $\eta = \eta(d,\lam) \in \N$ so that, a.a.s.,
	for every $u' \in \Gam$ there exists $u \in \Gam$ with $d(u',u)=\eta$,
	$d(1,u)=d(1,u')+d(u',u)$, and so that no geodesic $[u,1]$ starts with a subword of 
	some relator of length greater than $\lambda l +\eta$.
	We can further require that the initial edge of $[u',u]$ is not a specified
	edge adjacent to $u'$.
\end{proposition}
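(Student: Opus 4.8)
The plan is to extend a geodesic ending at $u'$ by a \emph{bounded} number $\eta$ of steps in very many different ways, and to show that all but a bounded number $C=C(d,\lambda)$ of these extensions satisfy the conclusion, so that a good extension exists once $\eta=\eta(d,\lambda)$ is chosen large enough. (Here $m$ is a fixed parameter of the model, whose influence on $\eta$ we suppress in the notation.) We fix $\eps>0$ with $2d+\eps<\tfrac13$ and work on the intersection of the finitely many a.a.s.\ events supplied by Theorem~\ref{thm-O-sc-rand}, Proposition~\ref{prop-OW-12cells}, Lemma~\ref{lem:rand-geodesics-meet-cell}, Lemma~\ref{lem-bigons}, Lemma~\ref{lem-twoinitalsegs} and Lemma~\ref{lem-geod-extend-rand}, adding one further a.a.s.\ event below. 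Two consequences of Theorem~\ref{thm-O-sc-rand} are used throughout: any two vertices at distance at most $\tfrac16 l$ are joined by a \emph{unique} geodesic (a geodesic bigon of total length $\le\tfrac13 l$ would bound a reduced diagram of area $<1$), and any subword of a relator of length $<\tfrac12 l$ is a geodesic in $X$.

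First, counting extensions. Call a combinatorial path $\sigma\colon[0,\eta]\ra\Gam$ with $\sigma(0)=u'$ an \emph{extension} if each edge of $\sigma$ runs from a vertex $a$ to a neighbour $b$ with $d(1,b)=d(1,a)+1$; then $d(1,\sigma(k))=d(1,u')+k$ for all $k$, so $\sigma$ is automatically a geodesic and its endpoint $u=\sigma(\eta)$ has $d(1,u)=d(1,u')+d(u',u)$. Each vertex has at least one non-extending neighbour and so at most $2m-1$ extending ones, and by Lemma~\ref{lem-geod-extend-rand} this maximum is attained at every vertex occurring on an extension of $u'$ save at most one; removing in addition the single forbidden edge at $u'$, we obtain at least $(2m-1)^{\eta-2}$ distinct extensions. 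Since geodesics of length $\eta\le\tfrac16 l$ are unique, distinct extensions have distinct endpoints.

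Next I reduce ``badness'' to a statement about subwords of relators. Say $u$ is \emph{bad} if some geodesic $[u,1]$ begins with a relator subword of length greater than $\lambda l+\eta$; passing to a prefix, $u$ is bad if and only if some geodesic $[u,1]$ begins with a relator subword of length exactly $L_0:=\lfloor\lambda l+\eta\rfloor+1$, and $L_0<\tfrac16 l$ for large $l$. By Lemma~\ref{lem-twoinitalsegs} there are at most two geodesic rays from $u$ towards $1$ differing within their first $\tfrac16 l$ steps, so $u$ is bad because of at most two of these rays, at least one of which passes through $u'$ and so is of the form $\sigma^{-1}\cup[u',1]$ for a geodesic $[u',1]$. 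For a ray through $u'$, the witnessing word is $v=\sigma^{-1}p$, with $p$ the length-$(\lfloor\lambda l\rfloor+1)$ prefix of a geodesic $[u',1]$, and by Lemma~\ref{lem-twoinitalsegs} applied at $u'$ there are at most two such $p$. For a ray $\tau$ not through $u'$, I form the geodesic bigon between $\tau$ and $\sigma^{-1}\cup[u',1]$, a ladder by Lemma~\ref{lem-bigons}(iii), examine where $\tau$ leaves the first $2$-cell of this ladder, and apply the overlap bound together with Theorem~\ref{thm-O-sc-rand} to the reduced diagram obtained by gluing on the relator $2$-cell carrying $v$; this again writes $v$ as $\sigma^{-1}q$ for $q$ in a bounded, $\sigma$-independent set (or else exhibits the ray through $u'$ as bad, a case already handled). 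In all cases a bad extension $\sigma$ forces the length-$(\lfloor\lambda l\rfloor+1)$ prefix of $v$ --- a geodesic word of $G$ determined by $\sigma$ up to bounded ambiguity --- to occur as a subword of some relator.

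The hypothesis $d<\lambda$ enters here. A union bound shows that a.a.s.\ no geodesic word of $G$ of length $\lfloor\lambda l\rfloor+1$ is a subword of the relators more than $k_0:=\lceil\lambda/(\lambda-d)\rceil+1$ times: the probability that a fixed reduced word of that length appears at least $k$ times among the relators is at most $(2|\cR|\,l)^{k}(2m-1)^{-k\lfloor\lambda l\rfloor}$, and summing over the at most $2m(2m-1)^{\lfloor\lambda l\rfloor}$ reduced words of that length gives, since $|\cR|=(2m-1)^{dl(1+o(1))}$ in the density model, a bound comparable to $(2m-1)^{\lambda l-k(\lambda-d)l+o(l)}$, which tends to $0$ as $l\ra\infty$ once $k(\lambda-d)>\lambda$. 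Adding this to our list of a.a.s.\ events and combining with the previous paragraph bounds the number of bad extensions of any $u'$ by a constant $C=C(d,\lambda)$. Taking $\eta=\eta(d,\lambda)\in\N$ with $(2m-1)^{\eta-2}>C$, we conclude: on the intersection of our finitely many a.a.s.\ events, every $u'\in\Gam$ has an extension whose endpoint $u$ is not bad and whose first edge is not the forbidden one, and this $u$ is the required point. The step I expect to be the main obstacle is the second case of the third paragraph: when the bad geodesic from $u$ does not return through $u'$, its witnessing relator is a priori one of the many relator $2$-cells near $u'$, and pinning it down so that the union bound of the fourth paragraph applies requires a careful interaction between the ladder structure of geodesic bigons and Ollivier's isoperimetric inequality.
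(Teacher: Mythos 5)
Your outline shares the same skeleton as the paper's proof --- extend geodesics from $u'$ in many ways, invoke Lemma~\ref{lem-twoinitalsegs} to control geodesics to $1$, and conclude via a statement like Lemma~\ref{lem-multiplerelators} --- but there is a genuine gap at exactly the point you flag, namely the case of a bad geodesic ray $\tau$ from $u$ that does not return through $u'$. The paper's proof eliminates this case rather than handling it. Once the first edge of the extension $\sigma$ is chosen to avoid \emph{both} the specified forbidden edge \emph{and} the direction towards the unique point $v\in E_{u',\eta}$ supplied by Lemma~\ref{lem-geod-extend-rand}, every vertex $\sigma(k)$ with $k\geq 1$ lies outside $E_{u',\eta}$ and therefore has a unique non-extending neighbour, which must be $\sigma(k-1)$. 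Hence every geodesic from $u=\sigma(\eta)$ to $1$ is forced to trace $\sigma$ back to $u'$, and the troublesome ray $\tau$ simply cannot exist. In your construction you use Lemma~\ref{lem-geod-extend-rand} only to count extensions and permit $\sigma$ to pass through $v$, which is what creates the second case; your sketch for it (form a bigon, appeal to the ladder structure, glue on the relator $2$-cell carrying $v$, and claim $v$ is still of the form $\sigma^{-1}q$) does not hold up, since a ray $\tau$ branching off $\sigma^{-1}$ before reaching $u'$ has an initial word that is not of the form $\sigma^{-1}q$, and the argument for pinning the witnessing $2$-cell to a bounded $\sigma$-independent set is not supplied.

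There is a second, smaller problem in the final counting. You apply the union bound to ``the length-$(\lfloor\lambda l\rfloor+1)$ prefix of $v$'', which is $\sigma$-dependent. Since distinct bad $\sigma$ produce distinct such words, bounding how often any single word occurs does not by itself bound the number of bad extensions. The paper instead uses that (because every geodesic $[u,1]$ goes through $u'$) the witnessing relator subword from each bad $u$ contains, from $u'$ onward, one of at most two $\sigma$-independent geodesic prefixes $p$ of length $\lambda l$ (Lemma~\ref{lem-twoinitalsegs} applied at $u'$), and that the witnessing $2$-cells are pairwise distinct in $X$; so a single word $p$ appears in $\cR$ in at least $2^{\eta-2}$ distinct ways, contradicting Lemma~\ref{lem-multiplerelators} for $\eta$ large. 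Your re-derivation of that lemma is also too cavalier about overlapping occurrences of a word inside one relator, a point Lemma~\ref{lem-multiplerelators} treats separately.
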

We will only use this Proposition with $\lambda = \frac{1}{8}$.
\begin{proof}
	By Lemma~\ref{lem-geod-extend-rand}, there is at most one point $v \in E_{u',\eta}$
	such that $d(1,v) = d(1,u')+d(u',v)$ and $0< d(u',v) \leq \eta$,
	and so that the first edge of $[v,1]$ is not unique.
	
	If $v=u'$ (or $E_{u',\eta} = \emptyset)$), we can extend geodesics at $u'$
	in at least $(2m-2)\geq 2$ ways, at least one of which avoids a specified edge.
	On the other hand, if $v \neq u'$, then 
	we can extend geodesics at $u'$ in $(2m-1)\geq 3$ ways, 
	at least one of which avoids both a specified edge and the direction leading to $v$.
	
	In either case, we can then further extend geodesics to 
	$(2m-1)^{\eta-1} \geq 2^{\eta-1}$
	different points $u$ with $d(u',u)=\eta$, and
	$d(1,u) = d(1,u')+d(u',u)$, so that every geodesic $[u,1]$ goes through $u'$.
	(These points really are distinct because $E_{u',\eta}$ consists of
	at most the point $v$.)
	
	Suppose from each of these points $u$ we have a geodesic $\gam_u$ from $u$ to $1$,
	which starts with an initial segment of a relator $r_u$ of length $\eta+\lam l$.
	As the points are distinct, these relators are distinct (that is, the corresponding $2$-cells
	are distinct in $X$).
	
	There are at most two different initial segments of length $l/6$ for geodesics 
	from $u'$ to $1$ by Lemma~\ref{lem-twoinitalsegs}.
	Therefore at least $2^{\eta-2}$ of the relators $r_u$ share an identical geodesic segment 
	from $u'$ of length $\lam l$.
	This is a contradiction to Lemma~\ref{lem-multiplerelators} below, provided $\eta > \eta(d,\lam)$.
\end{proof}

The following lemma, which we used above,
gives a sufficient condition to prevent some
word of length $> ld$ appearing in multiple ways in the relators.
The case $N=2$ reproves the fact that if $2d < \lambda$, then
a random group at density $d$ is $C'(\lam)$.
\begin{lemma}\label{lem-multiplerelators}
	Given $0 < d < \lambda < \frac{1}{2}$, for $N > \lam/(\lam-d)$, a.a.s.\ for a random group $G$ at
	density $d$ there is no word of length $\lambda l$ which appears in $N$ different ways
	in the relators of $G$.
\end{lemma}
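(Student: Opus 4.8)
The plan is to run a first-moment (union bound) argument, with the numerology of the hypothesis $N>\lambda/(\lambda-d)$ falling out of the exponential count at the end. Recall that at density $d$ there are $n\le(2m-1)^{dl}$ relators, each a cyclically reduced word of length $l$. Call a \emph{position} a triple (relator index, offset in $\{0,\dots,l-1\}$, sign $\pm$), so that reading $\lambda l$ consecutive letters cyclically, with the chosen orientation, from that relator yields a length-$\lambda l$ subword; there are at most $2ln$ positions. The event to exclude is that there are $N$ distinct positions whose subwords all coincide with some fixed reduced word $v$ of length $\lambda l<l/2$. There are at most $(2ln)^N$ ordered $N$-tuples of positions, and I would bound, for each such tuple, the probability over the relators that all $N$ positions read a common word.

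First I would handle the generic case, where the $N$ positions lie in $N$ distinct relators. Conditioning on the relator containing the first position determines the candidate word $v$. For a uniformly random cyclically reduced word of length $l$, the probability that the $\lambda l$ letters at a fixed offset spell a prescribed reduced word is at most $C(2m-1)^{-\lambda l}$: the remaining $(1-\lambda)l\ge1$ letters, subject to reducedness, contribute a factor $\Theta\bigl((2m-1)^{(1-\lambda)l}\bigr)$ against the $\Theta\bigl((2m-1)^l\bigr)$ cyclically reduced words. By independence of distinct relators, the other $N-1$ positions all read $v$ with probability at most $\bigl(C(2m-1)^{-\lambda l}\bigr)^{N-1}$. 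Summing over the $\le(2ln)^N$ tuples and using $n\le(2m-1)^{dl}$ gives
\[
  (2ln)^N\,\bigl(C(2m-1)^{-\lambda l}\bigr)^{N-1}
  \;\le\; C^{N-1}(2l)^N\,(2m-1)^{\,l\bigl(Nd-(N-1)\lambda\bigr)}.
\]
The exponent $l\bigl(Nd-(N-1)\lambda\bigr)$ is negative exactly when $Nd<(N-1)\lambda$, i.e.\ when $\lambda<N(\lambda-d)$, i.e.\ when $N>\lambda/(\lambda-d)$ — precisely our hypothesis — and then the polynomial prefactor $(2l)^N$ is swamped, so this probability tends to $0$.

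It remains to dispose of the degenerate configurations in which two of the $N$ positions fall in the same relator, possibly with overlapping windows. If two occurrences of $v$ overlap inside a single $r^{\pm1}$, then $v$ is periodic with period at most the size of the overlap, and a Fine--Wilf type reduction (exactly as in \cite[Lemmas 2.3, 2.5]{Mac-12-random-cdim}) extracts from $r^{\pm1}$ a word of length $\succeq\lambda l$ occurring in two disjoint locations; the number of witnesses is $\le n(2l)^2$ and each has probability $\le C(2m-1)^{-\lambda l/5}$, a quantity tending to $0$ since $n\asymp(2m-1)^{dl}$ and $d<\lambda$. More generally, if the $N$ occurrences are hosted by only $k<N$ distinct relators, the count drops to $\le(2ln)^k(2l)^{N-k}$ while (after discarding overlaps as above) at least $N-k$ independent matching constraints survive, so the worst case for the union bound is the generic one already treated. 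The only genuinely delicate point is this bookkeeping for overlapping and coinciding occurrences; the rest is the elementary counting above, and indeed in the applications (Propositions~\ref{prop:poly-rand-small-canc} and~\ref{prop-startrel}) only the distinct-relators case is actually needed.
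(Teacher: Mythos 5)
Your generic case (all $N$ positions in distinct relators) is exactly the paper's first case, and the counting there is correct: the exponent $l\bigl(Nd-(N-1)\lambda\bigr)$ is negative precisely when $N>\lambda/(\lambda-d)$. The gap is in your treatment of the degenerate configurations, which you try to eliminate outright.

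You claim that the event ``some relator $r$ contains two overlapping occurrences of a length-$\lambda l$ word'' is itself a.a.s.\ absent, because after the Fine--Wilf reduction the expected number of witnesses is $\preceq n\,(2l)^2\,(2m-1)^{-\lambda l/5}$. But $n\asymp(2m-1)^{dl}$, so this quantity is of order $(2m-1)^{(d-\lambda/5)l}$, which tends to $0$ only when $d<\lambda/5$. The lemma is stated for all $0<d<\lambda$, and indeed for $d$ close to $\lambda$ (already for $d>\lambda/2$, with the sharper exponent $\lambda l/2$ for an opposite-orientation overlap) it is a.a.s.\ \emph{true} that some relator exhibits such a self-overlap, so this case cannot be discarded. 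The assertion ``a quantity tending to $0$ since $n\asymp(2m-1)^{dl}$ and $d<\lambda$'' does not follow.

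What the paper does instead is to keep the overlap configurations and exploit their structure. It writes $N=k_1+\cdots+k_t$ by the multiplicities in each of the $t$ distinct relators involved and splits into cases on $k_1=\max k_i$. When $k_1=2$, an opposite-orientation overlap inside a single relator only buys a factor $(2m-1)^{-\lambda l/2}$ rather than $(2m-1)^{-\lambda l}$; but $k_1=2$ forces $t\geq\lceil N/2\rceil$, and the $t-1$ inter-relator matching constraints then contribute $(2m-1)^{-\lambda l(t-1)}$. The total exponent is $-\bigl((\lambda-d)t-\lambda/2\bigr)l$, which is negative because $(\lambda-d)t\geq(\lambda-d)N/2>\lambda/2$ exactly under the hypothesis $N>\lambda/(\lambda-d)$. (The case $k_1\geq3$ is easier: three occurrences force two of the same orientation, and then the full exponent $(2m-1)^{-\lambda l}$ is available.) So the trade-off between ``weaker probability per overlapping relator'' and ``more relators forced'' is the heart of the lemma, and your proof is missing it. As a minor further point, the phrase ``at least $N-k$ independent matching constraints survive'' undercounts: every position except the one fixing $v$ is a constraint (so $N-1$ of them), and their independence inside a single relator is exactly what the overlap analysis has to justify.
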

\begin{proof}
	Suppose a word $w$ of length $\lambda l$ appears in $N$ different places in the relators
	$\cR$ of a random group with $m$ generators, at length $l$ and density $d$.
	
	Write $N = k_1+k_2+\cdots +k_t$, where $k_1 \geq k_2 \geq \cdots \geq k_t \geq 1$,
	and consider the situation that $w$ appears in $t$ distinct relators
	$r_1, \ldots r_t$, and in each $r_i$ it appears in $k_i$ different ways.
	
	Let $P = P(k_1, \ldots, k_t)$ be the probability of this occurring.
	We bound $P$ by considering three different cases.
	
	In the first case, $k_1=1$ and so every $k_i=1$, implying that $t=N$.
	There are at most $(2m-1)^{ldN}$ choices for the relators $r_1, \ldots, r_N$,
	and $(2l)^N$ possible starting points for the designated subwords of length $\lambda l$
	in each $r_i^{\pm 1}$.
	There is no restriction on the relator $r_1$, but for each $i\geq 2$, the
	probability that the subword from $r_i$ matches the given subword from $r_1$ is at most
	$2 (2m-1)^{-\lambda l}$.  (The factor of $2$ deals with minor issues due to cyclically
	reduced words; see \cite[Lemma 2.4]{Mac-12-random-cdim} for an even sharper bound.)
	
	Thus in this case, we have
	\[
		P \leq (2m-1)^{ldN} (2l)^N \left( 2 (2m-1)^{-\lambda l} \right)^{N-1},
	\]
	which goes to zero as $l \ra \infty$, using our assumption that $N > \lambda/(\lambda-d)$.
	
	In the second case, we have $k_1 = 2$,
	which gives that $t \geq \lceil N/2 \rceil$.
	Let $P_1$ be the probability that some relator $r_1$ has a subword of 
	length $\lam l$ appearing in two different ways.
	There are $(2m-1)^{ld}$ choices for $r_1$, and $(2l)^2$ choices for the positions
	of the two different subwords $u_1$ and $u_2$ in $r_1^{\pm 1}$.
	
	If $u_1$ and $u_2$ do not overlap, then the choice of $u_2$ is entirely determined
	by the choice of $u_1$, and the two words agree with probability at most
	$2^2 (2m-1)^{-\lam l}$  (the factor of $2^2$ arises from filling in two cyclically
	reduced words).
	
	If $u_1$ and $u_2$ overlap, but have the same orientation, for example $u_2$ starts
	from the $j$th letter of $u_1$, then $u_2$ is entirely determined by the initial $j$
	letters of $u_1$, so the probability that $r_1$ has the required form is at most
	$2 (2m-1)^{-\lam l}$.
	
	If $u_1$ and $u_2$ overlap, but in the opposite orientation, then as
	we choose letters for $u_1$, there are at least $\lam l/2$ free choices which
	we make that determine the corresponding letter of $u_2$ (the worst case situation
	is when $u_1$ and $u_2$ almost entirely overlap).
	So, after again adding a factor of $4$ to deal with cyclically reduced words,
	we have that this occurs with probability at most 
	$4(2m-1)^{- \lam l/2}$.
	
	Putting this together, we have $P_1 \leq (2m-1)^{ld}(2l)^2 4(2m-1)^{-\lam l/2}$.
	
	The probability that $r_i$, for $i \geq 2$ contains $k_2 \geq 1$ different 
	copies of the given subword is at most the probability that it contains one such
	subword, which is at most $(2m-1)^{ld}(2l)2(2m-1)^{-\lam l}$.
	Thus,
	\begin{align*}
		P & \leq (2m-1)^{ld}(2l)^2 4(2m-1)^{-\lam l/2} \left( 
			(2m-1)^{ld}(2l)(2m-1)^{-\lam l} \right)^{t-1} \\
		 & = 4 (2l)^{t+1} (2m-1)^{-((\lam-d)t-\lam/2)l}.
	\end{align*}
	Because $(\lam-d)t \geq (\lam -d)N/2 > \lam/2$,
	this bound goes to zero as $l \ra \infty$.
	
	Finally, in the third case $k_1 \geq 3$.
	Here, the probability that that some $r_1$ contains $k_1$ subwords
	of length $\lam l$ is bounded by the probability that some $r_1$ contains two matching subwords
	of length $\lam l$ which are either disjoint or have the same orientation.
	As we saw above, this is bounded by $(2m-1)^{ld}(2l)^2 4 (2m-1)^{-\lam l}$.
	
	As in the second case, the probability that $r_i$, for $i \geq 2$ contains $k_2 \geq 1$ different 
	copies of the given subword is at most $(2m-1)^{ld}(2l)2(2m-1)^{-\lam l}$.
	Thus we have
	\begin{align*}
		P & \leq (2m-1)^{ld}(2l)^2 4 (2m-1)^{-\lam l} \left(
			(2m-1)^{ld}(2l)2(2m-1)^{-\lam l} \right)^{t-1} \\
		  & = (2l)^{t+1} (2m-1)^{-(\lam-d)tl} \ra 0 \ \text{ as } \ l \ra \infty.
	\end{align*}
	
	As the number of ways to write $N$ as $N=k_1+ \cdots +k_t$ is bounded independently of $l$,
	the proof is complete.	
\end{proof}

\subsection{Building a round tree}

We follow the outline of \cite[Section 5.2]{Mac-12-random-cdim}),
and build a round tree $A = \bigcup_{\ba \in T^\N} A_{\ba}$
(see Definition~\ref{def:comb-round-tree}).
Our vertical branching is controlled by the index set $T=\{1,2,\ldots,(2m-1)^{K-\eta-1}\}$,
where $K = \lfloor M^*/3 \rfloor$.
Here $M^*$ is chosen so that $\cR$ contains every word of length $M^*$ as a subword of some 
relator.

We build the round tree by induction.
At step $0$, let $A_0 = \{A_\emptyset\}$,
where $A_\emptyset$ is a $2$-cell which contains the base point $1$ in its boundary.
Let $L_\emptyset$ and $R_\emptyset$ be the two edges of $\partial A_\emptyset$ which meet $1$,
and let $E_\emptyset$ be the rest of $\partial A_\emptyset$.

At step $n+1$, we assume we have $A_n = \bigcup_{\ba_n \in T^n} A_{\ba_n}$, with each
$A_{\ba_n}$ a planar $2$-complex with boundary consisting of a left path $L_{\ba_n}$ from $1$,
a right path $R_{\ba_n}$ from $1$ and an outer boundary $E_{\ba_n}$.
We split $E_{\ba_n}$ into subpaths of lengths 
between $K/2$ and $K$,
so that local minima of $d(1,\cdot)|_{E_{\ba_n}}$ are not endpoints of the subpaths.

From each endpoint $u'$ of the subpaths, as it is not a local minima 
of $d(1,\cdot)|_{E_{\ba_n}}$, there is at most one adjacent edge in $E_{\ba_n}$
which extends geodesics to $1$.  Ruling out that direction, we use
Proposition~\ref{prop-startrel} (with $\lam = \frac{1}{8}$)
to extend a geodesic $[1,u']$ to the point $u$ with $d(u',u) = \eta$,
and so that the extension $[u',u]$ meets $E_{\ba_n}$ only at $u'$.

Since $d < \frac{1}{8}$, for $\eps = \frac{1}{24}$ we have
$\eps < \frac{1}{6}-d$.  We will later set $M^* = \lceil \frac{4}{5} dl \rceil$,
so $K-\eta \leq \frac{1}{3} M^* < \frac{1}{3} \cdot \frac{4}{5} \cdot \frac{1}{8} l = \frac{1}{30}l$,
which is certainly less than $\min\{\frac{1}{6}l-1, \frac{3}{2}\eps l-1\} = \frac{1}{16}l-1$.
So Lemma~\ref{lem-geod-extend-rand} (applied with $a=u$ and ``$\eta$'' equal to $K-\eta$)
says that there are at least
$2m-2 \geq 2$ ways to extend from $u$ so that all $|T|=(2m-1)^{K-\eta-1}$
possible further extensions of length $K-\eta-1$ will give geodesics
going away from $1$.
Choose one of these initial directions, and so find
$|T|$ points at distance $K$ from $u'$, and distance $d(1,u')+K$ 
from $1$.

For each $a_{n+1} \in T$, we have a corresponding geodesic segment of 
length $K$ leaving each endpoint
of a subpath of $E_{\ba_n}$.  
Since $3K \leq M^*$, we can fill in a $2$-cell along adjacent
geodesic segments and the path between them.
This defines $A_{\ba_{n+1}}$, with $\ba_{n+1} = (\ba_n, a_{n+1})$.

This inductive construction defines an infinite polygonal 
complex $A$, along with a natural immersion 
$i:A \ra X$.

\begin{remark}
	The perfect matching approach of Section~\ref{sec-new-lower-sc} gives some improvement
	in $C$ in Theorem~\ref{thm-main-density}, but as we have not achieved $C \ra 1$,
	we content ourself with the simpler approach above.
\end{remark}

\begin{lemma}[{cf.\ \cite[Lemma 5.6]{Mac-12-random-cdim}}]\label{lem-top-embed}
	The map $i:A \ra X$ is a topological embedding.
	
	More precisely, for every $p \in A$, every geodesic joining $i(p)$ to $i(1)=1$ in $\Gam=X^{(1)}$
	is the image under $i$ of a (geodesic) path joining $p$ to $1$ in $A^{(1)}$.
\end{lemma}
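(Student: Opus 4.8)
The plan is to prove, by induction on $n$, the following strengthening, call it $(\star_n)$: the restriction $i|_{A_n}$ is injective (hence, being a combinatorial map between locally finite complexes, a topological embedding onto a subcomplex), and for every vertex $p$ of $A_n$ every geodesic of $\Gam = X^{(1)}$ from $i(p)$ to $1$ is the image under $i$ of an edge-path in $A_n^{(1)}$ from $p$ to $1$ (which is then automatically geodesic in $A_n^{(1)}$, since any strictly shorter path in $A_n^{(1)}$ would map to a strictly shorter path in $\Gam$). As $A = \bigcup_n A_n$ and every vertex of $A$ lies in some $A_n$, the conjunction of the $(\star_n)$ gives the lemma. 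For the base case $n=0$, $A_0 = A_\emptyset$ is a single $2$-cell whose boundary word is a relator, and since the boundary path of every $2$-cell of $X$ embeds (Proposition~\ref{prop-OW-12cells}), $i|_{A_0}$ is a homeomorphism onto a closed $2$-cell $\bar R$ of $X$; if $\gam$ is a geodesic of $\Gam$ from $i(p)$ to $1$, then $\gam \cap \bar R$ is connected by Lemma~\ref{lem:rand-geodesics-meet-cell}, and since it contains both endpoints of $\gam$ it equals $\gam$, so $\gam \subset \bar R^{(1)} = i(\partial A_0)$, which lifts uniquely into $\partial A_0$.

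For the inductive step, assume $(\star_n)$ and let $p$ be a vertex of $A_{n+1}$; we may assume $p \notin A_n$, so $p$ lies on a $2$-cell $R$ attached at step $n+1$ to some $A_{\ba_n}$ along a subpath $\tau \subset E_{\ba_n}$, together with two outgoing geodesic segments $\sig_1, \sig_2$, each of length $K$, emanating from the endpoints $u'_1, u'_2$ of $\tau$ and built via Proposition~\ref{prop-startrel} and Lemma~\ref{lem-geod-extend-rand} so that $[1, u'_j]\cdot \sig_j$ is a geodesic; the complementary ``outer'' arc $\rho$ of $\partial R$ has length $\ge l - 3K$. As in the base case, $i|_R$ is a homeomorphism onto a closed $2$-cell $\bar R$ of $X$. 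Let $\gam$ be a geodesic of $\Gam$ from $i(p)$ to $1$. By Lemma~\ref{lem:rand-geodesics-meet-cell}, $\gam \cap \bar R$ is a single connected sub-arc of $\gam$ containing $i(p)$, so we may write $\gam = \alp_0 \cdot \gam'$, where $\alp_0$ is an arc of $i(\partial R)$ from $i(p)$ to a vertex $x = i(x')$ (with $x'$ the unique preimage of $x$ in $\partial R$, as $i|_{\partial R}$ is injective) and $\gam'$ is a geodesic from $x$ to $1$ meeting $\bar R$ only at $x$ (if $1 \in \bar R$ we instead conclude directly as in the base case).

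The heart of the argument is to show that, after possibly replacing $\gam$ by another geodesic from $i(p)$ to $1$, one has $x' \in \tau$. Once $x' \in \tau \subset A_n$, $(\star_n)$ supplies a lift of $\gam'$ to an edge-path in $A_n^{(1)}$ starting at $x'$, and concatenating with the lift of $\alp_0$ into $\partial R \subset A_{n+1}$ (which ends at $x'$, again by injectivity of $i|_{\partial R}$) produces the required lift of $\gam$ into $A_{n+1}^{(1)}$. To force $x' \in \tau$ one argues by the location of $x'$ (and of $p$) on $\partial R$: if $x'$ lies on $\sig_j$, then $d(i(x'),1) = d_{\sig_j}(x', u'_j) + d(i(u'_j),1)$ because $[1,u'_j]\cdot\sig_j$ is geodesic, so prolonging $\alp_0$ down $\sig_j$ to $u'_j$ and then continuing along any geodesic to $1$ yields another geodesic from $i(p)$ to $1$ whose exit vertex is $u'_j \in \tau$; while if $x'$ or $p$ lies on the outer arc $\rho$, this case is excluded using Ollivier's isoperimetric inequality (Theorem~\ref{thm-O-sc-rand}) together with the ladder structure of geodesic bigons (Lemma~\ref{lem-bigons}), by showing that a geodesic heading from a point of $\rho$ toward $1$ would combine with portions of $\partial R$ and with fixed geodesics supplied by $(\star_n)$ into a reduced diagram whose boundary is too short for its area, the bookkeeping being controlled by $3K \le M^* = \lceil \tfrac45 dl\rceil$ and $d < \tfrac18$. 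Finally, injectivity of $i|_{A_{n+1}}$ follows from that of $i|_{A_n}$ together with the geodesic-lifting just established: a coincidence $i(p)=i(q)$ with $p \in A_{n+1}\setminus A_n$ would, on lifting geodesics to $1$ through both $p$ and $q$, force two of the extending geodesics built in the construction to share an endpoint, contradicting the distinctness guaranteed there by small cancellation (equivalently, by Theorem~\ref{thm-O-sc-rand}); the same reasoning shows that distinct new $2$-cells have images meeting only along their common parts with $A_n$.

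The main obstacle is the ``outer arc'' case of this escape analysis. At density $d<\tfrac18$ one only has the weak $C'(\tfrac14)$ condition, so single $2$-cells need not be geodesically convex (Lemma~\ref{lem:sc-twocells-geod-convex} requires $\lam\le\tfrac16$), and one cannot simply assert that $\gam$ stays on $\partial R$; instead one must track, cell by cell, where a geodesic can leave the protruding part of a newly attached $2$-cell, using only Theorem~\ref{thm-O-sc-rand}, the bigon description of Lemma~\ref{lem-bigons}, and the genericity of the extending segments (Proposition~\ref{prop-startrel}, Lemmas~\ref{lem-geod-extend-rand} and~\ref{lem-twoinitalsegs}). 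Making every auxiliary loop that appears fall into the length range where Theorem~\ref{thm-O-sc-rand} gives a contradiction is precisely where the specific choices $M^* = \lceil \tfrac45 dl\rceil$, $K=\lfloor M^*/3\rfloor$ and $\eps = \tfrac1{24}$ fixed in the construction are consumed.
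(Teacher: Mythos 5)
The proposal follows the same broad strategy as the paper — induction on the step $n$ at which the $2$-cell containing $p$ is added, the connectedness property of geodesic-cell intersections (Lemma~\ref{lem:rand-geodesics-meet-cell}), the ladder description of bigon-type diagrams (Lemma~\ref{lem-bigons}) and Ollivier's isoperimetric inequality (Theorem~\ref{thm-O-sc-rand}) to obtain a contradiction, with Proposition~\ref{prop-startrel} supplying the ``no long relator prefix'' input. So the machinery is right. However there is a genuine gap in how you treat the case where the exit vertex $x'$ lies on one of the extending segments $\sig_j$.

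In that case you replace $\gam$ by a different geodesic from $i(p)$ to $1$ (the one following $\partial R$ down to $u'_j$) and lift that one. But the lemma asserts that \emph{every} geodesic from $i(p)$ to $1$ lifts, and your replacement argument only shows that \emph{some} geodesic lifts. The original $\gam=\alp_0\cdot\gam'$ has $\gam'$ starting at a vertex $x' \in \sig_j \subset A_{n+1}\setminus A_n$, so $(\star_n)$ does not apply to $\gam'$; and feeding $\gam'$ back into your own case analysis just reproduces the same $\sig_j$ case, so the argument loops. What is actually needed — and what the paper proves — is that a geodesic starting at any vertex $p \in \partial R \setminus A_n$ (whether on $\sig_j$ or on the outer arc $\rho$) must have its first edge in $\partial R$; this makes $x' \in \sig_j\setminus\{u'_j\}$ impossible, so no replacement is needed. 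The paper gets this uniformly by forming the reduced triangle diagram $D'$ with sides a geodesic $[v',1]$, the offending geodesic $\beta$, and the arc $\alp$ of $\partial R$ from $p$ to $v'$, invoking Lemma~\ref{lem-bigons}(ii) to see $D'$ is a ladder with a distinguished $2$-cell $R'$ at the $p$-end, attaching $R$ along $\alp$, and then reaching a contradiction with Theorem~\ref{thm-O-sc-rand} applied to $R\cup R'$ or $R\cup R'\cup R''$; the bound $k_3 \le l/8$ from Proposition~\ref{prop-startrel} is the key input. Your case-(B) sketch describes essentially this diagram argument, so the fix is simply to run that argument for \emph{all} positions of $p$ on $\partial R\setminus A_n$ rather than only on $\rho$, and to drop the ``replace the geodesic'' step entirely.
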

\begin{proof}
	We prove the lemma by induction on $n$, where $p$ is a point in the boundary of some 2-cell $R$ 
	added at stage $n$ of the construction.
	
	If $n=0$, then $p\in A_0$.  If some geodesic $[p,1]$ did not lie in $\partial A_0$,
	then there would be a contradiction to Lemma~\ref{lem:rand-geodesics-meet-cell}.
	
	For the inductive step, suppose $u', v' \in E_{\ba_n}$ are consecutive endpoints of segments 
	in the construction of $A_{n+1}$, for some $\ba_{n+1} \in T^{n+1}$.
	Let $u,v$ denote the corresponding points in $E_{\ba_{n+1}}$, and denote by $\gam_{uv}$
	the path connecting them, which lies in $\partial R$ for some $R \subset A_{n+1}$.  
	
	Suppose $p \in \partial R\setminus A_n$.
	It suffices to show that every geodesic $[p,1]$ comes from a subpath of $\partial R$ followed
	by a geodesic $[u',1]$ or $[v',1]$, which, by induction, is the image of a geodesic in $A_n$.
	
	Suppose this does not hold, and there is some geodesic $\beta$ from $p$ to $1$ not of this form.
	Without loss of generality, we assume that the first edge of $\beta$ is not in $\partial R$.
	Suppose $d(v',p) \leq d(u',p)$, and let $\alpha$ be the (geodesic) subpath from $p$ to $v'$ 
	in $\partial R$.
	Let $D'$ be a reduced diagram with boundary $[v',1], \beta, \alpha$.
	By Lemma~\ref{lem-bigons}(ii), $D'$ has the form of a ladder, and as $\beta$ branches
	off from $R$ at $p$, $p$ is contained in the boundary of some 2-cell $R' \subset D'$.
	
	Now let $D$ be the diagram formed from $D'$ by attaching $R$ along $\alpha$ 
	(Figure~\ref{fig-topembed1}).
	Again, as $\beta$ branches away from $R$ at $p$, we have that $D$ is reduced.
	So again by Lemma~\ref{lem-bigons}, we have that $D$ is a ladder.
	Let $k_2 = |\alpha|$ be the length of the intersection of $R$ and $R'$.
	Let $k_3$ be the length of $\partial R' \cap \partial D$ which is not in $\beta$.
\begin{figure}
	\centering
	\def\svgwidth{0.9\columnwidth}
	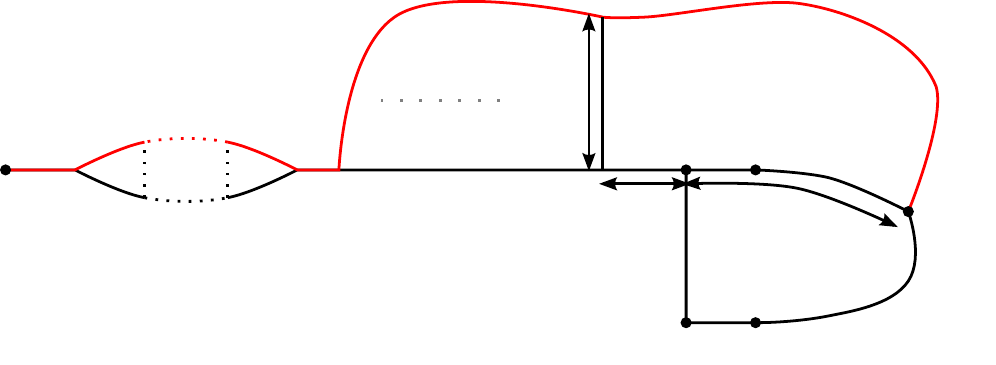
	\caption{Controlling geodesics to the identity}\label{fig-topembed1}
\end{figure}
	
	If $R'$ has no adjacent 2-cell in $D'$, then as $\beta$ is a geodesic we have
	$k_2+k_3 \geq l/2$.
	Theorem~\ref{thm-O-sc-rand} gives us $C'(\frac{1}{4})$, 
	so $k_2 \leq l/4$, and by the construction and
	Proposition~\ref{prop-startrel}, $k_3 \leq l/8$, a contradiction.
	
	If $R'$ has an adjacent 2-cell $R''$ in $D'$, let $0 \leq k_1 \leq l/2$ 
	denote the length of the intersection of $R'$ with $R''$.
	As $\beta$ is a geodesic, again we have $k_1+k_2+k_3 \geq l/2$.
	But Theorem~\ref{thm-O-sc-rand} applied to the diagram $D''$ formed by
	$R \cup R' \cup R''$ gives that
	\begin{align*}
		\tfrac94 l & < 3(1-2d-\eps)l \leq |\partial D''| = 3l-2k_1-2k_2 \\
		& = 3l-2(k_1+k_2+k_3)+2k_3 \leq 3l-l+\tfrac14 l= \tfrac94 l,
	\end{align*}
	a contradiction.
\end{proof}
\begin{remark}
	This last lemma is the key point at which $d < \frac{1}{8}$ was used,
	rather than just $d<\frac{1}{6}$.
\end{remark}

\begin{lemma}[{cf.\ \cite[Lemma 5.7]{Mac-12-random-cdim}}]\label{lem-qi-embed}
	Denote the path metric on $A^{(1)}$ 
	by $d_A$. 
	Then $i: (A^{(1)}, d_A) \ra (X, d)$ is a quasi-isometric embedding.
\end{lemma}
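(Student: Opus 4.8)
The upper bound is immediate: $i$ sends each edge of $A^{(1)}$ to an edge of $X$, so $i$ is $1$-Lipschitz and $d(i(p),i(q))\le d_A(p,q)$ for all $p,q\in A^{(1)}$. The content of the lemma is the reverse inequality up to affine distortion, and the plan is to follow the proof of \cite[Lemma 5.7]{Mac-12-random-cdim}, using Lemma~\ref{lem-top-embed} in place of its $C'(\tfrac16)$ predecessor and the density-model diagram results (Theorem~\ref{thm-O-sc-rand}, Lemma~\ref{lem-bigons}, Proposition~\ref{prop-startrel}, Lemma~\ref{lem-multiplerelators}) in place of the small cancellation lemmas it invokes.

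First I would extract from Lemma~\ref{lem-top-embed} that (i) $d(i(p),1)=d_A(p,1)$ for every $p$, and (ii) every geodesic $[i(p),1]\subset X$ is the image under $i$ of a geodesic $\gamma_p$ in $A^{(1)}$ from $p$ to $1$, which one may take to lie in a sector $A_{\ba}$ containing $p$. Writing $(a\,|\,b)_z=\tfrac12(d(z,a)+d(z,b)-d(a,b))$ and using the elementary fact that $d(z,w)\le (a\,|\,b)_z$ whenever $w$ lies on some geodesic $[z,a]$ and some geodesic $[z,b]$, the problem reduces to a single geometric statement about how $i(A)$ sits in $X$: there is a constant $C_0$ so that if $x,y\in A^{(1)}$ lie on geodesics $[1,x]_A,[1,y]_A$, satisfy $d_A(1,x)=d_A(1,y)$, and $d(i(x),i(y))$ is bounded by twice a hyperbolicity constant $\del_X$ of $X$, then $d_A(x,y)\le C_0$. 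Granting this, for $p,q\in A^{(1)}$ let $w$ be a deepest common point of fixed geodesics $[1,p]_A$ and $[1,q]_A$, so that $d_A(p,q)\le d_A(p,1)+d_A(q,1)-2d_A(1,w)$. If $(i(p)\,|\,i(q))_1$ exceeded $d_A(1,w)$ by more than a bounded amount, the $X$-geodesics $i\gamma_p,i\gamma_q$ would fellow-travel past depth $d_A(1,w)$, producing $x=\gamma_p(t)$, $y=\gamma_q(t)$ at equal depth with $d(i(x),i(y))\le 2\del_X$ but $d_A(x,y)$ large — since, by the tree-like structure of Definition~\ref{def:comb-round-tree}(3), any path in $A$ between points of the two sectors beyond their branch locus must return to within a bounded distance of depth $d_A(1,w)$ — contradicting the reduced statement. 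Hence $(i(p)\,|\,i(q))_1-d_A(1,w)$ is bounded, and combining this with (i) and the identity for $d(i(p),i(q))$ in terms of $(i(p)\,|\,i(q))_1$ yields $d_A(p,q)\le d(i(p),i(q))+C$, in fact a $(1,C)$-quasi-isometric embedding.

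It remains to prove the reduced statement, and here I would argue by contradiction as in \cite[Lemma 5.7]{Mac-12-random-cdim}. Suppose $x,y$ are as described but $d_A(x,y)$ is large; let $w$ be the branch point of the radial geodesics through $x$ and $y$, so $x$ and $y$ lie many levels beyond $w$ in distinct sectors. Then $[w,i(x)]$ and $[w,i(y)]$ are $X$-geodesics of equal length with endpoints a bounded distance apart, hence fellow-traveling, and disjoint apart from $i(w)$ since their $A$-preimages run outward through different sectors. Closing them with a short path $\sigma$ from $i(x)$ to $i(y)$ and filling a reduced disc diagram $D\ra X$, Theorem~\ref{thm-O-sc-rand} together with the ladder analysis of Lemma~\ref{lem-bigons} (adapted to allow the short non-geodesic side $\sigma$) forces $D$ to be a ladder of $2$-cells running from near $w$ to $\sigma$. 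Since $C'(\tfrac14)$ holds by Theorem~\ref{thm-O-sc-rand} and each ladder cell has at most two interior sides, each cell of $D$ meets $[w,i(x)]$, respectively $[w,i(y)]$, in a subarc of length of order $l$; but $[w,i(x)]$ and $[w,i(y)]$ were assembled, block by block, from geodesic extensions chosen via Proposition~\ref{prop-startrel} precisely so as never to follow a relator for long, and by Lemma~\ref{lem-multiplerelators} (with the near-uniqueness of extensions from Lemma~\ref{lem-geod-extend-rand}) a relator cannot be shared in this way between two extensions emanating from near $w$. This bounds the number of cells of $D$, hence $d_A(x,y)$, the desired contradiction.

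The main obstacle is exactly this last step: showing that $i(A)$ does not ``fold'', i.e.\ that two far-apart sectors of the round tree are not carried close together in $X$. The Gromov-product bookkeeping is routine, but the no-folding property is where the design of the construction (radial geodesics with no long relator prefix, Proposition~\ref{prop-startrel}) and the density hypothesis $d<\tfrac16$ (ensuring that the diagrams between the competing geodesic rays are ladders) are genuinely needed, just as in \cite[Lemmas 5.6 and 5.7]{Mac-12-random-cdim}.
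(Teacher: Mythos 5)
Your Gromov-product reformulation is a reasonable alternative scaffolding to the paper's direct triangle-filling argument, and the reduction to the ``no-folding'' statement (points at equal depth that are $X$-close must be $A$-close, up to an $O(l)$ additive error) does imply the lemma once the identity $d_X(p,q)=d_X(p,1)+d_X(q,1)-2(i(p)\,|\,i(q))_1$ is combined with Lemma~\ref{lem-top-embed}. However, the core combinatorial step is not correctly identified, and this is a genuine gap.

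Your sketch says that once $D$ is a ladder whose cells meet $[w,i(x)]$ and $[w,i(y)]$ in long subarcs, a contradiction follows ``by Lemma~\ref{lem-multiplerelators} \ldots\ a relator cannot be shared in this way.'' That is not the mechanism, and Lemma~\ref{lem-multiplerelators} (which bounds how often a single subword can appear across $\cR$) does not apply: the ladder cells are not claimed to carry the same word, and nothing prevents a long relator subarc from appearing on a radial geodesic --- radial geodesics in $A$ \emph{do} spend most of their length on $2$-cell boundaries. What actually goes wrong in the paper is more subtle. First one must split into cases: if one of the first few ladder cells already lies in $A$, then the two entry points on $\gam_{1x},\gam_{1y}$ are joined inside $A$ along the ladder and $d_A$ is directly bounded by $3l$ --- there is no contradiction to derive, and your sketch does not treat this case at all. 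In the remaining case the ladder cells $R_{xy},R_1,R_2$ all lie \emph{outside} $A$, and then the argument is a \emph{thickening}, not a sharing, argument: by Proposition~\ref{prop-startrel} together with Lemma~\ref{lem-top-embed}, a radial geodesic in $A$ alternates between long arcs on boundaries of $2$-cells of $A$ and short ($\eta$-length) connecting segments that avoid long relator prefixes; hence the long subarcs of $\gam_{1x}$ and $\gam_{1y}$ on $\partial R_1$ force $R_1$ to be adjacent to $2$-cells $R_3,R_4\subset A$ along segments of length at least $l/8-2\eta$ or so. The five-cell diagram $D''=R_{xy}\cup R_1\cup R_2\cup R_3\cup R_4$ is then reduced (because $R_3,R_4\in A$ while $R_{xy},R_1,R_2\notin A$), has almost all of $\partial R_1$ interior, and consequently violates Ollivier's isoperimetric inequality (Theorem~\ref{thm-O-sc-rand}). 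That isoperimetric violation, not a relator-counting argument, is what kills the long ladder.

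Two further points. You also omit the step that bounds the size of the central piece of the triangle diagram --- in the paper this is a separate application of Theorem~\ref{thm-O-sc-rand} showing $|D'|\le 5$, which is needed to locate the entry points (your $x,y$ or the paper's $p'',q''$) a controlled distance from $p,q$ before the ladder analysis begins; your fellow-travelling argument can in principle replace it, but you would still need to pin down concrete boundary points of a diagram. And the threshold in play for this lemma is $d<\tfrac18$, not $d<\tfrac16$: Lemma~\ref{lem-bigons} only needs $d<\tfrac16$, but the thickening estimate $k_4\ge 2\eta+l/8$ in the paper's proof and the preceding Lemma~\ref{lem-top-embed} genuinely use $d<\tfrac18$.
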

\begin{proof}
	We denote by $d_X$ the pullback metric on $A^{(1)}$, where $d_X(x,y) = d(i(x),i(y))$.
	
	Now take any $x,y \in A^{(1)}$.  
	Since $i$ sends edges in $A$ to edges in $X$, 
	we have $d_A(x,y) \geq d_X(x,y)$.
	
	Consider the geodesic triangle with vertices $1, x, y$ and edges $\gam_{1x}, \gam_{1y}, \gam_{xy}$.
	Let $D$ be a reduced diagram for this triangle, and as usual remove 
	all vertices of degree $2$.
	Let $R_{xy}$ be the cell which meets both $\gam_{1x}$ 
	and $\gam_{1y}$ and is furthest from $1$ in $D$
	(usually this will be a 2-cell).  
	Lemma~\ref{lem-bigons} shows that $R_{xy}$ bounds a ladder in $D$ containing $1$.
	In a similar way, define $R_{1x}$ and $R_{1y}$ as the last cells meeting the corresponding two geodesics,
	see Figure~\ref{fig-qiembed1}.
\begin{figure}
	\centering
	\def\svgwidth{0.7\columnwidth}
	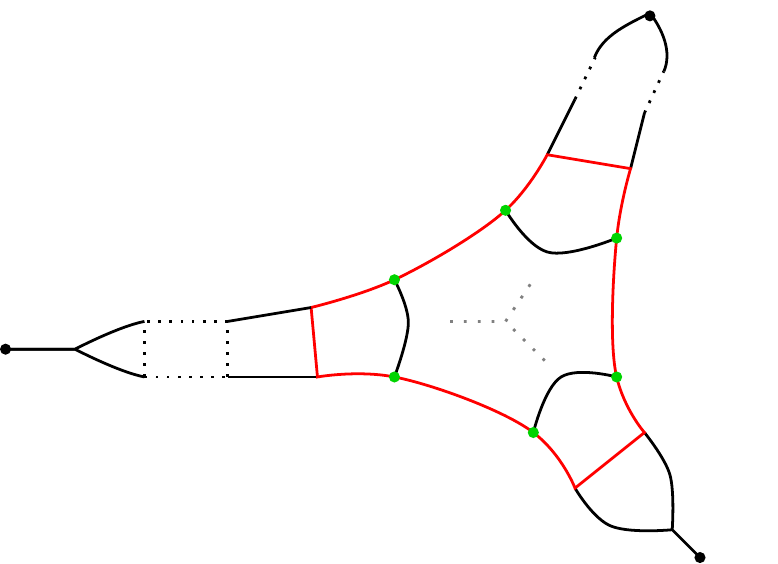
	\caption{Quasi-isometric embedding}\label{fig-qiembed1}
\end{figure}
	
	Let $D' \subset D$ be the subdiagram bounded by, and containing, $R_{xy}$, $R_{1x}$ and $R_{1y}$.
	Let $t$ be the number of 2-cells in $D'$.  Apart from at most three 2-cells, every 2-cell in $D'$ meets
	$\partial D'$ along at most half its boundary.
	Therefore by Theorem~\ref{thm-O-sc-rand},
	\[
		\tfrac34 lt < (1-2d-\eps)lt \leq |\partial D'|
			\leq 3l+(t-3)\cdot \tfrac12 l = \tfrac12 lt+\tfrac32 l,
	\]
	so $t < 6$, and thus $t \leq 5$.  
	
	Label $p, p', p''$ and $q,q', q''$ as in Figure~\ref{fig-qiembed1}.
	The triangle inequality and this bound on $t$ show that:
	\begin{equation}
	\begin{split}
		d_X(x,y) & = d_X(x,p)+d_X(p,q)+d_X(q,y)  \\
			& \geq d_X(x,p'')-d_X(p,p'') + d_X(y,q'')-d_X(q,q'') \\
			& \geq d_X(x,p'')+ d_X(y,q'')-5l. 
	\end{split}\label{eq-dGam-bound}
	\end{equation}
	
	We now bound $d_A(p'',q'')$.
	Recall that $R_{xy}$ bounds a ladder in $D$ containing $1$.
	We suppose that the first three cells in this ladder are all 2-cells which do not lie in $A$,
	for otherwise $d_A(p'',q'') \leq 3l$.
	Denote these three cells by $R_{xy}$, $R_1$ and $R_2$.
	The boundary of $R_1$ consists of four segments of lengths $k_1, \ldots k_4$ meeting
	$R_{xy}$, $\gam_{1x}$, $R_2$ and $\gam_{1y}$, respectively 
	(Figure~\ref{fig-qiembed2}).
	
	Observe that $k_1+k_3 \leq (3d+\frac{3}{2}\eps)l$ by Theorem~\ref{thm-O-sc-rand}
	applied to $R_{xy} \cup R_1 \cup R_2$,
	and $k_2\leq l/2$.  So assuming $l$ is large enough depending on $d<\frac{1}{8}$,
	we have $k_4 \geq 2\eta+l/8$.  Likewise, $k_2 \geq 2\eta+l/8$.
\begin{figure}
	\centering
	\def\svgwidth{0.75\columnwidth}
	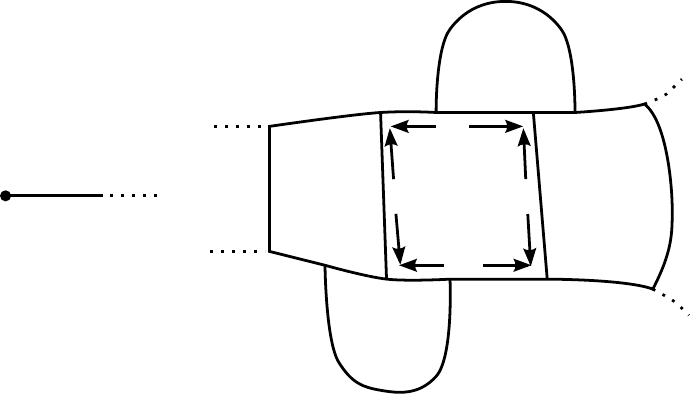
	\caption{Finding faces in $A$}\label{fig-qiembed2}
\end{figure}
	
	By Proposition~\ref{prop-startrel} and Lemma~\ref{lem-top-embed}, we have that every geodesic path
	from a point in $A \subset X$ to $1$ 
	alternates between paths in boundaries of $2$-cells of $A$, and segments $[u,u']$ 
	of length $\eta$ so
	that no geodesic $[u,1]$ starts with $\eta+l/8$ of some relation.
	This means that all of the length $k_2$, with the exception of $2\eta+l/8$, must lie in the
	boundary of some 2-cell $R_3 \subset A$.
	Likewise, all but $2\eta+l/8$ of the length $k_4$ lies in the boundary of some 2-cell $R_4 \subset A$.
	
	Consider the diagram $D''$ formed of $R_{xy} \cup R_1 \cup \cdots \cup R_4$.  
	This is reduced as $R_3$ and $R_4$ lie in $A$, while the other 2-cells do not.
	All but $4\eta+l/4$ of $\partial R_1$ lies in the interior of $D''$, so
	$|\partial D| \leq 5l-2(l-(4\eta+l/4))=(7/2)l+8\eta$.
	On the other hand, by Theorem~\ref{thm-O-sc-rand}, $|\partial D| \geq (1-2d-\eps)5l > (15/4)l$.
	This gives a contradiction for sufficiently large $l$, so $d_A(p'',q'') \leq 3l$.
	
	Note that $\gam_{1x}$ and $\gam_{1y}$ are in $A^{(1)}$ (Lemma~\ref{lem-top-embed}).
	Combining with \eqref{eq-dGam-bound}, we conclude the proof.
	\begin{align*}
		d_A(x,y) & \leq d_A(x,p'')+d_A(p'',q'')+d_A(q'',y) \\
			& \leq d_A(x,p'')+3l+d_A(q'',y) \\
			& = d_X(x,p'')+3l+d_X(q'',y) 
			\leq d_X(x,y) + 8l.\qedhere
	\end{align*}
\end{proof}

\subsection{Conformal dimension bound}\label{ssec-density-cdim}

We have built a quasi-isometrically embedded combinatorial round tree $A$ in $X$,
with vertical branching $|T|$, where
\[
	\log |T|=(K-\eta-1)\log(2m-1) = (\lfloor M^*/3 \rfloor-\eta-1)\log(2m-1).
\]
We choose $M^* = \lceil \frac{4}{5} dl \rceil$, and observe
that the presentation for $G$ contains every reduced word of length $M^*$
by~\cite[Proposition 2.7]{Mac-12-random-cdim}.
So for large enough $l$, $\log |T| \geq \frac{1}{4}dl \log(2m-1)$.

The horizontal branching of $A$ is most $l/(M^*/6)+2 \leq 8/d+2 \leq 24/d$,
so by Theorem~\ref{thm-roundtree-cdim} we have
\[
	\Cdim(\bdry G) \geq 1+\frac{dl \log(2m-1)}{4\log(24/d)}.
\]
The proof of Theorem~\ref{thm-main-density} is complete.\qed

\bibliographystyle{alpha}
\bibliography{biblio}

\end{document}